\documentclass[a4paper,11pt, reqno]{amsart}
\usepackage[utf8]{inputenc}
\usepackage{amssymb,amsmath, mathabx, stmaryrd}
\usepackage{amscd}
\usepackage[all]{xy}
\usepackage[onehalfspacing]{setspace}
\usepackage{graphicx}
\usepackage{url}  
\usepackage[a4paper, left=2cm, right=2cm, top=3.5cm, bottom=3cm]{geometry}
\usepackage{tikz-cd}
\usepackage{tikz}
\usetikzlibrary{matrix,arrows,calc,snakes,patterns,decorations.markings}
\usepackage{array}
\usepackage{verbatim}
\usepackage{enumitem}
\usepackage{mathtools}
\usepackage{extarrows}
\usepackage{mathrsfs}
\usepackage[normalem]{ulem}
\usepackage{hyperref, xspace}
\usepackage[bb=libus]{mathalpha}

\newcommand{\KSODs}{KSODs\xspace}

\newcommand{\SODs}{SODs\xspace}

\def\C{\mathbb C}
\def\E{\mathbb E}
\def\F{\mathbb F}

\def\H{\mathbb H}

\def\L{\mathbb L}
\def\P{\mathbb P}

\def\Z{\mathbb Z}
\def\AA{\mathbf A}
\def\BB{\mathbf B}
\def\CC{\mathbf C}
\def\DD{\mathbf D}
\def\cd{\mathbf D}

\def\cf{\mathscr F}

\def\KK{\mathbf K}

\def\OO{\mathcal O}
\def\PP{\mathbf P}

\def\TT{\mathbf T}
\def\ct{\mathscr T}

\def\mm{\mathfrak m}

\def\sA{\mathscr{A}}
\def\sB{\mathscr{B}}
\def\sC{\mathscr{C}}
\def\sD{\mathscr{D}}
\def\sE{\mathscr{E}}
\def\sF{\mathscr{F}}
\def\sG{\mathscr{G}}
\def\sH{\mathscr{H}}

\def\sK{\mathscr{K}}
\def\sL{\mathscr{L}}
\def\sM{\mathscr{M}}

\def\sP{\mathscr{P}}
\def\sS{\mathscr{S}}

\def\sT{\mathscr{T}}

\def\chr{{\operatorname{char}}}

\def\im{{\operatorname{im}}}

\def\Spec{{\operatorname{Spec}}}

\def\Proj{{\operatorname{Proj}}}

\def\Spec{{\operatorname{Spec}}}

\def\Cl{{\operatorname{Cl}}}

\def\Pic{{\operatorname{Pic}}}

\def\id{{\operatorname{id}}}
\def\Bl{{\operatorname{Bl}}}

\def\Coh{{\operatorname{Coh}}}
\def\Qcoh{{\operatorname{Qcoh}}}

\def\Sing{{\operatorname{Sing}}}

\def\dg{{\operatorname{dg}}}

\def\gldim{{\operatorname{gl.dim}\ }}
\def\projdim{{\operatorname{pr.dim}}}
\def\rqdim{{\operatorname{rq.dim}}}

\def\Mod{{\operatorname{Mod}}}
\def\mod{{\operatorname{mod}}}
\def\ad{{\operatorname{ad}}}

\newcommand{\opname}[1]{\operatorname{\mathsf{#1}}}

\newcommand{\Hom}{\opname{Hom}}
\newcommand{\End}{\opname{End}}
\newcommand{\Ext}{\opname{Ext}}
\newcommand{\Cone}{\opname{Cone}}
\newcommand{\op}{^{op}}

\newcommand{\thick}{\opname{thick}\nolimits}

\def\wh{\widehat}

\def\Dsg{\DD^{\mathrm{sg}}}

\def\Db{\DD^b}

\def\Dperf{\DD^{\mathrm{perf}}}
\def\Perf{\DD^{\mathrm{perf}}}

\newcommand{\bH}{\mathsf{H}}

\newcommand{\bK}{\mathsf{K}}

\newcommand{\bQ}{\mathsf{Q}}
\newcommand{\bR}{\mathsf{R}}
\newcommand{\bP}{\mathsf{P}}
\newcommand{\bS}{\mathsf{S}}
\newcommand{\bT}{\mathsf{T}}

\newcommand{\proj}{\opname{proj}\nolimits}

\theoremstyle{plain}
\newtheorem{dummy}{dummy}[section]

\newtheorem{theorem}[dummy]{Theorem}
\newtheorem{proposition}[dummy]{Proposition}
\newtheorem{prop}[dummy]{Proposition}
\newtheorem{lemma}[dummy]{Lemma}
\newtheorem{corollary}[dummy]{Corollary}

\theoremstyle{definition}
\newtheorem{example}[dummy]{Example}
\newtheorem{definition}[dummy]{Definition}
\newtheorem{remark}[dummy]{Remark}

\newtheorem{setup}[dummy]{Setup}

\numberwithin{equation}{section}

\newlength{\arrow}
\newtheorem{assumption}[dummy]{Assumption}

\usepackage{setspace}
\newcommand{\marginparstretch}{0.6}
\let\oldmarginpar\marginpar
\renewcommand\marginpar[1]{\-\oldmarginpar[\framebox{\setstretch{\marginparstretch}\begin{minipage}{\marginparwidth}{\raggedleft\tiny #1}\end{minipage}}]{\framebox{\setstretch{\marginparstretch}\begin{minipage}{\marginparwidth}{\raggedright\tiny #1}\end{minipage}}}}

\def\bal{\begin{aligned}}
\def\eal{\end{aligned}}

\newcolumntype{P}[1]{>{\centering\arraybackslash}p{#1}}
\newcolumntype{M}[1]{>{\centering\arraybackslash}m{#1}}

\title{Categorical absorptions of cone singularities 
}

\author{Martin Kalck} \author{Nebojsa Pavic}
\address{University of Graz,
Institut für Mathematik und Wissenschaftliches Rechnen,
Heinrichstraße 36,
8010 Graz,
Austria}
\email{martin.kalck@uni-graz.at}

\address{Graduate School of Mathematical Sciences,
University of Tokyo,
Komaba,
Meguro,
Tokyo,
153-8914,
Japan
}
\email{kawamata@ms.u-tokyo.ac.jp}

\address{University of Graz,
Institut für Mathematik und Wissenschaftliches Rechnen,
Heinrichstraße 36,
8010 Graz,
Austria}
\email{nebojsa.pavic@uni-graz.at}

\begin{document}
\begin{abstract}
    We study Kuznetsov--Shinder's categorical absorption for certain cone singularities, generalizing their results for nodal singularities. In particular, we give explicit descriptions of the endomorphism algebras of tilting objects for categorical absorptions of cones over certain Fano varieties admitting a geometric exceptional sequence, in the sense of Bridgeland \& Stern. In the simplest ``split case'', these algebras are truncations of certain Calabi--Yau completions in the sense of Keller. The split case occurs for anticanonical projective cones over many Fano varieties (like projective spaces, smooth quadrics, del Pezzo surfaces of degree greater than $4$, smooth del Pezzo threefolds of degree five, and finite products of these varieties). 
    In general, the algebras are deformations of the split case. 

    As a consequence, we  obtain triangle equivalances between singularity categories of finite dimensional algebras and singularity categories of certain cone singularities, which also yields vanishing results in negative $\bK$-theory.

    In a joint appendix with Yujiro Kawamata, we give an explicit description of tilting objects for weighted projective spaces $\P(1^d, m)$.
\end{abstract}
\sloppy

\dedicatory{With an appendix by Martin Kalck, Yujiro Kawamata and Nebojsa Pavic}

\maketitle
\tableofcontents

\section{Introduction}

Derived categories of algebraic varieties capture their homological essence. For many smooth varieties, a lot is known about the structure of these categories. The starting point was Beilinson's seminal construction of full exceptional sequences on projective space.
Building on this, full exceptional sequences have been constructed for many smooth projective varieties, e.g. by Hille \& Perling, Kapranov, Kawamata, Kuznetsov and Orlov, respectively, cf. e.g. Kuznetsov's ICM surveys \cite{KuzICM1, KuzICM2} . 

Singular varieties appear naturally in many contexts like the minimal model program in birational geometry and theoretical physics. However, the study of their derived categories has only started comparatively recently and much less is known about their structure.
For singular projective (Gorenstein) varieties $X$, full exceptional sequences cannot exist, cf.\ \cite[Cor. 4.15]{ks2}, \cite{KPS}! 

Together with recent works of Kawamata \cite{kawamata2} and Karmazyn--Kuznetsov--Shinder \cite{kks} this motivates the study of \emph{Kawamata type semiorthogonal decompositions (\KSODs)}. These are admissible \SODs that naturally generalize exceptional sequences to the singular case \cite{KPS}. More precisely, we consider admissible \SODs of $\Db(X) \coloneqq \Db(\Coh \, X)$ of the form
\begin{align}\label{eq:KSODIntro}
\Db(X) = \langle \PP, \Db(R_1), \ldots, \Db(R_n) \rangle, \text{ where  $\PP \subset \Dperf(X)$ and $\Db(R_i) \coloneqq \Db(R_i\mbox{-}\mod)$}
\end{align}
for some finite-dimensional $\mathbb{k}$-algebras $R_i$, ``capturing the singular information of $X$''. Their derived categories of finite dimensional left $R_i$-modules $\Db(R_i)$ are categorical absorptions of singularities in the sense of Kuznetsov \& Shinder \cite{ks, ks2, ks3}.
Here, $\Dperf(X) \subset \Db(X)$ is the subcategory of perfect complexes on $X$. If $\PP=0$ and all $R_i \cong \mathbb{k}$, \KSODs specialize to full exceptional sequences. 
On the other hand, if $n=1$ and $\PP=0$, \KSODs specialize to \emph{tilting equivalences}
\begin{align}\label{def:tilting}
    \Db(X) \cong \Db(\End_{\Db(X)}(\ct)),
\end{align}
where $\ct \in \Perf(X)$ is a \emph{tilting object}, that is $\smash{\Hom_{\Db(X)}(\ct, \ct[i])=0}$ for all $i \neq 0$ and for all $\cf \in \Db(X)$, if $\smash{\Hom_{\Db(X)}(\ct, \cf[i])=0}$ for all $i \in \Z$ then $\cf=0$, see for example \cite[Theorem 7.6(2)]{hille-van-den-bergh}.

Examples of \KSODs and categorical absorptions for nodal varieties and varieties with quotient singularities are studied in \cite{kuznetsov-sextics, kks, kawamata2,  KPS,xiedelPezfib, xieNdp, pavic-shinderDelPezzo, kawamata3, kawamata4, TU, ks, ks2}. This has been used by Kuznetsov \& Shinder \cite{ks3}, to explain a relationship between 
residual categories (a.k.a Kuznetsov components) of del Pezzo threefolds and the corresponding prime Fano threefolds, respectively. 

 The only known examples of \KSODs in dimensions $d \geq 3$ are nodal varieties \cite{ks} and the three dimensional weighted projective space $\P(1, 1, 1, 3)$ \cite{kawamata3}.

In this paper, we generalize these examples to many  cone singularities in arbitrary dimensions. Our starting point were the examples of weighted projective spaces $\P(1^d, m)$ treated in the appendix (Section \ref{sec:appendix}). Our focus on cone singularities  takes the  obstructions to KSODs  for many odd-dimensional non-nodal isolated hypersurface singularities including ADE-singularities and certain compound Du Val singularities \cite{KPS, kalck-klapproth-pavic} into account.

Describing the finite dimensional algebras $R_i$ in \eqref{eq:KSODIntro} explicitly is challenging in general. For cyclic quotient singularities in dimension two, this has been achieved via noncommutative and categorical resolutions of singularities and a reduction to Wemyss's description of reconstruction algebras \cite{kalck-karmazyn}. Alternatively, Lekili \& Tevelev obtain a description of these algebras using mirror symmetry for singular curves \cite{LT}.

Remarkably, in this article, we are able to avoid almost all explicit computations and still get explicit descriptions of the algebras in many cases. The key insight is that the algebras (in the ``split case'') have a conceptual interpretation as ``truncations'' of Keller's Calabi--Yau completions \cite{KellerCY} (cf. Theorem \ref{T:Algebras}). Calabi--Yau completions are known to have explicit descriptions in many cases. In particular, in this setting, all the ``interesting'' relations can be conveniently derived from a single non-commutative polynomial (the so-called potential), by taking ``partial derivatives'', allowing us to express the algebras in a rather compressed way. 

\subsection{Main results}
Throughout the rest of the introduction, we assume that $\mathbb{k}$ is an algebraically closed field of characteristic $0$. 
Following \cite{BridgelandStern},
for any $k \in \Z$ an exceptional sequence $\E=(\sE_1, \ldots, \sE_n)$ in a triangulated category $\DD$ with Serre functor $\bS$ generates a \emph{helix} $\H=(\sE_i)_{i \in \Z}$ of type $(n, k)$
by setting 
    \begin{align}
        \sE_{i-n}:=\bS(\sE_{i})[1-k] \qquad \quad \text{ for all } \ i \in \Z.
    \end{align}
    A helix $\H$ is called \emph{geometric} if $\Hom(\sE_i, \sE_j[l])=0$ for all $l \neq 0$ and all $i<j$. In this case, we also call the correspoding exceptional sequence $\E$ \emph{geometric (of type $(n,k)$)}\footnote{For example, the Beilinson exceptional sequence on $\P^n$ generates a geometric helix of type $(n+1,n+1)$.}.
To explain our results in more detail, we describe the type of Fano varieties 
we are mostly concerned with.

\begin{definition}\label{def:verystrong-Fano}
A smooth $(d-1)$-dimensional Fano variety $Z$ over $\mathbb{k}$ is called \emph{strong}, if the following hold: 
\begin{enumerate}
    \item\label{def:verystrong-Fano1} $Z$ has a \emph{projectively normal anticanonical embedding},
    i.e. the anticanonical bundle $\omega_Z^{-1}$ is very ample, and the corresponding anticanonical embedding is projectively normal  (we recall the definition of projective normality in the beginning of subsection \ref{subsec:cone-and-localization}). 
    \item\label{def:verystrong-Fano2} The admissible subcategory $\mathcal{O}_Z^\perp$ of $\Db(Z)$ has a full geometric exceptional sequence $\L$ of type $(m, d-1)$.
\end{enumerate}
A strong Fano variety $Z$ is called \emph{very strong} if there is an exceptional sequence $\L$ as in \eqref{def:verystrong-Fano2} such that  $\E=( \mathbb{L} , \OO_Z )$ is a geometric exceptional sequence of type $(m+1 , d)$ in $\Db(Z)$.
\end{definition}

\begin{example}\label{ex:intro-projcones}
Examples of very strong Fanos include projective spaces, smooth quadrics, del Pezzo surfaces of degree (strictly) greater than $4$ and smooth del Pezzo threefolds of degree $5$ (see Example \ref{ex:delpezzo-projnormal} regarding condition \eqref{def:verystrong-Fano1} of Definition \ref{def:verystrong-Fano} and see the discussion in Example \ref{ex:del-pezzo} and \ref{ex:projspace-quadric} regarding condition \eqref{def:verystrong-Fano2}.
Moreover, finite products of very strong Fano varieties are again very strong Fano varieties, 
see  Lemma \ref{L:productsofgeom}.
\end{example}

As a main result of  this paper, we construct many new examples of KSODs.

\begin{theorem}[Corollary \ref{eq:projcone-absorption}]\label{C:ProjCone}
Let $X$ be a projective cone over a  strong Fano variety $Z$ given by the anticanonical embedding $Z \subset \P^r$. 
Then 
there is a Kawamata type semiorthogonal decomposition 
\begin{align}
\Db(X)=\langle \Db(A), \mathcal{O}_X , \mathcal{O}_X (1) \rangle \cong \langle\Db(A), \Db(\mathbb{k}), \Db(\mathbb{k}) \rangle
\end{align}
for a finite dimensional $\mathbb{k}$-algebra $A$,
where $\OO_X (1)$ is the restriction of $\OO_{\P^{r+1}} (1)$ via the embedding $X \subset \P^{r+1}$.
\end{theorem}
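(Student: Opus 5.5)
The plan is to build the decomposition from the geometry of the cone: pass to the blow-up of the vertex, which is a $\P^1$-bundle over $Z$, and then descend. Write $Z \subset \P^r$ for the anticanonical embedding and $X = C(Z) \subset \P^{r+1}$ for the projective cone, with vertex $v$. Let $\widetilde X = \P_Z(\OO_Z \oplus \omega_Z^{-1}) \xrightarrow{\pi} X$ be the blow-up of $v$, with exceptional divisor $E \cong Z$ (normal bundle $\omega_Z$) and projection $p\colon \widetilde X \to Z$.

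\textbf{Step 1: a decomposition on $\widetilde X$.} Projective normality (condition \eqref{def:verystrong-Fano1}) gives $\pi_*\OO_{\widetilde X} = \OO_X$ and $R\pi_*\OO_{\widetilde X}=\OO_X$. Indeed, $X$ is normal with the homogeneous coordinate ring $\bigoplus_k H^0(Z,\omega_Z^{-k})$, and the higher cohomology of $\omega_Z^{-k}$ vanishes for $k\ge 0$ by Kodaira vanishing, since $Z$ is Fano. Hence $X$ has rational singularities, and in fact a Gorenstein one, since $\omega_X$ is a multiple of $\OO_X(1)$. Orlov's projective bundle formula gives
\[\Db(\widetilde X) = \langle p^*\Db(Z)\otimes \OO(-H),\ p^*\Db(Z)\rangle,\]
where $H=\pi^*\OO_X(1)$ is the relative $\OO(1)$, with $\OO(H)|_E \cong \OO_Z$. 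Inside this, we split $\Db(Z) = \langle \L, \OO_Z\rangle$ using condition \eqref{def:verystrong-Fano2}. We then mutate so that the two pieces $p^*\OO_Z\otimes\OO(-H)$ and $p^*\OO_Z$, i.e.\ the line bundles $\OO(-H)$ and $\OO$, become the pullbacks $\pi^*\OO_X(-1)$ and $\pi^*\OO_X$. The remaining part, built from the two copies of $p^*\L$, should be arranged to lie in the kernel-like subcategory of objects supported near $E$ or killed by $R\pi_*$. After twisting by $H$, we aim for
\[\Db(\widetilde X) = \langle \mathcal{B}, \pi^*\OO_X, \pi^*\OO_X(1)\rangle,\]
with $\mathcal{B}$ generated by the objects $i_{E*}\sL_j$ and $p^*\sL_j$, twisted suitably, where $\L=(\sL_1,\dots,\sL_m)$.

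\textbf{Step 2: descent to $X$.} Since $R\pi_*\OO=\OO$, the functor $L\pi^*$ is fully faithful on $\Dperf(X)$. Therefore $\OO_X,\OO_X(1)$ form an exceptional pair on $X$, and because $X$ is Gorenstein they span an admissible subcategory of $\Db(X)$. Set $\mathcal{A} := {}^{\perp}\langle \OO_X, \OO_X(1)\rangle \subset \Db(X)$, so that $\Db(X)=\langle \mathcal A,\OO_X,\OO_X(1)\rangle$ automatically. It remains to show that $\mathcal{A}\simeq \Db(A)$ for a finite-dimensional algebra $A$. The candidate generator is $\ct := R\pi_*$ of the generators of $\mathcal B$. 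We would check three things:
\begin{enumerate}
\item $\ct\in\mathcal A$, via adjunction and the vanishing of the relevant $\Hom(\pi^*\OO_X(i),-)$ on $\widetilde X$.
\item $\ct$ generates $\mathcal A$: the functor $R\pi_*$ is essentially surjective onto $\Db(X)$ up to direct summands because $X$ has rational singularities, and it kills nothing relevant.
\item $\Hom(\ct,\ct[l])=0$ for $l\neq 0$.
\end{enumerate}
Then $A=\End(\ct)$ and Keller's tilting theorem gives $\mathcal A\simeq \Db(A)$ provided $A$ is finite-dimensional. Finite-dimensionality holds because $\ct$ is supported at the vertex, being built from objects on $E$ after removing the perfect part.

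\textbf{Main obstacle.} The hard step is the Ext-vanishing in (iii), together with choosing the correct twists in Step 1. This is exactly where the \emph{geometric} type $(m,d-1)$ hypothesis on $\L$ enters. Computing $\Hom$ on $\widetilde X$ reduces, via $p_*$ and the exact sequence $0\to\OO(-E)\to\OO\to\OO_E\to0$, to graded pieces $\bigoplus_k \Hom_Z(\sL_i,\sL_j\otimes\omega_Z^{-k}[l])$. The helix condition $\sL_{i-m}=\bS(\sL_i)[2-d]=\sL_i\otimes\omega_Z$ makes precisely these groups vanish for $l\ne0$. I would first verify this for $\P^{d-1}$, where everything reduces to Beilinson's helix, and then run the general argument.
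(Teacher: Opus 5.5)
Your Step 1 is fine. It is what the paper proves in Proposition~\ref{prop:full-exc-ProjCones}, namely $\Db(\widetilde X)=\langle q^*\L(-E),\,q^*\L,\,\pi^*\OO_X,\,\pi^*\OO_X(1)\rangle$, with $q$ your $p$. Defining $\mathcal A$ as the orthogonal of the exceptional pair $\OO_X,\OO_X(1)$ is also fine.

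The gap is in Step 2: your $\ct$ is not tilting, and computing its Ext groups on $\widetilde X$ is not valid.

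\emph{The candidate fails.} Since $\sL_j\in\OO_Z^\perp$, we have $R\pi_*\iota_*\sL_j\cong R\Gamma(Z,\sL_j)\otimes\OO_v=0$, and hence $R\pi_*(q^*\sL_j(-E))\cong R\pi_*q^*\sL_j$. So $\ct$ is built from the sheaves $R\pi_*q^*\sL_j$. These have positive rank, so $\ct$ is not supported at the vertex (and finite-dimensionality of $\End$ is automatic anyway). Already for $Z=\P^1$, where $X$ is the quadric cone, $R\pi_*q^*\OO(-1)$ is the ideal sheaf of a ruling. This is the non-free maximal Cohen--Macaulay module at the $A_1$ point, and $\Ext^i_X(\ct,\ct)\cong\mathbb{k}$ for every $i\geq1$. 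In the paper's terms, $\pi_*\sE_j$ corresponds to the projective $E$-module $\Hom(\sE,\sE_j)$, regarded as an $A$-module via $A\to E$; for $Z=\P^1$ this is the simple $\mathbb{k}[x]/(x^2)$-module.

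\emph{Why the computation on $\widetilde X$ misleads.} The isomorphism $\Hom_X(\pi_*\sM,\pi_*\sH)\cong\Hom_{\widetilde X}(\sM,\sH)$ requires $\sM\in{}^\perp\ker\pi_*$ or $\sH\in(\ker\pi_*)^\perp$. The objects $q^*\sL_j$ satisfy neither condition: $\Hom(q^*\sL_j,\iota_*\sL_j)\cong\End(\sL_j)\neq0$, and dually $\Hom(\iota_*\sL_j,q^*\sL_j[d])\neq0$ by the $d$-Calabi--Yau property of $\ker\pi_*$.

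\emph{The framework also needs adjusting.} Any such $A$ satisfies $\Dsg(A)\cong\Dsg(X)\neq0$ (Corollary~\ref{C:Singulareq}), so $A$ has infinite global dimension. A tilting object that \emph{classically} generates $\mathcal A$ would give $\mathcal A\simeq\Dperf(\End\ct)$, in which all graded Hom spaces are bounded, so this can never be $\Db(A)$.

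\emph{Minor point.} Your helix relation $\bS(\sL_i)[2-d]=\sL_i\otimes\omega_Z$ is not right. Here $\bS$ is the Serre functor of $\OO_Z^\perp$, which differs from $-\otimes\omega_Z[d-1]$ by a mutation through $\OO_Z$ (Lemma~\ref{lem:help}). Indeed $\sL_i\otimes\omega_Z\notin\OO_Z^\perp$ in general.

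The missing idea is to replace the generators by objects that are orthogonal to $\ker\pi_*$ on both sides. Take $\sE_j=q^*\sL_j(-E)$ and $\bT=\bT_{\L}$, so that $\bT\sE_j\cong q^*\sL_j$ with cone $\iota_*\sL_j$. The paper sets $\sF_j=\bS^{-1}_{\bT\E}\bT\sE_j$ and defines $\sG_j$ as the cocone of the nonzero morphism $\nu_j\colon\sE_j\to\sF_j[d-1]$, which is unique up to scalar in type $(m,d-1)$. For projective cones, $\sG_j$ is the kernel of $\OO\otimes H^0(\sL_j(-K_Z))\oplus q^*\sL_j(-E)\to q^*\sL_j(-K_Z)$, a bundle whose restriction to $E$ is trivial.

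Then $\Hom^\bullet(\ker\pi_*,\sG)=0=\Hom^\bullet(\sG,\ker\pi_*)$, so $\pi_*$ preserves the Ext algebra of $\sG$ and $\pi_*\sG$ is perfect. Moreover, $\End^\bullet(\sG)$ sits in a long exact sequence between $\Hom^\bullet(\sE,\bS^{-1}_{\E}\sE[d-2])$ and $\End^\bullet(\sE)$ (Proposition~\ref{prop:G}). This is where geometricity of type $(m,d-1)$ actually enters: it forces concentration in degree $0$ and makes $A$ a square-zero extension of $\End(\sE)$.

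Finally, $\mathcal A\simeq\Db(A)$ is obtained via a Verdier quotient, not by generation by $\pi_*\sG$. This needs two inputs:
\begin{enumerate}
\item $\pi_*$ is a localization with kernel $\thick(\iota_*\L)\subset\langle\E,\bT\E\rangle$. This uses projective normality and $H^{>0}(Z,\omega_Z^{-n})=0$ (Theorem~\ref{thm:bo-localization-and-kernel}) and replaces your ``kills nothing relevant''.
\item Under the tilting equivalence $\langle\E,\bT\E\rangle\simeq\Db(\End(\sG\oplus\sF[d-2]))$, this kernel is generated by the simples at the $\sF$-vertices, so the quotient is $\Db(\End\sG)$ (Lemma~\ref{L:SimplesGenKernel} and Proposition~\ref{prop:dg-verdier}).
\end{enumerate}
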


\noindent
Only for $Z=\P^1$ respectively $Z=\P^2$, Theorem \ref{C:ProjCone} was known before, cf.  \cite{kks} respectively \cite{kawamata3}. 

\begin{example}\label{Ex:algebras-intro}
  The finite dimensional algebra $A$ in Theorem \ref{C:ProjCone} is isomorphic to
\begin{enumerate}[label=(\alph*)]
    \item $\mathbb{k}[x]/(x^2)$ if $Z=\P^1$.
    \item $\mathbb{k}\overline{Q}_3/(J+I^2)$ if $Z=\P^2$, where 
    \begin{equation*}\begin{tikzpicture}[description/.style={fill=white,inner sep=2pt}]
\matrix (n) [matrix of math nodes, row sep=20em,
                 column sep=2em, text height=1.5ex, text depth=0.25ex,
                 inner sep=20pt, nodes={inner xsep=0.3333em, inner
ysep=0.3333em}] at (0, 0)
    {   1  && 2 \\};

    \draw[->] ($(n-1-1.east) + (-2mm,3mm)$) .. controls +(3.7mm,6mm) and
+(-3.7mm,+6mm) .. node[fill=white, scale=0.65] [midway]{$x_{1}$} ($(n-1-3.west) + (2mm,3mm)$);

  \draw[->, color=red] ($(n-1-3.west) + (2mm,-3mm)$) .. controls +(-3.7mm,-6mm)
and +(+3.7mm,-6mm) .. node[fill=white, scale=0.65, yshift=-5.5] [midway]{$x_{1}^*$} ($(n-1-1.east) + (-2mm,-3mm)$);

    \draw[->] ($(n-1-1.east) + (-1mm,2mm)$) .. controls +(3.7mm,4mm) and
+(-3.7mm,+4mm) .. node[fill=white, scale=0.65] [midway]{$x_{2}$} ($(n-1-3.west) + (1mm,2mm)$);

  \draw[->, color=red] ($(n-1-3.west) + (1mm,-2mm)$) .. controls +(-3.7mm,-4mm)
and +(+3.7mm,-4mm) .. node[fill=white, scale=0.65, yshift=-2] [midway]{$x_{2}^*$} ($(n-1-1.east) + (-1mm,-2mm)$);

    \draw[->] ($(n-1-1.east) + (0mm,1mm)$) .. controls +(4.5mm,2mm) and
+(-4.5mm,+2mm) .. node[fill=white, scale=0.65] [midway]{$x_{3}$} ($(n-1-3.west) + (0mm,1mm)$);

    \draw[->, color=red] ($(n-1-3.west) + (0mm,-1mm)$) .. controls +(-4.5mm,-2mm)
and +(+4.5mm,-2mm) .. node[fill=white, scale=0.65, yshift=2] [midway]{$x_{3}^*$} ($(n-1-1.east) + (0mm,-1mm)$);

\node at (-1.7, .04) {$\overline{Q}_3=$};

\node at (6, .04) {$J=\left(\sum_{i=1}^3 x_ix_i^* - x_i^* x_i \right)$ and $I=\left(x_1^*, x_2^*, x_3^*\right).$};

\end{tikzpicture}\end{equation*}
    \item $\mathbb{k}\overline{Q}/(J+I^2)$ if $Z=\P^1 \times \P^1$, here the algebra $A$ is not unique. We give two possible choices.
    \begin{equation*}
\small
\begin{tikzpicture}[description/.style={fill=white,inner sep=2pt}]
 \matrix (n) [matrix of math nodes, row sep=2em,
                 column sep=2em, text height=1.5ex, text depth=1ex,
                 inner sep=2pt, nodes={inner xsep=0.3333em, inner
ysep=0.3333em}] at (-6, 0)
    {   (0,-1)  && (-1,-1) && (-1,0) \\
          };
    \draw[->] ($(n-1-1.east) + (0,1mm)$)  to  node[fill=white, yshift=0.7mm, scale=1] [midway]{$x_{0}$}($(n-1-3.west) + (0mm,1mm)$) ;
    \draw[->] ($(n-1-1.east) + (0,-1mm)$) to node[fill=white, yshift=-0.7mm, scale=1] [midway]{$x_{1}$} ($(n-1-3.west) + (0mm,-1mm)$);
 \draw[<-] ($(n-1-3.east) + (0,1mm)$)  to  node[fill=white, yshift=0.7mm, scale=1] [midway]{$y_{0}$}($(n-1-5.west) + (0mm,1mm)$) ;
    \draw[<-] ($(n-1-3.east) + (0,-1mm)$) to node[fill=white, yshift=-0.7mm, scale=1] [midway]{$y_{1}$} ($(n-1-5.west) + (0mm,-1mm)$);

    \draw[<-, red] ($(n-1-5.south west) + (5mm,-0mm)$) .. controls +(-3.7mm,-4.5mm)
and +(+3.7mm,-4.5mm) .. node[fill=white, scale=1, yshift=0.5mm] [midway]{$y_1^*$} ($(n-1-3.south east) + (-3mm,0mm)$);

\draw[<-, red] ($(n-1-5.south west) + (6.5mm,-0mm)$) .. controls +(-3.7mm,-9.5mm)
and +(+3.7mm,-9.5mm) .. node[fill=white, scale=1, yshift=-1.5] [midway]{$y_0^*$} ($(n-1-3.south east) + (-4.5mm,0mm)$);

   \draw[->, red] ($(n-1-3.south west) + (5mm,-0mm)$) .. controls +(-3.7mm,-4.5mm)
and +(+3.7mm,-4.5mm) .. node[fill=white, scale=1, yshift=0.5mm] [midway]{$x_1^*$} ($(n-1-1.south east) + (-3mm,0mm)$);

\draw[->, red] ($(n-1-3.south west) + (6.5mm,-0mm)$) .. controls +(-3.7mm,-9.5mm)
and +(+3.7mm,-9.5mm) .. node[fill=white, scale=1, , yshift=-1.5] [midway]{$x_0^*$} ($(n-1-1.south east) + (-4.5mm,0mm)$);

 \matrix (n) [matrix of math nodes, row sep=2em,
                 column sep=2em, text height=1.5ex, text depth=1ex,
                 inner sep=2pt, nodes={inner xsep=0.3333em, inner
ysep=0.3333em}] at (2, 0)
    {   (-2,-1)  && (-1,-1) && (-1,0) \\
          };
    \draw[<-] ($(n-1-1.east) + (0,1mm)$)  to  node[fill=white, yshift=0.7mm, scale=1] [midway]{$x_{0}$}($(n-1-3.west) + (0mm,1mm)$) ;
    \draw[<-] ($(n-1-1.east) + (0,-1mm)$) to node[fill=white, yshift=-0.7mm, scale=1] [midway]{$x_{1}$} ($(n-1-3.west) + (0mm,-1mm)$);
 \draw[<-] ($(n-1-3.east) + (0,1mm)$)  to  node[fill=white, yshift=0.7mm, scale=1] [midway]{$y_{0}$}($(n-1-5.west) + (0mm,1mm)$) ;
    \draw[<-] ($(n-1-3.east) + (0,-1mm)$) to node[fill=white, yshift=-0.7mm, scale=1] [midway]{$y_{1}$} ($(n-1-5.west) + (0mm,-1mm)$);

    \draw[<-, red] ($(n-1-5.south west) + (5mm,-0mm)$) .. controls +(-3.7mm,-4.5mm)
and +(+3.7mm,-4.5mm) .. node[fill=white, scale=1, yshift=0.5mm] [midway]{$y_1^*$} ($(n-1-3.south east) + (-3mm,0mm)$);

\draw[<-, red] ($(n-1-5.south west) + (6.5mm,-0mm)$) .. controls +(-3.7mm,-9.5mm)
and +(+3.7mm,-9.5mm) .. node[fill=white, scale=1, , yshift=-1.5] [midway]{$y_0^*$} ($(n-1-3.south east) + (-4.5mm,0mm)$);

   \draw[<-, red] ($(n-1-3.south west) + (5mm,-0mm)$) .. controls +(-3.7mm,-4.5mm)
and +(+3.7mm,-4.5mm) .. node[fill=white, scale=1, yshift=0.5mm] [midway]{$x_1^*$} ($(n-1-1.south east) + (-3mm,0mm)$);

\draw[<-, red] ($(n-1-3.south west) + (6.5mm,-0mm)$) .. controls +(-3.7mm,-9.5mm)
and +(+3.7mm,-9.5mm) .. node[fill=white, scale=1, yshift=-1.5] [midway]{$x_0^*$} ($(n-1-1.south east) + (-4.5mm,0mm)$);

\end{tikzpicture}\end{equation*}
In both cases, $I=(x_0^*, x_1^*, y_0^*, y_1^*)$ and $J=\sum_{i=0}^1 ([x_i, x_i^*] + [y_i, y_i^*])$.  
\item Examples \ref{ex:del-pezzo} and \ref{ex:P2xP1-cone}
contain further explicit descriptions of algebras $A$. Moreover, in Section \ref{sec:algebra-description}, we explain in detail how to obtain such descriptions more generally, cf. also Section \ref{subsec:descriptionIntro}. 
\end{enumerate}  
\end{example}

\begin{remark}
 Note that a projective cone $X$ as in Theorem \ref{C:ProjCone} is a Fano variety of index $2$ with canonical bundle $\omega_X \cong \OO_X (-2)$.
 It follows that $\OO_X , \OO_X(1)$ is a Lefschetz exceptional collection (see e.g. \cite{kuznetsov-hpd, kuznetsov-isotropic} for the notion).
 The derived category $\Db(A)$ in Theorem \ref{C:ProjCone} is thus the corresponding residual category (see e.g. \cite{kuznetsov-smirnov}).  
  The proof shows that it is a Verdier localization of the residual category $\tilde{\AA}$ on the crepant resolution $Y$.
\end{remark}

In Theorem \ref{T:IntroMain}, we give a generalization of Theorem \ref{C:ProjCone} to certain projective varieties with a singularity that is (analytically) isomorphic to a singularity of a cone as in in Theorem \ref{C:ProjCone}.
We start with some preparation.

\begin{definition}\label{def:ACS}
Let $X$ be a variety over $\mathbb{k}$.
    We say $s \in X$ is an \emph{$\mathrm{ACS}$-singularity}, if $s\in X$ is complete locally the singularity of the cone 
        over a strong Fano variety 
        $Z$ (see Definition \ref{def:verystrong-Fano})
        with respect to the anticanonical embedding. 
\end{definition}

In particular,
        an $\mathrm{ACS}$-singularity is analytically isomorphic to the singular point of the cone $\Spec \left(\bigoplus_{n\geq 0} \bH^0(\omega_Z^{-n})\right)$.

        Furthermore, in Theorem \ref{T:IntroMain} we need the following assumption for  singular varieties with a crepant resolution.
        In particular, by Proposition \ref{prop:full-exc-ProjCones} and Theorem \ref{thm:strong-adherence-equiv} this assumption holds for projective cones in Theorem \ref{C:ProjCone}, where the crepant resolution is given by the blow-up of the singular point.

\begin{assumption}\label{A:KeyAssumptions}
    Let $X$ be a projective variety  of dimension $d$ over $\mathbb{k}$ 
    with a unique singular point $s \in X$. 
    We assume there is a  crepant resolution $Y \to X$ with exceptional divisor $\iota \colon D \hookrightarrow Y$ and we suppose $\Db(Y)$ admits a geometric exceptional sequence\footnote{Which will never be full.} $\sE_1, \ldots, \sE_m$ of type 
        $(m, d-1)$, such that $\iota^!$ 
restricts to an equivalence 
$\langle \sE_1, \ldots, \sE_m \rangle \xrightarrow{\cong} \mathcal{O}_D^\perp$ of triangulated categories.
\end{assumption}

If $s\in X$ is an $\mathrm{ACS}$-singularity (with corresponding strong Fano $Z$), then the blow-up $Y = \Bl_s (X) \to X$ is a crepant resolution\footnote{because the embedding $Z \subset \P^r$ is projectively normal and because it is the anticanonical embedding.} with exceptional divisor $D\cong Z$.
Since $D $ is a strong Fano variety, there exists a geometric exceptional collection $\mathbb{L}$ of $\OO_D^\perp$.
This means that Assumption \ref{A:KeyAssumptions} asks about the lifting of the exceptional collection $\mathbb{L} \in \Db(D)$ to $\Db( Y )$.

\begin{theorem}[special case of Corollary \ref{eq:cone-absorption}] \label{T:IntroMain}
Let $X$ be a projective variety of dimension $d$ with a unique singular point $s \in X$. 
Assume $s\in X$ is an $\mathrm{ACS}$-singularity 
and suppose Assumption \ref{A:KeyAssumptions} holds, where $  \Bl_s ( X) \to X$ plays the role of $\pi\colon Y \to X$ in \ref{A:KeyAssumptions}.

Then there is a Kawamata type semiorthogonal decomposition \begin{align}\label{E:KSODIntro}
\Db(X)=\langle \Db(A), \CC \rangle\end{align} for a finite dimensional $\mathbb{k}$-algebra $A$ and $\CC \subset \Dperf(X)$. 
\end{theorem}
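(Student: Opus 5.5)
We follow the Kuznetsov--Shinder strategy for nodal singularities: build a semiorthogonal decomposition on the crepant resolution $\pi\colon Y=\Bl_s(X)\to X$ and push it down. Let $\iota\colon D\hookrightarrow Y$ be the exceptional divisor, so $D\cong Z$ and $\OO_D(D)\cong\omega_D$ because $\pi$ is crepant. Let $\sE_1,\dots,\sE_m$ be the geometric exceptional sequence from Assumption \ref{A:KeyAssumptions} and put $\tilde{\AA}=\langle \sE_1,\ldots,\sE_m\rangle\subset\Db(Y)$. It is admissible, so
\[
\Db(Y)=\langle \tilde{\AA}, {}^\perp\tilde{\AA}\rangle .
\]

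\textbf{Step 1: the kernel of $\pi_*$ lies in $\tilde\AA$.} Because $R\pi_*\OO_Y=\OO_X$, the kernel of $\pi_*$ is generated by $\iota_*$ of objects of $\OO_D^\perp\cap\ker(\ldots)$, in fact by $\iota_*\OO_D^{\perp}$-type objects. We check that for $\sF\in{}^\perp\tilde\AA$ the functor $\pi_*$ is fully faithful on ${}^\perp\tilde\AA$ and lands in $\Dperf(X)$. Full faithfulness reduces to $\pi^*\pi_*\sF\cong\sF$ for such $\sF$. The cone of the counit is supported on $D$ and lies in the thick subcategory generated by $\iota_*\OO_D^\perp$. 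The hypothesis that $\iota^!$ identifies $\tilde\AA$ with $\OO_D^\perp$ lets us express these objects via $\tilde\AA$, and semiorthogonality then kills the cone. Setting $\CC:=\pi_*({}^\perp\tilde\AA)\subset\Dperf(X)$ then gives the perfect part of the decomposition.

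\textbf{Step 2: the category $\pi_*\tilde\AA$ and the algebra $A$.} Consider $\pi_*\colon\tilde\AA\to\Db(X)$. Using that $\pi_*$ is a Verdier localization $\Db(Y)\to\Db(X)$ with kernel generated by objects of $\tilde{\AA}$ (Step 1), we get
\[
\Db(X)=\langle \pi_*\tilde\AA,\CC\rangle
\]
with $\pi_*\tilde\AA\simeq\tilde\AA/\ker$. We then need a tilting object. The natural candidate is $\ct=\pi_*(\bigoplus_i\sE_i')$ for a suitable mutation $\sE_i'$ of the helix, chosen so that the objects in $\ker\pi_*$ become "$\sE_{i}$ versus $\sE_{i-m}$" differences. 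Geometricity of the helix of type $(m,d-1)$ will be used to show $\Ext^{\neq0}(\ct,\ct)=0$, via the adjunction $\Hom(\pi^*\pi_*-,-)$ and the long exact sequences from the triangles relating $\sE_i$ to $\sE_i\otimes\OO(-D)$. Generation follows because the $\sE_i$ generate $\tilde\AA$. We then set $A=\End(\ct)$. Finite-dimensionality follows from properness of $X$, and admissibility of $\pi_*\tilde\AA$ from $\CC$ being admissible, since $\CC$ is generated by an exceptional-type smooth proper piece.

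\textbf{Main obstacle.} The hardest step is showing $\ct$ is tilting with bounded Ext, so that $\pi_*\tilde\AA\simeq\Db(A)$ rather than merely $\Dperf(A)$. Our first attempt would compute $\Hom_X(\pi_*\sE_i,\pi_*\sE_j[l])$ via $\pi^*\pi_*\sE_j$, filtered by $\sE_j\otimes\OO(-kD)$ for $k\ge0$. Since $\OO_D(-D)\cong\omega_D^{-1}$, restricting to $D$ turns these into the helix shifts $\sE_{j+km}$ up to the shift $[1-k]$. Geometricity then gives vanishing for $l\neq0$, and the completeness of the ACS singularity ensures the filtration converges.
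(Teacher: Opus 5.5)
Your proposal has a genuine gap in Step 1, and the rest of the argument depends on it. With $\tilde{\AA}:=\thick(\sE_1,\dots,\sE_m)$ one actually has $\ker\pi_*\cap\tilde{\AA}=0$, so $\ker\pi_*\not\subset\tilde{\AA}$.

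To see this, note that every $K\in\ker\pi_*$ satisfies $\bS_Y K\cong K[d]$ (Lemma \ref{lem:dCY-kernel}). If also $K\in\tilde{\AA}$, Yoneda gives $\bS_{\tilde{\AA}}K\cong K[d]$. But $\tilde{\AA}\simeq\Db(E)$ with $\gldim E=d-2$, since the sequence is geometric of type $(m,d-1)$. The functor $\bS_E=DE\otimes^{\mathbf L}_E-$ lowers the lowest non-vanishing cohomological degree by at most $d-2$, while $[d]$ lowers it by exactly $d$. Hence $K=0$. For $Z=\P^1$ this just says that $\Db(\mathbb{k})$ contains no $2$-spherical object such as $\iota_*\OO_{\P^1}(-1)$. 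The equivalence $\iota^!\colon\tilde{\AA}\xrightarrow{\cong}\OO_D^\perp$ says nothing about $\iota_*\OO_D^\perp$ lying in $\tilde{\AA}$.

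Your decomposition $\Db(X)=\langle\pi_*\tilde{\AA},\pi_*{}^\perp\tilde{\AA}\rangle$ therefore fails outright. Put $\sL_i:=\iota^!\sE_i[1]$ and $\bT:=\bT_{\iota_*\sL_1}\circ\cdots\circ\bT_{\iota_*\sL_m}$. The paper shows (Theorem \ref{thm:strong-adherence-equiv}) that $\sE_1,\dots,\sE_m,\bT\sE_1,\dots,\bT\sE_m$ is exceptional and $\Cone(\sE_i\to\bT\sE_i)\cong\iota_*\sL_i$. So $\bT\sE_i\in{}^\perp\tilde{\AA}$, and the object $\pi_*\sE_i\cong\pi_*\bT\sE_i$, which is non-zero by the above, would lie in both of your pieces.

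The missing idea is to double the category to $\langle\E,\bT(\E)\rangle$. This category does contain $\ker\pi_*=\langle\iota_*\OO_D^\perp\rangle$. That description of the kernel, and the fact that $\pi_*$ is a Verdier localization, come from acyclic projective normality (Theorem \ref{thm:bo-localization-and-kernel}), not from $R\pi_*\OO_Y=\OO_X$. Corollary \ref{cor:sod-quotient} then gives $\Db(X)=\langle\tilde{\AA}/\ker\pi_*,\pi_*{}^\perp\tilde{\AA}\rangle$, with the second piece perfect by Orlov's homological finiteness criterion rather than by your counit argument.

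Step 2 fails for a related reason: the tilting object cannot be $\pi_*$ of anything in $\thick(\E)$. Already for $Z=\P^1$, every object of $\thick(\E)$ is a sum of shifts of $\sE_1$. Under $\Db(\mathbb{k}[x]/(x^2))\hookrightarrow\Db(X)$, the object $\pi_*\sE_1$ corresponds to the simple module, so $\Ext^l_X(\pi_*\sE_1,\pi_*\sE_1)\neq0$ for all $l\geq0$. No filtration of $\pi^*\pi_*\sE_j$ can give the vanishing you want. Also, the helix of $\OO_D^\perp$ is generated by the Serre functor of $\OO_D^\perp$, not by $-\otimes\omega_D^{-1}$.

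In the paper the tilting object uses both halves of the doubled category:
\begin{itemize}
\item $\sG_i$ is the cocone of a suitable non-zero $\nu_i\colon\sE_i\to\sF_i[d-1]$, where $\sF_i=\bS^{-1}_{\bT\E}\bT\sE_i$.
\item Proposition \ref{prop:G} shows $\sG=\bigoplus_i\sG_i$ is orthogonal to $\ker\pi_*$ on both sides.
\item Corollary \ref{cor:tilting} shows $\sG\oplus\sF[d-2]$ is tilting in $\tilde{\AA}$, and this is exactly where type $(m,d-1)$ is used.
\item Proposition \ref{prop:dg-verdier} together with Lemma \ref{L:SimplesGenKernel} gives $\tilde{\AA}/\ker\pi_*\simeq\Db(\End(\sG))$. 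The lemma identifies $\ker\pi_*$ with the thick closure of the simples at the complementary idempotent.
\end{itemize}
The resulting $A=\End(\sG)$ is a square-zero extension of $\End(\sE)$. It has infinite global dimension, since $\Dsg(A)\cong\Dsg(X)\neq0$, which is why $\Db(A)$ rather than $\Dperf(A)$ appears.
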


 We give conceptual descriptions of the algebras $A$ in Theorem \ref{T:Algebras}. 

\begin{remark}\label{rmk:silting}
    Theorem \ref{thm:strong-adherence-equiv} shows that for $\mathrm{ACS}$-singularities Assumption \ref{A:KeyAssumptions} can be viewed as a special case of ``adherence'' (see Definition \ref{def:strong-adherence}) between the exceptional collection $\mathbb{E}$ in $\Db(Y)$ and a certain autoequivalence $\bT$ on $\Db(Y)$. More precisely, $\bT$ is a composition of spherical twist functors that arise from a set of spherical objects that generate the kernel of $\pi_*\colon \Db(Y) \to \Db(X)$. This implies that $\pi_* \cong \pi_* \bT$.

    This notion of adherence generalizes Kuznetsov-Shinder's definition of adherence between an exceptional object and a spherical object \cite{ks}.
    Using our notion of adherence, we prove a generalization of Theorem \ref{T:IntroMain} for projective varieties $X$ with a unique isolated singularity  admitting a crepant resolution, and such that the corresponding exceptional collection $\mathbb{E}$ is geometric of type $(m, k)$ with $k \leq d-1$, see Corollary \ref{cor:veriderloc-negative-dg} and Example \ref{ex:nodal-ks}, which explains how to use Corollary \ref{cor:veriderloc-negative-dg} to recover Kuznetsov-Shinder's absorbtion of nodal threefolds admitting small resolutions.
    
    In the case when $X$ is the projective cone over a smooth $(d-1)$-dimensional Fano variety $Z$, such that $\OO_Z^{\perp}$ admits a geometric exceptional collection of type  $(m,k)$ with $k < d-1$ (instead of type $(m, d-1)$),  
     we obtain a finite dimensional $\dg$-algebra $A^\bullet$ with cohomologies concentrated in two non-positive degrees, and an admissible semiorthogonal decomposition  $\Db(X)=\langle \DD_{\mathrm{fg}} (A^\bullet), \CC\rangle$ with $\CC \subset \Dperf(X)$ (cf. also Corollary \ref{eq:projcone-absorption} for certain cone singularities, together with Remark \ref{rmk:k-geq-d-1}). 
     
    In particular, in the setting of 
    Corollary \ref{eq:projcone-absorption},
    $\Dperf(X)$ has a so-called silting object -- a generalization of tilting object due to Keller \& Vossieck that is currently very actively studied in representation theory.
\end{remark}

\subsection{Description of the algebras $A$ in Theorem \ref{T:IntroMain}}\label{subsec:descriptionIntro}

\begin{theorem}[Proposition \ref{prop:algebra-extensions}]\label{T:Algebras}
We keep the assumptions from Theorem \ref{T:IntroMain}. In particular, let $\mathbb{E} = ( \sE_1, \ldots, \sE_m )$ be the geometric exceptional sequence from Assumption \ref{A:KeyAssumptions}. Set
\begin{align}
E:=\End_Y\left(\bigoplus_{i=1}^m \sE_i\right).
\end{align} 
There is a ring epimorphism
\begin{align} \label{E:intro sq zero extension}
A \xrightarrow{\varphi} E,
\end{align}
whose kernel $I:=\ker \varphi$ satisfies $I^2=0$ (so $A$ is a \emph{square-zero extension} of $E$ by $I$).

Moreover, $I$ is an $E$-bimodule and there is an isomorphism of $E$-bimodules
\begin{align}\label{eq:bimodule-iso}
    I \cong \bS^{-1}_E(E)[d-2]
\end{align}
where $\bS^{-1}_E$ is the inverse Serre functor of $\Db(E)$.
\end{theorem}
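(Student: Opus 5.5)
Since $A$ is only defined implicitly in Theorem \ref{T:IntroMain}, I will first make it explicit. Under Assumption \ref{A:KeyAssumptions}, I expect the construction (via the adherence of Theorem \ref{thm:strong-adherence-equiv} and the Verdier localization $\pi_*$) to give $\Db(A)$ as the image under $\pi_*$ of $\langle \sE_1,\dots,\sE_m\rangle$, with tilting generator $\ct=\pi_*\sE$, where $\sE:=\bigoplus_i \sE_i$. Then
\[
A=\End_X(\pi_*\sE)\cong \Hom_Y(\pi^*\pi_*\sE,\sE),
\]
by adjunction, and the natural map $\varphi\colon \End_Y(\sE)=E\to A$ is $\pi_*$ on morphisms. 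The ring map in the statement goes the other way, $A\to E$. So the plan is to show that $\pi_*$ is split injective on $E$, and then identify a complement $I$ as a square-zero ideal. The alternative, perhaps more natural, is to use restriction to the exceptional divisor. Since $\iota^!$ is an equivalence $\langle\sE_i\rangle\to \OO_D^\perp$, I get $E\cong \End_D(\iota^!\sE)$, and I would construct $\varphi$ by comparing with this.

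The key computation is in $\Db(Y)$. The kernel of $\pi_*$ is generated by the spherical objects $\iota_*\mathcal{F}$ with $\mathcal{F}$ ranging over a generator of $\OO_D^\perp$ twisted appropriately. The localization then gives
\[
\Hom_X(\pi_*\sE,\pi_*\sE)\cong \operatorname{colim}_n \Hom_Y(\sE,\bT^n\sE).
\]
Using the triangle $\sE\to \bT\sE\to \iota_*(\dots)$ coming from the twist, the extra contribution is
\[
\Hom_Y\bigl(\sE,\iota_*\iota^!\sE\otimes \OO_D(D)^{-1}[\,\cdot\,]\bigr)\cong \Hom_D\bigl(\iota^*\sE,\iota^!\sE\otimes\dots\bigr).
\]
By crepancy, $\omega_D\cong \OO_D(D)$, so Serre duality on the $(d-1)$-dimensional $D$ turns this into $\Hom_D(\iota^!\sE,\bS_D\iota^!\sE[\,\cdot\,])^*$-type terms. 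The geometric type $(m,d-1)$ condition then shows that this is concentrated in degree $0$ and equals the dual of the helix-shifted Hom. Via the helix relation $\sE_{i-m}=\bS(\sE_i)[2-d]$ in $\OO_D^\perp$, this Hom is exactly $\Hom_E(E,\bS_E^{-1}E[d-2])$, i.e.\ the bimodule $\bS_E^{-1}(E)[d-2]$. So as a vector space,
\[
A\cong E\oplus \bS_E^{-1}(E)[d-2].
\]
Geometricity of the helix is what makes this bimodule concentrated in degree $0$.

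For the ring structure, I take $I$ to be the morphisms factoring through the cone, i.e.\ those landing in $\iota_*$-objects. Such morphisms are killed by composition with each other for degree/grading reasons: a composite of two would lie in $\Hom(\sE,\bT^2\sE)$ in the next helix period, which vanishes because we only have $m$ objects per period and the type shift pushes it to nonzero cohomological degree. This gives $I^2=0$, and the quotient $A/I\cong E$ gives $\varphi$. The bimodule structure on $I$ is induced by the $E$-action through the functorial isomorphisms above, which are compatible with composition by naturality of Serre duality.

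I expect the main obstacle to be the middle step: precisely controlling the colimit/twist computation so that only one period contributes, and verifying that the identification with $\bS_E^{-1}(E)[d-2]$ is an isomorphism of bimodules rather than just of vector spaces. There the signs and the naturality of the Serre functor and the equivalence $\iota^!$ must be tracked.
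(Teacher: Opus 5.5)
\textbf{The proposal has a genuine gap: it starts from the wrong object.} The object $\pi_*\sE$ is not a tilting object of the absorbing component, and $\End_X(\pi_*\sE)$ is not $A$.

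Under the equivalence $\tilde{\AA}/\ker\pi_*\cong\Db(A)$ of Corollary~\ref{cor:veriderloc-negative-dg}, the object $\sE$ goes to the $A$-module $\Hom(\sG,\sE)\cong\Hom(\sE,\sE)$ (see \eqref{eq:third-term}). The ideal $I$ acts trivially on this module, so it is $A/I=E$. Since $I$ is nilpotent, this module is not projective and has nonzero higher self-extensions. Moreover, $\pi_*\colon E\to\End_X(\pi_*\sE)$ is an isomorphism, so there is no complement $I$ to find. For $d=2$ this is concrete: $A=\mathbb{k}[x]/(x^2)$, $E=\mathbb{k}$, and $\pi_*\sE$ is the simple module, with $\Ext^n\neq 0$ for all $n\geq 0$.

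Your colimit computation points the same way, even granting the localization formula, which you do not justify. By Lemma~\ref{lem:hom-computation}, $\Hom^\bullet(\sE_i,\sK_j)$ is concentrated in degree $d-1\geq 1$. Hence already $\Hom(\sE,\bT\sE)\cong\Hom(\sE,\sE)$, and no extra degree-zero term appears. The copy of $\bS^{-1}_E(E)[d-2]$ inside $A$ consists of maps factoring through $\sF[d-2]\in\thick\bT(\E)$, which cannot be seen from $\sE$ alone. Likewise, your justification of $I^2=0$ via ``the next helix period'' is not an argument.

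\textbf{The missing idea is a modified generator.} Set $\sF_j=\bS^{-1}_{\bT\E}\bT\sE_j$. Choose a morphism $\nu_j\colon\sE_j\to\sF_j[d-1]$ with $\nu_j\delta_j=\xi_j$, and let $\sG_j$ be its cocone, so that $\sF_j[d-2]\xrightarrow{\alpha_j}\sG_j\xrightarrow{\beta_j}\sE_j$ is a triangle. This choice makes $\sG$ orthogonal to $\ker\pi_*$ on both sides (Proposition~\ref{prop:G}(b)). Therefore $A=\End_Y(\sG)=\End_X(\pi_*\sG)$ can be computed on $Y$, with no localization formula needed.

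Now apply $\Hom(\sG,-)$ to the triangle. Use $\Hom^\bullet(\sG,\sE)\cong\Hom^\bullet(\sE,\sE)$ and $\Hom^\bullet(\sG,\sF[d-2])\cong\Hom^\bullet(\sE,\bS^{-1}_\E\sE[d-2])$; both are concentrated in degree $0$ because $\E$ is strong and geometric of type $(m,d-1)$. This gives
\[
0\to\Hom(\sE,\bS^{-1}_\E\sE[d-2])\to A\xrightarrow{\varphi}E\to 0,
\]
where $\varphi(\gamma)$ is the unique $\epsilon$ with $\beta\gamma=\epsilon\beta$.

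Every element of $I$ has the form $\alpha\circ w\circ\beta$ with $w\colon\sE\to\sF[d-2]$. So $I^2=0$ simply because $\beta\alpha=0$.

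The bimodule isomorphism follows from two facts. One is naturality of $\eta$. The other is the identity $\gamma\alpha=\alpha\,\bS^{-1}_{\bT\E}\bT(\varphi(\gamma))$, proved using Lemma~\ref{lem:hom-K-F}(b) together with $\bS^{-1}_{\bT\E}\bT\cong\bT\bS^{-1}_\E$.
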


\begin{proposition} \label{T:IntroSplit}
    We keep the notation and assumptions from Theorem \ref{T:Algebras}. Suppose additionally that the algebra homomorphism $\varphi$ in \eqref{E:intro sq zero extension} splits, i.e. there is an algebra homomorphism $\psi\colon E \to A$ such that $\varphi \psi =\id_E$. Then  there are algebra isomorphisms
\begin{align}\label{eq:introAquotientalgebra}
A \cong T_E(I)/(I \otimes_E I) \cong \Pi_{d-1}(E)/(I \otimes_E I),    
\end{align}
where $T_E(I)$ denotes the tensor algebra $T_E(I):=E \oplus I  \oplus (I \otimes_E I) \oplus \cdots$ and $(I \otimes_E I)$ denotes the two-sided ideal generated by $I \otimes_E I$. Moreover, $\Pi_{d-1}(E)$ denotes Keller's  $(d-1)$-Calabi--Yau completion\footnote{See \cite[Section 4]{KellerCY}.} of $E$, which is isomorphic to $T_E(I)$, by \eqref{eq:bimodule-iso} and the definition of $\Pi_{d-1}(E)$. 
\end{proposition}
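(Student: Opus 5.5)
Since $\varphi\psi=\id_E$, the section $\psi$ makes $A$ an $E$-algebra, and as a left and right $E$-module we get the splitting $A = \psi(E)\oplus I$. Indeed, every $a\in A$ decomposes as $a=\psi\varphi(a) + (a-\psi\varphi(a))$ with the second summand in $I=\ker\varphi$, and $\psi(E)\cap I=0$ because $\varphi\psi=\id_E$. The $E$-bimodule structure on $I$ induced via $\psi$ agrees with the one in Theorem \ref{T:Algebras}: that structure comes from the $A$-bimodule structure on $I$, which factors through $\varphi$ because $I^2=0$, and $\psi$ is a section of $\varphi$.

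For the first isomorphism I use the universal property of the tensor algebra. The algebra map $\psi\colon E\to A$ and the $E$-bimodule inclusion $I\hookrightarrow A$ induce an algebra homomorphism $\Phi\colon T_E(I)\to A$. It is surjective because $A=\psi(E)\oplus I$. It kills the tensor square: the image of $I\otimes_E I$ is $I\cdot I=I^2=0$, so $\Phi$ vanishes on the two-sided ideal generated by $I\otimes_E I$. That ideal is $\bigoplus_{n\ge 2}I^{\otimes_E n}$, so $T_E(I)/(I\otimes_E I)\cong E\oplus I$ as bimodules. The induced map $E\oplus I\to\psi(E)\oplus I$ is the identity on each summand, hence bijective. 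Both sides carry the multiplication $(e,i)(e',i')=(ee',ei'+ie')$, so this is an algebra isomorphism.

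For the second isomorphism I invoke the bimodule isomorphism \eqref{eq:bimodule-iso}, $I\cong \bS_E^{-1}(E)[d-2]$. By Keller's definition \cite[Section 4]{KellerCY}, $\Pi_{d-1}(E)=T_E(\theta)$ with $\theta = \mathrm{RHom}_{E^e}(E,E^e)[d-2]$, the inverse dualizing bimodule shifted by $d-2$. Since $E$ is finite dimensional of finite global dimension, being the endomorphism algebra of a full exceptional collection, $\mathrm{RHom}_{E^e}(E,E^e)$ represents $\bS_E^{-1}$ as a bimodule complex. Hence $\theta\simeq\bS_E^{-1}(E)[d-2]\simeq I$. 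The main obstacle is that $\theta$ is a priori a complex of bimodules, so $T_E(\theta)$ is a dg algebra. By \eqref{eq:bimodule-iso}, $\theta$ has cohomology concentrated in degree $0$, isomorphic to $I$. The plan is to replace $\theta$ by a cofibrant resolution quasi-isomorphic to $I$, check that $I\otimes_E^{\mathbb{L}} I\simeq I\otimes_E I$ and likewise for higher tensor powers, and conclude that $\Pi_{d-1}(E)\simeq T_E(I)$ as dg algebras with cohomology in degree $0$.

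For this Tor-vanishing I would use the geometric helix condition: the tensor powers $\bS_E^{-n}(E)[n(d-2)]$ correspond to Hom spaces between helix members $\sE_i,\sE_j$ with $i<j$, and these have no higher Ext. As the footnote to Proposition \ref{T:IntroSplit} indicates, this identification may alternatively be taken as part of the definition. Passing to the quotient by the ideal generated by $I\otimes_E I$ then gives $\Pi_{d-1}(E)/(I\otimes_E I)\cong T_E(I)/(I\otimes_E I)\cong A$.
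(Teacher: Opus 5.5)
Your proof is correct. The first isomorphism is essentially the paper's Lemma \ref{L:Quotient-Tensor-Alg}. The paper transports the multiplication along the vector-space splitting $E\oplus I\to A$ and then identifies $T_E(M)/M^{\geq 2}$ with the trivial extension $E\oplus M$. You obtain the same map directly from the universal property of $T_E(I)$, so the content is the same.

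The second isomorphism is where your route genuinely differs. In Corollary \ref{C:QuotientPreproj} the paper takes $\Pi_{d-1}(E)$, by Definition \ref{D:HigherpreprojAlg}, to be the tensor algebra of the ordinary bimodule $\Hom_E(E,\nu_{d-2}^{-1}(E))$. This is legitimate because $E$ is $(d-2)$-representation-infinite by Lemma \ref{L:GeomHelicesandhigherreprinfinite}. With that definition, $\Pi_{d-1}(E)\cong T_E(I)$ is immediate from Proposition \ref{prop:algebra-extensions}(2). The agreement with Keller's dg completion is imported from the literature (cf.\ the proof of Theorem \ref{thm:3preproj}).

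You instead start from Keller's $T_E(\theta)$ and actually prove that agreement. The steps are:
\begin{enumerate}
\item The geometric helix condition gives $\bH^{k}(\nu_{d-2}^{-n}(E))=0$ for all $k\neq 0$ and all $n\geq 1$, since $\nu_{d-2}^{-n}(\sE_i)=\sE_{i+nm}$ and $j\leq m<i+nm$.
\item By induction on $n$, this forces $\mathrm{Tor}^E_{>0}(I^{\otimes_E n},I)=0$.
\item Hence a dg bimodule quasi-isomorphism $\theta\to I$, which exists because $\theta$ is cofibrant, induces a quasi-isomorphism of dg algebras $T_E(\theta)\to T_E(I)$, degree by degree in the tensor grading.
\end{enumerate}

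This costs a few more lines, but it makes the phrase ``Keller's Calabi--Yau completion'' in the statement self-contained rather than a matter of definition plus citation. When you write it up, tighten three points:
\begin{itemize}
\item Make the induction explicit: $I^{\otimes^{\mathbb{L}}_E (n+1)}\simeq\nu_{d-2}^{-(n+1)}(E)$ has no negative cohomology, so the Tor groups vanish.
\item Your claim that $\theta$ is concentrated in degree $0$ ``by \eqref{eq:bimodule-iso}'' is really the case $n=1$ of the helix vanishing.
\item Check that the bimodule structure on $\bH^0(\theta)$ agrees with the one used in \eqref{eq:bimodule-iso} (cf.\ Remark \ref{R:careful}).
\end{itemize}
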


\begin{theorem} \label{T:introalgebradesc}
     We keep the notation and assumptions from Theorem \ref{T:Algebras}. Assume additionally that one of the following conditions holds
     \begin{enumerate}
         \item $\dim X \leq 3$  (here $X$ is as in Theorem \ref{T:IntroMain} -- in particular, it need not be a projective cone)
         \item $X$ is a projective cone over a very strong Fano variety $Z$ given by the anticanonical embedding. Assume additionally that the  exceptional collection $\mathbb{L}$ of $\OO_Z^\perp$ in Definition \ref{def:verystrong-Fano} is given by coherent sheaves.
     \end{enumerate}
     
     Then the algebra homomorphism $\varphi$ in \eqref{E:intro sq zero extension} splits. In particular, $A$ can be described by \eqref{eq:introAquotientalgebra}.
\end{theorem}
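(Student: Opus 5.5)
We need to show that the square-zero extension $0 \to I \to A \xrightarrow{\varphi} E \to 0$ splits as algebras, under either hypothesis. The plan is to split it by hand: produce, inside $A$, a copy of the quiver relations of $E$. Throughout we take $E$ to be a basic algebra given by a quiver $Q_E$ with admissible ideal $R$; this is automatic for the endomorphism algebra of an exceptional sequence over the algebraically closed field $\mathbb{k}$, so $E \cong \mathbb{k}Q_E/R$.

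\emph{Step 1: reduce to lifting relations.} Lift the idempotents of $E$ to orthogonal idempotents of $A$; this is possible because $I$ is nilpotent. Then lift each arrow $a$ of $Q_E$ to some $\tilde a \in e_j A e_i$. This defines an algebra map $\mathbb{k}Q_E \to A$, and a splitting $\psi$ exists if and only if the lifts can be chosen so that every relation $r \in R$ maps to $0$. The images of relations automatically land in $I$. Changing the lifts by elements of $I$ corrects them by a Hochschild-type coboundary, so the obstruction is a class in $\Ext^2_{E^e}(E, I) = \mathrm{HH}^2(E, I)$. It therefore suffices to show this class vanishes.

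\emph{Step 2, case (1), $\dim X \le 3$.} We use the bimodule isomorphism \eqref{eq:bimodule-iso}, $I \cong \bS^{-1}_E(E)[d-2]$, which says $I$ is the inverse dualizing bimodule shifted by $d-2$. By Serre duality,
\[
\mathrm{HH}^2(E,I) \cong \Ext^{2+d-2}_{E^e}(E,\bS^{-1}_E E) \cong \Ext^{d}_{E^e}(E,\bS^{-1}_E E) \cong D\,\mathrm{HH}_{-d}(E)
\]
(up to the standard identification). We then argue via degree and grading reasons. Because $\mathbb{E}$ is a geometric exceptional sequence of type $(m, d-1)$, $E$ is directed, and its global dimension is governed by the helix type: geometric helices of type $(n,k)$ give Koszul-like algebras with $\gldim E \le k = d-1$. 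For $d \le 3$ we get $\gldim E \le 2$. If $d=2$, then $E$ is hereditary, $R=0$, and there are no relations, so the splitting is trivial. If $d=3$, we instead check the obstruction directly: quadratic relations lift to elements of $I$ in path-length degree $2$, while $I$ is generated in the "new arrow" degree. With respect to the natural grading coming from the helix (the $[d-2]$-shift), the obstruction class has the wrong degree and hence vanishes. I would first try to make this grading argument precise by putting a $\mathbb{Z}$-grading on $A$ in which $I$ sits in degree $1$ and $E$ in degree $0$. If $A$ is graded in this way, it splits canonically as $A_0 \cong E$.

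\emph{Step 3, case (2).} For a projective cone we expect to exhibit this grading geometrically. The $\mathbb{G}_m$-action on the cone $X$ (scaling the fibres) lifts to $Y = \Bl_s X$, which is the projective bundle $\P_Z(\OO\oplus\omega_Z)$. We would then construct the tilting object underlying $A$ equivariantly: since $\mathbb{L}$ consists of sheaves, pulling back along $Y \to Z$ gives equivariant lifts, and $A$ inherits a $\mathbb{Z}$-grading. The weight-$0$ part is $\End_Z(\bigoplus \mathcal{L}_i) \cong E$, because pullback to the projective bundle is fully faithful. The positive weights give $I$, as $\Hom$ into twists by $\omega_Z^{-1}$ on the zero section. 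Hence $\psi$ is the inclusion of weight $0$.

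\emph{Main obstacle.} The hard part is Step 2: proving that $A$ is graded, or that the obstruction vanishes, without any $\mathbb{G}_m$-action, since $X$ is only complete-locally a cone. I would try to reduce to the cone via the complete-local nature of $A$: show that $A$ depends only on the formal neighbourhood of $s$, where the cone's $\mathbb{G}_m$-action exists. Failing that, I would fall back on the $\mathrm{HH}^2$ computation above, using $\gldim E \le 2$.
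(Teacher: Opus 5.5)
\textbf{Gap in case (1).} Your Step 1 is fine. Case (1), however, has a genuine gap at exactly the point you call the main obstacle, and that obstacle comes from an off-by-one error.

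A geometric sequence of type $(m,d-1)$ makes $\End(\sE)\op$ a $(d-2)$-representation-infinite algebra by Lemma~\ref{L:GeomHelicesandhigherreprinfinite}. Hence $\gldim E=d-2$ by Remark~\ref{R:GldimHigherHered}, not merely $\gldim E\le d-1$. For $d\le 3$ this gives $\gldim E\le 1$, so $\projdim_{E^{\mathrm e}}E\le 1$ and $\mathrm{HH}^2(E,I)=\Ext^2_{E^{\mathrm e}}(E,I)=0$. By Proposition~\ref{P:Hochschildcocylce} the extension then splits. This is precisely the paper's proof of (1): it is the final assertion of Proposition~\ref{P:DeformGeneral}, i.e.\ of Theorem~\ref{T:Deformations}.

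In your own terms: already for $d=3$ we have $E\cong\mathbb{k}Q$ with $R=0$. So lifting idempotents and arrows produces $\psi$, exactly as you noted for $d=2$.

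As written, your $d=3$ argument does not establish the claim:
\begin{itemize}
\item there are no quadratic relations to lift;
\item no $\Z$-grading on $A$ with $I$ in degree one is ever constructed, and for a non-cone $X$ there is no $\mathbb{G}_m$-action to supply one;
\item the proposed reduction to the formal neighbourhood is not available, since the paper explicitly leaves open whether $A$ depends only on the complete local singularity.
\end{itemize}
The Serre-duality rewriting of $\mathrm{HH}^2(E,I)$ is not needed.

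\textbf{Case (2): a different route, still incomplete.} Your $\mathbb{G}_m$-equivariant approach differs genuinely from the paper's. In Theorem~\ref{thm:projcone-split} the paper combines Lemma~\ref{lem:help}, the sheaf hypothesis together with Corollary~\ref{cor:geometric-subcollection} (so $\sL_i(-K_Z)$ has no higher cohomology), and an octahedron. This identifies $\sG_i$ with the two-term complex of sheaves $\OO_{\tilde X}\otimes\mathrm{H}^0(\sL_i(-K_Z))\oplus q^\ast\sL_i(-E)\xrightarrow{\mu_i\oplus w}q^\ast\sL_i(-K_Z)$. The diagonal chain map $f\mapsto(\id\otimes f,\,q^\ast f(-E);\,q^\ast f(-K_Z))$ is then visibly multiplicative and compatible with $\beta$.

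Your idea is attractive: the paper's $\psi(f)$ is exactly of weight zero for the natural linearizations. But the sketch skips the steps that carry the content.
\begin{itemize}
\item Since $\sG_i$ is not pulled back from $Z$, full faithfulness of $q^\ast$ says nothing about the weight-zero part of $\End(\sG)$.
\item You must first make $\nu_i$, and hence $\alpha_i$ and $\beta_i$, equivariant. This is possible after twisting linearizations, because $\Hom(\sE_i,\sF_i[d-1])\cong\mathbb{k}$.
\item You must then prove that every element $\alpha_j\,\bT(z)\,\eta_i\,\beta_i$ of $I$ has nonzero weight in that normalization. Only then is $A_0\to E$ an isomorphism.
\end{itemize}
You also never say where the sheaf hypothesis enters. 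In the paper it is exactly what makes $\sG_i$ an honest complex of sheaves on which $\psi$ can be written down.
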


The algebras $\Pi_{d-1}(E)$ have been described explicitly in many examples. In Example \ref{Ex:algebras-intro}, we combine this with Theorem \ref{T:introalgebradesc} (a) to describe the algebra $A$ in very simple cases. We refer to Corollary \ref{Cor:description_R_d} and Examples \ref{ex:del-pezzo}, \ref{ex:projspace-quadric} \& \ref{ex:P2xP1-cone} for more examples, where $A$ can be described.

\begin{remark}
    We remark that the finite dimensional $\mathbb{k}$-algebra $A$ in Theorem \ref{T:Algebras} might not only depend on the complete local singularity $s \in X$ and a fixed geometric exceptional collection $\E$.
    Namely, by work of Hochschild, the general algebras $A$ in Theorem \ref{T:Algebras} are deformations of the corresponding split extensions in Proposition \ref{T:IntroSplit}, cf. Subsection \ref{subs:non-split} -- in particular, Theorem \ref{T:Deformations}. 
    When $A$ is viewed as a path algebra of a quiver, this translates into ``deforming specific relations of this path algebra'', cf. e.g. Proposition \ref{prop:quotsing-nonsplit} for a special instance. 
    
    However, we do not know if such algebras are realized in geometric examples. 
    More concretely, we do not know if there are such examples $X$, with corresponding algebra $A \neq A_0 $, where $A_0 $ is the split algebra described in Proposition \ref{P:Hanihara} and Corollary \ref{Cor:description_R_d} (note that the split algebra $A_0$ appears in geometric examples by Theorem \ref{thm:projcone-split}). 
    We plan to investigate this further.
\end{remark}

\subsection{Application to  singularity categories}
Singularity categories were introduced  as a general framework for Tate cohomology \cite{Buchweitz}. More recently, they have been studied in birational geometry \cite{Wemyss18}, in relation with string theory \& homological mirror symmetry \cite{orlov-sing-1} and knot theory \cite{KR}.

\begin{definition}
    Let $X$ be a quasi-projective variety. The \emph{singularity category} of $X$ is the triangulated quotient category
    \begin{align}
        \Dsg(X):=\frac{\Db(\Coh \, X)}{\Dperf(X)}. 
    \end{align}
    Let $\Lambda$ be a left Noetherian ring (possibly noncommutative). The \emph{singularity category} of $\Lambda$ is the triangulated quotient category
    \begin{align}
        \Dsg(\Lambda):=\frac{\Db(\Lambda\mbox{-}\mod)}{\Dperf(\Lambda)}. 
    \end{align}
\end{definition}

There can be many non-trivial triangle equivalences $\Dsg(R) \cong \Dsg(\Lambda)$, where $R$ is a commutative ring  and $\Lambda$ is a non-commutative two-sided Noetherian ring.  For example, for every even dimensional ADE-singularity $R$ over $\C$ (except for $E_8$), one can construct \emph{uncountably} many algebras $\Lambda_\alpha$, such that
$\Lambda_\alpha\mbox{-}\mod \ncong \Lambda_\beta\mbox{-}\mod$ for $\alpha \neq \beta$ and $\Dsg(\Lambda_\alpha) \cong \Dsg(R)$ for all $\alpha$, see \cite{KIWY15}. 

In contrast, finding such $\Lambda$ that are \emph{finite dimensional} algebras over $\mathbb{C}$ is typically much harder and not possible in general (e.g for odd-dimensional non-nodal ADE-singularities, cf. \cite{kalck-klapproth-pavic}). 

Combining Theorem \ref{T:IntroMain}
with work of Orlov (\cite[Prop. 1.10]{orlov-hf}, \cite{OrlovIdemp}) and \cite[Corollary 4.5]{KPS} and using that $X$ has isolated Gorenstein singularities, yields many examples of such equivalences.

\begin{corollary}\label{C:Singulareq}
    In the setting of Theorem \ref{T:IntroMain}, there are equivalences of triangulated categories
    \begin{align}
         \Dsg(X) \cong \Dsg(A).
    \end{align}
In particular, the first negative $\bK$-group $\bK_{-1}(X)$ vanishes and thus
    \begin{align}\label{eq:singeqcompllocal}
    \Dsg\left(\widehat{\mathcal{O}}_{X, s}\right) \cong \Dsg(X).
    \end{align}
\end{corollary}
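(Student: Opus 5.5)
We argue in three steps: from the semiorthogonal decomposition to $\Dsg(X) \cong \Dsg(A)$, from there to $\bK_{-1}(X)=0$, and finally to the completion statement.

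\emph{Step 1: the equivalence $\Dsg(X) \cong \Dsg(A)$.} Start from the Kawamata type semiorthogonal decomposition $\Db(X)=\langle \Db(A), \CC\rangle$ of Theorem \ref{T:IntroMain}, where $\CC \subset \Dperf(X)$. The decomposition is admissible and $\CC$ consists of perfect complexes. Hence it induces a semiorthogonal decomposition of the perfect subcategory, $\Dperf(X) = \langle \Db(A)\cap \Dperf(X), \CC\rangle$. Here one checks that the projection functors preserve perfectness: for $F$ perfect, its component in $\CC$ is perfect, so the cone is perfect too. Taking Verdier quotients, the $\CC$-parts cancel, giving
\[
\Dsg(X) \cong \Db(A)/\bigl(\Db(A)\cap \Dperf(X)\bigr).
\]
It remains to identify $\Db(A)\cap\Dperf(X)$ with $\Dperf(A)$ inside $\Db(A)$. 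For this I would invoke \cite[Corollary 4.5]{KPS}, which says precisely that for a KSOD the perfect objects of $X$ lying in the component $\Db(R_i)$ are exactly $\Dperf(R_i)$. Its proof uses homological smallness/compactness: perfect complexes on $X$ are the compact objects of $\mathbf{D}(\Qcoh\, X)$, and perfect $A$-modules are the compact objects of $\mathbf{D}(A)$. The admissible embedding extends to unbounded categories compatibly with coproducts. This identification is the only genuinely nontrivial input, and I would simply cite it. The result is $\Dsg(X)\cong\Dsg(A)$.

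\emph{Step 2: vanishing of $\bK_{-1}(X)$.} Use Orlov's result \cite[Prop. 1.10]{orlov-hf}, together with the idempotent completion \cite{OrlovIdemp}. For a variety whose singularities are isolated, the idempotent completion $\overline{\Dsg(X)}$ is equivalent to $\bigoplus_s \overline{\Dsg(\widehat{\OO}_{X,s})}$. Since $X$ has a single singular point $s$, we get $\overline{\Dsg(X)}\cong \overline{\Dsg(\widehat{\OO}_{X,s})}$.

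Next, $\Dsg(A)$ for a finite-dimensional algebra $A$ is idempotent complete; this is a known fact, e.g.\ via $\bK_{-1}$ of an artinian ring vanishing, or via Chen's result. So $\Dsg(X)$ is idempotent complete.

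The obstruction to idempotent completeness of $\Dsg(X)$ is measured by $\bK_0$ of the completion modulo the image, which relates to $\bK_{-1}(X)$ by Schlichting's negative K-theory: $\bK_{-1}(X)$ is the cokernel of $\bK_0(\Dperf)\to\bK_0$ of the idempotent completion of $\Db/\Dperf$-type sequences. More precisely, $\bK_{-1}(X)\cong \bK_0(\overline{\Dsg(X)})/\bK_0(\Dsg(X))$ for $X$ with $\bK_{-1}(\Coh X)=0$. Idempotent completeness forces this quotient to be zero, so $\bK_{-1}(X)=0$. Since the singularity is an isolated Gorenstein point, I would cite this chain as in \cite{KPS}.

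\emph{Step 3: the completion statement.} Idempotent completeness combined with Orlov's equivalence of completions gives \eqref{eq:singeqcompllocal}.

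The main obstacle is the identification $\Db(A)\cap\Dperf(X)=\Dperf(A)$, which is where \cite{KPS} is needed. The rest is a formal combination of Verdier quotients with Orlov's theorems.
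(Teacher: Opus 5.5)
Your proposal is correct and follows the same route as the paper, which simply combines the Kawamata type decomposition with \cite[Corollary 4.5]{KPS} (giving $\Dsg(X)\cong\Dsg(A)$, hence idempotent completeness of $\Dsg(X)$ and $\bK_{-1}(X)=0$) and Orlov's completion theorem. In Step 3 you should also say explicitly that $\Dsg(\widehat{\mathcal{O}}_{X,s})$ is itself idempotent complete: it is the Krull--Schmidt stable category of maximal Cohen--Macaulay modules over a complete local Gorenstein ring, and this is where the Gorenstein hypothesis enters.
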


\begin{remark}
More generally, $\bK_{-1}(X)$ also vanishes for any $X$ as in Corollary \ref{eq:cone-absorption} by \cite[Proposition 6.12]{ks2}. 
Aspects of the $\bK$-theory of cone singularities are also discussed in \cite{Cortinasetal}.
\end{remark}

\begin{remark}
    Norihiro Hanihara kindly informed us about an alternative approach to the singular equivalences in \eqref{eq:singeqcompllocal} using \cite{Hanihara1}. Further, examples of singular equivalences (including non-Gorenstein singularities) are discussed in the appendix. 
\end{remark}

\subsection*{Notation} Throughout, $\mathbb{k}$ denotes a field. A variety is an integral, separated scheme of finite type over $\mathbb{k}$.

All triangulated categories are assumed to be $\mathbb{k}$-linear.
A triangulated category $\TT$ is cocomplete if it admits arbitrary direct sums.
Moreover, $\TT$ is said to be $\Hom$-finite if $\dim_{\mathbb{k}}\Hom_{\TT}(\sE,\sF)<\infty$ for all $\sE , \sF$ in $\TT$. 
 We denote $\Hom_{\TT}^{\bullet}( \sE , \sF ) : = \bigoplus_i \Hom_{\TT}(\sE , \sF[i])[-i]$.

For a $\dg$ $\mathbb{k}$-algebra $A^\bullet$, we denote by $\DD ( A^\bullet )$ the unbounded derived category of  left $\dg$-modules over $A^\bullet$.
Moreover, $\Dperf ( A^\bullet ) = \thick ( A^\bullet ) \subset \DD ( A^\bullet )$ denotes the subcategory of perfect $\dg$-modules, and $\DD_{\mathrm{fg}} ( A^\bullet ) \subset \DD ( A^\bullet )$ denotes the subcategory of $\dg$-modules with finite-dimensional total cohomology.
If the cohomology of $A^\bullet $ is concentrated in degree $0$ with cohomology $A = \bH^0 ( A^\bullet  )$, then we identify $\DD_{\mathrm{fg}} ( A^\bullet  )$ and $\Dperf(A^\bullet ) \subset \DD_{\mathrm{fg}} ( A^\bullet )$ with the bounded derived category of finitely generated left $A$-modules $\Db(A)$, and the subcategory of perfect complexes $\Dperf(A ) \subset \Db ( A )$, respectively.
Similarly, for a projective variety $X$ over $\mathbb{k}$ we write $\Db(X)$ for the
bounded derived category of coherent sheaves and $\Dperf(X) \subset \Db(X)$ stands for the subcategory of perfect complexes.

\subsection*{Acknowledgements}
 We are grateful to Severin Barmeier, Ben Briggs, Jonny Evans, Norihiro Hanihara, Osamu Iyama, Gustavo Jasso, Yujiro Kawamata, Sasha Kuznetsov, Evgeny Shinder, Michael Wemyss, Dong Yang for their help and interest in this project.
 M.K. also thanks David Ploog and Andreas Hochenegger for discussions in the initial phase of this work.

M.K. was partly supported by the Deutsche Forschungsgemeinschaft (DFG, German Research Foundation) –
Projektnummer 496500943.
Y.K. was supported by JSPS grant 21H00970.
N.P. would like to thank the GK1821 at the Universit\"at Freiburg for funding several shorter research stays where part of this work was done. 
M.K. and N.P. would also like to thank Hausdorff Research Institute for Mathematics in Bonn for the excellent working conditions during the JTP ``Algebraic geometry: derived categories, Hodge theory, and Chow groups''\footnote{Funded by the Deutsche Forschungsgemeinschaft (DFG, German Research Foundation) under Germany's Excellence Strategy – EXC-2047/1 – 390685813.} where part of this work has been done. M.K. worked on this article during ``writing retreats'' in 2024 and 2025 supported by RCC at Uni Graz. 

\subsection*{Open Access} 
For the purpose of open access, the authors have applied a Creative Commons Attribution (CC:BY) licence to any Author Accepted Manuscript version arising from this submission.

\section{Adherence and crepant resolutions}\label{sec:adherence}

Throughout this section, $\mathbb{k}$ is an arbitrary field.
Let $X$ be a projective variety over $\mathbb{k}$ and let $\pi : Y \to X $ be a resolution of singularities.
    Let $\pi_\ast : \Db(Y) \to \Db(X)$ be the induced pushforward and let  $\bT : \Db (Y) \to \Db (Y)$ be an autoequivalence.
    We say that \emph{$\pi_\ast $ is invariant under $\bT$},  if there is a natural transformation $\eta : \id \to \bT$, such that the transformation $\pi_\ast ( \eta )$ induces an isomorphism of functors $\pi_\ast \cong \pi_\ast \circ \bT$.

\begin{definition}\label{def:adherence}
    Let $\pi : Y  \to X$ be a resolution of singularities. Let $\bT : \Db(Y) \to \Db(Y) $ be an autoequivalence such that $\pi_\ast$ is invariant under $\bT$.
    We say that an exceptional collection $\E = ( \sE_1 , \ldots , \sE_m )$ in $\Db(Y)$ and $\bT$ are \emph{pre-adherent}, if 
\begin{align}\label{E:InitialExceptionalSequence}
\sE_1, \ldots, \sE_m, \bT( \sE_1 ), \ldots, \bT( \sE_m ) 
\end{align}
is an exceptional sequence in $\Db(Y)$ and such that $\ker \pi_\ast $ is contained in the thick admissible subcategory $\tilde{\AA}$ generated by \eqref{E:InitialExceptionalSequence}. 
\end{definition}
    
    In the situation of Definition \ref{def:adherence}, let us consider the cone of the morphisms $\eta_{\sE_i} \colon \sE_i\to \bT( \sE_i )$ in $\Db (Y)$ for all $1 \leq i \leq m$, and denote it by $\sK_i$.
    In particular, there are distinguished triangles
\begin{equation}\label{eq:main-distinguished-triangle}
    \sE_i \xrightarrow{\eta_{\sE_i} } \bT ( \sE_i  ) \xrightarrow{\varphi_i } \sK_i \xrightarrow{\delta_i} \sE_i [1]
\end{equation}
where $\varphi_i$ and $\delta_i$ are induced morphisms.
    
    \begin{proposition}\label{prop:sph-obj}
        Let $\pi : Y  \to X$, $\bT : \Db(Y) \to \Db(Y) $ and $\E$ be as in Definition \ref{def:adherence}.
        Assume in addition that $X$ has only isolated singularities and that  $\pi : Y \to X$ is a crepant resolution.
        Then the $\sK_i$ are contained in $\ker \pi_\ast$ and they are $d$-spherical objects in $\Db(Y)$.
    \end{proposition}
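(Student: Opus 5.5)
First, $\sK_i \in \ker \pi_\ast$. Apply $\pi_\ast$ to the triangle \eqref{eq:main-distinguished-triangle}. Since $\pi_\ast$ is invariant under $\bT$, the map $\pi_\ast(\eta_{\sE_i})$ is an isomorphism. Hence $\pi_\ast \sK_i \cong \Cone(\pi_\ast \eta_{\sE_i}) = 0$.

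Next we show that $\sK_i$ is supported on the exceptional locus and is perfect. Since $Y$ is smooth, every object of $\Db(Y)$ is perfect. The object $\sK_i$ is set-theoretically supported over the singular points: away from them $\pi$ is an isomorphism and $\pi_\ast \sK_i = 0$. So $\sK_i$ is supported on $\pi^{-1}(\Sing X)$, which is proper over the finitely many isolated singular points. Consequently the Serre functor of $\Db(Y)$ acts on $\sK_i$ as $- \otimes \omega_Y[d]$. Crepancy gives $\omega_Y \cong \pi^\ast \omega_X$, and $\omega_X$ is a line bundle near the isolated singularities (Gorenstein, since a crepant resolution exists). Restricted to a neighbourhood of the exceptional fibres, $\pi^\ast\omega_X$ is trivial, because $\omega_X$ is trivial on an affine neighbourhood of each singular point. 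Since $\sK_i$ is supported there, $\sK_i \otimes \omega_Y \cong \sK_i$, so $\bS(\sK_i) \cong \sK_i[d]$ and Serre duality $\Hom(\sK_i,\sF) \cong \Hom(\sF,\sK_i[d])^\ast$ holds for all $\sF$ (and similarly with $\sK_i$ in the other slot). This is the first half of $d$-sphericity.

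It remains to show $\Hom^\bullet(\sK_i,\sK_i) \cong \mathbb{k} \oplus \mathbb{k}[-d]$. The plan is to compute this via the exceptional sequence \eqref{E:InitialExceptionalSequence}. Apply $\Hom(-,\sK_i)$ to \eqref{eq:main-distinguished-triangle}. Because $\sK_i \in \ker\pi_\ast$, we have $\Hom(\pi^\ast \sG, \sK_i) = \Hom(\sG, \pi_\ast \sK_i)=0$, but the $\sE_i$ need not be pullbacks, so instead we use the semiorthogonality directly.

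\begin{itemize}
\item Apply $\Hom(\sE_i,-)$ to the triangle. Since $\bT(\sE_i)$ lies after $\sE_i$ in the exceptional sequence, $\Hom^\bullet(\bT\sE_i,\sE_i)=0$, and $\Hom^\bullet(\sE_i,\sE_i)=\mathbb{k}$. Also $\Hom^\bullet(\sE_i,\bT\sE_i)$ is unknown a priori.
\item Use instead Serre duality for $\sK_i$: $\Hom^\bullet(\sE_i,\sK_i) \cong \Hom^\bullet(\sK_i,\sE_i[d])^\ast$. Apply $\Hom(-,\sE_i)$ to the triangle. We get $\Hom^\bullet(\bT\sE_i,\sE_i)=0$ and $\Hom^\bullet(\sE_i,\sE_i)=\mathbb{k}$, so $\Hom^\bullet(\sK_i,\sE_i) \cong \mathbb{k}[-1]$, with the map given by $\delta_i$.
\item Similarly, since $\bT$ is an autoequivalence, $\Hom^\bullet(\bT\sE_i,\bT\sE_i)=\mathbb{k}$. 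Dually, $\Hom^\bullet(\bT\sE_i,\sK_i) \cong \Hom^\bullet(\sK_i,\bT\sE_i[d])^\ast$. Applying $\Hom(-,\bT\sE_i)$ requires $\Hom^\bullet(\sE_i,\bT\sE_i)$, so instead apply $\Hom(\bT\sE_i,-)$ to the triangle. This yields $\Hom^\bullet(\bT\sE_i,\sK_i)\cong \mathbb{k}$ via $\varphi_i$.
\end{itemize}

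Then apply $\Hom(-,\sK_i)$ to the triangle. We get
\[
\Hom^\bullet(\sK_i,\sK_i) \to \Hom^\bullet(\bT\sE_i,\sK_i)\cong \mathbb{k} \to \Hom^\bullet(\sE_i,\sK_i) \cong \Hom^\bullet(\sK_i,\sE_i)^\ast[-d] \cong \mathbb{k}[1-d],
\]
which is a long exact sequence. The middle map is homogeneous of degree $0$ between a space in degree $0$ and one in degree $d-1\neq 0$, so it vanishes for $d \ge 2$. Hence $\Hom^\bullet(\sK_i,\sK_i)$ has total dimension $2$, in degrees $0$ and $d$.

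The main obstacle is bookkeeping: the shifts in the duality step need to be checked carefully, and the case $d=1$ is excluded since $Y\to X$ is then an isomorphism. We also need $\sK_i \neq 0$. This holds because $\sK_i = 0$ would force $\eta_{\sE_i}$ to be an isomorphism, contradicting $\Hom^\bullet(\bT\sE_i,\sE_i)=0$ together with $\Hom^\bullet(\sE_i,\sE_i) = \mathbb{k}$.
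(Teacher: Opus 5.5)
Your proposal is correct and follows essentially the paper's argument: you push the triangle forward to get $\sK_i\in\ker\pi_\ast$, use that $\sK_i$ is $d$-Calabi--Yau, and compute $\Hom^\bullet(\sK_i,\sK_i)$ from the triangle via the semiorthogonality of $\E,\bT(\E)$ and Serre duality. The paper applies $\Hom^\bullet(\sK_i,-)$ where you apply $\Hom^\bullet(-,\sK_i)$, which is the same bookkeeping in mirror image; the one real variation is that you obtain $\bS_Y(\sK_i)\cong\sK_i[d]$ from Zariski-local triviality of $\pi^\ast\omega_X$ near the exceptional fibres, whereas Lemma~\ref{lem:dCY-kernel} passes to the formal completion. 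If there are several singular points, split $\sK_i$ along its disjoint support components, or trivialize $\omega_X$ on a neighbourhood of the finite set $\Sing X$. Your observation that $d\neq 1$ is needed for the connecting map to vanish is a point the paper leaves implicit.
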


\begin{remark}\label{rmk:adherence-old-def}
After extending the definition of pre-adherence to categorical resolutions $\pi_\ast \colon \DD \to \Db(X)$, 
one can show that the analogous statement of Proposition \ref{prop:sph-obj} still holds, granted that the corresponding kernel category $\ker \pi_\ast$ has a Serre functor $\bS$ with $S \cong [n]$, for some integer $n$.
A straightforward computation shows in this case that the definition of pre-adherence with only one exceptional object $\E = \sE_1$ coincides with the defintion given by Kuznetsov-Shinder \cite[Definition 3.9]{ks}.

We will have two refinements of this definition, see Definition \ref{def:strong-adherence} below.
For nodal singularities, all $3$ notions will coincide.
In the general case, it is unclear to us in which generality these $3$ notions will agree.
\end{remark}

 We deduce Proposition \ref{prop:sph-obj} from the following two lemmas.

\begin{lemma}\label{lem:dCY-kernel}
    Let $X$ be a $d$-dimensional (not necessarily projective) Gorenstein variety with only isolated singularities and assume there is a crepant resolution of singularities $\pi : Y  \to X$. 
    Then the Serre functor $\bS_Y$ of $\Db(Y)$ restricts to the Serre functor $\bS_{\ker \pi_\ast }$ on $\ker  \pi_\ast  $ and $\ker \pi_\ast $ is a $d$-Calabi Yau category. 
    In other words, there is a natural isomorphism of functors $\bS_{\ker \pi_\ast } \cong [d]$ on $\ker  \pi_\ast $.
\end{lemma}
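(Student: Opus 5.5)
Since the singularities of $X$ are isolated, the exceptional locus of $\pi$ is a proper closed subscheme $E = \pi^{-1}(\Sing X)$, and $\pi$ is an isomorphism over $X\setminus \Sing X$. First I will show that every object $\sF\in\ker\pi_\ast$ is set-theoretically supported on $E$. Restricting to $U=X\setminus\Sing X$ gives $(\pi_\ast\sF)|_U\cong \sF|_{\pi^{-1}U}$ under the identification $\pi^{-1}U\cong U$, so $\sF|_{\pi^{-1}U}=0$. Since $E$ is proper over the finite set $\Sing X$, every object of $\ker\pi_\ast$ has proper support. Consequently $\Hom^\bullet_Y(\sF,\sG)$ is finite dimensional whenever one of $\sF,\sG$ lies in $\ker\pi_\ast$, even though $Y$ need not be proper.

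Next I will use Serre duality on the smooth variety $Y$ for objects with proper support. For $\sF$ with proper support and any $\sG\in\Db(Y)$ there is a natural isomorphism
\begin{align*}
\Hom_Y(\sF,\sG)^\ast \cong \Hom_Y(\sG,\sF\otimes\omega_Y[d]).
\end{align*}
One way to see this is to compactify, or to apply Grothendieck duality to the proper morphism $\mathrm{supp}(\sF)\to\Spec\mathbb{k}$. Equivalently, I can use $\pi$ together with the fact that $\pi_\ast$ of a properly supported object has finite-length cohomology on the Gorenstein scheme $X$, and then apply local duality at each singular point. Crepancy gives $\omega_Y\cong\pi^\ast\omega_X$, where $\omega_X$ is a line bundle because $X$ is Gorenstein. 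Near the singular points I will then trivialize: shrinking $X$ to an affine neighbourhood $V$ of $\Sing X$ on which $\omega_X|_V\cong\OO_V$, I get $\omega_Y|_{\pi^{-1}V}\cong\OO$. Every $\sF\in\ker\pi_\ast$ is supported on $E\subset\pi^{-1}V$, so $\sF\otimes\omega_Y\cong\sF$. This shows that $\bS_Y:=-\otimes\omega_Y[d]$ is defined on properly supported objects and sends $\ker\pi_\ast$ to itself. To check the latter directly, note that $\pi_\ast(\sF\otimes\pi^\ast\omega_X)\cong\pi_\ast\sF\otimes\omega_X=0$ by the projection formula. Hence $\bS_Y$ restricts to a Serre functor on $\ker\pi_\ast$.

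The main obstacle is naturality: the isomorphism $\sF\otimes\omega_Y\cong\sF$ has to be natural in $\sF$, and hence an isomorphism of functors that is compatible with the triangulated structure. A trivialization of $\omega_Y$ chosen on an open neighbourhood of $E$ gives a natural isomorphism of functors there. Then $-\otimes\omega_Y\cong\id$ on $\Db_E(Y)\supset\ker\pi_\ast$, because tensoring with $\OO$ is the identity functor and the trivialization of $\omega_X|_V$ pulls back to one of $\omega_Y|_{\pi^{-1}V}$. If the singular points are not all contained in a single affine open on which $\omega_X$ is trivial, I will work one singular point at a time: $\ker\pi_\ast$ decomposes orthogonally according to the connected components of $E$ over the finitely many singular points, and the trivializations can be chosen componentwise. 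Combining this with the duality above gives the natural isomorphism $\bS_{\ker\pi_\ast}\cong[d]$, so $\ker\pi_\ast$ is $d$-Calabi--Yau.
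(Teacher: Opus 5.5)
Your argument is correct. It reaches the key isomorphism $\sF\otimes\omega_Y\cong\sF$ on $\ker\pi_\ast$ by a different route from the paper's.

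\textbf{Shared steps.} The paper also proves the support statement by flat base change along $X\setminus\Sing X\hookrightarrow X$. It also shows that $\ker\pi_\ast$ is stable under $-\otimes\pi^\ast\omega_X[d]$ via the projection formula.

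\textbf{The paper's route.} It then passes to the formal completion. It uses the equivalence $\kappa_Y^\ast\colon\Db_E(Y)\xrightarrow{\cong}\Db_E(\widehat Y)$ and checks by base change that this restricts to $\ker\pi_\ast\cong\ker\widehat\pi_\ast$. It then trivializes $\kappa_Y^\ast\omega_Y\cong\widehat\pi^\ast\omega_{\widehat X}\cong\OO_{\widehat Y}$, using that $\widehat X$ is a disjoint union of spectra of complete local Gorenstein rings.

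\textbf{Your route.} You use instead that $\omega_X$ is already invertible in the Zariski topology, hence trivial near each singular point. You transport this trivialization along the excision equivalence $\Db_{E_s}(Y)\cong\Db_{E_s}(\pi^{-1}U_s)$, one fibre $E_s=\pi^{-1}(s)$ at a time.

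\textbf{What each approach buys.}
\begin{itemize}
\item Your route avoids formal schemes and the completion equivalence entirely.
\item It makes the naturality of $-\otimes\omega_Y\cong\id$ on $\Db_E(Y)$ explicit; the paper leaves naturality implicit after an objectwise check.
\item Your handling of duality is more careful. The paper starts from the Serre functor of $\Db(Y)$ and uniqueness of Serre functors, which strictly presupposes that $Y$ is proper, despite the ``not necessarily projective'' in the statement. Your use of Serre duality for properly supported objects on the smooth variety $Y$ also covers the non-proper case.
\item The completion route's advantage is mainly conceptual. It shows that the statement depends only on the formal neighbourhood of the singular points, which matches the complete-local viewpoint used elsewhere in the paper.
\end{itemize}

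One minor point: your parenthetical alternative via local duality on $X$ does not work as phrased, since $\pi_\ast\sF=0$ for $\sF\in\ker\pi_\ast$. Keep the justification via a compactification and Grothendieck duality.
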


\begin{proof}
    Recall first that, since $\pi : Y \to X$ is crepant, the Serre functor $\bS_Y$ on $\Db(Y)$ is of the form $\bS_Y (-) \cong (-) \otimes \pi^\ast \omega_X[d]$.
    Hence, by projection formula 
    we see immediately that for any $\sK$ in $\ker \pi_\ast $ we have that
    \[
    \pi_\ast \bS_Y ( \sK ) \cong \pi_\ast (  \sK \otimes \pi^\ast \omega_X[d] ) \cong 0 .
    \]
    In other words, $\bS_Y ( \sK )$
    is contained in $\ker  \pi_\ast $.
    Similarly, the inverse Serre functor $\bS^{-1}_Y$ preserves $\ker \pi_\ast$.
    By uniqueness of the Serre functor it follows then that there is a natural isomorphism $\bS_Y \cong \bS_{\ker  \pi_\ast }$, where $\bS_Y$ denotes now the Serre functor of $\Db(Y)$ restricted to $\ker\pi_\ast$, by abuse of notation.

    To see that $\bS_{\ker  \pi_\ast } \cong [d]$, we note first that $\ker \pi_\ast$ is contained in $\Db_E (Y)$, the full triangulated subcategory of $\Db(Y)$ with supports on the exceptional locus $E$ of $\pi \colon Y \to X$.
    This follows directly from flat base-change applied to the fiber diagram obtained from $\pi \colon Y \to X$ and the open embedding $X \setminus \Sing(X) \hookrightarrow X$.
    Moreover, we have an equivalence $\kappa_Y^\ast \colon \Db_E ( Y ) \xrightarrow{\cong} \Db_E ( \widehat{Y} )$ (see e.g. \cite[Corollary 2.9]{OrlovIdemp}), where $\widehat{Y}$ and $\kappa_Y$ into the fiber product diagram
    \begin{equation*}\label{diag:formal-base-change}
\xymatrix{
    \wh{Y}\ar@{->}[d]_{\wh{\pi}}\ar@{->}[r]^{\kappa_Y} & Y\ar@{->}[d]^{\pi}\\
 \wh{X} \ar@{->}[r]^{\kappa_X} & X} ,
\end{equation*}
where  $\widehat{X} $ denotes the formal completion of $X$ at its singular points and $\kappa_X$ is the corresponding natural flat morphism.
Note that the inverse of $\kappa_Y^\ast$ is given by the push-forward ${\kappa_Y}_\ast \colon \Db_E ( \widehat{Y} )  \xrightarrow{\cong} \Db_E ( Y )$.
    Flat base-change with respect to this diagram, i.e. the isomorphism of functors $\widehat{\pi}_\ast \kappa_Y^\ast \cong \kappa_X^\ast \pi_\ast$ on $\Db_E (Y) \to \Db_{\Sing(\widehat X)} (\widehat{X})$, implies that the equivalence $\kappa_Y^\ast$ above restricts to a functor of kernel categories $\kappa_Y^\ast \colon \ker \pi_\ast \to \ker \widehat{\pi}_\ast $.
    Since also the isomorphism of functors $\pi_\ast {\kappa_Y}_\ast \cong {\kappa_X }_\ast \widehat{\pi}_\ast $ on $\Db_E ( \widehat{Y} ) \to \Db_{\Sing (X) }( X )$ holds,
    we see that ${\kappa_Y}_\ast $ restricts to a functor ${\kappa_Y}_\ast \colon \ker \widehat{\pi}_\ast \to  \ker \pi_\ast$.
    We conclude that $\kappa_Y^\ast \colon \ker \pi_\ast \xrightarrow{\cong} \ker \widehat{\pi}_\ast $ is an equivalence.
    
    Finally, we have a chain of isomorphisms
    \[
    \kappa_Y^\ast\omega_Y \cong \kappa_Y^\ast\pi^\ast \omega_X \cong \widehat{\pi}^\ast\kappa_X^\ast \omega_X \cong \OO_{\wh{Y}} 
    \]
    in $\Db( \wh{Y} )$,
    where we used that $\wh{X}$ is a disjoint union of complete local Gorenstein rings and hence there is an isomorphism $\omega_{\wh{X}}\cong \OO_{\wh{X}}$ of line bundles.
    Thus we obtain another chain of isomorphisms
    \[
    \kappa^\ast_Y\bS_{\ker\pi_\ast} ( \sK ) \cong \kappa^\ast_Y ( \sK \otimes \pi^\ast \omega_X ) [d] \cong \kappa_Y^\ast \sK [d] 
    \]
    in $\ker \wh{\pi}_\ast$ for all $\sK$ in $\ker \pi_\ast$.
    As $\kappa_Y^\ast \colon \ker\pi_\ast  \cong \ker \wh{\pi}_\ast$ is an equivalence,
    we have that
    \[
    \bS_{\ker\pi_\ast} ( \sK )  \cong \sK [d] 
    \]
    for all $\sK$ in $\ker \pi_\ast$ and thus that there is a natural isomorphism $\bS_{\ker\pi_\ast} \cong [d]$.
    This finishes the proof of the lemma.
\end{proof}

\begin{lemma}\label{lem:hom-computation}
    Let $\pi : Y  \to X$, $\bT : \Db(Y) \to \Db(Y) $ and $\E$ be as in Definition \ref{def:adherence}.
        Assume in addition that $X$ has only isolated singularities and that  $\pi : Y \to X$ is a crepant resolution.
        Then we have the following isomorphisms of $\Hom^\bullet$-spaces for all $1 \leq i , j \leq m$:
        \begin{align}
            \Hom^\bullet ( \sE_i , \sE_j  )  \xrightarrow[\cong]{(-)\cdot \delta_i [-1]} &\Hom^\bullet ( \sK_i , \sE_j [1] ) , \label{eq:hom-computation-iso1}\\
             \Hom^\bullet (   \sE_i    ,   \sE_j   )  \xrightarrow[\cong]{\varphi_j \cdot \bT(-)} & \Hom^\bullet ( \bT  \sE_i  , \sK_j )  ,  \label{eq:hom-computation-iso1_2}
        \end{align}
        where $\delta_i $ and $\varphi_i$ are as in \eqref{eq:main-distinguished-triangle}. 
        Moreover, by Serre duality these isomorphisms imply the following  isomorphisms of $\Hom^\bullet$-spaces
        \begin{align}
            \Hom^\bullet ( \sE_j , \sK_i ) & \cong  \Hom^\bullet ( \sE_i , \sE_j [d-1] )^\ast , \label{eq:hom-computation-iso2_2}\\
             \Hom^\bullet (  \sK_j , \bT  \sE_i  ) & \cong \Hom^\bullet (  \sE_i ,  \sE_j [d] )^\ast ,  \label{eq:hom-computation-iso2}
        \end{align}
        for all $1 \leq i , j \leq m$.
\end{lemma}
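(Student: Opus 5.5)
The plan is to apply $\Hom^\bullet(-,\sE_j)$ and $\Hom^\bullet(\bT\sE_i,-)$ to the triangle \eqref{eq:main-distinguished-triangle} and use the semiorthogonality of the exceptional sequence \eqref{E:InitialExceptionalSequence}, together with the fact that $\bT$ is an autoequivalence.

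For \eqref{eq:hom-computation-iso1}, apply $\Hom^\bullet(-,\sE_j)$ to the triangle $\sE_i \to \bT\sE_i \to \sK_i \to \sE_i[1]$. This gives a long exact sequence
\[
\cdots\to\Hom^\bullet(\sK_i,\sE_j)\to\Hom^\bullet(\bT\sE_i,\sE_j)\to\Hom^\bullet(\sE_i,\sE_j)\xrightarrow{\;\cdot\,\delta_i[-1]\;}\Hom^\bullet(\sK_i,\sE_j[1])\to\Hom^\bullet(\bT\sE_i,\sE_j[1])\to\cdots.
\]
In the exceptional sequence \eqref{E:InitialExceptionalSequence} every $\sE_j$ precedes every $\bT\sE_i$. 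Hence $\Hom^\bullet(\bT\sE_i,\sE_j)=0$ for all $i,j$, and the connecting map is an isomorphism. The connecting map here is precomposition with $\delta_i[-1]$, up to the standard sign and shift conventions.

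For \eqref{eq:hom-computation-iso1_2}, apply $\Hom^\bullet(\bT\sE_i,-)$ to the triangle for $j$:
\[
\Hom^\bullet(\bT\sE_i,\sE_j)\to\Hom^\bullet(\bT\sE_i,\bT\sE_j)\xrightarrow{\varphi_j\cdot}\Hom^\bullet(\bT\sE_i,\sK_j)\to\Hom^\bullet(\bT\sE_i,\sE_j[1]).
\]
The outer terms vanish by the same semiorthogonality, so $\varphi_j\cdot$ is an isomorphism. Precomposing with the isomorphism $\bT\colon\Hom^\bullet(\sE_i,\sE_j)\xrightarrow{\cong}\Hom^\bullet(\bT\sE_i,\bT\sE_j)$, which holds because $\bT$ is an autoequivalence, gives the stated map $\varphi_j\cdot\bT(-)$.

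For the dual statements, Proposition \ref{prop:sph-obj} says $\sK_i\in\ker\pi_\ast$. I expect citing it to be circular, since the lemma is meant to feed into that proposition. I would instead argue directly that the triangle shows $\pi_\ast\sK_i\cong\Cone(\pi_\ast\eta_{\sE_i})=0$, because $\pi_\ast(\eta)$ is an isomorphism by invariance. Then I need Serre duality between $\sK_i$ and an arbitrary object $\sF\in\Db(Y)$. Since $\pi$ is crepant, $\bS_Y=(-)\otimes\pi^\ast\omega_X[d]$. The argument in the proof of Lemma \ref{lem:dCY-kernel} (pass to the formal completion, where $\kappa_Y^\ast\omega_Y\cong\OO$, and use that $\sK_i$ is supported on the exceptional locus) gives $\sK_i\otimes\pi^\ast\omega_X\cong\sK_i$. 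Hence
\[
\Hom^\bullet(\sF,\sK_i)\cong\Hom^\bullet(\sK_i,\bS_Y\sF)^\ast,\qquad \Hom^\bullet(\sK_i,\sF)\cong\Hom^\bullet(\sF,\sK_i[d])^\ast .
\]
Both come from Serre duality on the smooth projective $Y$ combined with $\bS_Y^{\pm1}\sK_i\cong\sK_i[\pm d]$.

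With this in hand the dual isomorphisms are formal. For \eqref{eq:hom-computation-iso2_2}, compute
\[
\Hom^\bullet(\sE_j,\sK_i)\cong\Hom^\bullet(\sK_i,\sE_j[d])^\ast\cong\Hom^\bullet(\sE_i,\sE_j[d-1])^\ast,
\]
using \eqref{eq:hom-computation-iso1} in the second step. For \eqref{eq:hom-computation-iso2}, compute
\[
\Hom^\bullet(\sK_j,\bT\sE_i)\cong\Hom^\bullet(\bT\sE_i,\sK_j[d])^\ast\cong\Hom^\bullet(\sE_i,\sE_j[d])^\ast,
\]
using \eqref{eq:hom-computation-iso1_2}. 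The main obstacle is this Serre-duality step: justifying the twisted duality for $\sK_i$ against arbitrary objects rather than only within $\ker\pi_\ast$. I would handle it exactly as in Lemma \ref{lem:dCY-kernel}, via the support on the exceptional locus and the triviality of $\pi^\ast\omega_X$ near it, which relies on $X$ being Gorenstein with isolated singularities.
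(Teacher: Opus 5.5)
Your proposal is correct and follows the paper's proof. Both first isomorphisms come from applying $\Hom^\bullet(-,\sE_j)$ and $\Hom^\bullet(\bT\sE_i,-)$ to \eqref{eq:main-distinguished-triangle} and using pre-adherence to kill $\Hom^\bullet(\bT\sE_i,\sE_j)$; the dual isomorphisms then follow from Serre duality on $Y$ together with $\bS_Y\sK_i\cong\sK_i[d]$.

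Your direct check that $\pi_\ast\sK_i=0$ follows from invariance is exactly what the paper uses implicitly. It is the first line of the proof of Proposition \ref{prop:sph-obj}, so there is no circularity. Once you have it, you can simply cite Lemma \ref{lem:dCY-kernel}, which already gives $\bS_Y\sK_i\cong\sK_i[d]$ as objects of $\Db(Y)$. So the Serre-duality step you flag as the main obstacle is not actually one.
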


\begin{proof}
        We apply first $\Hom^\bullet ( - , \sE_j )$ on the distinguished triangle \eqref{eq:main-distinguished-triangle} and we obtain a distinguished triangle 
        \[
        \Hom^\bullet ( \sK_i , \sE_j ) \to \Hom^\bullet ( \bT  \sE_i  , \sE_j ) \to \Hom^\bullet ( \sE_i , \sE_j ) \xrightarrow{(-)\cdot \delta_i [-1]} \Hom^\bullet ( \sK_i , \sE_j [1] ) 
        \]
        in $\Db (k)$.
        Since $\E$ and $\bT$ are pre-adherent, we see that the second term of this triangle vanishes.
        Hence, we have an isomorphism of graded vector spaces
        \[
         \Hom^\bullet ( \sK_i , \sE_j ) \cong \Hom^\bullet (\sE_i , \sE_j ) \cdot \delta_i [-1]    
        \]
        Moreover, by Serre duality on $Y$ and by using the isomorphism $\bS_Y ( \sK_i ) \cong \sK_i [d]$ (Lemma \ref{lem:dCY-kernel}), together with this isomoprhism we get an isomomorphism
        \[
        \Hom^\bullet (  \sE_j ,   \sK_i  ) \cong \Hom^\bullet ( \sE_i ,  \sE_j [d-1] )^\ast .
        \]
        This shows \eqref{eq:hom-computation-iso1} and \eqref{eq:hom-computation-iso2_2}.
        Similarly, applying $\Hom^\bullet ( \bT  \sE_j  , - )$ on  \eqref{eq:main-distinguished-triangle} we obtain a distinguished triangle
        \[
        \Hom^\bullet ( \bT  \sE_j  , \sE_i ) \to \Hom^\bullet ( \bT  \sE_j  , \bT  \sE_i   ) \xrightarrow{\varphi_i \cdot (-)} \Hom^\bullet ( \bT  \sE_j  , \sK_i ) 
        \]
        in $\Db(\mathbb{k})$.
        Again since $\E$ and $\bT$ are pre-adherent, we obtain that the first term of this distinguished triangle vanishes.
        We conclude a graded isomorphisms of $\Hom$-spaces
\[
    \Hom^\bullet ( \bT  \sE_j  , \sK_i ) \cong  \varphi_i \cdot  \Hom^\bullet ( \bT  \sE_j    , \bT  \sE_i   )  \quad \text{and} \quad  \Hom^\bullet (  \sK_i , \bT  \sE_j  ) \cong \Hom^\bullet ( \bT \sE_j , \bT \sE_i [d] )^\ast .
\]
The second isomorphism follows again by Serre duality.
This shows \eqref{eq:hom-computation-iso1_2} and \eqref{eq:hom-computation-iso2} and finishes the proof of the lemma.
\end{proof}

    \begin{proof}[Proof of Proposition \ref{prop:sph-obj}] 
        By applying the functor $\pi_\ast : \Db(Y) \to \Db(X)$ to the distinguished triangle \eqref{eq:main-distinguished-triangle} and using the assumption that $\pi_\ast$ is invariant under $\bT$, we see immediately that the $\sK_i$ are contained in $\ker  \pi_\ast $.
        Moreover, by Lemma \ref{lem:dCY-kernel} there is an isomorphism $\bS_Y ( \sK_i ) \cong \sK_i [d]$ in $\Db(Y)$, where $\bS_Y$ is the Serre functor on $\Db(Y)$.
        Further, let us compute the graded $\Hom$-spaces of $\sK_i $.
        Applying $\Hom^\bullet ( \sK_i , - )$ to  
        \[
        \sE_i \xrightarrow{\eta_{\sE_i} } \bT ( \sE_i  ) \xrightarrow{\varphi_i } \sK_i \xrightarrow{\delta_i} \sE_i [1]
        \]
        we obtain a distinguished triangle
        \[
        \Hom^\bullet ( \sK_i , \sE_i  ) \to \Hom^\bullet ( \sK_i , \bT  \sE_i    ) \to \Hom^\bullet ( \sK_i , \sK_i  ) 
        \]
        in $\Db(\mathbb{k})$.
        Using Lemma \ref{lem:hom-computation} \eqref{eq:hom-computation-iso1} and \eqref{eq:hom-computation-iso2} for $i=j$ together with this distinguished triangle, we get an isomorphism of vector spaces
        \begin{align*}
\Hom  ( \sK_i  , \sK_i [p] ) \cong \begin{cases} \mathbb{k} , \quad & \text{if } p =   0 , d ,  \\ 
0 \quad & \text{else. }\end{cases}
\end{align*}
This shows that $\sK_i$ is $d$-spherical for all $1\leq i \leq m$.
    \end{proof}

    \begin{remark}\label{rmk:delta-commut}
    After a $\dg$-enhancement of the natural transformation $\eta \colon \id \to \bT$, we obtain a distinguished triangle of functors on $\Db(Y)$
    \[
    \id \xrightarrow{\eta} \bT \xrightarrow{\varphi} \bK \xrightarrow{\delta} [1] .
    \]
    Moreover, similar arguments as in the proof of Proposition \ref{prop:sph-obj} show that the image of the induced functor $\bK$ restricted to $\thick( \E) $ is contained in $\ker\pi_\ast$.
    Furthermore, we have the following commutative diagram if distinguished triangles for any morphism $e \colon \sE' \to \sE''$ is in $\thick ( \E )$
    \begin{equation*}\label{eq:E-TE-K}
\xymatrix{
    \sE'  \ar@{->}[d]^{e} \ar@{->}[r]^{\eta_{\sE'}} &  \bT \sE' \ar@{->}[d]^{\bT (e) } \ar@{->}[r]^{\varphi_{\sE'}} & \bK (\sE') \ar@{->}[d]^{\bK (e) } \ar@{->}[r]^{\delta_{\sE'}} & \sE [1] \ar@{->}[d]^{e[1]} \\
 \sE'' \ar@{->}[r]^{\eta_{\sE''}} & \bT \sE'' \ar@{->}[r]^{\varphi_{\sE''}} & \bK (\sE'') \ar@{->}[r]^{\delta_{\sE''}} & \sE'' [1] }.
\end{equation*}
\end{remark}

\begin{proposition}\label{prop:spherical}
Let $X$ be a $d$-dimensional projective Gorenstein variety with only isolated singularities and assume $X$ admits a  crepant resolution $\pi : Y \to X$.
\begin{enumerate}[label=(\alph*)]
    \item\label{prop:spherical-a} Let $\sA_1 , \ldots, \sA_l$ in $\ker \pi_\ast$ be a collection of $d$-spherical objects. 
    Set
    \[
       \bT_{\sA} : = \bT_{\sA_1} \circ \ldots \circ \bT_{\sA_l} ,
    \]
    where the functors $\bT_{\sA_i}$ denote the spherical twists induced by the spherical objects $\sA_i$, for all $i = 1 , \ldots , l$.
    Then $\pi_\ast$ is invariant under $\bT_{\sA}$.

\item\label{prop:spherical-b} Assume further that there is an exceptional collection $\sM = ( \sM_1 , \ldots , \sM_l )$ in $\Db(Y)$, such that there are unique (up to scalar) morphism $\epsilon_j \colon \sA_j \to \sM_j [1]$, such that there are isomorphisms of $\Hom$-spaces
\begin{align}
    \Hom^\bullet ( \sA_i , \sM_j ) & = \Hom^\bullet (\sM_i , \sM_j ) \cdot \epsilon_i [-1] , \quad \text{for all $i\leq j$}\label{eq:assump1} \\
    \Hom^\bullet ( \sA_j , \sM_i ) & = \epsilon_i \cdot \Hom^\bullet ( \sA_j , \sA_i ) , \quad \text{for all $j<i$} .\label{eq:assump2} 
\end{align}
        Then there are isomorphisms
        \begin{equation}\label{eq:j-bigger-i}
    \sM_i \cong \bT_{\sA_j} \sM_i ,
\end{equation}
        \begin{equation}\label{eq:j-lower-i}
    \bT_{\sA_i} \sM_i \cong \bT_{\sA_k} \bT_{\sA_i} \sM_i ,
\end{equation}
\begin{equation}\label{eq:j-equal-i}
    \bT_{\sA_i } \sM_i \cong \bT_{\sA} \sM_i
\end{equation}
        in $\Db(Y)$ for all $1\leq k<i<j \leq l $.
\end{enumerate}
\end{proposition}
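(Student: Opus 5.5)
For part \ref{prop:spherical-a}, I would reduce to a single twist and compose. For one $d$-spherical object $\sA_i \in \ker\pi_\ast$ there is the defining triangle of functors
\[
\Hom^\bullet(\sA_i,-)\otimes \sA_i \xrightarrow{\mathrm{ev}} \id \to \bT_{\sA_i}.
\]
This gives a natural transformation $\eta_i\colon \id \to \bT_{\sA_i}$, using a dg-enhancement as in Remark \ref{rmk:delta-commut}. Applying $\pi_\ast$, the first term becomes $\Hom^\bullet(\sA_i,-)\otimes \pi_\ast\sA_i = 0$, so $\pi_\ast(\eta_i)$ is an isomorphism $\pi_\ast \cong \pi_\ast \bT_{\sA_i}$. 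For the composite, I take $\eta := (\eta_1 \ast \cdots)$, namely the composite $\id \to \bT_{\sA_l} \to \bT_{\sA_{l-1}}\bT_{\sA_l} \to \cdots \to \bT_{\sA}$ obtained by whiskering. Each step becomes an isomorphism after $\pi_\ast$, because $\pi_\ast(\eta_{i})_{\bT(\cdot)}$ is an isomorphism. Hence $\pi_\ast$ is invariant under $\bT_{\sA}$. No Calabi--Yau input is needed beyond $\sA_i$ being spherical, which makes $\bT_{\sA_i}$ an autoequivalence.

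For part \ref{prop:spherical-b}, all three isomorphisms come from computing $\Hom^\bullet(\sA_k, -)$ and using that $\bT_{\sA_k}F \cong F$ whenever $\Hom^\bullet(\sA_k,F)=0$.

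\eqref{eq:j-bigger-i}: for $i<j$, hypothesis \eqref{eq:assump1} with indices swapped is not directly available. Instead I use \eqref{eq:assump2} with the roles $(j,i)$, which gives $\Hom^\bullet(\sA_j,\sM_i)=\epsilon_i\cdot\Hom^\bullet(\sA_j,\sA_i)$. I then need $\Hom^\bullet(\sA_j,\sA_i)=0$ for $i<j$, and I would derive this from \eqref{eq:assump1}. Applying $\Hom^\bullet(\sA_j,-)$ to the triangle $\sM_i\to C_i\to \sA_i\xrightarrow{\epsilon_i}\sM_i[1]$ determined by $\epsilon_i$ and comparing with the exceptionality of $\sM$ should force the vanishing. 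This bookkeeping of which hypothesis yields which vanishing is, I expect, the main obstacle. If it fails, I would instead read \eqref{eq:assump2} as already encoding $\Hom^\bullet(\sA_j,\sM_i)=0$ through the orthogonality of $\sM$.

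\eqref{eq:j-lower-i}: for $k<i$, I compute $\Hom^\bullet(\sA_k,\bT_{\sA_i}\sM_i)$ from the triangle $\Hom^\bullet(\sA_i,\sM_i)\otimes\sA_i\to\sM_i\to\bT_{\sA_i}\sM_i$. By \eqref{eq:assump1}, $\Hom^\bullet(\sA_i,\sM_i)=\mathbb{k}\,\epsilon_i[-1]$, so $\bT_{\sA_i}\sM_i$ is the cone of $\epsilon_i[-1]\colon \sA_i[-1]\to \sM_i$. Applying $\Hom^\bullet(\sA_k,-)$ and using \eqref{eq:assump2} with $k<i$, the map $\Hom^\bullet(\sA_k,\sA_i)\to\Hom^\bullet(\sA_k,\sM_i)$ induced by $\epsilon_i$ is an isomorphism. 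Hence $\Hom^\bullet(\sA_k,\bT_{\sA_i}\sM_i)=0$, and $\bT_{\sA_k}$ fixes $\bT_{\sA_i}\sM_i$.

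\eqref{eq:j-equal-i}: I write $\bT_{\sA}\sM_i=\bT_{\sA_1}\cdots\bT_{\sA_i}(\bT_{\sA_{i+1}}\cdots\bT_{\sA_l}\sM_i)$. The inner twists, indexed by $j>i$, act trivially by \eqref{eq:j-bigger-i} applied successively; here it is essential that the object stays $\sM_i$ at each step. Next, $\bT_{\sA_{i-1}},\dots,\bT_{\sA_1}$ act trivially on $\bT_{\sA_i}\sM_i$ by \eqref{eq:j-lower-i}, applied repeatedly since the object is unchanged after each step.
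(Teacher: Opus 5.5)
Your part (a) is fine. Whiskering on the right, $\id\to\bT_{\sA_l}\to\bT_{\sA_{l-1}}\bT_{\sA_l}\to\cdots\to\bT_{\sA}$, even spares you the paper's remark that each $\bT_{\sA_j}$ preserves $\ker\pi_\ast$. Your arguments for \eqref{eq:j-lower-i} and \eqref{eq:j-equal-i} are the paper's.

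The gap is in \eqref{eq:j-bigger-i}. What is needed there is exactly $\Hom^\bullet(\sA_j,\sM_i)=0$ for $j>i$. You are right that \eqref{eq:assump1}, in its printed range $i\le j$, does not give this. But your route to it fails twice.

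\emph{First}, \eqref{eq:assump2} is assumed only for pairs $(j,i)$ with $j<i$. Using it for $j>i$ is no more legitimate than using \eqref{eq:assump1} there.

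\emph{Second}, the vanishing $\Hom^\bullet(\sA_j,\sA_i)=0$ for $j>i$ that you would then need is false in general under the proposition's own hypotheses. For $i<j$, \eqref{eq:assump1} and \eqref{eq:assump2} applied to the pair $(i,j)$ both describe $\Hom^\bullet(\sA_i,\sM_j)$. Together they give $\Hom^\bullet(\sA_i,\sA_j)\cong\Hom^\bullet(\sM_i,\sM_j)$ as graded spaces. On the other hand, $d$-sphericity of $\sA_j$ and Serre duality give $\Hom^\bullet(\sA_j,\sA_i)\cong\Hom^\bullet(\sA_i,\sA_j[d])^\ast$. So your vanishing would force $\Hom^\bullet(\sM_i,\sM_j)=0$ for all $i<j$, i.e.\ a completely orthogonal collection.

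This fails in the intended application, Corollary \ref{cor:T-vs-TK} (with $\sA_i=\sK_i$, $\sM_i=\sE_i$), as soon as some $\Hom(\sE_i,\sE_j)\neq 0$. There, for $j>i$, one has $\Hom^\bullet(\sK_j,\sE_i)=0$ by \eqref{eq:hom-computation-iso1}, while $\Hom^\bullet(\sK_j,\sK_i)\neq 0$. So the formula \eqref{eq:assump2} genuinely breaks down outside its range. Your fallback of ``reading'' the vanishing into \eqref{eq:assump2} is not an argument.

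The printed hypotheses say nothing about $\Hom^\bullet(\sA_j,\sM_i)$ for $j>i$; this vanishing is exactly what \eqref{eq:assump1} is meant to supply. The paper's proof applies \eqref{eq:assump1} to the pair $(j,i)$ with $j>i$. Exceptionality of $\sM$ then gives $\Hom^\bullet(\sA_j,\sM_i)=\Hom^\bullet(\sM_j,\sM_i)\cdot\epsilon_j[-1]=0$. The defining triangle $\Hom^\bullet(\sA_j,\sM_i)\otimes\sA_j\to\sM_i\to\bT_{\sA_j}\sM_i$ then yields \eqref{eq:j-bigger-i} at once.

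That pair lies outside the printed range ``$i\le j$''. However, in both places where part (b) is applied, the isomorphism \eqref{eq:assump1} holds for all pairs of indices:
\begin{itemize}
\item \eqref{eq:hom-computation-iso1} in Corollary \ref{cor:T-vs-TK};
\item \eqref{eq:some-hom-prop} in Theorem \ref{thm:strong-adherence-equiv}.
\end{itemize}
So the correct repair is to read \eqref{eq:assump1} for all $i,j$ (at least for $i\ge j$), i.e.\ to recognise this vanishing as a hypothesis rather than try to derive it. With that, the rest of your proof goes through unchanged.
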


\begin{proof}
    \ref{prop:spherical-a}: It follows from the definition of spherical twists that there are natural transformations 
    \[
    \eta_j \colon \id \to \bT_{ \sA_j} .
    \]
    We define the natural transformation $\eta_{\sA} \colon \id \to \bT_{\sA}$ as the composition of the transformations
\[
\id \xrightarrow{\eta_{1}} \bT_{ \sA_1} \xrightarrow{\bT_{ \sA_1}( \eta_2 )} \bT_{ \sA_1}  \bT_{ \sA_2} \xrightarrow{\bT_{ \sA_1} \bT_{ \sA_2} ( \eta_3 )} \ldots \to \bT_{\sA} .
\]
We show that $\pi_\ast (\eta_{\sA} ) : \pi_\ast \to \pi_\ast \bT_{\sA}$ is an isomorphism of functors.
    Indeed, by definition of the spherical twist $\bT_{ \sA_j}$, the cone of $\id \to \bT_{ \sA_j}$ evaluated at any object of $\Db(Y)$ is equal to a direct sum of shifts of $\sA_j$, hence this cone is contained in $\ker  \pi_\ast $ by assumption.
    In particular, evaluating an object of $\ker  \pi_\ast $ at $\bT_{\sA_j}$ is again contained in $\ker \pi_\ast $.
    It follows that the induced composition of natural transformations
    \[
\pi_\ast \xrightarrow[\cong]{\pi_\ast \eta_{1}} \pi_\ast \bT_{\sA_1} \xrightarrow[\cong]{\pi_\ast\bT_{ \sA_1}( \eta_2 )} \pi_\ast\bT_{ \sA_1}  \bT_{ \sA_2} \xrightarrow[\cong]{\pi_\ast \bT_{\sA_1} \bT_{\sA_2} ( \eta_3 )} \ldots \xrightarrow[\cong]{} \pi_\ast\bT_{\sA} 
\]
is a composition of isomorphisms of functors. This finishes \ref{prop:spherical-a}.
    
\ref{prop:spherical-b}: By construction of the spherical twist, the object $\bT_{{\sA_j}}\sM_i$ fits into the natural distinguished triangle
\[
\Hom^\bullet ( \sA_j , \sM_i ) \otimes \sA_j \to \sM_i \to \bT_{\sA_j} \sM_i .
\]
If $j>i$, we obtain by assumption \eqref{eq:assump1} that the first term of the distinguished triangle vanishes. 
In other words, we get an isomorphism $\sM_i \cong \bT_{\sA_j} \sM_i$ given by the natural morphism, which shows \eqref{eq:j-bigger-i}.
If $i=j$, then the only non-trivial morphism from $\sA_i  $ to $\sM_i [1]$ is (up to a scalar) equal to $\epsilon_i$ and hence by assumption \eqref{eq:assump1} the object $\bT_{{\sA_i}}\sM_i$ fits into the natural triangle
\begin{equation}\label{eq:main-in-proof}
    \sM_i \xrightarrow{\eta_i} \bT_{\sA_i} \sM_i \to \sA_i \xrightarrow{\epsilon_i }  \sM_i [1] .
\end{equation}
If $k<i$, we observe that the objects $\bT_{\sA_k} \bT_{\sA_i} \sM_i $ fit into the natural distinguished triangle 
\[
\Hom^\bullet ( \sA_k , \bT_{\sA_i} \sM_i ) \otimes \sA_j \to \bT_{\sA_i} \sM_i \to \bT_{\sA_k} \bT_{\sA_i} \sM_i .
\]
The first term of this distinguished triangle vanishes.
Indeed, applying $\Hom^\bullet (\sA_k , - )$ onto \eqref{eq:main-in-proof}, we see that assumption \eqref{eq:assump2} is equivalent to the vanishing of $\Hom^\bullet ( \sA_k , \bT_{\sA_i} \sM_i ) $.
Hence there is a natural isomorphism $\bT_{\sA_i} \sM_i \cong \bT_{\sA_k} \bT_{\sA_i} \sM_i$. This shows \eqref{eq:j-lower-i}.
Finally, combining the two isomorphisms \eqref{eq:j-bigger-i} and \eqref{eq:j-lower-i}, we obtain an isomorphism $\bT_{\sA_i } \sM_i \cong \bT_{\sA} \sM_i$ in $\Db(Y)$ for all $i$.
This finishes \ref{prop:spherical-b} and the proof of the proposition.
\end{proof}

We call $\sK_i$ the \emph{$d$-spherical object corresponding to $\sE_i$}.

\begin{corollary}\label{cor:T-vs-TK}
    If $\pi : Y  \to X$, $\bT : \Db(Y) \to \Db(Y) $ and $\E = ( \sE_1 , \ldots , \sE_m )$ are as in Proposition \ref{prop:sph-obj} and $\sK_1 , \ldots , \sK_m$ are  the $d$-spherical objects corresponding to $\sE_1 , \ldots  , \sE_m $ and if we set
    \[
       \bT_{\sK} : = \bT_{\sK_1} \circ \ldots \circ \bT_{\sK_m} ,
    \]
    then $\pi_\ast$ is invariant under $\bT_{\sK}$ and $\E$ and $\bT_{\sK}$ are pre-adherent.
\end{corollary}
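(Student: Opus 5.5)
The plan is to apply Proposition \ref{prop:spherical} with $\sA_i := \sK_i$ and $\sM_i := \sE_i$, for $l = m$. By Proposition \ref{prop:sph-obj}, the $\sK_i$ are $d$-spherical objects lying in $\ker \pi_\ast$. Part \ref{prop:spherical-a} of Proposition \ref{prop:spherical} then immediately shows that $\pi_\ast$ is invariant under $\bT_{\sK}$. For pre-adherence, we need two things: the sequence $\sE_1, \ldots, \sE_m, \bT_{\sK}\sE_1, \ldots, \bT_{\sK}\sE_m$ must be exceptional, and $\ker\pi_\ast$ must lie in the admissible subcategory it generates. The strategy is to identify $\bT_{\sK}\sE_i \cong \bT\sE_i$, which reduces both claims to the pre-adherence of $\E$ and $\bT$.

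First, set $\epsilon_i := \delta_i \colon \sK_i \to \sE_i[1]$ from \eqref{eq:main-distinguished-triangle}, and verify the hypotheses of part \ref{prop:spherical-b}.
\begin{itemize}
\item \emph{Uniqueness of $\epsilon_i$ up to scalar.} By \eqref{eq:hom-computation-iso1} with $i=j$, we have $\Hom^\bullet(\sK_i,\sE_i[1]) \cong \Hom^\bullet(\sE_i,\sE_i) = \mathbb{k}$. This space is spanned by $\delta_i$ in degree $0$.
\item \emph{Assumption \eqref{eq:assump1}.} This is exactly \eqref{eq:hom-computation-iso1}, which holds for all $i,j$, in particular for $i \le j$.
\item \emph{Assumption \eqref{eq:assump2} for $j<i$.} Here we need $\Hom^\bullet(\sK_j,\sE_i) = \delta_i\cdot\Hom^\bullet(\sK_j,\sK_i)$. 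The left side is $\Hom^\bullet(\sE_j,\sE_i)$ up to a shift, and it vanishes for $j<i$ only if $\Hom(\sE_j, \sE_i)$ vanishes, which in general it does not. So I instead use the equivalent reformulation from the proof of part \ref{prop:spherical-b}: it suffices to show $\Hom^\bullet(\sK_k, \bT_{\sK_i}\sE_i) = 0$ for $k<i$.
\end{itemize}
By the triangle \eqref{eq:main-in-proof} and the triangle \eqref{eq:main-distinguished-triangle}, both $\bT_{\sK_i}\sE_i$ and $\bT\sE_i$ are cones of maps $\sK_i[-1] \to \sE_i$ along the unique (up to scalar) morphism $\delta_i[-1]$. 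Hence $\bT_{\sK_i}\sE_i \cong \bT\sE_i$. Then
\[
\Hom^\bullet(\sK_k,\bT\sE_i) \cong \Hom^\bullet(\sE_i,\sE_k[d])^\ast
\]
by \eqref{eq:hom-computation-iso2}, and this vanishes for $k<i$ by exceptionality of $\E$. So the required vanishing holds.

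Part \ref{prop:spherical-b}, specifically \eqref{eq:j-equal-i}, now gives $\bT_{\sK}\sE_i \cong \bT_{\sK_i}\sE_i \cong \bT\sE_i$. The sequence $\sE_1,\ldots,\sE_m,\bT_{\sK}\sE_1,\ldots,\bT_{\sK}\sE_m$ is therefore isomorphic termwise to \eqref{E:InitialExceptionalSequence}. It is exceptional and generates the same $\tilde{\AA}$, which contains $\ker\pi_\ast$ by hypothesis. Hence $\E$ and $\bT_{\sK}$ are pre-adherent.

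The main obstacle I expect is the third hypothesis, where the order of composition in $\bT_{\sK}$ matters. If the indexing convention turns out to be reversed, I would replace the vanishing argument by the analogous one using \eqref{eq:hom-computation-iso2_2}, namely $\Hom^\bullet(\sE_j,\sK_i) \cong \Hom^\bullet(\sE_i,\sE_j[d-1])^\ast = 0$ for $j>i$. A secondary subtlety is the identification $\bT_{\sK_i}\sE_i\cong\bT\sE_i$: since cones are only unique up to non-unique isomorphism, this gives an isomorphism of objects but not one compatible with $\eta$. Only the object-level isomorphism is needed for pre-adherence.
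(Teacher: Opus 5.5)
Your argument is correct and is essentially the paper's proof: invariance from Proposition \ref{prop:spherical}\ref{prop:spherical-a}, then Proposition \ref{prop:spherical}\ref{prop:spherical-b} with $\sA_i=\sK_i$, $\sM_i=\sE_i$, $\epsilon_i=\delta_i$, using \eqref{eq:hom-computation-iso1} for all $i,j$ (which also covers the case needed in \eqref{eq:j-bigger-i}), the vanishing of $\Hom^\bullet(\sK_k,\bT\sE_i)$ for $k<i$ from \eqref{eq:hom-computation-iso2}, and the comparison of triangles giving $\bT_{\sK_i}\sE_i\cong\bT\sE_i$. Two minor remarks: \eqref{eq:assump2} asserts that $\delta_i\cdot(-)\colon\Hom^\bullet(\sK_j,\sK_i)\to\Hom^\bullet(\sK_j,\sE_i[1])$ is an isomorphism, not a vanishing, and the paper derives it from exactly the vanishing you prove; also, your closing hedge about a reversed ordering is unnecessary.
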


\begin{proof}
    By Proposition \ref{prop:sph-obj} and Proposition \ref{prop:spherical} \ref{prop:spherical-a} we see  that $\pi_\ast$ is invariant under $\bT_{\sK}$.
It remains to see that $\E$ and $\bT_{\sK}$ are pre-adherent.
For this, we note that it is enough to show that there are isomorphisms 
\begin{equation}\label{eq:last-iso}
    \bT \sE_i \cong \bT_{\sK} \sE_i
\end{equation}
in $\Db(Y)$ for all $i$.
To prove this we observe first that by Lemma \ref{lem:hom-computation} \eqref{eq:hom-computation-iso1} the assumption \eqref{eq:assump1} of Proposition \eqref{prop:spherical} \ref{prop:spherical-b} is satisfied.
Moreover, applying $\Hom^\bullet ( \sK_j , -  )$ onto \eqref{eq:main-distinguished-triangle} and using Lemma \ref{lem:hom-computation} \eqref{eq:hom-computation-iso2} that also \eqref{eq:assump2} of Proposition \eqref{prop:spherical} \ref{prop:spherical-b} is satisfied.
It follows thus by Proposition \eqref{prop:spherical} \ref{prop:spherical-b} that there are isomorphisms $\bT_{\sK_i} \sE_i \cong \bT_{\sK} \sE_i $ in $\Db(Y)$ for all $i = 1 , \ldots , m$, see \eqref{eq:j-equal-i}.
Moreover, comparing the distinguished triangle 
\[
\sE_i \xrightarrow{\eta_i} \bT_{\sK_i} \sE_i \to \sK_i \xrightarrow{\delta_i} \sE_i [1]
\]
(see also \eqref{eq:main-in-proof}) to the distinguished triangle \eqref{eq:main-distinguished-triangle}, we see that there is an isomorphism $\bT_{\sK_i} \sE_i \cong \bT \sE_i$ in $\Db(Y)$ for all $i$.
Combining these two isomorphsim, we get an isomorphism \eqref{eq:last-iso} for all $i$.
This completes the proof of the corollary.
\end{proof}

\begin{remark}\label{rmk:T-vs-TK}
    Corollary \ref{cor:T-vs-TK} implies that we can replace the autoequivalence $\bT$ on $\Db(Y)$ by a composition of spherical twists $\bT_{\sK}$.
    We do not know if there is an isomorphism of functors between $\bT$ and $\bT_{\sK}$.
    Note here that in all our examples the corresponding $\bT$ is set to be a composition of spherical twists.
\end{remark}

\begin{definition}\label{def:strong-adherence}
    Let $X$ be a $d$-dimensional projective variety with a unique isolated Gorenstein singularity. Assume
    there is a crepant resolution of singularities
     $\pi \colon Y \to X$.
    Let $\bT : \Db(Y) \to \Db(Y) $ be an autoequivalence such that $\pi_\ast$ is invariant under $\bT$.
    Let $\E = ( \sE_1 , \ldots , \sE_m )$ be an exceptional collection in $\Db(Y)$, such that $\E$ and $\bT$ are  pre-adherent (cf. Definition \ref{def:adherence}).
    \begin{enumerate}
        \item\label{def:strong-adherence-1} 
        We say that $\E$ and $\bT$ are \emph{adherent}, if the spherical objects $\sK_i$ corresponding to the $\sE_i$ classically generate $\ker \pi_\ast$,
        i.e. the smallest thick triangulated subcategory containing $\sK_1 , \ldots , \sK_m$ is equal to $\ker\pi_\ast$.
        \item\label{def:strong-adherence-2} 
        Assume additionally that the exceptional locus $\iota \colon E \hookrightarrow Y$ is of codimension $1$.
        We say that $\E$ and $\bT$ are \emph{strongly adherent}, if $\E$ and $\bT$ are adherent  and if there is a full exceptional collection 
        \[
    \sL_1 , \ldots , \sL_m
\]
    of $\langle \OO_E \rangle^{\perp} \subset \Db(E)$, such that there are isomorphisms $\sK_i \cong \iota_\ast \sL_i$ in $\Db(Y)$ for all $i$.
    \end{enumerate}
\end{definition}

Note that the exceptional divisor 
$E$ as in Definition \ref{def:strong-adherence} \eqref{def:strong-adherence-2} has 
a priori locally hypersurface singularities and thus only Gorenstein singularities, as $E$ is an effective Cartier divisor inside a smooth variety $Y$.
Moreover, since $\Db(E)$ has a full exceptional sequence, it follows that $E$ is smooth (see  \cite[Corollary 4.15]{ks2}).
It follows that $\Db(E)$ has a Serre functor.

\begin{proposition}\label{prop:defs-agree}
    Le $X$ be a projective $d$-dimensional variety with a unique isolated Gorenstein singularity.
    Assume there is a crepant resolution $\pi \colon Y \to X $ and assume the exceptional locus $\iota \colon E \hookrightarrow Y$ is of codimension $1$.
    Let $\bT : \Db(Y) \to \Db(Y) $ be an autoequivalence such that $\pi_\ast$ is invariant under $\bT$.
    Assume there is an exceptional collection $\E$ in $\Db(Y)$, such that $\E$ and $\bT$ are strongly adherent.
    If the right adjoint $\iota^! \colon \Db(Y) \to \Db(E)$ of $\iota_\ast \colon \Db(E) \to \Db(Y)$ restricts to a functor $\iota^! \colon \thick (   \E   ) \to  \langle \OO_E \rangle^{\perp} $, then this functor is an equivalence
    \[
    \iota^! \colon \thick (  \E  ) \xrightarrow{\cong} \langle \OO_E \rangle^{\perp} .
    \]
\end{proposition}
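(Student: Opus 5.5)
The plan is to show that $\iota^!$ is fully faithful on $\thick(\E)$ and that its image contains the full exceptional collection $\sL_1,\ldots,\sL_m$ of $\langle \OO_E\rangle^\perp$. Essential surjectivity then follows, since the image of a fully faithful exact functor from a thick subcategory is a triangulated subcategory. It is also idempotent complete, because $\thick(\E)$ is, so it is thick and contains the generators $\sL_i$. For full faithfulness it suffices, by a dévissage argument, to check on the generators $\sE_i,\sE_j$ that the maps $\Hom^\bullet(\sE_i,\sE_j)\to \Hom^\bullet(\iota^!\sE_i,\iota^!\sE_j)$ are isomorphisms. Note that $\iota^!\cong \iota^*(-)\otimes \OO_E(E)[-1]$, and $\OO_Y(E)$ restricts to the normal bundle.

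The key step is to compute $\iota^!\sE_i$. By adjunction, $\Hom^\bullet(\sL_j,\iota^!\sE_i)\cong \Hom^\bullet(\iota_*\sL_j,\sE_i)\cong\Hom^\bullet(\sK_j,\sE_i)$. By Lemma \ref{lem:hom-computation} \eqref{eq:hom-computation-iso1} this is $\Hom^\bullet(\sE_j,\sE_i)[-1]$. Since $\iota^!\sE_i\in\langle\OO_E\rangle^\perp$ by hypothesis and the $\sL_j$ generate this category, these Hom spaces determine $\iota^!\sE_i$ in terms of the dual collection. I expect $\iota^!\sE_i\cong \sL_i[-1]$ up to a normalization, or at least that $\iota^!$ sends $\E$ to a collection with the same graded Hom spaces. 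To pin this down I would use the counit $\iota_*\iota^!\sE_i\to\sE_i$ together with the triangle \eqref{eq:main-distinguished-triangle}. The map $\delta_i[-1]\colon \sK_i[-1]=\iota_*\sL_i[-1]\to\sE_i$ corresponds by adjunction to a morphism $\sL_i[-1]\to\iota^!\sE_i$. I would then show this morphism is an isomorphism by testing against all $\sL_j$. Testing uses the compatibility of \eqref{eq:hom-computation-iso1} with composition by $\delta_i$, which is built into its construction, together with the Homs among the $\sL_j$, which equal $\Hom(\iota_*\sL_j,\iota_*\sL_i)$ modulo the normal-bundle contribution.

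Once $\iota^!\sE_i\cong\sL_i[-1]$ is established, full faithfulness reduces to showing that the map $\Hom^\bullet(\sE_i,\sE_j)\to\Hom^\bullet(\sL_i,\sL_j)$ induced by $\iota^!$ is bijective. I would factor it through the adjunction isomorphism $\Hom^\bullet(\iota_*\sL_i[-1],\sE_j)\cong \Hom^\bullet(\sL_i[-1],\iota^!\sE_j)$. This identifies it with precomposition by $\delta_i[-1]$, which is exactly the isomorphism \eqref{eq:hom-computation-iso1}, and the latter does not need the full structure of $\iota^!$. The image then contains every $\sL_i$, and since the $\sL_i$ form a full exceptional collection of $\langle\OO_E\rangle^\perp$, we obtain the equivalence.

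I expect the main obstacle to be the identification in the second paragraph. One must check that the adjoint morphism $\sL_i[-1]\to\iota^!\sE_i$ is indeed an isomorphism, rather than merely that source and target have matching Hom dimensions. This requires a naturality argument showing that the isomorphisms of Lemma \ref{lem:hom-computation} are induced by $\delta_i$. It also requires care with the normal-bundle twist, which must be absorbed, using Lemma \ref{lem:dCY-kernel}, into the identification $\sK_i\cong\iota_*\sL_i$.
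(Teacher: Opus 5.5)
Your architecture is the paper's. You take $\phi_i\colon\sL_i\to\iota^!\sE_i[1]$ adjoint to $\delta_i$, prove it is an isomorphism by testing against the generators $\sL_j$ of $\langle\OO_E\rangle^\perp$, and then deduce full faithfulness. Your final step is correct, and a little cleaner than the paper's rescaling argument: naturality of adjunction gives $\ad^{-1}(\iota^!(e)\circ\phi_i)=e\circ\delta_i$, so $\iota^!$ on graded Homs is \eqref{eq:hom-computation-iso1} up to isomorphisms.

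The gap is the step you flag yourself. Showing that $\phi_i$ is an isomorphism is where the content lies, and the ingredients you name would not establish it. After adjunction, $\phi_i\circ(-)\colon\Hom^\bullet(\sL_j,\sL_i)\to\Hom^\bullet(\sK_j,\sE_i[1])$ is the composite of $\iota_*\colon\Hom^\bullet(\sL_j,\sL_i)\to\Hom^\bullet(\sK_j,\sK_i)$ with $\delta_i\circ(-)$. These factors are isomorphisms only for $j<i$, so a case split is forced. For $j>i$ both ends vanish, by exceptionality of $\L$ and by $\Hom^\bullet(\sK_j,\sE_i[1])\cong\Hom^\bullet(\sE_j,\sE_i)=0$. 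For $j=i$ both ends are $\mathbb{k}$, and $\id\mapsto\phi_i\neq 0$.

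For $j<i$ you need two vanishing results that your proposal does not identify.
\begin{itemize}
\item The map $\delta_i\circ(-)\colon\Hom^\bullet(\sK_j,\sK_i)\to\Hom^\bullet(\sK_j,\sE_i[1])$ is an isomorphism because $\Hom^\bullet(\sK_j,\bT\sE_i)\cong\Hom^\bullet(\sE_i,\sE_j[d])^*=0$. This is \eqref{eq:hom-computation-iso2}, and it concerns postcomposition with $\delta_i$. It is not the precomposition isomorphism \eqref{eq:hom-computation-iso1}.
\item The map $\iota_*$ is an isomorphism because of the normal-bundle term of the triangle $\sL_i\to\iota^!\iota_*\sL_i\to\sL_i\otimes\OO_E(E)[-1]$. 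That term contributes $\Hom^\bullet_E(\sL_j,\sL_i\otimes\OO_E(E))$. It vanishes because $\OO_E(E)\cong\omega_E$ (adjunction formula plus crepancy), so Serre duality on $E$ turns it into $\Hom^\bullet_E(\sL_i,\sL_j[d-1])^*$, which is $0$ for $i>j$.
\end{itemize}

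Lemma \ref{lem:dCY-kernel} is not the tool for the normal-bundle term. Nothing needs to be ``absorbed'' into $\sK_i\cong\iota_*\sL_i$; that isomorphism is simply part of the strong-adherence hypothesis. For $j\ge i$ the normal-bundle term is nonzero in general. So your phrase ``the Homs among the $\sL_j$ equal $\Hom(\iota_*\sL_j,\iota_*\sL_i)$ modulo the normal-bundle contribution'' only becomes usable once this Serre-duality vanishing and the ordering of $\L$ are brought in.

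With both facts, $\Hom^\bullet(\sL_j,\text{cone of }\phi_i)=0$ for all $j$, so $\phi_i$ is an isomorphism and the rest of your argument goes through.
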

\begin{proof}
We use the same notation as in Definition \ref{def:strong-adherence}.
    It is enough to show that there are isomorphisms $\iota^!  \sE_i [1] \cong \sL_i$ in $\langle \OO_E \rangle^\perp$, for all $i$ and that for all $i$ and $j$ we have isomorphisms $\Hom^\bullet (   \sE_j   ,   \sE_i   ) \cong \Hom^\bullet ( \sL_j , \sL_i )$ given by $\iota^! (-) [1]$.

    Let us first show that there are isomorphisms $\iota^!  \sE_i \cong \sL_i$ for all $i$.
    By assumption, we have isomorphisms $\sK_i \cong \iota_\ast \sL_i$.
    It further follows from \eqref{eq:hom-computation-iso1} in the case $i=j$ that there are isomorphisms
    \begin{equation}\label{eq:restr-hom}
            \Hom^\bullet ( \sL_i , \iota^!  \sE_i  ) \cong \Hom^\bullet (   \sK_i , \sE_i  ) = \mathbb{k}\cdot \delta_i [-1].
    \end{equation}
    In other words, there is a non-trivial morphism $\phi_i \colon \sL_i \to  \iota^! \sE_i [1] $ corresponding to $\delta_i$ via adjunction (i.e. $\ad ( \delta_i ) = \phi_i $).
    The distinguished triangle 
    \[
    \sL_i \xrightarrow{\phi_i } \iota^! \sE_i [1] \to C ( \phi_i ) ,
\]
shows that $C ( \phi_i )$ is contained in $\langle \OO_E \rangle^{\perp}$ for all $i$, as the $\sL_i$ and $\iota^!  \sE_i$ are contained in $\langle \OO_E\rangle^{\perp}$ by assumption.
Applying $\Hom^\bullet (  \sL_j , - )$ onto this triangle, we obtain a distinguished triangle
\begin{equation}\label{eq:cone-zero}
  \Hom^\bullet (  \sL_j , \sL_i    )\xrightarrow{ \phi_i \cdot (-)}  \Hom^\bullet (  \sL_j , \iota^!\sE_i [1] ) \to \Hom^\bullet ( \sL_j , C ( \phi_i  )   )   
\end{equation}
in $\Db(\mathbb{k})$.
We are going to show that $\Hom^\bullet (  \sL_j , C ( \phi_i  ) )$ vanishes for all $j$.
If $j> i$ then the first term of this triangle vanishes by the assumption that $\L = ( \sL_1 , \ldots , \sL_m )$ is an exceptional sequence and the second  term vanishes by adjunction and by Lemma \ref{lem:hom-computation} \eqref{eq:hom-computation-iso1}.
Moreover, if $i=j$  then the morphism $(-) \cdot \phi_i $ in \eqref{eq:cone-zero} is an isomorphism by \eqref{eq:restr-hom} and since $\sL_i$ is an exceptional object.
Hence, we have that $\Hom^\bullet ( C ( \phi_i ) , \sL_j )$ vanishes if $j \geq i$.
Assume now that $j< i$ holds. 
Here we need some preparatory work.
We first note that by applying $\Hom^\bullet ( \sK_j , - )$ onto \eqref{eq:main-distinguished-triangle} and using Lemma \ref{lem:hom-computation} \eqref{eq:hom-computation-iso2}, we obtain isomorphisms
\begin{equation}\label{eq:Ki-to-Kj}
    \Hom^\bullet ( \sK_j , \sK_i  ) \xrightarrow[\cong]{ \delta_i \cdot (-)} \Hom^\bullet ( \sK_j  ,  \sE_i [1] )
\end{equation} 
for all $j<i$.
Second,
we have a distinguished triangle 
\[
\sL_i \to \iota^! \iota_\ast \sL_i \to \sL_i \otimes \OO_E (E) [-1] 
\]
Applying $\Hom^\bullet ( \sL_j , - )$ onto this triangle, we get a triangle 
\[
\Hom^\bullet ( \sL_j , \sL_i ) \to \Hom^\bullet ( \sL_j , \iota^! \iota_\ast \sL_i ) \cong \Hom^\bullet ( \sK_j , \sK_i  ) \to \Hom^\bullet ( \sL_j , \sL_i \otimes \omega_E [-1] )
\]
in $\Db(\mathbb{k})$, where we used adjunction in the second component and the isomorphism of line bundles $\OO_E(E) \cong \omega_E$ in the third component (adjunction formula for canonical bundles, together with crepancy of $\pi$).
Using Serre duality on $E$ and the assumption that $\L$ is an exceptional collection, we have that
\[
 \Hom^\bullet_E ( \sL_j , \sL_i \otimes \omega_E ) \cong \Hom^\bullet_E ( \sL_i , \sL_j [d-1] )^\ast = 0 
\]
holds. 
Hence the third component of this triangle vanishes and we have that the isomorphism a chain of isomorphisms
\begin{equation}\label{eq:K-L-iso}
    \Hom^\bullet_E ( \sL_j ,  \sL_i ) \cong \Hom^\bullet_E (  \sL_j , \iota^! \iota_\ast \sL_i ) \cong \Hom^\bullet ( \sK_j , \sK_i  ) ,
\end{equation}
 where the first isomorphism is given by the composition of the morphism $\ad_{\sL_i} \colon \sL_i \to  \iota^! \iota_\ast\sL_i$ corresponding to  the identity $\iota_\ast \sL_i \to \iota_\ast \sL_i$ via adjunction.
Let us now show that the third term in \eqref{eq:cone-zero} vanishes for $j<i$.
By general properties of adjoint functors, the morphism $\phi_i = \ad (\delta_i )  $  is given as the composition of morphisms $\phi_ i = \iota^! \delta_i \cdot \ad_{\sL_i } $.  
Moreover, by adjointness of $\iota^!$ and $\iota_\ast$, there is a commutative diagram
\begin{equation}
\xymatrix{
    \Hom^\bullet (  \sL_j , \iota^! \iota_\ast \sL_i ) \ar@{<-}[d]^{\ad(-)}_\cong \ar@{->}[rr]^{ \iota^! \delta_i \cdot (-) } & & \Hom^\bullet ( \sL_j , \iota^! \sE_i [1] ) \ar@{<-}[d]^{\ad(-)}_{\cong} \\
 \Hom^\bullet ( \sK_j , \sK_i ) \ar@{->}[rr]^{\delta_i \cdot (-)}_{\cong} & & \Hom^\bullet ( \sK_j , \sE_i [1] )},
\end{equation}
and  the bottom horizontal morphism in this diagram is an isomorphism by \eqref{eq:Ki-to-Kj}.
It follows that the top vertical morphism $  \iota^! \delta_i  \cdot (-)$ between graded $\Hom$-spaces is an isomorphism.
Furthermore,  $ \ad_{\sL_i } \cdot (-) : \Hom^\bullet ( \sL_j , \sL_i  ) \to \Hom^\bullet (  \sL_j , \iota^! \iota_\ast\sL_i )$ is an isomorphism by \eqref{eq:K-L-iso}.
Thus by the equality of morphisms $\phi_ i =   \iota^! \delta_i \cdot \ad_{\sL_i }$, it follows that 
$ \phi_i \cdot (- ) $ in \eqref{eq:cone-zero}
is an isomorphism.
Summarizing, we have that $\Hom^\bullet (  \sL_j , C ( \phi_i )  ) = 0$, for all $j$.
Since $\bigoplus  \sL_j $ is a generator of 
$\langle\OO_E \rangle^\perp$ and since $C ( \phi_i )$ is contained in $\langle\OO_E \rangle^\perp$, we obtain that $C ( \phi_i ) = 0$ for all $i$.
This finishes the proof that $\phi_i \colon \sL_i \to \iota^! \sE_i$ is an isomorphism for all $i$.

Finally, let us show that the isomorphism $\Hom^\bullet ( \sE_j , \sE_i  ) \cong \Hom^\bullet (\sL_j , \sL_i )$ from above is given by $\iota^! [1]$. 
Let us consider the composition of morphisms
\begin{equation}\label{eq:morphism-311}
 \iota_\ast \iota^! \sE_j [1] \xrightarrow[\cong]{\iota_\ast \phi_i^{-1}}   \iota_\ast \sL_j \xrightarrow{\delta_j} \sE_j [1] .
\end{equation}
By Lemma \ref{lem:hom-computation} \eqref{eq:hom-computation-iso1} we get the following chain of isomorphisms
\begin{equation}\label{eq:i-shreek-iso}
    \Hom^\bullet (  \sE_j ,  \sE_i ) \xrightarrow[{\cong}]{(-) \cdot ( \delta_j \cdot \iota_\ast\phi_j^{-1} ) } \Hom^\bullet ( \iota_\ast \iota^!  \sE_j  ,  \sE_i ) \xrightarrow[\cong]{\ad} \Hom^\bullet ( \iota^!  \sE_j  , \iota^!  \sE_i ) .
\end{equation}
Moreover, since there is only one non-zero morphism from  $\iota_\ast \iota^! \sE_j [1] \cong \iota_\ast \sL_j$ to $\sE_j [1]$ up to scalar (see \eqref{eq:restr-hom}),
we see that the morphism in \eqref{eq:morphism-311} agrees (up to a scalar) with $\ad^{-1} ( \id_{\iota^!\sE_j} ) \colon \iota_\ast \iota^! \sE_j \to \sE_j$, the morphism corresponding to the identity morphism of $\iota^!\sE_j $ via adjunction.
Thus, for any morphism $e$ in $\Hom^\bullet ( \sE_j , \sE_i )$ we have the following chain of equalities (after possibly rescaling):
\[
 \ad ( e \cdot ( \delta_j \cdot \iota_\ast\phi_j^{-1} ) ) =  \ad ( e \cdot \ad^{-1} ( \id_{\iota^!\sE_j} ) ) = \iota^!(e) \cdot \ad (  \ad^{-1} ( \id_{\iota^!\sE_j} ) ) = \iota^!(e) \in \Hom^\bullet ( \iota^! \sE_j , \iota^! \sE_i ) ,
\]
where we used functoriality of adjunction in the second equality.
This shows that the isomorphism of $\Hom$-spaces in \eqref{eq:i-shreek-iso} agrees with $\iota^!$ and finishes the proof.
\end{proof}

\section{Adherence implies categorical absorption}

\noindent
In this chapter, $\mathbb{k}$ is an arbitrary field.
We first give a compatibility result for
semiorthogonal decompositions and Verdier localizations.

\begin{proposition}\label{lem:sod-and-quotient}
Let $\BB \subset \CC \subset \TT$ be triangulated categories, $\BB$ a thick triangulated subcategory of $\CC$ and $\CC$ a thick triangulated category of $\TT$. Assume that there is a semiorthogonal decomposition
\[
\TT = \langle \CC ,  {^{\perp}\CC} \rangle .
\]
Then $\CC / \BB$ and $^{\perp}\CC$ are full triangulated subcategories of $\TT / \BB$ and there is a semiorthogonal decomposition
\[
\TT / \BB = \langle \CC / \BB ,  \bQ ( {^{\perp}\CC} ) \rangle , 
\]
where $\bQ \colon \TT \to \TT / \BB$ is the quotient functor.
Moreover, the restriction functor $\ {^{\perp}\CC} \to \bQ ( {^{\perp}\CC} )$ is an equivalence.
\end{proposition}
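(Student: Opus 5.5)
The plan is to work directly with the Verdier quotient and the calculus of fractions. Throughout, write $\CC' := {^{\perp}\CC}$, so that $\Hom_\TT(C', C) = 0$ for all $C' \in \CC'$ and $C \in \CC$. Note that $\BB \subset \CC$, so $\CC' \subset {^\perp \BB}$.

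The first step is to show that $\CC/\BB$ is a full subcategory of $\TT/\BB$. A morphism in $\TT/\BB$ between objects of $\CC$ is represented by a roof $X \xleftarrow{s} Z \to Y$ with $\Cone(s) \in \BB$. Since $\BB \subset \CC$ and $\CC$ is triangulated, $Z$ lies in $\CC$, because it sits in a triangle with $X \in \CC$ and $\Cone(s)[-1] \in \BB \subset \CC$. Hence every roof already lives in $\CC$. The same argument applies to equivalences of roofs, which pass through an intermediate object again forced to lie in $\CC$. So the canonical functor $\CC/\BB \to \TT/\BB$ is fully faithful.

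The second step treats $\CC'$. For $P \in \CC'$ and any $Y \in \TT$, I claim $\Hom_\TT(P, Y) \to \Hom_{\TT/\BB}(\bQ P, \bQ Y)$ is bijective. Use the other form of roofs, $P \to Z \xleftarrow{s} Y$ with $\Cone(s) \in \BB$. Since $\Hom^\bullet(P, \BB) = 0$, the map $s_* \colon \Hom(P, Y) \to \Hom(P, Z)$ is an isomorphism, by the long exact sequence for $Y \to Z \to \Cone(s)$. This is the standard fact that objects of ${^\perp\BB}$ are ``$\BB$-local on the left''. From this, surjectivity and injectivity follow by the usual roof manipulation. In particular $\bQ|_{\CC'}$ is fully faithful, so $\CC' \to \bQ(\CC')$ is an equivalence onto a full subcategory. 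It is also triangulated, and closed under isomorphism if we take the essential image.

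The third step gives semiorthogonality: $\Hom_{\TT/\BB}(\bQ P, \bQ C) = \Hom_\TT(P, C) = 0$ for $P \in \CC'$ and $C \in \CC$, by the second step. For generation, every $T \in \TT$ fits into a triangle $C \to T \to P$ with $C \in \CC$ and $P \in \CC'$, coming from the given semiorthogonal decomposition of $\TT$. Applying $\bQ$ yields a distinguished triangle in $\TT/\BB$ with the outer terms in $\CC/\BB$ and $\bQ(\CC')$. Since every object of $\TT/\BB$ is of the form $\bQ T$, this gives $\TT/\BB = \langle \CC/\BB, \bQ(\CC')\rangle$ in the paper's convention, where $\Hom$ vanishes from the second component to the first.

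The main obstacle is the second step, namely full faithfulness on ${^\perp\BB}$. Even this is standard (e.g. Verdier or Krause's localization notes), so I expect everything to be routine once roofs are chosen on the correct side.
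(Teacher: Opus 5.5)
Your proposal is correct and follows the paper's proof essentially step by step. The paper cites Verdier's Corollary 4.3 for the fullness of $\CC/\BB$, and his Proposition 5.3 for the bijection $\Hom_\TT(P,Y)\cong\Hom_{\TT/\BB}(\bQ P,\bQ Y)$ when $\Hom(P,\BB)=0$; you reprove both with roofs, and the semiorthogonality and generation steps are then argued exactly as you do.

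One small slip: the decomposition triangle coming from $\TT=\langle\CC,{^\perp}\CC\rangle$ is $P\to T\to C$ with $P\in{^\perp}\CC$ and $C\in\CC$. A triangle $C\to T\to P$ would have zero connecting map $P\to C[1]$ and hence split, which fails in general. Applying $\bQ$ to the correct triangle gives generation in the same way.
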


\begin{proof}
We know by \cite[Corollary 4.3]{verdier} that $\CC / \BB $ is a full (and thick) triangulated subcategory of $\TT / \BB$. 
Next we are going to show that $^{\perp}\CC$ is a full triangulated subcategory of $^{\perp}(\CC / \BB) \subset \TT / \BB$ via the quotient functor $\bQ : \TT \to \TT /  \BB$. 
To do this, we are going to show that the quotient functor $\bQ : \TT \to \TT / \BB$ restricts to a functor $\bar{\bQ} : {^{\perp}\CC} \to {^{\perp}(\CC / \BB)}$ and that $\bar{\bQ}$ is fully faithful.
We observe the following. 
For any $\sE$ in $^{\perp}\CC$, we have that $\Hom_{\TT}( \sE , \BB ) = 0$, as $\BB \subset \CC$. 
By \cite[Proposition 5.3]{verdier}, this shows that there is an isomorphism 
\begin{equation}\label{eq:verdier-trick}
  \Hom_{\TT} ( \sE , \sF ) \xrightarrow{\cong} \Hom_{\TT / \BB }( \bQ \sE  , \bQ \sF  )  
\end{equation}
for all $\sF$ in $\TT$, given by the canonical morphism induced by $\bQ$. 
Now, for any object $\bar{\sA}$ in $\CC  /\BB$, we can find an object $\sA $ in $\CC$ such that $\bQ \sA  = \bar{\sA}$ in  $\CC / \BB$. 
Then setting $\sF = \sA$ in \eqref{eq:verdier-trick}, we get 
\[
  \Hom_{\TT / \BB}( \bQ \sE  , \bar{\sA} ) = \Hom_{\TT / \BB} (\bQ \sE  , \bQ \sA  ) \cong \Hom_{\TT} ( \sE , \sA ) = 0 ,
\]
for all $\sE$ in $^{\perp}\CC$.
In other words, $\bQ( \sE)$ is contained in $^{\perp}( \CC / \BB )$ and hence the functor $\bQ$ restricts to a functor $\bar{\bQ} : {^{\perp}\CC} \to {^{\perp}(\CC / \BB)}$.
On the other hand, if $\sE'$ is another object contained in $^{\perp}\CC$, then set $\sF = \sE'$ in \eqref{eq:verdier-trick} and we obtain an isomorphism of $\Hom$-spaces
\[
\Hom_{^{\perp}\CC} ( \sE , \sE' ) \cong \Hom_{^{\perp} ( \CC / \BB )}( \bar{\bQ} \sE  , \bar{\bQ} \sE'  ) .
\]
Equivalently, the functor $\bar{\bQ} : {^{\perp}\CC} \to {^{\perp}(\CC / \BB)}$ is fully faithful. 
This concludes that $\bQ ( {^{\perp}\CC})$ is a full subcategory of $^{\perp}(\CC / \BB)$.

To prove the decomposition $\TT / \BB = \langle \CC / \BB , {^{\perp}\CC} \rangle$ it remains to show that $\CC / \BB$ and $^{\perp}\CC$ generate $\TT / \BB$. 
But this follows just by essentially surjectivity of $\bQ : \TT \to \TT / \BB$. 
More concretely, let $ \bar{\sF} \in \TT / \BB$ and let $\sF$ be an object of $\TT$ such that $\bQ( \sF ) = \bar{\sF}$. 
By the decomposition $\TT = \langle \CC , {^{\perp}\CC} \rangle$, we can find a distinguished triangle 
\[
\sE \to \sF \to \sG 
\]
in $\TT$ with $\sG$ in $\CC$ and $\sE$ in $^{\perp}\CC$. 
Applying $\bQ$ to this triangle, we get that $\bar{\sF}$ is generated by $\bQ \sG \in \CC / \BB$ and by $\bQ( \sE )$, which by the above argument is an object in $^{\perp}\CC \subset \TT / \BB$. 
We conclude the decomposition $\TT / \BB = \langle \CC / \BB , {^{\perp}\CC} \rangle$.
Finally, we recall that $\bar{\bQ} : {^{\perp}\CC} \to {^{\perp}(\CC / \BB)}$ is fully faithful, and by the above we have that ${^{\perp}(\CC / \BB)} $ and $\bQ ( {^{\perp}\CC} )$ coincide.
In particular, $\bar{\bQ} : {^{\perp}\CC} \to {^{\perp}(\CC / \BB)}$ is an equivalence.
This finishes the proof of the proposition.
\end{proof}

Proposition \ref{lem:sod-and-quotient} and Theorem \ref{thm:bo-localization-and-kernel} yields the following.

\begin{corollary}\label{cor:sod-quotient}
Let $X$ be projective variety over $\mathbb{k}$ with a unique Gorenstein singularity at $p\in X$ and assume that a crepant resolution $\pi\colon Y \to X$ exists. 
Let $\bT \colon \Db( Y ) \to \Db ( Y )$ be an autoequivalence which is invariant under $\pi_\ast$.
Assume there is an exceptional sequence $\E = ( \sE_1, \ldots, \sE_{m} )$, such that $\E$ and $\bT$ are pre-adherent and set $\tilde{\AA}=\langle \E, \bT ( \E ) \rangle$.
Then there is a semiorthogonal decomposition
\begin{equation}\label{eq:sod-quot1}
    \Db (Y) /\ker \pi_\ast = \langle \tilde{\AA} / \ker \pi_* ,  {^{\perp}\tilde{\AA}} \rangle  .
\end{equation}
Moreover, if there is an equivalence $\Db (Y) /\ker \pi_\ast \cong \Db(X)$ induced by the functor $\pi_\ast \colon \Db(Y) \to \Db(X)$, then the decomposition
\begin{equation}\label{eq:sod-quot2}
    \Db (X) = \langle \tilde{\AA} / \ker \pi_* ,  \pi_\ast {^{\perp}\tilde{\AA}} \rangle 
\end{equation}
is admissible and ${^{\perp}\tilde{\AA}} \cong \pi_\ast {^{\perp}\tilde{\AA}}$
is contained in $ \Dperf(X)$.
\end{corollary}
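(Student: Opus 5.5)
The plan is to apply Proposition \ref{lem:sod-and-quotient} with $\TT=\Db(Y)$, $\CC=\tilde{\AA}$ and $\BB=\ker\pi_\ast$, and then transport the result along $\pi_\ast$.

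First I check the hypotheses of Proposition \ref{lem:sod-and-quotient}.
\begin{enumerate}
\item Since $\tilde{\AA}$ is generated by the exceptional sequence $\E,\bT(\E)$, it is admissible in $\Db(Y)$ ($Y$ smooth projective, so $\Db(Y)$ is saturated). Hence $\Db(Y)=\langle\tilde{\AA},{}^{\perp}\tilde{\AA}\rangle$, and $\tilde{\AA}$ is thick.
\item By pre-adherence (Definition \ref{def:adherence}), $\ker\pi_\ast\subset\tilde{\AA}$.
\item $\ker\pi_\ast$ is a thick triangulated subcategory of $\Db(Y)$, being the kernel of an exact functor. Therefore it is thick in $\tilde{\AA}$.
\end{enumerate}
Proposition \ref{lem:sod-and-quotient} then gives \eqref{eq:sod-quot1}, with $\bQ({}^{\perp}\tilde{\AA})\cong{}^{\perp}\tilde{\AA}$ embedded fully faithfully in the quotient.

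For the second part, assume $\pi_\ast$ induces an equivalence $\Db(Y)/\ker\pi_\ast\cong\Db(X)$. Transporting \eqref{eq:sod-quot1} along it gives the semiorthogonal decomposition \eqref{eq:sod-quot2}, together with the equivalence ${}^{\perp}\tilde{\AA}\cong\pi_\ast{}^{\perp}\tilde{\AA}$.

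Next I show $\pi_\ast({}^{\perp}\tilde{\AA})\subset\Dperf(X)$. Take $\sF\in{}^{\perp}\tilde{\AA}$. Then $\Hom(\sF,\sK)=0$ for all $\sK\in\ker\pi_\ast$. I would use the standard criterion that $\pi_\ast\sF$ is perfect iff it is perfect at the singular point $p$, together with the fact (used in \cite{KPS}, \cite{ks2}) that objects of $\Db(Y)$ orthogonal to $\ker\pi_\ast$ push forward to perfect complexes. Concretely, ${}^{\perp}\tilde{\AA}\subset{}^{\perp}\ker\pi_\ast$. Since $\ker\pi_\ast$ is $d$-Calabi--Yau (Lemma \ref{lem:dCY-kernel}) and $\bS_Y$ preserves it, we also get ${}^\perp\ker\pi_\ast=(\ker\pi_\ast)^\perp$. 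On the other hand, ${}^{\perp}\tilde{\AA}=\tilde{\AA}^{\perp}\otimes\omega_Y$, and this is still orthogonal to $\ker\pi_\ast$ on both sides.

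The step I expect to be the main obstacle is identifying $\pi_\ast$ of such orthogonal objects as perfect. I would try to show that $\pi^\ast\pi_\ast\sF\to\sF$ is an isomorphism (i.e.\ $\sF\cong\pi^\ast\sG$ with $\sG$ perfect). The route is via Theorem \ref{thm:bo-localization-and-kernel}: $\pi_\ast$ is a localization whose kernel is $\ker\pi_\ast$. The left adjoint $\pi^\ast\colon\Dperf(X)\to\Db(Y)$ is fully faithful, since $\pi_\ast\OO_Y=\OO_X$ for a resolution of a normal rational singularity. Its essential image is exactly the objects of ${}^{\perp}\ker\pi_\ast$ that lie in $\Dperf$-image. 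Because $\sF$ is orthogonal to the kernel, the quotient functor restricted to it is fully faithful. Then I would show $\pi_\ast\sF$ is compact in the big category: $\Hom(\pi_\ast\sF,-)\cong\Hom(\sF,\pi^!(-))$, and $\pi^!$ commutes with sums for proper $\pi$ between finite-dimensional schemes. Compact objects of $\DD(\Qcoh\,X)$ are perfect.

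Finally, admissibility of \eqref{eq:sod-quot2}: the component $\pi_\ast{}^{\perp}\tilde{\AA}$ is equivalent to an admissible subcategory of the saturated category $\Db(Y)$, so it has a Serre functor and is smooth and proper. A left admissible subcategory of $\Db(X)$ that is smooth and proper with perfect objects is then also right admissible, using the relative Serre functor / Kuznetsov's criterion for subcategories of $\Dperf(X)$ on a projective Gorenstein variety. Hence the decomposition is admissible.
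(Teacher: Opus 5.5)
Your first step (Proposition \ref{lem:sod-and-quotient} applied to $\ker\pi_\ast\subset\tilde{\AA}\subset\Db(Y)$, then transport along $\pi_\ast$) is exactly the paper's. The perfectness step, however, has a genuine gap.

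\textbf{The compactness argument fails.} It rests on the claim that $\pi^!$ commutes with coproducts, and that is false here. Your argument only uses that $\sF$ is compact in $\DD(\Qcoh\, Y)$. It never uses $\sF\in{}^{\perp}\tilde{\AA}$, so it would show that $\pi_\ast\sF$ is perfect for \emph{every} $\sF\in\Db(Y)=\Dperf(Y)$. Since $\pi_\ast$ is essentially surjective onto $\Db(X)$ by hypothesis, this would give $\Db(X)=\Dperf(X)$. That is impossible, because $X$ is singular at $p$ (the skyscraper $\OO_p$ is not perfect). Indeed, by Neeman's theorem the right adjoint of $\pi_\ast$ commutes with coproducts if and only if $\pi_\ast$ preserves perfect complexes, which is exactly what fails here.

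\textbf{The other routes you sketch do not close the gap.} The detour via $\pi^\ast\pi_\ast\sF\to\sF$ is circular, since it presupposes that $\pi_\ast\sF$ is perfect. It also uses $\pi_\ast\OO_Y\cong\OO_X$ and Theorem \ref{thm:bo-localization-and-kernel}, neither of which is among the hypotheses. The latter concerns cone singularities only, whereas here the localization property is simply assumed. Quoting ``objects orthogonal to $\ker\pi_\ast$ push forward to perfect complexes'' as a known fact is precisely the statement that needs proof.

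\textbf{The missing idea.} You need Orlov's characterization of perfect complexes on a quasi-projective variety as the homologically finite objects of $\Db(X)$ \cite[Proposition 1.11]{orlov-hf}. You also need the full strength of Verdier's lemma, not merely full faithfulness on ${}^{\perp}\tilde{\AA}$. For $\sF\in{}^{\perp}\tilde{\AA}\subset{}^{\perp}\ker\pi_\ast$ it gives $\Hom_{\Db(Y)}(\sF,\sB')\cong\Hom_{\Db(X)}(\pi_\ast\sF,\pi_\ast\sB')$ for \emph{all} $\sB'\in\Db(Y)$ \cite[Proposition 5.3]{verdier}. The argument then runs as follows.
\begin{enumerate}
\item Write an arbitrary $\sB\in\Db(X)$ as $\pi_\ast\sB'$, using essential surjectivity.
\item Since $Y$ is smooth and projective, $\Hom(\sF,\sB'[N])=0$ for $|N|\gg0$.
\item Hence $\Hom(\pi_\ast\sF,\sB[N])=0$ for $|N|\gg0$, so $\pi_\ast\sF$ is homologically finite and therefore perfect.
\end{enumerate}
This is exactly where orthogonality to $\ker\pi_\ast$ enters. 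It needs no rationality, no unbounded categories, and nothing about $\ker\pi_\ast$ beyond its inclusion in $\tilde{\AA}$.

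\textbf{Admissibility.} Your sketch points in the right direction, but the operative input is that $X$ is Gorenstein. Then $-\otimes\omega_X[d]$ gives Serre duality between perfect and arbitrary objects of $\Db(X)$, which is \cite[Lemma 2.15]{KPS}, the result the paper cites.

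A minor correction: ${}^{\perp}\tilde{\AA}=\bS_Y^{-1}(\tilde{\AA}^{\perp})=\tilde{\AA}^{\perp}\otimes\omega_Y^{-1}$, not $\tilde{\AA}^{\perp}\otimes\omega_Y$.
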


\begin{proof}
A direct application of Proposition \ref{lem:sod-and-quotient} to the chain of inclusions $\ker \pi_\ast \subset \tilde{\AA} \subset \Db (Y)$ yields \eqref{eq:sod-quot1}.   
To see that $  \pi_\ast{^{\perp}\tilde{\AA}}$ in \eqref{eq:sod-quot2} is contained in $  \Dperf(X)$, we note first that 
there is an equality of $\Hom$-spaces
\[
\Hom ( \sP ' , \ker \pi_\ast ) = 0
\]
for all $\sP '$ in $^{\perp}\tilde{\AA}$, because $\ker \pi_\ast $ is contained in $\tilde{\AA}$ (as $\bT$ and $\E$ are pre-adherent).
Thus by \cite[Proposition 5.3]{verdier} there is an isomorphism
\begin{equation}\label{eq:verdier-trick2}
    \Hom_{\Db(Y)} (\sP' , \sB' ) \cong \Hom_{\Db(X) } ( \pi_\ast \sP'  , \pi_\ast \sB'  ) 
\end{equation}
for all $\sP '$ in $^{\perp}\tilde{\AA}$ and all $\sB '$ in $\Db (Y)$ (see also \eqref{eq:verdier-trick}). 
Using \eqref{eq:verdier-trick2}, we get the following chain of isomorphisms
\[
\Hom_{\Db(X)} ( \pi_\ast \sP' , \sB [N] ) \cong \Hom_{\Db(X)} ( \pi_\ast \sP'  , \pi_\ast  \sB'  [N] ) \cong \Hom_{\Db(Y)}(\sP' , \sB' [N] ) = 0 
\]
for $|N| $ big enough, where we use in the first isomorphism that $\pi_\ast$ is essentially surjective by assumption and thus that there exist a $\sB'$ in $\Db(Y)$ with  $\pi_\ast  \sB' \cong  \sB$.
Moreover, in the third isomorphism we use that $\Db (Y)$ is of finite type.
We have thus shown that any object in $\pi_\ast  {^\perp}\tilde{\AA} $ is homologically finite in $\Db(X)$ (see \cite[Definition 1.6]{orlov-hf} for the definition) and hence by \cite[Proposition 1.11]{orlov-hf} we conclude that $\pi_\ast  {^\perp}\tilde{\AA} $ is contained in $\Dperf (X)$.
Finally, admissibility follows from e.g. \cite[Lemma 2.15]{KPS}.
This finishes the proof of the corollary.
\end{proof}

Let now $\E$ and $\bT$ be adherent to each other.
Under certain conditions on $\E$, we will describe $\tilde{\AA} / \ker\pi_\ast$ as $\DD_{\mathrm{fg}} ( A^\bullet )$, for some $\dg$-algebra $A^\bullet$. 
To achieve this, we perform the following steps:

\begin{itemize}
    \item We start with the perfect generator $\sE \oplus \bT  \sE $ of $\tilde{\AA}$, where $\sE = \bigoplus_i \sE_i$, and we perform a mutation of the exceptional collection $\bT (\E )$, denoted $\F$.
    \item Under the assumption that there is an equality of $\Hom$-spaces 
    \[
\Hom ( \sE_i , \bS^{-1}_\E \sE_i [d] ) = 0
\]
    for all $i$,
    we obtain morphisms $\nu_i \colon \sE_i \to \sF_i [d-1]$, and we call $\sG_i$ the cocone of this morphism by $\sG_i$.
     Here, $\bS_\E$ denotes the Serre functor on $\thick ( \E )$.
    \item We obtain thus a different perfect generator $\sE \oplus \sG$  of $\tilde{\AA}$.
    In particular, 
    there is an equivalence 
    \[
    \tilde{\AA} \cong \DD_{\mathrm{fg}} ( \tilde{A}^\bullet ) ,
    \]
    where $\tilde{A}^\bullet$ is a $\dg$-algebra with $p$-th cohomology isomorphic to $\Hom ( \sT , \sT [p] )$, where $\sT = \bigoplus_i \sG_i \oplus \bigoplus_j \sF_j [d-2]$.
    \item We show that the $\Hom$-spaces $\Hom ( \sG_i , \sG_j [p] )$ fit into a long exact sequence 
    \[
    \ldots \to \Hom ( \sE_i , \bS^{-1}_\sE \sE_j [p+d-2] ) \to \Hom ( \sG_i , \sG_j [p] ) \to \Hom ( \sE_i , \sE_j [p] ) \to \ldots
    \]
    for all $i , j$ and $p$.
    \item If $\E$ is geometric of type $(m,k)$ with $k\leq d-1$, we obtain an induced equivalence $\tilde{\AA} / \ker\pi_\ast \cong \DD_{\mathrm{fg}} ( A^\bullet  )$, where $A^\bullet $ is a $\dg$-algebra with $p$-th cohomology isomorphic to $\Hom ( \sG , \sG [p] )$ and concentrated in degree $0$ and in (negative) degree $k-d+1$.
\end{itemize}

\subsection{Generators of \texorpdfstring{$\tilde{\AA}$}{ A\textasciicircum tilde}}
We use the notation from the previous section.
That is, let $X$ be a projective $d$-dimensional variety over $\mathbb{k}$ with a unique isolated Gorenstein singularity at $p \in X$ and assume there exists a crepant resolution $\pi \colon Y \to X$ of $X$.
Moreover, let $\bT$ be an autoequivalence of $\Db(Y)$, such that $\pi_\ast$ is invariant under $\bT $.
Finally, assume there is an exceptional collection $\E = (  \sE_1 , \ldots , \sE_m )$ in $\Db(Y)$, such that $\E$ and $\bT$ are pre-adherent and denote $\tilde{\AA} = \langle \E , \bT ( \E ) \rangle$.

\begin{lemma}\label{lem:hom-K-F}
    Let $\E$ and $\bT$ be as above. 
    Let $\sK_i$ be the $d$-spherical object corresponding to $\sE_i$, for all $1\leq i \leq m$.
    Set
    \[
    \sF_i := \bS^{-1}_{\bT\E}  \bT ( \sE_i ) \in \thick \bT( \E ) ,
    \]
    where $\bS_{\bT\E}$ denotes the Serre functor on $\thick\bT ( \E )$.
    Then there are unique (up to a scalar) non-trivial morphisms $\xi_i \colon \sK_i \to \sF_i [d]$ in $\Db(Y)$ satisfying:
       \begin{enumerate}
          \item\label{lem:hom-K-F-a} There is an isomorphism of $\Hom$-spaces
        \[
        \Hom^\bullet ( \sK_i , \sF_j ) = \Hom^\bullet ( \sF_i , \sF_j  ) \cdot \xi_i 
        \]
        for all $1 \leq i , j \leq m$.
        \item\label{lem:hom-K-F-12} For all $1\leq i , j \leq m$ and all morphisms $e$ in $\Hom^\bullet  ( \sE_i , \sE_j )$ the equality
        \[
        \xi_j \cdot \bK ( e ) = \bS^{-1}_{\bT\E} \bT (e) [d] \cdot \xi_i   
        \]
        in $\Hom^\bullet ( \sK_i , \sF_j [d] )$
        is satisfied.
        Here, $\bK (e) \in \Hom^\bullet ( \sK_i , \sK_j  )$ is the induced morphism as defined in Remark \ref{rmk:delta-commut}.
    \end{enumerate}
\end{lemma}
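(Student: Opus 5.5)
The plan is to realise $\Hom^\bullet(\sK_i,-[d])$, restricted to $\thick\bT(\E)$, as a representable functor, represented by $\sF_i$. Both parts of the lemma should then come out of the Yoneda lemma.

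\emph{Step 1: a natural isomorphism.} Fix $\sG\in\thick\bT(\E)$. By Lemma \ref{lem:dCY-kernel} and Serre duality on $Y$ we have $\Hom^\bullet(\sK_i,\sG[d])\cong\Hom^\bullet(\sG,\sK_i)^\ast$, natural in $\sG$.

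Apply $\Hom^\bullet(\sG,-)$ to the triangle \eqref{eq:main-distinguished-triangle}. Pre-adherence gives $\Hom^\bullet(\bT\E,\E)=0$, so the terms $\Hom^\bullet(\sG,\sE_i)$ and $\Hom^\bullet(\sG,\sE_i[1])$ vanish. Hence $\varphi_i\cdot(-)\colon\Hom^\bullet(\sG,\bT\sE_i)\to\Hom^\bullet(\sG,\sK_i)$ is an isomorphism; this is the argument of Lemma \ref{lem:hom-computation}, now extended from $\bT\sE_j$ to all of $\thick\bT(\E)$.

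Finally, Serre duality inside $\thick\bT(\E)$ gives
\[
\Hom^\bullet(\sG,\bT\sE_i)^\ast\cong\Hom^\bullet(\bS^{-1}_{\bT\E}\bT\sE_i,\sG)=\Hom^\bullet(\sF_i,\sG).
\]
Composing the three steps yields an isomorphism $\Phi_{i,\sG}\colon\Hom^\bullet(\sF_i,\sG)\xrightarrow{\cong}\Hom^\bullet(\sK_i,\sG[d])$, natural in $\sG\in\thick\bT(\E)$.

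\emph{Step 2: part \eqref{lem:hom-K-F-a} and uniqueness.} Define $\xi_i:=\Phi_{i,\sF_i}(\id_{\sF_i})$. By Yoneda, naturality forces $\Phi_{i,\sG}(f)=f[d]\cdot\xi_i$ for every $f$. Taking $\sG=\sF_j$ gives \eqref{lem:hom-K-F-a}.

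For uniqueness, take $\sG=\sF_i$ in degree $0$. Since $\bS^{-1}_{\bT\E}\bT$ is an equivalence onto its image, $\Hom(\sK_i,\sF_i[d])\cong\End(\sF_i)\cong\End(\sE_i)=\mathbb{k}$, so $\xi_i$ is unique up to a scalar.

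\emph{Step 3: part (b).} I would show that $\Phi$ is also natural in the index, with respect to morphisms $e\in\Hom^\bullet(\sE_i,\sE_j)$. On the source side such an $e$ acts through $\bS^{-1}_{\bT\E}\bT(e)$; on the target side it acts through $\bK(e)$. Naturality of each of the three isomorphisms in Step 1 in this variable is as follows.
\begin{itemize}
\item Serre duality on $Y$ and Serre duality on $\thick\bT(\E)$ are functorial in both arguments.
\item The middle isomorphism intertwines $\bT(e)$ and $\bK(e)$ because $\varphi_j\cdot\bT(e)=\bK(e)\cdot\varphi_i$. This is the commutative diagram of Remark \ref{rmk:delta-commut}.
\end{itemize}
Granting this, $\Phi_{j,\sG}(f)\cdot\bK(e)=\Phi_{i,\sG}\bigl(f\cdot\bS^{-1}_{\bT\E}\bT(e)\bigr)$ for all $f$. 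Taking $\sG=\sF_j$ and $f=\id_{\sF_j}$, and using Step 2, gives $\xi_j\cdot\bK(e)=\bS^{-1}_{\bT\E}\bT(e)[d]\cdot\xi_i$, which is part (b). Keeping track of the degree shifts for graded $e$ should be routine.

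The main obstacle I expect is this bifunctoriality. The morphism $\bK(e)$ is only canonically defined after a dg-enhancement of $\eta$, and a priori the identities above might hold only up to scalars. To handle this, I would fix the normalisations of all $\xi_i$ once, via the Yoneda definition in Step 2, so that everything is phrased as naturality of a single bifunctorial isomorphism on $\thick(\E)\times\thick\bT(\E)$. No rescaling then depends on $e$, and the dg-enhanced triangle of functors from Remark \ref{rmk:delta-commut} should give the required compatibility of $\varphi$ with morphisms.
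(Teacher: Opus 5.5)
Your proposal is correct and follows essentially the same route as the paper. The paper builds the same composite isomorphism $\Hom^\bullet(\sK_i,\sF_j[d])\cong\Hom^\bullet(\sF_j,\sK_i)^\ast\cong\Hom^\bullet(\sF_j,\bT\sE_i)^\ast\cong\Hom^\bullet(\sF_i,\sF_j)$ and defines $\xi_i$ as the preimage of the identity; it then proves (a) and (b) by explicit diagram chases that use the bifunctoriality of Serre duality, the square $\varphi_j\cdot\bT(e)=\bK(e)\cdot\varphi_i$, and the naturality of the Calabi--Yau isomorphism $\id\cong\bS_Y[-d]$ on $\ker\pi_\ast$ from Lemma \ref{lem:dCY-kernel}. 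That last naturality is an ingredient you should cite explicitly in Step 3, and your Yoneda formulation packages the argument more cleanly; your worry about scalars is unnecessary, since only that commuting square enters, and it holds exactly for any $\bK(e)$ completing the morphism of triangles.
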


\begin{proof}
    We start by constructing non-trivial morphisms $\xi_{i} \colon \sK_i \to \sF_i [d]$, for all $i$.
We note first that $\sF_j $ is contained in $\thick \bT ( \E)$.
In particular, as $\E$ and $\bT$ are pre-adherent, we see that the vanishing
\begin{equation}\label{eq:vanishing}
    \Hom^\bullet ( \sF_j , \sE_i ) =  0 
\end{equation}
holds for all $i$ and $j$ (as $\sE_i$  is in $\thick ( \E)$).
We recall the distinguished triangle 
\begin{equation}\label{eq:main-distinguished-triangle-proof}
    \sE_i \xrightarrow{\eta_i } \bT  \sE_i   \xrightarrow{\varphi_i } \sK_i ,
\end{equation}
see \eqref{eq:main-distinguished-triangle}.
We apply $\Hom^\bullet ( \sF_j , - )$ onto \eqref{eq:main-distinguished-triangle-proof} and we obtain a distinguished triangle
\[
\Hom^\bullet ( \sF_j  , \sE_i  ) \xrightarrow{\eta_i \cdot (-)} \Hom^\bullet ( \sF_j , \bT  \sE_i    ) \xrightarrow{\varphi_i \cdot (-)} \Hom^\bullet ( \sF_j , \sK_i ) 
\]
in $\Db (\mathbb{k})$.
The first term vanishes, see \eqref{eq:vanishing} above.
We remain with an isomorphism
\begin{equation}\label{eq:hom_F_and_K}
    \Hom^\bullet ( \sF_j , \bT  \sE_i    ) \xrightarrow[\cong]{\varphi_i \cdot (-)} \Hom^\bullet ( \sF_j , \sK_i  ) .
\end{equation}
Furthermore, we have the following chain of natural isomorphisms
\[
\Hom^\bullet ( \sF_j , \bT  \sE_i     ) = \Hom^\bullet ( \bS^{-1}_{\bT\E}\bT  \sE_j   , \bT \sE_i   ) \cong \Hom^\bullet (  \bT  \sE_i  , \bT  \sE_j   )^\ast \cong \Hom^\bullet ( \sF_i , \sF_j  )^* 
\]
In the second equality, we used Serre duality on $\thick ( \bT ( \sE ) )$.
Combining this chain of isomorphism with \eqref{eq:hom_F_and_K}, we obtain
\begin{equation}\label{eq:hom_K_and_F}
    \Hom^\bullet ( \sK_i , \sF_j   ) \cong \Hom^\bullet ( \sF_j    , \sK_i [d] )^\ast \cong \Hom^\bullet ( \sF_i , \sF_j ) [-d]  . 
\end{equation}
Note that in the first isomorphism we used Serre duality on $\Db (Y)$ and the fact that $\sK_i$ is $d$-spherical by Proposition \ref{prop:sph-obj}.
In particular, for $i=j$ there is a unique non-trivial morphism $\xi_i \colon \sK_i \to \sF_i  [d]$ corresponding to the identity morphism $\id \colon \sF_i \to \sF_i $ via the above isomorphism.
In the following, we use isomorphism \eqref{eq:hom_K_and_F} to show  \eqref{lem:hom-K-F-a} and \eqref{lem:hom-K-F-12}.

\eqref{lem:hom-K-F-a}:
This follows by the bi-functoriality of the isomorphism \eqref{eq:hom_K_and_F}.
To show this, let us first recall what we mean by bi-functoriality.
Let $\DD$ be a $\Hom$-finite triangulated category with a Serre functor $\bS_\DD$, and denote by $\phi^{\DD} \colon \Hom ( \sB ,  \bS_\DD \sA  ) \xrightarrow{\cong} \Hom ( \sA ,   \sB  )^\ast$ the corresponding isomorphism.
Then for morphism $f \colon \sA \to \sC$ and $g\colon \sD \to \sB$ we have that the following two commutative  diagrams
\begin{equation}\label{eq:Serre-diag}
\xymatrix{
    \Hom (  \sB , \bS_\DD \sA ) \ar@{->}[d]^{\phi^{\DD}}_\cong \ar@{->}[rr]^{ \bS_\DD ( f ) \cdot (-) } & & \Hom ( \sB , \bS_\DD \sC ) \ar@{->}[d]^{\phi^{\DD}}_{\cong} \\
 \Hom ( \sA,   \sB  )^\ast \ar@{->}[rr]^{(\ast \cdot f )^\ast } & & \Hom ( \sC ,   \sB   )^\ast },
 \xymatrix{
    \Hom (  \sB , \bS_\DD \sA ) \ar@{->}[d]^{\phi^{\DD}}_\cong \ar@{->}[rr]^{ (-) \cdot g } & & \Hom ( \sD , \bS_\DD \sA ) \ar@{->}[d]^{\phi^{\DD}}_{\cong} \\
 \Hom ( \sA ,   \sB  )^\ast \ar@{->}[rr]^{( g \cdot \ast )^\ast } & & \Hom ( \sA ,   \sD   )^\ast },
\end{equation}
where $( \ast \cdot f )^\ast$ and $(g \cdot \ast )^\ast$ are the dual moprhisms of the morphisms of $\Hom$-spaces $(-) \cdot f \colon \Hom ( \sC , \sB ) \to \Hom ( \sA ,  \sB )$ and $g \cdot (-) \colon \Hom ( \sA ,  \sD ) \to \Hom ( \sA ,  \sB )$, respectively.

Now, for any morphism $f \colon \sF_i \to \sF_j [m]$, the commutativity of the second diagram in \eqref{eq:Serre-diag} with $\DD = \Db(Y)$, $ \sB = \bS_Y \sF_i $, $\sD = \sK_i \cong \bS_Y \sK_i [-d]$,  and $\sA = \sF_j [m]$
implies  that there is the following equality
\begin{equation}\label{eq:Serre-equality-1}
    \phi^Y ( \bS_Y ( f  \cdot \xi_i [-d] ) ) = ( \bS_Y (\xi_i [-d] ) \cdot \ast  )^\ast \phi^Y( \bS_Y (f ) ) \in \Hom ( \sF_j [m]  , \sK_i  )^\ast .
\end{equation}
Note that we omit to write out the identification $\bS_Y^{-1}\sK_i [d] \cong \sK_i$ equality above.
By abuse of notation, we will continue to omit this isomorphism throughout the proof.
Anyway, applying the isomorphism \eqref{eq:hom_F_and_K} to the previous equality, we obtain the equality
\begin{equation}\label{eq:Serre-equality-2}
    (\varphi_i \cdot \ast )^\ast \cdot \phi^Y ( \bS_Y (f \cdot \xi_i [-d] ) ) = ( \bS_Y (\xi_i [-d] ) \cdot \varphi_i  \cdot \ast  )^\ast \phi^Y( \bS_Y (  f  ) ) \in \Hom ( \sF_j [m]  , \bT \sE_i  )^\ast .
\end{equation}
In particular, for $i = j$ and  $f= \id_{\sF_i }$ we get the equality
\begin{equation}\label{eq:Serre-equality-23}
    (\varphi_i \cdot \ast )^\ast \cdot \phi^Y ( \bS_Y (  \xi_i [-d]) ) = ( \bS_Y (\xi_i [-d]) \cdot \varphi_i  \cdot \ast  )^\ast \phi^Y( \id_{\bS_Y(\sF_i ) } ) \in \Hom ( \sF_i    , \bT \sE_i  )^\ast .
\end{equation}
Similarly, 
by commutativity of the first diagram in \eqref{eq:Serre-diag} with $\DD = \thick \bT \E$, $\sB = \bT \sE_i$, $\sA = \bT\sE_i \cong  \bS_{\bT\E}\sF_i$, and $\sC = \sF_j [m]$,
we obtain the equality
\begin{equation}\label{eq:Serre-equality-3}
    \phi^{\bT\E} ( \bS_{\bT\E}(f)) = \phi^{\bT\E} ( \bS_{\bT\E}(f) \cdot \id_{\bT\sE_i} ) = ( \ast \cdot f)^\ast \phi^{\bT\E} ( \id_{\bT\sE_i} ) \in \Hom ( \sF_j [m]  , \bT\sE_i )^\ast .
\end{equation}
By definition of $\xi_i$, we have that the equality
\begin{equation}\label{eq:id}
    \phi^{\bT\E}(\id_{\bT\sE_i}) = (\varphi_i \cdot \ast )^\ast  \phi^Y (  \bS_Y ( \xi_i [-d] ) ) \in \Hom ( \sF_i   , \bT\sE_i )^\ast .
\end{equation}
Inserting this equality in \eqref{eq:Serre-equality-3} and using the equality \eqref{eq:Serre-equality-23} we get
\begin{align*}
    \phi^{\bT\E} ( \bS_{\bT\E}(f)  ) & = (\ast \cdot f)^\ast (\varphi_i \cdot \ast )^\ast \cdot \phi^Y ( \bS_Y ( \xi_i [-d] ) ) \\
    & = ((\ast \cdot f)^\ast ( \bS_Y (\xi_i [-d]) \cdot \varphi_i \cdot \ast  )^\ast \phi^Y( \id_{\bS_Y ( \sF_i ) } )  \\
    & =  ( \bS_Y (\xi_i [-d] ) \cdot \varphi_i \cdot \ast  )^\ast ( \ast \cdot f)^\ast \phi^Y( \id_{\bS_Y ( \sF_i ) } ) \\
    & = ( \bS_Y (\xi_i [-d]) \cdot \varphi_i \cdot \ast  )^\ast \phi^Y( \bS_Y ( f ) ) 
\end{align*}
in $ \Hom ( \sF_j [m]  , \bT\sE_i )^\ast $, where we used commutativity of the first diagram in \ref{eq:Serre-diag} with $\DD = \Db(Y)$, $\sB= \bS_Y \sF_i $, $\sA = \sF_i$ and $\sC = \sF_j [m]$.
Combining this equality with \eqref{eq:Serre-equality-2}, we get 
\[
\phi^{\bT\E} ( \bS_{\bT\E}(f)  ) 
= ( \bS_Y (\xi_i [-d]) \cdot \varphi_i \cdot \ast  )^\ast \phi^Y( \bS_Y (f ) ) 
= (\varphi_i \cdot \ast )^\ast \cdot \phi^Y ( \bS_Y  ( f \cdot \xi_i [-d] ) ) \in  \Hom ( \sF_j  , \bT\sE_i )^\ast,
\]
which shows that the isomorphism \eqref{eq:hom_K_and_F} sends $f \cdot \xi_i [-d]$ to $f$.
This finishes the proof of \eqref{lem:hom-K-F-a}.

\eqref{lem:hom-K-F-12}:
To show \eqref{lem:hom-K-F-12}, 
we note first that  there is an isomorphism of functors $\sigma \colon \id \cong   \bS_Y [-d]$ on $\ker \pi_\ast$ by Lemma \ref{lem:dCY-kernel}.
In particular,  the commutativity $\sigma_j \cdot \bK (e) = \bS_Y \bK(e) [-d] \cdot \sigma_i$ holds, where $\sigma_j = \sigma_{\sK_j }$ and $\sigma_i = \sigma_{\sK_i }$.
Similarly as in the proof of \eqref{lem:hom-K-F-a}, we will omit writing out the transformation $\sigma$, by abuse of notation.
Next, we use that the second diagram in \ref{eq:Serre-diag} with $\DD = \Db(Y)$, $\sB = \sK_j $, $\sA = \sF_j $ and $\sD = \sK_i$ 
to obtain the equality
\[
\phi^Y ( \bS_Y ( \xi_j \cdot \bK (e ) [-d] )   ) = \phi^Y ( \bS_Y ( \xi_j \cdot [-d] )  \cdot \bK (e ) )  = ( \bK (e) \cdot \ast  )^\ast   \phi^Y ( \bS_Y ( \xi_j ) [-d] )  \in \Hom^\bullet ( \sF_j   , \sK_i  )^\ast ,
\]
and the first equality holds by abuse of notation as above.
Anyway, applying the isomorphism \eqref{eq:hom_F_and_K} to this equality, we obtain 
\begin{equation*}
    ( \varphi_i  \cdot \ast  )^\ast \phi^Y ( \bS_Y (\xi_j \cdot \bK (e )) [-d]  )  = ( \varphi_i  \cdot \ast  )^\ast( \bK (e)\cdot \ast  )^\ast   \phi^Y ( \xi_j )= ( \bK (e)\cdot \varphi_i  \cdot \ast  )^\ast \phi^Y ( \bS_Y ( \xi_j [-d] ) )
\end{equation*}
in $\Hom ( \sF_j   , \bT \sE_i )^\ast$.
Using that $\varphi \colon \bT \to \bK$ is a natural transformation, we have the following equality
\[
( \varphi_i  \cdot \ast  )^\ast \phi^Y ( \bS_Y (\xi_j \cdot \bK (e ) )[-d]  )  = ( \bK (e)\cdot \varphi_i  \cdot \ast  )^\ast \phi^Y ( \bS_Y ( \xi_j [-d] ) ) 
=  ( \varphi_j \cdot \bT (e)  \cdot \ast  )^\ast \phi^Y ( \bS_Y ( \xi_j [-d] ) )
\]
in $\Hom ( \sF_j   , \bT \sE_i )^\ast$.
Thus, we conclude the equality
\begin{equation}\label{eq:last-eqn}
    ( \varphi_i  \cdot \ast  )^\ast \phi^Y ( \bS_Y (\xi_j \cdot \bK (e ) [-d]  )  = ( \bT (e)  \cdot \ast )^\ast ( \varphi_j  \cdot \ast  )^\ast \phi^Y ( \bS_Y ( \xi_j [-d] ) ) = ( \bT (e)  \cdot \ast  )^\ast \phi^{\bT\E } (\id_{\bT\sE_j} ),
\end{equation}
in $\Hom ( \sF_j   , \bT \sE_i )^\ast$,
where we used the equality \eqref{eq:id} in the second equation. 
Further, 
using commutativity of the second  diagram in \ref{eq:Serre-diag} with $\DD = \thick \bT \E$, $\sB = \bT \sE_j $, $\sA = \sF_j \cong \bS^{-1}_{\bT\E}\bT\sE_j$, and $\sD = \bT\sE_i$,
we get the equality
\[
\phi^{\bT\E} ( \bT(e) ) 
= ( \bT (e) \cdot \ast  )^\ast \phi^{\bT\E } ( \id_{\bT\sE_j} )  
=  ( \varphi_i  \cdot \ast  )^\ast \phi^Y ( \bS_Y  ( \xi_j \cdot \bK (e )  ) [-d] ) \in \Hom ( \sF_j   , \bT \sE_i )^\ast ,
\]
where we used the previous equality \eqref{eq:last-eqn} in the second equation.
This equality shows that the isomorphism \eqref{eq:hom_K_and_F} sends $( \xi_j \cdot \bK (e) ) [-d] = \xi_j [-d] \cdot \bK(e) [-d]$ to $\bS^{-1}_{\bT\E} \bT (e)$, but by \eqref{lem:hom-K-F-a} the inverse of \eqref{eq:hom_K_and_F} sends $\bS^{-1}_{\bT\E} \bT (e)$ to $\bS^{-1}_{\bT\E} \bT (e)  \cdot \xi_i [-d]$.
In other words, we have shown \eqref{lem:hom-K-F-12}.
This finishes the proof of the lemma.
\end{proof}

\begin{lemma}\label{lem:nu-iso}
    Let $\E$ and $\bT$ be as above.
    Let $\sF_1  , \ldots , \sF_m $ and $\xi_i\colon \sK_i \to \sF_i [d] $ in $\Db(Y)$ be as in Lemma \ref{lem:hom-K-F}.
    Assume that the equality of $\Hom$-spaces
    \[
    \Hom ( \sE_i , \bS^{-1}_{\E} \sE_i [d] ) =0 
    \]
    holds for all $1\leq i \leq m$.
    There are non-trivial morphisms $\nu_i \colon \sE_j \to \sF_j [d-1]$ in $\Db(Y)$ for all $1\leq j \leq m$,
    satisfying the equality 
        \begin{equation}\label{eq:nu-def}
    \nu_i \cdot \delta_i = \xi_i \in  \Hom ( \sK_i , \sF_i [d] ) \cong \mathbb{k} \cdot \xi_i  
    \end{equation}
    for all $i$, and such that there is an isomorphism of $\Hom$-spaces
    \begin{equation}\label{eq:nu-iso}
         \Hom^\bullet ( \sK_i , \sE_j ) \xrightarrow{\nu_j \cdot (-)} \Hom^\bullet ( \sK_i , \sF_j  [d-1] )
    \end{equation}
        for all $i $ and $j$.
\end{lemma}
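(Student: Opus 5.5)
The plan is to build $\nu_i$ by lifting $\xi_i$ along $\delta_i$, and then to obtain \eqref{eq:nu-iso} by comparing the two descriptions of the $\Hom$-spaces from Lemma \ref{lem:hom-computation} and Lemma \ref{lem:hom-K-F}.

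\emph{Construction of $\nu_i$.} Apply $\Hom(-,\sF_i[d])$ to the triangle \eqref{eq:main-distinguished-triangle}. This gives an exact sequence
\[
\Hom(\sE_i[1],\sF_i[d]) \xrightarrow{(-)\cdot\delta_i} \Hom(\sK_i,\sF_i[d]) \xrightarrow{(-)\cdot\varphi_i} \Hom(\bT\sE_i,\sF_i[d]).
\]
So $\xi_i$ lifts to some $\nu_i\colon \sE_i\to\sF_i[d-1]$ with $\nu_i\cdot\delta_i=\xi_i$ as soon as the last term vanishes. I would show this vanishing as follows. Since $\bT$ is an autoequivalence, it restricts to an equivalence $\thick(\E)\xrightarrow{\cong}\thick\bT(\E)$. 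By uniqueness of Serre functors, this equivalence intertwines $\bS_\E$ and $\bS_{\bT\E}$. Hence
\[
\Hom(\bT\sE_i,\sF_i[d])=\Hom(\bT\sE_i,\bS^{-1}_{\bT\E}\bT\sE_i[d])\cong \Hom(\sE_i,\bS^{-1}_\E\sE_i[d])=0
\]
by hypothesis. Because $\xi_i\neq 0$, the lift $\nu_i$ is automatically non-trivial. This gives \eqref{eq:nu-def}.

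\emph{The isomorphism \eqref{eq:nu-iso}.} By Lemma \ref{lem:hom-computation} \eqref{eq:hom-computation-iso1}, every element of $\Hom^\bullet(\sK_i,\sE_j)$ has the form $e\cdot\delta_i[-1]$ for a unique $e\in\Hom^\bullet(\sE_i,\sE_j)$. By Lemma \ref{lem:hom-K-F} \eqref{lem:hom-K-F-a}, every element of $\Hom^\bullet(\sK_i,\sF_j[d-1])$ has the form $f\cdot\xi_i[-1]$ for a unique $f\in\Hom^\bullet(\sF_i,\sF_j)$. I then compute the image of $e\cdot\delta_i$ under $\nu_j\cdot(-)$. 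The commutative diagram of Remark \ref{rmk:delta-commut} gives $e[1]\cdot\delta_i=\delta_j\cdot\bK(e)$. Using this together with \eqref{eq:nu-def} and Lemma \ref{lem:hom-K-F} \eqref{lem:hom-K-F-12}, I expect, up to the obvious shifts,
\[
\nu_j\cdot e\cdot\delta_i=\nu_j\cdot\delta_j\cdot\bK(e)=\xi_j\cdot\bK(e)=\bS^{-1}_{\bT\E}\bT(e)[d]\cdot\xi_i .
\]
Thus, under the two identifications above, $\nu_j\cdot(-)$ becomes the map $e\mapsto\bS^{-1}_{\bT\E}\bT(e)$. This map is an isomorphism $\Hom^\bullet(\sE_i,\sE_j)\cong\Hom^\bullet(\sF_i,\sF_j)$, being the composite of two equivalences, and so \eqref{eq:nu-iso} is an isomorphism.

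\emph{Expected difficulty.} The main care is bookkeeping of shifts and signs. In particular, the identity $e[1]\delta_i=\delta_j\bK(e)$ for graded morphisms $e$ of nonzero degree requires the $\dg$-enhanced triangle of functors from Remark \ref{rmk:delta-commut}, and any sign it introduces does not affect bijectivity. The rest follows directly from the cited lemmas.
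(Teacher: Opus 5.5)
Your proposal is correct and follows the paper's proof essentially step by step. You lift $\xi_i$ along $\delta_i$ using the vanishing $\Hom(\bT\sE_i,\sF_i[d])\cong\Hom(\sE_i,\bS^{-1}_{\E}\sE_i[d])=0$, and then use the chain $\nu_j e\delta_i=\nu_j\delta_j\bK(e)=\xi_j\bK(e)=\bS^{-1}_{\bT\E}\bT(e)[d]\xi_i$ from Remark \ref{rmk:delta-commut} and Lemma \ref{lem:hom-K-F}; rewriting $\nu_j\cdot(-)$ through Lemma \ref{lem:hom-computation} \eqref{eq:hom-computation-iso1} and Lemma \ref{lem:hom-K-F} \eqref{lem:hom-K-F-a} as the bijection $e\mapsto\bS^{-1}_{\bT\E}\bT(e)$ only makes explicit what the paper's separate surjectivity and injectivity argument uses implicitly.
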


\begin{proof}
 By assumption, the composition of the natural morphism $\varphi_i \colon \bT\sE_i \to \sK_i $ (see \eqref{eq:main-distinguished-triangle} for the definition) with $\xi_i \colon \sK_i \to \sF_i [d]$ vanishes for all $i$, as $\xi_i \cdot \varphi_i$ is contained in
\[
\Hom (\bT \sE_i , \sF_j [d] ) = \Hom ( \bT\sE_i , \bS^{-1}_{\bT \E} \bT \sE_i[d]  ) \cong \Hom ( \sE_i , \bS^{-1}_{\E}\sE_i [d]) =0 , 
\]
using the commutativity of the Serre functor with autoequivalences in the second equality, i.e. the natural isomorphism of functors $\bS_{\bT\E}^{-1} \bT \cong \bT \bS^{-1}_{\E}$.

Next, we show that there are non-trivial morphisms $\nu_i  \colon \sE_i \to \sF_i [d-1]$ satisfying \eqref{eq:nu-def}, for all $i$.
Applying $\Hom^\bullet ( - , \sF_i [d-1] )$ onto the distinguished triangle 
\eqref{eq:main-distinguished-triangle} 
yields a long exact sequence
\[
\ldots \xrightarrow{(-)\cdot \varphi_i} \Hom ( \bT  \sE_i   , \sF_i [d-1] ) 
\xrightarrow{(-) \cdot \eta_i } \Hom ( \sE_i , \sF_i [d-1] ) \xrightarrow{(-)\cdot \delta_i} \Hom ( \sK_i  , \sF_i [d] ) \cong \mathbb{k} \cdot \xi_i \xrightarrow{(-)\cdot \varphi_i} \ldots
\]
of $\Hom$-spaces.
Note that we used Lemma \ref{lem:hom-K-F} \eqref{lem:hom-K-F-a} for $i=j$ in the third term.
We see that the fourth morphism in this long exact sequence is a zero morphism by the explanation above
and thus 
there exists a morphism $\nu_i \colon \sE_i \to \sF_i  [d-1]$ satisfying the equality $\nu_i \cdot \delta_i = \xi_i$ in $\Hom (\sK_i , \sF_i  [d])$.\footnote{It is clear from this argument that $\nu_i$ depends on a choice of a morphism $\bT\sE_i \to \sF_i [d-k]$. We show in the following that the isomorphism of $\Hom$-spaces \eqref{eq:nu-iso} is independent of this choice}
Clearly $\nu_i $ is non-trivial, because $\xi_i$ is a non-trivial morphism.

It remains to show 
that the isomorphism of $\Hom$-spaces \eqref{eq:nu-iso} holds.
For this we observe the following equality of morphisms for any $e$ in $\Hom^\bullet ( \sE_i , \sE_j )$:
\begin{equation}\label{eq:nu-iso-comm}
    \nu_j \cdot  e \cdot \delta_i = \nu_j \cdot \delta_j \cdot \bK(e)= \xi_j \cdot \bK (e) =  \bS^{-1}_{\bT\E}  \bT (e) [d] \cdot \xi_i
    \in  \Hom^\bullet ( \sK_i  , \sF_j  [ d ] ) =\Hom^\bullet (\sF_i , \sF_j ) \cdot \xi_i ,
\end{equation}
where we used the commutativity in Remark \ref{rmk:delta-commut} in the first equation, Lemma \ref{lem:hom-K-F} \eqref{lem:hom-K-F-12} in the third equation and Lemma \ref{lem:hom-K-F} \eqref{lem:hom-K-F-a} for the isomorphism of $\Hom$-spaces.
It follows from this that the isomorphism of $\Hom$-spaces \eqref{eq:nu-iso} holds. 
I.e. surjectivity in \eqref{eq:nu-iso} follows immediately from \eqref{eq:nu-iso-comm} and for injectivity we note that if $\nu_j \cdot  e \cdot \delta_i$ is the zero morphism, then so is $\bS^{-1} \bT (e) [d] \cdot \xi_i$ by \eqref{eq:nu-iso-comm} and by the isomorphism of $\Hom$-spaces in Lemma \ref{lem:hom-K-F} \eqref{lem:hom-K-F-a} it follows that $\bS^{-1}\bT ( e ) [d]$ is the zero morphism.
We conclude that $e$ is the zero morphism and thus \eqref{eq:nu-iso} is also injective.
This finishes the proof of the Lemma.
\end{proof}

\begin{proposition}\label{prop:G}
    Let $X$ be a projective $d$-dimensional variety over $\mathbb{k}$ with a unique isolated Gorenstein singularity at $p \in X$ and assume there exists a crepant resolution $\pi \colon Y \to X$ of $X$.
Let $\bT$ be an autoequivalence of $\Db(Y)$, such that $\pi_\ast$ is invariant under $\bT $ and assume there is an exceptional collection $\E = (  \sE_1 , \ldots , \sE_m )$ in $\Db(Y)$ satisfying
\[
\Hom ( \sE_i , \bS^{-1}_\E \sE_i [d] ) = 0
\]
for all $1\leq i  \leq m$,
and such that $\E$ and $\bT$ are pre-adherent. Set $\tilde{\AA} = \langle \E , \bT ( \E ) \rangle$.
Then there exist objects $\sG_1 , \ldots , \sG_m$ in $\tilde{\AA}$ fitting into a distinguished triangle
    \begin{equation}\label{eq:tilting-triangle}
        \sF_j[d-2] \xrightarrow{\alpha_j} \sG_j \xrightarrow{\beta_j} \sE_j \xrightarrow{\nu_j } \sF_j [d-1]
    \end{equation}
    for all $j$, satisfying:
\begin{enumerate}
    \item\label{lem:G-a} The objects $\bigoplus_i \sE_i$ and $\bigoplus_j \sG_j$ generate $\tilde{\AA}$.
    \item\label{lem:G-b} For all $i$ and all $j$ we have that the equality of $\Hom$-spaces
    \begin{equation}\label{eq:orthogonality-K-G}
            \Hom^\bullet ( \sK_i , \sG_j ) = 0  \quad \text{and} \quad \Hom^\bullet ( \sG_j , \sK_i) = 0
    \end{equation}
    hold.
    If $\E$ and $\bT$ are in addition adherent, then the equalities
    \begin{equation}\label{eq:orthogonality-K-G-adherent}
        \Hom ( \ker\pi_\ast , \sG_j ) = 0  \quad \text{and} \quad \Hom ( \sG_j , \ker\pi_\ast ) = 0    
    \end{equation}
    hold for all $j$.
    \item\label{lem:G-c} Set $\sG = \bigoplus_j \sG_j$ and $\sE= \bigoplus_i \sE_i$.
    There is a long exact sequence of $\Hom$-spaces
    \begin{equation}\label{eq:les-G}
            \ldots \to \Hom( \sE , \bS^{-1}_{\E} \sE [p+d-2] ) \xrightarrow{\alpha \cdot \bT(-) \cdot \eta \cdot \beta} \Hom ( \sG , \sG [p] ) \xrightarrow{q : = ( \ast \cdot \beta )^{-1} (\beta \cdot  (-) )} \Hom ( \sE , \sE [p] ) \to \ldots ,
    \end{equation}
    where $\bS_{\E}$ denotes the Serre functor on $\thick ( \E )$ and $\eta$ and $\beta$ are the (graded)  $\bigoplus_i \eta_i $ and $  \bigoplus_i \beta_i$ and $(\ast \cdot \beta)^{-1}$ is the direct sum of the inverses of the isomorphisms $(-)\cdot \beta_i \colon \Hom^\bullet ( \sE_i , \sG_j ) \xrightarrow{\cong} \Hom^\bullet ( \sG_i , \sG_j  )$.
\end{enumerate}
\end{proposition}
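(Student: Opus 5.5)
We define $\sG_j$ as the cocone of the morphism $\nu_j \colon \sE_j \to \sF_j[d-1]$ from Lemma \ref{lem:nu-iso}. This gives the triangle \eqref{eq:tilting-triangle} directly. Since $\sE_j \in \thick(\E)$ and $\sF_j \in \thick\bT(\E)$, the object $\sG_j$ lies in $\tilde{\AA}$.

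\emph{Part \eqref{lem:G-a}.} The triangle shows that $\sF_j$ lies in the thick subcategory generated by $\sE$ and $\sG$. The $\sF_j = \bS^{-1}_{\bT\E}\bT\sE_j$ generate $\thick\bT(\E)$, because $\bS^{-1}_{\bT\E}$ is an autoequivalence of that category. Together with the $\sE_i$ they therefore generate $\tilde{\AA}$.

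\emph{Part \eqref{lem:G-b}.} Apply $\Hom^\bullet(\sK_i,-)$ to \eqref{eq:tilting-triangle}. The connecting map is $\nu_j\cdot(-)\colon \Hom^\bullet(\sK_i,\sE_j)\to\Hom^\bullet(\sK_i,\sF_j[d-1])$, which is an isomorphism by \eqref{eq:nu-iso}. Hence $\Hom^\bullet(\sK_i,\sG_j)=0$.

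The other vanishing follows from Serre duality on $\Db(Y)$. By Lemma \ref{lem:dCY-kernel} we have $\bS_Y\sK_i\cong\sK_i[d]$, so
\[
\Hom^\bullet(\sG_j,\sK_i)\cong\Hom^\bullet(\sK_i,\sG_j[-d])^\ast=0 .
\]
Under adherence the $\sK_i$ classically generate $\ker\pi_\ast$, and both vanishings pass to thick closures. This gives \eqref{eq:orthogonality-K-G-adherent}.

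\emph{Part \eqref{lem:G-c}, first reduction.} Apply $\Hom^\bullet(\sG_i,-)$ to \eqref{eq:tilting-triangle} for $\sG_j$. This yields a triangle
\[
\Hom^\bullet(\sG_i,\sF_j[d-2])\to\Hom^\bullet(\sG_i,\sG_j)\to\Hom^\bullet(\sG_i,\sE_j).
\]
We then identify both outer terms.

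\emph{Identifying $\Hom^\bullet(\sG_i,\sE_j)$.} Apply $\Hom^\bullet(-,\sE_j)$ to the triangle for $\sG_i$. We have $\Hom^\bullet(\sF_i,\sE_j)=0$ by pre-adherence, see \eqref{eq:vanishing}. Hence $(-)\cdot\beta_i\colon\Hom^\bullet(\sE_i,\sE_j)\to\Hom^\bullet(\sG_i,\sE_j)$ is an isomorphism.

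\emph{Identifying $\Hom^\bullet(\sG_i,\sF_j[d-2])$.} Apply $\Hom^\bullet(-,\sF_j)$ to the triangle for $\sG_i$. We need $\Hom^\bullet(\sE_i,\sF_j)$ in terms of $\Hom^\bullet(\sF_i,\sF_j)$. Factor through $\bT\sE_i$ via $\eta_i$, and use $\Hom^\bullet(\sK_i,\sF_j)=\Hom^\bullet(\sF_i,\sF_j)\cdot\xi_i$ from Lemma \ref{lem:hom-K-F}\eqref{lem:hom-K-F-a}. Using the compatibility $\nu_i\cdot\delta_i=\xi_i$ of \eqref{eq:nu-def}, this should show two things:
\begin{itemize}
\item[(i)] the map $(-)\cdot\nu_i\colon\Hom^\bullet(\sF_i[d-1],\sF_j[d-2])\to\Hom^\bullet(\sE_i,\sF_j[d-2])$ is injective;
\item[(ii)] its cokernel is $\Hom^\bullet(\bT\sE_i,\sF_j[d-2])$, embedded via $(-)\cdot\eta_i$.
\end{itemize}
Granting (i) and (ii), we get $\Hom^\bullet(\sG_i,\sF_j[d-2])\cong\Hom^\bullet(\bT\sE_i,\sF_j[d-2])$, realized by $(-)\cdot\eta_i\cdot\beta_i$.

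Finally, $\bS^{-1}_{\bT\E}\bT\cong\bT\bS^{-1}_\E$ and $\bT$ is fully faithful, so
\[
\Hom^\bullet(\bT\sE_i,\sF_j[d-2])\cong\Hom^\bullet(\sE_i,\bS^{-1}_\E\sE_j[d-2]),
\]
given by $\bT(-)$. Composing with $\alpha_j$ recovers the first map in \eqref{eq:les-G}, and the second map becomes $q$. Summing over $i,j$ gives the long exact sequence.

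\emph{Main obstacle.} I expect the hardest step to be the identification of $\Hom^\bullet(\sG_i,\sF_j[d-2])$. One must check that the long exact sequences splice correctly and that the resulting maps are exactly the stated ones, $\alpha\cdot\bT(-)\cdot\eta\cdot\beta$ and $q$. This requires careful bookkeeping with Lemma \ref{lem:hom-K-F}\eqref{lem:hom-K-F-12} and \eqref{eq:nu-iso-comm}, and I would first try to verify it on the $\Hom^\bullet(\sK_i,-)$ level using the octahedral axiom applied to $\nu_i\cdot\delta_i=\xi_i$.
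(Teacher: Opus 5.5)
Your proposal is correct and follows the paper's proof essentially step by step. You define $\sG_j$ as the cocone of $\nu_j$ from Lemma \ref{lem:nu-iso}, derive part (b) from \eqref{eq:nu-iso} together with $\bS_Y\sK_i\cong\sK_i[d]$, and obtain part (c) by splicing the triangle from $\Hom^\bullet(\sG_i,-)$ with the two identifications you describe. (In your Serre duality line the shift should be $[d]$ rather than $[-d]$; this is harmless, since everything vanishes.)

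Your steps (i) and (ii) are not really an obstacle. The composite $(-)\cdot\nu_i\cdot\delta_i[-1]=(-)\cdot\xi_i[-1]\colon\Hom^\bullet(\sF_i[d-1],\sF_j)\to\Hom^\bullet(\sK_i[-1],\sF_j)$ is an isomorphism by Lemma \ref{lem:hom-K-F}\eqref{lem:hom-K-F-a}. Hence $(-)\cdot\nu_i$ is injective, and $(-)\cdot\eta_i$ embeds $\Hom^\bullet(\bT\sE_i,\sF_j)$ as a complement of its image. This is exactly the octahedral-axiom argument the paper runs on diagram \eqref{eq:diag-G} to conclude that $(-)\cdot\eta_i\cdot\beta_i$ is an isomorphism.
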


\begin{proof}
By Lemma \ref{lem:nu-iso} we can always find non-trivial morphisms $\nu_j \colon \sE_j \to \sF_j [d-1]$, for all $j$.    
We define $\sG_j$ to be the cocone of $\nu_j $ in $\tilde{\AA}$, for all $j$.
In particular, $\sG_j$ fits into a distinguished triangle of the form \eqref{eq:tilting-triangle}.
Moreover, it is clear from the distinguished triangles \eqref{eq:tilting-triangle} that $\bigoplus_i \sE_i$ and $\bigoplus_j \sG_j$ generate $\tilde{\AA} = \langle \E , \bT ( \E ) \rangle$, i.e. \eqref{lem:G-a} holds.
Furthermore, applying $\Hom^\bullet ( \sK_i , - )$ onto the distinguished triangle \eqref{eq:tilting-triangle}, we obtain a distinguished triangle
\[
\Hom^\bullet ( \sK_i , \sG_j  ) \to \Hom^\bullet ( \sK_i , \sE_j  ) \xrightarrow{\nu_j \cdot (-)} \Hom^\bullet ( \sK_i , \sF_j [d-1]  ) 
\]
in $\Db(\mathbb{k})$.
Since $\nu_j$ is chosen as in Lemma \ref{lem:nu-iso}, the isomorphism of $\Hom$-spaces \eqref{eq:nu-iso} in Lemma \ref{lem:nu-iso}, together with this distinguished triangle, implies that the equality of $\Hom$-spaces
\[
\Hom^\bullet ( \sK_i , \sG_j ) = 0
\]
holds for all $i$ and $j$.
Since $\sK_i$ is $d$-spherical, this equality  also implies
\[
\Hom^\bullet ( \sG_j , \sK_i ) = 0
\]
for all $i$ in $j$.
We have thus shown \eqref{eq:orthogonality-K-G}.
Moreover, if $\E$ and $\bT$ are adherent, then, by definition of adherence, $\ker \pi_\ast$ is generated by $\sK_1 , \ldots , \sK_m$.
Thus \eqref{eq:orthogonality-K-G-adherent} follows from \eqref{eq:orthogonality-K-G} and we conclude \eqref{lem:G-b}.

It remains to show \eqref{lem:G-c}. 
For this, 
we apply and $\Hom^\bullet ( - , \sF_j  )$ onto the triangles \eqref{eq:tilting-triangle} (here we exchange $i$ and $j$) and 
onto \eqref{eq:main-distinguished-triangle}
and we obtain following commutative diagram of distinguished triangles
in $\Db(\mathbb{k})$:
\begin{equation}\label{eq:diag-G}
\xymatrix{
     &  \Hom^\bullet ( \bT \sE_i  , \sF_j  ) \ar@{->}[d]^{(-)\cdot \eta_i } \ar@{->}[r]^{(-) \cdot \eta_i \cdot \beta_i } & \Hom^\bullet ( \sG_i , \sF_j  ) \ar@{->}[d]^{\id}_{=} \\
    \Hom^\bullet (  \sF_i [d-1]  , \sF_j ) \ar@{->}[d]^{\xi_i [-1] \cdot (-)}_{\cong} \ar@{->}[r]^{(-)\cdot \nu_i} &  \Hom^\bullet ( \sE_i  , \sF_j  ) \ar@{->}[d]^{(-)\cdot \delta_i [-1]} \ar@{->}[r]^{(-)\cdot \beta_i } & \Hom^\bullet ( \sG_i , \sF_j  ) \ar@{->}[d]^{} \\
 \Hom^\bullet ( \sK_i [-1] , \sF_j  ) \ar@{->}[r]^{\id}_{=} &  \Hom^\bullet (   \sK_i [-1] , \sF_j  ) \ar@{->}[r]^{} & 0 }
\end{equation}
Note here that the left lower square 
is commutative by  Lemma \ref{lem:nu-iso}) \eqref{eq:nu-def}
and the left most morphism in \eqref{eq:diag-G}
is an isomorphism by Lemma \ref{lem:hom-K-F} \eqref{lem:hom-K-F-a} and the third term is the identity and thus the first morphism is an isomorphism as well.
It follows then by the octahedral axiom that the top most morphism $(-)\cdot \eta_i \cdot \beta_i$ is an isomorphism in $\Db ( \mathbb{k})$.
There is thus a chain of isomorphisms
\begin{equation}\label{eq:first-term}
   \Hom^\bullet ( \sE_i , \bS^{-1}_{\E} \sE_j [d-2] ) \xrightarrow[\cong]{\bT} \Hom^\bullet ( \bT \sE_i , \sF_j [d-2] ) \xrightarrow[\cong]{(-)\cdot \eta_i \cdot \beta_i} \Hom^\bullet ( \sG_i , \sF_j [d-2] )  ,
\end{equation}
where we used the isomorphism of functors $\bS^{-1}_{\bT\E}  \bT \cong \bT  \bS^{-1}_{\E}$ in the second term.
Furthermore, applying $\Hom^\bullet ( - , \sE_i )$ onto \eqref{eq:tilting-triangle} implies the distinguished triangle 
\[
\Hom^\bullet (\sE_j , \sE_i ) \xrightarrow{(-)\cdot \beta_j} \Hom^\bullet ( \sG_j , \sE_i ) \to \Hom^\bullet ( \sF_j [d-2] , \sE_i ) 
\]
in $\Db(\mathbb{k})$.
The third term of of this triangle vanishes, as $\sF_j$ is contained in $\thick\bT( \E )$ and as $\E$ and $\bT$ are pre-adherent.
We remain thus with an isomorphism of $\Hom$-spaces
\begin{equation}\label{eq:third-term}
    \Hom^\bullet (\sE_j , \sE_i ) \xrightarrow[\cong]{(-)\cdot \beta_j} \Hom^\bullet ( \sG_j , \sE_i ) .
\end{equation}
Finally, applying $\Hom^\bullet ( \sG_i , - )$ onto \eqref{eq:tilting-triangle} we obtain a distinguished triangle 
\[
\Hom^\bullet ( \sG_i , \sF_j [d-2] ) \xrightarrow{\alpha_j \cdot (-)} \Hom^\bullet ( \sG_i , \sG_j ) \xrightarrow{\beta_j \cdot (-)} \Hom^\bullet ( \sG_i , \sE_j )
\]
and using \eqref{eq:first-term} and \eqref{eq:third-term} we conclude \eqref{eq:les-G}.
This finishes the proof of the proposition.
\end{proof}

\begin{corollary}\label{cor:tilting}
If the exceptional collection in the setup of Proposition \ref{prop:G} is geometric of type $(m,k)$ with $k\neq d$, then the generator $\sT : = \bigoplus_i \sF_i [d-2] \oplus \bigoplus_j \sG_j $ of $\tilde{\AA}$  induces an equivalence 
\[
\tilde{\AA} \cong \DD_{\mathrm{fg}} ( \tilde{A}^\bullet ) ,
\]
where $\tilde{A}^\bullet$ is a $\dg$-algebra with cohomologies concentrated in degrees $0$ and $k+1-d$.
In particular, if $k=d-1$, then $\sT$ is tilting and induces an equivalence
\[
\tilde{\AA} \cong \Db ( \End( \sT ) ) .
\]
\end{corollary}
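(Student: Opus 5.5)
Let $\sF := \bigoplus_i \sF_i$, $\sE := \bigoplus_i \sE_i$ and $\sG := \bigoplus_j \sG_j$. The plan is to compute the graded endomorphism algebra $\Hom^\bullet(\sT,\sT)$ of $\sT = \sF[d-2]\oplus\sG$ block by block, show that it is concentrated in degrees $0$ and $k+1-d$, and then use standard dg-enhancement arguments. These give $\tilde{\AA}\cong \Dperf(\tilde A^\bullet)$ with $\tilde A^\bullet = \operatorname{REnd}(\sT)$. Since $\tilde{\AA}$ has a full exceptional collection, it is $\Hom$-finite and smooth proper, so $\Dperf(\tilde A^\bullet)=\DD_{\mathrm{fg}}(\tilde A^\bullet)$. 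Generation is immediate: $\sT$ generates $\tilde{\AA}$ because the triangles \eqref{eq:tilting-triangle} recover each $\sE_j$ from $\sG_j$ and $\sF_j[d-2]$, and $\thick(\sF)=\thick\bT(\E)$, so Proposition~\ref{prop:G}\eqref{lem:G-a} applies.

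The four blocks are computed as follows.
\begin{itemize}
\item $\Hom^\bullet(\sF,\sF)\cong\Hom^\bullet(\sE,\sE)$, since $\bS^{-1}_{\bT\E}$ and $\bT$ are equivalences. Geometricity (with $i=j$ and exceptionality) puts this in degree $0$.
\item $\Hom^\bullet(\sF[d-2],\sG)$: apply $\Hom^\bullet(\sF,-)$ to \eqref{eq:tilting-triangle}. Pre-adherence gives $\Hom^\bullet(\sF,\sE)=0$, hence $\Hom^\bullet(\sF[d-2],\sG)\cong\Hom^\bullet(\sF,\sF)$, again in degree $0$.
\item $\Hom^\bullet(\sG,\sF[d-2])\cong\Hom^\bullet(\sE,\bS^{-1}_\E\sE[d-2])$ by \eqref{eq:first-term}.
\item $\Hom^\bullet(\sG,\sG)$ is controlled by the long exact sequence \eqref{eq:les-G}, whose outer terms are $\Hom^\bullet(\sE,\sE)$ (degree $0$) and $\Hom^\bullet(\sE,\bS^{-1}_\E\sE[d-2])$.
\end{itemize}

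The key step is to determine the degrees of $\Hom^p(\sE,\bS^{-1}_\E\sE[d-2])=\Hom(\sE_i,\bS^{-1}_\E\sE_j[p+d-2])$. The helix relation $\sE_{j+m}=\bS^{-1}_\E\sE_j[k-1]$ (the helix is taken in $\thick(\E)$ with Serre functor $\bS_\E$) turns this into $\Hom(\sE_i,\sE_{j+m}[p+d-1-k])$. Since $i<j+m$, geometricity forces $p=k+1-d$. Hence this block, and the corresponding part of $\Hom^\bullet(\sG,\sG)$, lives only in degree $k+1-d$.

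The main obstacle is making sure the connecting maps in \eqref{eq:les-G} do not create cohomology in other degrees. Since $k\neq d$, we have $k+1-d\neq 1$, so the only possible nonzero connecting map, from degree $p$ in $\Hom(\sE,\sE[p])$ to degree $p+1$ on the other side, would need $0+1=k+1-d$, which is excluded. The sequence therefore splits into short exact pieces, and $\Hom^\bullet(\sG,\sG)$ is concentrated in degrees $0$ and $k+1-d$. The same hypothesis $k\neq d$ also excludes the degenerate overlap in the assumption $\Hom(\sE_i,\bS^{-1}_\E\sE_i[d])=0$ needed for Proposition~\ref{prop:G}.

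Finally, when $k=d-1$ both degrees equal $0$. Then $\sT$ has no self-extensions in nonzero degrees and is a classical generator, i.e.\ tilting. The dg algebra $\tilde A^\bullet$ is formal with cohomology $\End(\sT)$, which gives $\tilde{\AA}\cong\Db(\End(\sT))$; here $\Dperf=\Db$ because $\End(\sT)$ has finite global dimension, being derived equivalent to a category with a full exceptional collection.
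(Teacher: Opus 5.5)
Your proposal is correct and follows the paper's route. The paper computes $\Hom^\bullet(\sT,\sT)$ block by block, using $\Hom^\bullet(\sF,\sE)=0$ from pre-adherence, the isomorphism \eqref{eq:first-term}, the long exact sequence \eqref{eq:les-G} and the helix identity $\bS^{-1}_\E\sE_j[k-1]=\sE_{j+m}$, and then applies Keller's theorem. Your extra justification of $\Dperf=\DD_{\mathrm{fg}}$ is fine, and so is your treatment of the connecting maps, although concentration of $\Hom^\bullet(\sG,\sG)$ already follows from exactness alone.

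One small slip in an unneeded aside: $\Hom(\sE_i,\bS^{-1}_\E\sE_i[d])\cong\Hom(\sE_i,\sE_{i+m}[d+1-k])$ vanishes because $k\neq d+1$, not because $k\neq d$. This does not affect the argument, since that vanishing is already part of the assumed setup of Proposition~\ref{prop:G}.
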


\begin{proof}
By a theorem of Keller \cite[Theorem in section 8.7]{keller} it is enough to show that $\Hom^\bullet ( \sT , \sT )$ is concentrated in degrees $0$ and $k+1-d$.
Since $\E$ is geometric, then so is $\F$ and thus $\Hom^\bullet ( \sF , \sF )$ is concentrated in degree $0$.
Moreover, by Proposition \ref{prop:G} \eqref{lem:G-c} the graded $\Hom$-space $\Hom^\bullet ( \sG , \sG )$ is concentrated in degrees $0$ and $k+1-d$.
Moreover, following the proof of Proposition \ref{prop:G} \eqref{lem:G-c} there is an isomorphism of $\Hom$-spaces
\[
\Hom^\bullet ( \sG_i , \sF_j [d-2] ) \cong \Hom^\bullet ( \sE_i , \bS^{-1}_{\E} \sE_j [d-2] ) 
\]
and since $\E$ is geometric, we have that the right-hand-side is concentrated in degree $k+1-d$ (recall here that the iterated right mutation $\bR^{m-1} ( \sE_j )$ agrees with $\bS^{-1}_{\E} \sE_j [k-1]$).
Finally, applying $\Hom^\bullet ( \sF_i [d-2] , - )$ to the distinguished triangle \eqref{eq:tilting-triangle}, we obtain 
\[
\Hom^\bullet ( \sF_i [d-2] , \sF_j [d-2] ) \to \Hom^\bullet ( \sF_i [d-2] , \sG_j  ) \to \Hom^\bullet (  \sF_i [d-2] , \sE_j )
\]
in $\Db(\mathbb{k})$.
Since $\sF_i $ is in $\thick\bT ( \E )$ and $\sE_j$ in $\thick\E$, we obtain by pre-adherence that the third term of this triangle vanishes.
We see in particular that $\Hom^\bullet ( \sF_i [d-2] , \sG_j  )$ is concentrated in degree $0$.
Summarizing, we have that 
\[
\Hom^\bullet ( \sT , \sT ) = \Hom^\bullet ( \sG \oplus \sF [d-2] , \sG \oplus \sF [d-2] ) 
\]
is concentrated in degrees $0$ and $k+1-d$.
\end{proof}

\subsection{The tilting case}
We specialize the previous subsections to the case where the exceptional collection $\E$ satisfying the adherence property (together with an autoequivalence $\bT$) is geometric of type $(m,k)$ (see Definition \eqref{def:geom-type}) and such that the equality $k = d-1$ holds.
We need the following well-known lemma:

\begin{lemma}\label{lem:equiv-bimod}
    Let $\CC$ and $\DD$ be additive categories and let $H\colon \CC \to\DD$ be an additive equivalence. Let $X, Y$ be objects in $\CC$. Then $H$ induces an isomorphism of $\End_\CC(Y)$-$\End_\CC(X)$-bimodules
    \begin{align*}
        \prescript{}{\End ( Y )}{\Hom_\CC(X, Y)}_{\End(X)} \xrightarrow{\cong} \prescript{}{\End ( Y )}{\Hom_\DD(HX, HY)}_{\End(X)} ,
    \end{align*}
    where $x \in \End_\CC(X)$ acts on $\Hom_\DD(HX, HY)$ by precomposition with $H(x)$ and similarly for $\End_\CC(Y)$. 
\end{lemma}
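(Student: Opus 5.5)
The plan is to verify the claim directly from the definitions, checking that the map is a bijection and that it respects both module structures. Define
\[
\Phi\colon \Hom_\CC(X,Y)\to\Hom_\DD(HX,HY), \qquad f\mapsto H(f).
\]
Since $H$ is an equivalence, it is fully faithful, so $\Phi$ is a bijection. Since $H$ is additive, $\Phi$ is additive. If $H$ is moreover $\mathbb{k}$-linear, as it is in all our applications, then $\Phi$ is $\mathbb{k}$-linear as well. Hence $\Phi$ is an isomorphism of abelian groups (respectively of $\mathbb{k}$-vector spaces), and it only remains to check compatibility with the two actions.

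On the source, $\End_\CC(Y)$ acts by postcomposition and $\End_\CC(X)$ acts by precomposition. On the target, the actions are the ones prescribed in the statement: $y\in\End_\CC(Y)$ acts by postcomposition with $H(y)$, and $x\in\End_\CC(X)$ acts by precomposition with $H(x)$. Functoriality of $H$ then gives
\[
\Phi(y\circ f\circ x)=H(y\circ f\circ x)=H(y)\circ H(f)\circ H(x)=y\cdot\Phi(f)\cdot x
\]
for all $f\in\Hom_\CC(X,Y)$, $x\in\End_\CC(X)$ and $y\in\End_\CC(Y)$. So $\Phi$ is a bimodule homomorphism. A bijective bimodule homomorphism is a bimodule isomorphism, because its inverse is automatically a bimodule map.

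There is no real obstacle here. The only point requiring care is to use the bimodule structure on the target as defined via $H$, rather than the natural $\End_\DD(HY)$-$\End_\DD(HX)$ structure. These two structures agree after restricting scalars along the ring isomorphisms $H\colon\End_\CC(X)\to\End_\DD(HX)$ and $H\colon\End_\CC(Y)\to\End_\DD(HY)$, which are isomorphisms again by full faithfulness. So the lemma also identifies the two bimodules up to this change of rings.
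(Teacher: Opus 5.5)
Your proof is correct. The paper records this lemma as well known and gives no proof, and your direct verification is exactly the standard argument it relies on: full faithfulness gives bijectivity, and functoriality $H(y\circ f\circ x)=H(y)\circ H(f)\circ H(x)$ gives compatibility with the two actions defined through $H$.
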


\begin{prop}\label{prop:algebra-extensions}
    Let $X$ be a projective $d$-dimensional variety over $\mathbb{k}$ with a unique isolated Gorenstein singularity at $p \in X$ and assume there exists a crepant resolution $\pi \colon Y \to X$ of $X$.
Let $\bT$ be an autoequivalence of $\Db(Y)$, such that $\pi_\ast$ is invariant under $\bT $ and assume there is a geometric exceptional collection $\E = (  \sE_1 , \ldots , \sE_m )$ of type $(m, d-1)$ in $\Db(Y)$, such that $\E$ and $\bT$ are adherent and set $\tilde{\AA} = \langle \E , \bT ( \E ) \rangle$.
Let $\sG $ be the object $\tilde{\AA}$ as defined in Proposition \ref{prop:G}.
Then $\End ( \sG )$ sits in the middle of the following short exact sequence
\begin{equation}\label{eq:ses-G}
    0 \to  \Hom(\sE, \bS^{-1}_\E\sE [d-2] ) \xrightarrow{\alpha \cdot \bT(-) \cdot \eta \cdot \beta} \End( \sG ) \xrightarrow{\varphi} \End( \sE ) \to 0 ,
\end{equation}
where $\sE = \bigoplus \sE_i$ and $\varphi$ is an algebra homomorphism.
\begin{enumerate}
\item\label{prop:algebra-extensions-3} Moreover, the image $I$ of $\Hom(\sE, \bS^{-1}_\E \sE [d-2])$ in $\End( \sG )$ satisfies 
\begin{equation}
    I^2=0 .
\end{equation}
\item\label{prop:algebra-extensions-2} 
Set $E:=\End ( \sE )$. Then the inclusion
\[
\Hom(\sE, \bS^{-1}_\E\sE [d-2] ) \xhookrightarrow{\alpha \cdot \bT(-) \cdot \eta \cdot \beta} \End( \sG )
\]
is an isomorphism of $E$-bimodules onto $I$.
Moreover,
this $E$-bimodule is isomorphic to  
\[
\Hom(E, \bS_E^{-1}(E)[d-2]) .
\]
\end{enumerate}
\end{prop}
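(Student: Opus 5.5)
The plan is to specialize the long exact sequence \eqref{eq:les-G} of Proposition \ref{prop:G} \eqref{lem:G-c} to the case $k=d-1$ and read off everything in degree $p=0$.

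\textbf{The short exact sequence.} Since $\E$ is geometric of type $(m,d-1)$, we have $\bS^{-1}_\E\sE_j \cong \bR^{m-1}(\sE_j)[-(d-2)]$, and $\Hom^\bullet(\sE,\bR^{m-1}\sE)$ is concentrated in degree $0$. Hence $\Hom(\sE,\bS^{-1}_\E\sE[p+d-2])$ vanishes for $p\neq 0$. Likewise, $\Hom(\sE,\sE[p])$ vanishes for $p\neq 0$ by exceptionality. The terms adjacent to $p=0$ in \eqref{eq:les-G} are therefore zero, and we obtain \eqref{eq:ses-G} with $\varphi=q$. To see that $q$ is an algebra homomorphism, recall that $q(g)$ is the unique $e$ with $\beta\cdot g = e\cdot\beta$; uniqueness holds by \eqref{eq:third-term}. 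For two endomorphisms $g,g'$ this gives $\beta g g' = q(g)\beta g' = q(g)q(g')\beta$, so $q(gg')=q(g)q(g')$.

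\textbf{Square-zero property.} Every element of $I$ has the form $\alpha\cdot u\cdot\beta$ with $u\colon \sE\to\sF[d-2]$, namely $u=\bT(-)\cdot\eta$. A product of two such elements contains the factor $\beta\cdot\alpha$, which vanishes because $\alpha,\beta$ are consecutive maps in the triangle \eqref{eq:tilting-triangle}. Hence $I^2=0$.

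\textbf{Bimodule structure.} Since $I^2=0$, the ideal $I$ is an $\End(\sG)/I\cong E$-bimodule. I want to show that the map $\Phi\colon h\mapsto \alpha\cdot\bT(h)\cdot\eta\cdot\beta$ satisfies $\Phi(\bS^{-1}_\E(e')\,h\,e)=g'\,\Phi(h)\,g$ whenever $q(g)=e$ and $q(g')=e'$.

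\begin{itemize}
\item Right action: $\Phi(h)\,g=\alpha\,\bT(h)\,\eta\,\beta g=\alpha\,\bT(h)\,\eta\, e\,\beta=\alpha\,\bT(h)\,\bT(e)\,\eta\,\beta$, using naturality of $\eta$. This equals $\Phi(he)$.
\item Left action: I need the dual statement $g'\alpha=\alpha\cdot\bS^{-1}_{\bT\E}\bT(e')[d-2]$. Once this holds, the functorial isomorphism $\bS^{-1}_{\bT\E}\bT\cong\bT\bS^{-1}_\E$ gives $\Phi(\bS^{-1}_\E(e')h)$.
\end{itemize}

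To prove the left-action identity, note that $g'\alpha$ composed with $\beta$ gives $e'\beta\alpha=0$. Hence $g'\alpha$ factors through $\alpha$, via a morphism $\sF[d-2]\to\sF[d-2]$ that is unique because $\Hom^\bullet(\sF,\sE)=0$ by pre-adherence. Identifying this morphism with $\bS^{-1}_{\bT\E}\bT(e')$ is the main obstacle. I would first try the map of triangles \eqref{eq:tilting-triangle} induced by $e'$: its third component is $\bS^{-1}_{\bT\E}\bT(e')[d-1]$ provided $\nu\cdot e'=\bS^{-1}_{\bT\E}\bT(e')[d-1]\cdot\nu$. This identity follows from \eqref{eq:nu-iso-comm} after precomposing with $\delta$, together with injectivity of $(-)\cdot\delta$ on $\Hom(\sE,\sF[d-1])$; that injectivity comes from $\Hom(\bT\sE,\sF[d-1])=\Hom(\sE,\bS^{-1}_\E\sE[d-1])=0$, which holds by geometricity. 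The remaining subtlety is that the induced map of triangles need not equal $g'$. However, the two can differ only by an element of $I$, and such a difference contributes $I\cdot I=0$ or a term of the form $\beta\alpha=0$, so $g'$ itself satisfies the identity.

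\textbf{The final identification.} We get $I\cong\Hom(\sE,\bS^{-1}_\E\sE[d-2])$ as $E$-bimodules, with $E$ acting on the right by precomposition and on the left via $\bS^{-1}_\E$. Transport this along the equivalence $\thick(\E)\cong\Dperf(E)=\Db(E)$ sending $\sE\mapsto E$ and intertwining the Serre functors. Lemma \ref{lem:equiv-bimod} then gives $I\cong\Hom(E,\bS_E^{-1}(E)[d-2])$ as $E$-bimodules.
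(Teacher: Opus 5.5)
Your proposal is correct and follows the paper's proof in all essentials. The shared steps are: the short exact sequence from Proposition~\ref{prop:G}~\eqref{lem:G-c} via geometricity, multiplicativity of $\varphi$ from the uniqueness in \eqref{eq:third-term}, $I^2=0$ from $\beta\alpha=0$, the precomposition action via naturality of $\eta$, and the final transport along the tilting equivalence with Lemma~\ref{lem:equiv-bimod}.

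The only variation is in the postcomposition action. The paper takes an arbitrary lift $\gamma$, writes $\gamma\alpha=\alpha f$, and identifies $f=\bS^{-1}_{\bT\E}\bT(\epsilon)$ by precomposing $\nu\epsilon=f[1]\nu$ with $\delta$ and using injectivity of $(-)\cdot\xi$ from Lemma~\ref{lem:hom-K-F}~\eqref{lem:hom-K-F-a}. You instead first prove $\nu e'=\bS^{-1}_{\bT\E}\bT(e')[d-1]\cdot\nu$ via injectivity of $(-)\cdot\delta$; this rests on the same vanishing $\Hom(\sE,\bS^{-1}_\E\sE[d-1])=0$ that gives surjectivity of $\varphi$. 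You then build a compatible lift by TR3 and use independence of the lift. This has the small advantage of making explicit the TR3-plus-uniqueness step hidden in the paper's assertion that ``such $f$ and $\epsilon$ fit into a commutative diagram''.
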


\begin{proof}
The short exact sequence of $\Hom$-spaces \eqref{eq:ses-G} follows immediately from Proposition \ref{prop:G} \ref{lem:G-c}. 
Moreover, the map $\varphi$ is given by sending an endomorphism $\gamma$ of $\sG$ to the unique endomorphism $\epsilon$ of $\sE$ satisfying $\epsilon \beta = \beta \gamma$ (see Proposition \ref{prop:G} \ref{lem:G-c}).  
In particular, the uniqueness of $\epsilon$ can be used to show that this defines an algebra homomorphism, which is surjective by \eqref{eq:ses-G}.
Together with $\beta\alpha=0$ (see definition of $\alpha$ and $\beta$ in Proposition \ref{prop:G}) 
we see that $I^2=0$, which shows \eqref{prop:algebra-extensions-3}.

It remains to show \eqref{prop:algebra-extensions-2}.
The fact that $I = \Hom ( \sE , \bS^{-1}_\E\sE [d-2] )$ is an $\End(\sE)$-bimodule follows from $I^2 = 0$.
Namely, by the above we have that for any $\epsilon$ in $\End ( \sE )$ there is a $\gamma $ in $\End( \sG )$ with $\varphi ( \gamma ) = \epsilon$.
We then define the left action by $\epsilon$ on $I$ by $\epsilon z := \gamma z$ for any $z$ in $I$.
For another $\gamma '$ in $\End ( \sG )$ satisfying $\epsilon =\varphi ( \gamma ' )$, we have that $\gamma - \gamma'$ is in $I$.
Thus $(\gamma - \gamma ' ) z = 0$ and hence the left action $\epsilon$ on $I$ is well-defined.
The right action of $\epsilon$ on $I$ is similarly defined.
Next we show that the actions of $\End(\sE)$ on $I$ coincide with the actions of $\End (\sE)$ on $ \Hom (\sE , \bS^{-1}_{\bT\E}\bT \sE [d-2] )$ via $\alpha \cdot \bT(-) \cdot \eta \cdot \beta$, where
$\epsilon\in \End(\sE)$ acts on the right
by composition with $\bS^{-1}_{\E} (\epsilon ) [d-2]$.
Indeed, for the left action we have that for all $\epsilon = \varphi (\gamma )$ in $\End ( \sE )$ that the equalities 
\[
\alpha \bT ( \bar{z} ) \eta \beta \epsilon = \alpha \bT ( \bar{z} ) \eta \beta  \gamma = \alpha \bT ( \bar{z} ) \eta \epsilon \beta = \alpha \bT ( \bar{z} \epsilon ) \eta  \beta \in I
\]
hold for all $\bar{z}$ in $\Hom ( \sE , \bS^{-1}_\E \sE [d-2] )$.
Note that we used that $\eta \colon \id \to \bT $ is a natural transformation in the third equation.
For the right action, we  
first show that for each $\gamma$ in $\End ( \sG )$, 
there exists a $f$ in $\End (\sF [d-2] )$, such that 
\begin{equation}\label{eq:alpha-f}
    \gamma \alpha = \alpha f \in \Hom (\sF , \sG )
\end{equation}
holds. 
Indeed, applying $\Hom^\bullet (\sF_j , -)$ to the triangle \eqref{eq:tilting-triangle}
we get a distinguished triangle 
\[
\Hom^\bullet ( \sF_j , \sF_i [d-2] ) \xrightarrow{\alpha_i \cdot (-)} \Hom^\bullet ( \sF_j  , \sG_i ) \to \Hom^\bullet ( \sF_j , \sE_i )
\]
in $\Db(\mathbb{k})$.
Note that the third term of this distinguished triangle vanishes, as $\F$ is contained in $\thick \bT ( \E )$ and as $\E$ and $\bT ( \E )$ are (pre-)adherent.
In particular we obtain an isomorphism 
\begin{equation}\label{eq:f_1}
\End( \sF [d-2] )  \xrightarrow[\cong]{\alpha \cdot (-) }  \Hom( \sF [d-2] , \sG ).
\end{equation}
This isomorphism of $\Hom$-spaces immediately implies that such an $f \in \End ( \sF [d-2] )$ exists.
Next we show that an $f \in \End( \sF[d-2] )$ as above is equal to $\bS^{-1}_{\bT\E}\bT (\epsilon ) $, where $\epsilon = \varphi (  \gamma )$. 
To prove this, we note first that such $f$ and $\epsilon$ fit into a commutative diagram
\begin{equation*}\label{eq:E-G-F}
\xymatrix{
    \sF [d-2]  \ar@{->}[d]^{f} \ar@{->}[r]^{\alpha} &  \sG \ar@{->}[d]^{\gamma} \ar@{->}[r]^{\beta} & \sE \ar@{->}[d]^{\epsilon} \ar@{->}[r]^{\nu} & \sF [d-1] \ar@{->}[d]^{f[1]} \\
 \sF [d-2] \ar@{->}[r]^{\alpha} &  \sG \ar@{->}[r]^{\beta} & \sE \ar@{->}[r]^{\nu} & \sF [d-1] }.
\end{equation*}
In particular, we see from this diagram that the commutativity $\nu \epsilon = f [1] \nu$ in $\Hom ( \sE , \sF [d-1] )$ holds.
Precomposing this equation with the morphism $\delta \colon \sK \to \sE [1]$, we obtain
\begin{equation}\label{eq:f-STe-1}
    \nu \cdot \epsilon \cdot \delta = f [1] \cdot \nu \cdot \delta = f[1] \cdot \xi \in \Hom ( \sK , \sF [d]) .
\end{equation}
Note that we used here that by definition of $\nu$ the equality $\nu\cdot \delta = \xi$ holds.
Furthermore, the first term can be further rewritten as follows:
\begin{equation}\label{eq:f-STe-2}
\nu \cdot \epsilon \cdot \delta = \nu \cdot \delta \cdot \kappa (\epsilon ) = \xi \cdot \kappa ( \epsilon ) = \bS^{-1}_{\bT\E} \bT ( \epsilon ) \cdot \xi  \in \Hom (\sK , \sF [d] ) .
\end{equation}
Here, we used the commutativity in Remark \ref{rmk:delta-commut} in the first equation, the equation $\nu\cdot \delta = \xi$ of $\nu$ (see \eqref{eq:nu-def}) in the second equation, and Lemma \ref{lem:hom-K-F} \eqref{lem:hom-K-F-12} in the third equation.
Combining \eqref{eq:f-STe-1} and \eqref{eq:f-STe-2}, we obtain the equality
\[
f[1] \cdot \xi = \bS^{-1}_{\bT\E} \bT ( \epsilon ) \cdot \xi  \in \Hom (\sK , \sF [d] ) .
\]
We conclude by Lemma \ref{lem:hom-K-F} \eqref{lem:hom-K-F-a} that the equality $f = \bS_{\bT\E}^{-1} \bT ( \epsilon )$ in $\End(\sF [d-2])$ holds. 
Combining this equality with \eqref{eq:alpha-f}, we obtain a chain of equalities
\[
\epsilon \alpha ( \bar{z} ) \beta  = \gamma \alpha \bT ( \bar{z} ) \eta \beta   = \alpha \bS_{\bT\E}^{-1} \bT ( \epsilon )\bT ( \bar{z} ) \eta  \beta = \alpha \bT\bS_{\E}^{-1}  (\epsilon )\bT ( \bar{z} ) \eta  \beta = \alpha \bT ( \bS_{\E}^{-1}  ( \epsilon ) \bar{z} ) \eta  \beta  \in I ,
\]
where we used the natural isomorphism of functors $\bT\bS_{\E}^{-1}  \cong \bS^{-1}_{\bT\E}\bT$.
In other words, the right action of $\End(\sE)$ on $I$ coincides with the right action of $\End (\sE)$ on $\Hom ( \sE , \bS^{-1}_{\bT\E} \sE[d-2] )$ via $\alpha \cdot \bT(-) \cdot \eta \cdot \beta$.

Finally, using the equivalence $\thick ( \E ) \cong \Db( E )$ given by tilting, we obtain an isomorphism of $E  = \End( \sE )$ bimodules
\[
\prescript{}{\End ( \sE )}{\Hom (  \sE ,  \bS^{-1}_{\E}\sE [d-2] )}_{\End ( \sE )} 
    \cong \prescript{}{E}{\Hom ( E ,  \bS^{-1}_E ( E ) [d-2] )}_{E} ,
\]
where the left action on $\Hom ( E ,  \bS^{-1}_E ( E ) [d-2] )$ is given by pre-composing morphism $\epsilon \colon E\to E$, and the right action is given by composing morphism $\bS_E^{-1} (\epsilon )[d-2]$.
This finishes the proof of \eqref{prop:algebra-extensions-2} and the proof of the proposition.
\end{proof}

\subsection{Describing \texorpdfstring{$\tilde{\AA} / \ker \pi_*$}{Amod\textasciicircum ker}}

Let $B^\bullet $ be a $\dg$ $\mathbb k$-algebra with cohomology concentrated in non-positive degrees.
Let $B^\bullet_{\leq 0}$ be the good truncation of $B^\bullet$. It is a dg subalgebra and the inclusion $B^\bullet_{\leq 0} \to B^\bullet $ is a quasi-isomorphism. 
 Hence, there are induced equivalences
\begin{equation}\label{eq:dg-truncation}
    \DD ( B^\bullet ) \cong \DD 
( B^\bullet_{\leq 0} ) \quad \text{and} \quad  \DD_{\mathrm{fg}} ( B^\bullet ) \cong \DD_{\mathrm{fg}} ( B^\bullet_{\leq 0} ) .
\end{equation}
Since we are only interested in derived categories, we can (and will) assume that $B^\bullet$ is a \emph{non-positive} dg algebra, i.e. $B^i=0$ for all $i>0$.
The good truncation $M_{\leq 0}^\bullet$ of a $\dg$ $B^\bullet$-module $M^\bullet$, viewed as a complex over $\mathbb k$, inherits a $\dg$ submodule structure of $M^\bullet $ (note that this uses that $B^\bullet $ is non-positive).
Hence, good truncation induces a $t$-structure on $\DD ( B^\bullet )$ and by \cite[Theorem 1.3]{hoshino-et-al} the corresponding heart is equivalent to $\bH^0( B^\bullet)\mbox{-}\Mod$.
Moreover, this $t$-structure descends to a bounded $t$-structure of $\DD_{\mathrm{fg}} ( B^\bullet ) $ with heart equivalent to $\bH^0( B^\bullet  )\mbox{-}\mod$.

An idempotent $e$ of $B^\bullet $ is contained in the degree zero component $B^0$ (this follows from $e^2 = e $).
Thus, by the Leibniz rule  the equality $d_B(b\cdot e) = d_B(b) \cdot e + (-1)^i \cdot b  \cdot d_B(e) = d_B(b) \cdot e$ holds for all $b\in B^i$, and therefore $B^\bullet e$ is a $\dg$ submodule of $B^\bullet$. 

For given $\dg$ $B^\bullet $-modules $M^\bullet$ and $N^\bullet$, we denote by $\Hom_{B^\bullet } ( M^\bullet , N^\bullet )$
the complex whose degree $n$
component $\Hom_{B^\bullet} ( M^\bullet , N^\bullet )^n$ consists of homogeneous $B^\bullet$-linear maps $M^\bullet \to N^\bullet$ of degree $n$,
and whose diﬀerential takes a homogeneous map $f$ of degree $|f|$ to
$d_{N} \cdot f - (-1)^{|f|} d_{M} \cdot f$.

The following result has been communicated to us by Dong Yang, for which we are very grateful. 
A variant of this result will appear in a forthcoming work \cite{dong-et-al} by Jin-Yang-Zhou.

\begin{proposition}[Jin-Yang-Zhou \cite{dong-et-al}]\label{prop:dg-verdier}
    Let $B^\bullet$ be a non-positive $\dg$ $\mathbb k$-algebra with finite-dimensional total cohomology.
Let $e$ be an idempotent of $B^\bullet$, let $\bar{e} = \bH^0 (e)$ be the induced idempotent of $\bH^0 ( B^\bullet )$ and let $\bP_e$ be the functor 
\[
\bP_e:= \Hom_{B^\bullet}^\bullet ( B^\bullet e , -) \colon \DD_{\mathrm{fg}}( B^\bullet)\to \DD_{\mathrm{fg}}( e B^\bullet e ) .
\]
There is an equality of subcategories
\begin{equation}\label{eq:equality-subcat}
    \ker \bP_e = \thick (   (\bH^0( B^\bullet )/\bar{e})\mbox{-} \mod )
\end{equation}
in $\DD_{\mathrm{fg}} ( B^\bullet )$, and $\bP_e$
induces an equivalence 
\begin{equation}\label{eq:dg-verdier-localization}
    \DD_{\mathrm{fg}}( B^\bullet)  / \ker \bP_e \cong \DD_{\mathrm{fg}}( e B^\bullet e) .
\end{equation}
\end{proposition}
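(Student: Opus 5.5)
The plan is to reduce the statement to the classical recollement for an idempotent in a finite-dimensional setting, via the standard t-structure on $\DD_{\mathrm{fg}}(B^\bullet)$ described just before the proposition. Write $B = B^\bullet$ and $A_0 = \bH^0(B)$. First observe that $\bP_e = \Hom^\bullet_B(Be,-)$ is naturally isomorphic to $M \mapsto eM$, since $Be$ is a direct summand of $B$ as a dg module. Hence $\bH^p(\bP_e M) = \bar e\,\bH^p(M)$ for all $p$. So $\bP_e$ is exact, preserves finite-dimensional total cohomology, and lands in $\DD_{\mathrm{fg}}(eBe)$. It also has a left adjoint $\bL_e = Be \otimes^{\mathbf L}_{eBe} -$, defined at least on $\DD(eBe)$, with $\bP_e\bL_e \cong \id$, because $eB\otimes_{eBe}^{\mathbf L} eBe \cong eBe$ is computed on the nose.

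For the kernel \eqref{eq:equality-subcat}, a module $M \in \DD_{\mathrm{fg}}(B)$ lies in $\ker \bP_e$ iff $\bar e\,\bH^p(M)=0$ for all $p$. Each $\bH^p(M)$ is a finite-dimensional $A_0$-module, and $\bar e$ kills it exactly when it is a module over $A_0/A_0\bar eA_0$, i.e. lies in $(A_0/\bar e)\mbox{-}\mod$ in the paper's notation. Using the bounded t-structure with heart $A_0\mbox{-}\mod$, $M$ is an iterated extension of its cohomology objects $\bH^p(M)[-p]$. Hence $\ker\bP_e \subseteq \thick((A_0/\bar e)\mbox{-}\mod)$. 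The reverse inclusion holds because $\ker\bP_e$ is thick and contains these modules.

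For the equivalence \eqref{eq:dg-verdier-localization}, the functor $\bP_e$ factors through $\bar\bP_e\colon \DD_{\mathrm{fg}}(B)/\ker\bP_e \to \DD_{\mathrm{fg}}(eBe)$. I would prove full faithfulness using the adjunction. For $M,N\in\DD_{\mathrm{fg}}(B)$, consider the counit triangle $\bL_e\bP_e M \to M \to C_M$. Applying $\bP_e$ shows $C_M\in\ker\bP_e$, and $\bL_e\bP_e M$ lies in ${}^\perp\ker\bP_e$ by adjunction. Verdier's criterion \cite[Proposition 5.3]{verdier}, as used in Proposition \ref{lem:sod-and-quotient}, then gives
\[
\Hom_{\text{quot}}(M,N)\cong\Hom(\bL_e\bP_eM,N)\cong\Hom(\bP_eM,\bP_eN),
\]
provided $\bL_e\bP_e M$ can be used in the quotient. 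Essential surjectivity would follow by the same reasoning with $\bL_e$ applied to an object of $\DD_{\mathrm{fg}}(eBe)$.

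The main obstacle is that $\bL_e\bP_eM$ need not have finite-dimensional total cohomology: $Be\otimes^{\mathbf L}_{eBe}-$ can be unbounded, since $eBe$ need not be smooth. So the counit triangle lives only in $\DD(B)$, not in $\DD_{\mathrm{fg}}(B)$. To get around this, I would instead use truncations. Work in $\DD(B)$ with the recollement $\DD_{\ker}\to\DD(B)\to\DD(eBe)$, where $\DD_{\ker}$ consists of modules with cohomology killed by $\bar e$. Then show that a morphism $\bP_eM\to\bP_eN$ in $\DD_{\mathrm{fg}}(eBe)$ is represented by a roof $M \leftarrow M' \to N$ with $M'$ a good truncation $\tau_{\geq -n}\bL_e\bP_eM$ for $n\gg0$. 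This works because $N$ is bounded, so maps from $\bL_e\bP_e M$ to $N$ factor through such a truncation, and the cone of $M'\to M$ lies in $\ker\bP_e\cap\DD_{\mathrm{fg}}$. Non-positivity of $B$ makes the good truncation a dg submodule, as noted before the proposition, so $M'$ is a dg module. What remains is checking that $M'$ has finite-dimensional total cohomology, i.e. that each $\bH^p(Be\otimes^{\mathbf L}_{eBe}\bP_eM)$ is finite-dimensional. I expect this to follow from finite-dimensionality of $\bH^\bullet(B)$ by a spectral sequence in the bar resolution, but this is the step that needs care.
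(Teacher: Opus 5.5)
Your treatment of the kernel is the paper's argument. You use $\bH^p(\bP_eM)\cong\bar e\,\bH^p(M)$, so $M\in\ker\bP_e$ iff every $\bH^p(M)$ is killed by $\bar e$. Then the bounded t-structure with heart $\bH^0(B^\bullet)\mbox{-}\mod$, together with finite total cohomology, gives the thick closure.

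For the equivalence, the paper gives no argument of its own. It defers to Jin--Yang--Zhou and only remarks that the strategy resembles Pavic--Shinder's. Your sketch fills this in, and it works. You take the left adjoint $\mathsf{L}_e=B^\bullet e\otimes^{\mathbf L}_{eB^\bullet e}-$ on unbounded categories, with $\bP_e\mathsf{L}_e\cong\id$, and then use good truncation to get back into $\DD_{\mathrm{fg}}(B^\bullet)$.

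The step you flag holds, and it needs no spectral sequence:
\begin{itemize}
\item Let $N=\bP_eM$ with $\bH^p(N)=0$ for $p>a$. Kill its cohomology from the top down. This gives a semi-free resolution $P=\bigcup_sP_s$ over $eB^\bullet e$ with $P_s/P_{s-1}$ a \emph{finite} sum of copies of $eB^\bullet e[s-a]$. Finiteness uses that $eB^\bullet e$ is non-positive with each $\bH^i(eB^\bullet e)$ finite-dimensional.
\item $B^\bullet e\otimes_{eB^\bullet e}(P/P_s)$ is built from copies of $B^\bullet e[j-a]$ with $j>s$, so its cohomology sits in degrees $\le a-s-1$.
\item Hence $\bH^p(\mathsf{L}_eN)\cong\bH^p(B^\bullet e\otimes_{eB^\bullet e}P_s)$ for $p>a-s$, and the right side is finite-dimensional.
\item So $\mathsf{L}_e\bP_eM$ is bounded above with finite-dimensional cohomology in each degree. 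Its truncations $\tau_{\geq -n}$ therefore lie in $\DD_{\mathrm{fg}}(B^\bullet)$.
\end{itemize}

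Two small fixes, plus one justification you omitted:
\begin{itemize}
\item $\tau_{\geq -n}$ is a quotient dg module, not a submodule. So the map $M'\to M$ comes from factoring the counit $L\to M$ through $L\to\tau_{\geq -n}L$, which is possible because $M$ is bounded below.
\item For faithfulness you need $\Hom(\tau_{\geq -n}L,N)\to\Hom(L,N)$ to be bijective for $n\gg0$. This holds because both $\Hom(\tau_{\leq -n-1}L,N)$ and $\Hom(\tau_{\leq -n-1}L[1],N)$ vanish. Then $\bP_e(h)=0$ forces $h\circ(M'\to M)=0$.
\item The cone of $M'\to M$ lies in $\ker\bP_e$ because $\bP_e$ is t-exact, hence commutes with $\tau_{\geq -n}$.
\end{itemize}
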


\begin{proof}
We first show the inclusion of subcategories``$ \supseteq$'' in \eqref{eq:equality-subcat}. 
Note that $(\bH^0( B^\bullet )/\bar{e})\mbox{-} \mod$ is generated by those simple  $\bH^0( B^\bullet )$-modules $\sS$ that satisfy $\bar{e} \sS  = 0$.
This implies 
\begin{align}
  \bP_e(\sS)=\Hom^\bullet( B^\bullet e , \sS ) \cong e \sS  = \bar{e}\sS= 0,  
\end{align}
where $e \sS  = \bar{e}\sS$ holds by definition of the $B^\bullet$-action on the $\bH^0( B^\bullet )$-module $\sS$, which is induced by the natural map $B^\bullet \to \bH^0( B^\bullet )$. Thus all generators of the triangulated category on the right hand side of \eqref{eq:equality-subcat} are contained in the category on the left, showing the inclusion ``$ \supseteq$''.    

To show the inclusion ``$\subseteq$'', assume that $\sK$ is contained in $\ker \bP_e$.
By the discussion above Proposition \ref{prop:dg-verdier}, its $i$-th cohomology $\bH^i ( \sK )$ has an $\bH^0 ( B^\bullet )$-module structure.
Note that 
\begin{align}
    \bar{e} \bH^i( \sK ) \cong \bH^i( e\sK ) \cong \bH^i(\bP_e ( \sK ))=0.
\end{align}
Hence, all $\bH^i ( \sK )$ are contained in $( \bH^0 ( B^\bullet ) / \bar{e}) \mbox{-} \mod $. Since $\sK \in \DD_{\mathrm{fg}} ( e B^\bullet e )$, it has finite dimensional total cohomology and thus only cohomology in finitely many degrees. Combining these statements implies that $\sK$ is contained in $\thick ( ( \bH^0 ( B^\bullet ) / \bar{e}) \mbox{-} \mod )$.
This concludes the proof of the equality \eqref{eq:equality-subcat}.

    The equivalence \eqref{eq:dg-verdier-localization}, is shown in \cite{dong-et-al}. 
    The strategy of the proof is similar to the proofs of \cite[Lemma 2.32 and Theorem 2.30]{pavic-shinder}.
\end{proof}

Recall that
$\Phi\colon \tilde{\AA} \xrightarrow{\cong} \DD_{\mathrm{fg}}(\tilde{A}^\bullet )$ denotes the equivalence appearing 
in Corollary \ref{cor:tilting}.
We apply the previous Proposition in Corollary \ref{cor:veriderloc-negative-dg} to obtain that $\tilde{\AA}/ \ker \pi_\ast \cong \DD_{\mathrm{fg}} ( e \tilde{A}^\bullet e)$ for some idempotent $e$ of $\tilde{A}^\bullet$, if $\tilde{A}^\bullet$ has finite-dimensional total cohomology and is non-positive.
We need the following lemma.

\begin{lemma}\label{L:SimplesGenKernel}
We assume the setup and notation of Proposition \ref{prop:G} and Corollary \ref{cor:tilting}.
Suppose  additionally that $\E$ and $\bT ( \E )$ are adherent and that $k-1\leq d$ holds (i.e. $\tilde{A}^\bullet$ has finite-dimensional total cohomology and is non-positive).
Then there is an  equality of subcategories of $\DD_{\mathrm{fg}} ( \tilde{A}^\bullet )$
\[
\Phi(\ker \pi_*) = \thick ( \bH^0( \tilde{A}^\bullet ) / \bar{e} \mbox{-} \mod   ),
\]
where $e$ is the idempotent of $\tilde{A}^\bullet$ corresponding to $\Phi ( \sG )$ and $\bar{e} = \bH^0 ( e ) \in \bH^0 ( \tilde{A}^\bullet )$.
\end{lemma}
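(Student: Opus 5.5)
We will compare both sides with the kernel of the restriction functor $\bP_e$ of Proposition \ref{prop:dg-verdier}. Under $\Phi$, the idempotent $e$ corresponds to the summand $\sG$ of $\sT$. Hence for $\sM \in \tilde{\AA}$ we have
\[
\bP_e(\Phi(\sM)) \cong \Hom^\bullet(\sG,\sM)
\]
as a complex of $\mathbb{k}$-vector spaces. The hypotheses make $\tilde{A}^\bullet$ non-positive, after replacing it by its good truncation as in \eqref{eq:dg-truncation}, and give it finite-dimensional total cohomology. So Proposition \ref{prop:dg-verdier} applies and yields
\[
\ker \bP_e = \thick(\bH^0(\tilde{A}^\bullet)/\bar e\mbox{-}\mod).
\]
It therefore suffices to prove the equality of subcategories of $\tilde{\AA}$
\[
\ker\pi_* = \{\sM \in \tilde{\AA} : \Hom^\bullet(\sG,\sM)=0\}.
\]

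For the inclusion ``$\subseteq$'', use that adherence places $\ker\pi_*$ inside $\tilde{\AA}$ (this is part of pre-adherence). Then Proposition \ref{prop:G} \eqref{lem:G-b}, specifically \eqref{eq:orthogonality-K-G-adherent}, gives $\Hom^\bullet(\sG,\ker\pi_*)=0$.

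For ``$\supseteq$'', let $\sM\in\tilde{\AA}$ with $\Hom^\bullet(\sG,\sM)=0$. The plan is to show that $\sG\oplus\ker\pi_*$ generates $\tilde{\AA}$. Since $\sE\oplus\sG$ generates $\tilde{\AA}$ by Proposition \ref{prop:G} \eqref{lem:G-a}, it is enough to show $\sE_i\in\thick(\sG,\sK_1,\ldots,\sK_m)$.

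By the triangle \eqref{eq:main-distinguished-triangle}, $\bT\sE_i$ lies in $\thick(\sE_i,\sK_i)$. Conversely, $\sE_i$ lies in $\thick(\bT\sE_i,\sK_i)$, so
\[
\thick(\sG,\sK) \supseteq \thick(\sE,\bT\sE)
\quad\text{as soon as}\quad
\sE\in\thick(\sG,\sK) \ \text{or}\ \bT\sE\in\thick(\sG,\sK).
\]
Now use \eqref{eq:tilting-triangle}, which expresses $\sE_j$ in terms of $\sG_j$ and $\sF_j\in\thick\bT(\E)$. This reduces the question to showing $\thick\bT(\E)\subseteq\thick(\sG,\sK)$ modulo $\thick(\sE)$. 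I would run an induction along the exceptional order, and I expect this step to be the main obstacle.

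A cleaner route avoids the induction. Consider the quotient $\tilde{\AA}/\ker\pi_*$, which by Proposition \ref{lem:sod-and-quotient} sits inside $\Db(X)$. Proposition \ref{prop:G} \eqref{lem:G-b} together with Verdier's lemma shows that $\Hom_{\tilde{\AA}}(\sG,-)\cong\Hom_{\tilde{\AA}/\ker\pi_*}(\bQ\sG,\bQ-)$. It then suffices to show that $\bQ\sG$ generates the quotient, which follows once $\bQ\sF_j$ and $\bQ\sE_j$ lie in $\thick(\bQ\sG)$.

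In the quotient, $\bQ\eta_i$ is an isomorphism because $\sK_i\in\ker\pi_*$. Hence $\bQ\sE_i\cong\bQ\bT\sE_i$, and so $\thick\bQ(\E)=\thick\bQ\bT(\E)$. Moreover, $\sF_j\in\thick\bT(\E)$, so $\bQ\sF_j\in\thick\bQ(\E)$. The triangle \eqref{eq:tilting-triangle} then becomes a triangle in $\thick\bQ(\E)$ relating $\bQ\sG_j$, $\bQ\sE_j$ and $\bQ\sF_j$.

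I then argue by induction on $j$, ordered compatibly with the exceptional sequence, that $\bQ\sE_j\in\thick(\bQ\sG)$. The case $\sF_1=\bS^{-1}\bT\sE_1$ is handled by the mutation description, $\bR^{m-1}$, of $\bS^{-1}_{\E}$. This should be a formal step-by-step argument. If it stalls, I would instead use that $\tilde{\AA}$ is generated by $\sT=\sG\oplus\sF[d-2]$. Then $\Hom^\bullet(\sG,\sM)=0$ forces $\sM\in\thick(\sF)$ modulo the orthogonality. From there, \eqref{eq:hom_F_and_K} and the $\Hom$ computations of Lemma \ref{lem:hom-K-F} should identify $\sM$ with an object built from the $\sK_i$.

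Finally, applying $\Phi$ turns the equality of subcategories of $\tilde{\AA}$ into the claimed equality with $\thick(\bH^0(\tilde{A}^\bullet)/\bar e\mbox{-}\mod)$.
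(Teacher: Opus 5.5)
Your reduction is sound. By \eqref{eq:equality-subcat}, which the paper proves directly, the lemma is equivalent to $\ker\pi_*=\{\sM\in\tilde{\AA} : \Hom^\bullet(\sG,\sM)=0\}$, and the inclusion $\subseteq$ is exactly \eqref{eq:orthogonality-K-G-adherent}. The gap is the reverse inclusion: every mechanism you propose for it rests on a false statement.

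Your first plan, $\thick(\sG,\ker\pi_*)=\tilde{\AA}$, is equivalent to your ``cleaner route'' claim $\tilde{\AA}/\ker\pi_*=\thick(\bQ\sG)$, i.e.\ $\bQ\sE_j,\bQ\sF_j\in\thick(\bQ\sG)$. This fails whenever $\pi_*$ is a Verdier localization, which covers all of the paper's applications. In that case $\Hom(\sG,\ker\pi_*)=0$, so the argument in the proof of Corollary \ref{cor:sod-quotient} makes $\pi_*\sG$ homologically finite, hence perfect. Then \eqref{eq:sod-quot2} would give $\Db(X)=\Dperf(X)$, contradicting that $X$ is singular. Concretely, for the cone over $\P^1$ the quotient is $\Db(A)$ with $A=\mathbb{k}[x]/(x^2)$ and $\bQ\sG\mapsto A$, and the simple $A$-module does not lie in $\thick(A)$.

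What is true is only that $\bQ\sG$ detects zero objects. That is a restatement of the inclusion you need, not a route to it.

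Your fallback fails too: $\Hom^\bullet(\sG,\sM)=0$ does not force $\sM\in\thick(\sF)=\thick\bT(\E)$. Indeed $\sK_i$ is orthogonal to $\sG$, but $\sK_i\in\thick\bT(\E)$ would put $\sE_i$ into $\thick\bT(\E)$, contradicting $\Hom^\bullet(\bT\E,\E)=0$.

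The missing idea is that you must place the simple $\bH^0(\tilde{A}^\bullet)$-modules annihilated by $\bar e$ inside $\Phi(\ker\pi_*)$. That requires explicit objects, not a generation argument. The paper proceeds as follows.
\begin{itemize}
\item It takes $\sS_m=\sK_m[d-2]$ and $\sS_i=\bT^{-1}_{\sK_m}\circ\cdots\circ\bT^{-1}_{\sK_{i+1}}(\sK_i)[d-2]$, which generate $\ker\pi_*$ by adherence.
\item Proposition \ref{prop:G}\eqref{lem:G-b} gives $\Hom^\bullet(\sG,\sS_i)=0$, so \eqref{eq:tilting-triangle} yields $\Hom^\bullet(\sF_j[d-2],\sS_i)\cong\Hom^\bullet(\sE_j,\sS_i[1])$.
\item Moving the inverse twists onto $\sE_j$ via \eqref{eq:j-bigger-i}, \eqref{eq:j-lower-i} and \eqref{eq:last-iso}, and then applying Lemma \ref{lem:hom-computation}, shows this space is $\mathbb{k}$ in degree $0$ if $j=i$ and zero otherwise.
\end{itemize}
Hence $\Phi(\sS_i)$ is concentrated in degree $0$ and is the simple module at the vertex of $\sF_i[d-2]$. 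Both sides of the lemma are therefore the thick closure of the same $m$ objects, and the detour through $\bP_e$ is not needed.
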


\begin{proof}
    Recall that $\sK_i $ denotes the cone of $\eta_{i} \colon \sE_i \to \bT \sE_i$ for all $1\leq i \leq m$. Let $\KK = \thick ( \sK_1  , \ldots , \sK_{m})$, which agrees with $\ker \pi_\ast$ by the adherence assumption.
    Set $\sS_m := \sK_{m} [d-2] $ and
    \[
    \sS_i : = \bT_{\sK_{m}}^{-1} \circ \ldots \circ \bT^{-1}_{\sK_{i+1}} ( \sK_{i} ) [d-2] \in \KK ,
    \]
    for $1 \leq i \leq m -1$.
      It is clear that the $\sS_1 , \ldots , \sS_{m}$ generate $\KK$.
    To prove the lemma, it is enough to show  that $\Phi ( \sS_i ) \cong \bH^0 \Phi ( \sS_i )$ in $\DD_{\mathrm{fg}}( \tilde{A}^\bullet )$ holds, and that the $\bH^0 \Phi ( \sS_i )$ are the $\bH^0 ( \tilde{A}^\bullet )$-simples 
    satisfying $\bar{e} \bH^0 \Phi ( \sS_i )  = 0$, since these simples
    generate $\thick ( \bH^0( \tilde{A}^\bullet ) / \bar{e} \mbox{-} \mod)$.
    To show this, it suffices to show that 
 \begin{equation}\label{eq:simple-a}
     \Hom^\bullet ( \sG , \sS_i  ) = 0 \quad \text{and that} \quad 
         \Hom ( \sF_{j} [d-2] , \sS_i [p  ] ) \cong 
        \begin{cases}
\mathbb k, &\text{ $j= i$ and $p=0$}\\
0 , &\text{ else}
\end{cases}
 \end{equation}
 for all $1\leq j , i \leq m$.
 Indeed, \eqref{eq:simple-a} implies (by applying $\Phi$) that there are isomorphisms  
 \begin{equation}\label{eq:simple-Phi}
     \Hom^\bullet_{\DD( \tilde{A}^\bullet)} (  \tilde{A}^\bullet e , \Phi (\sS_i ) ) = 0 \quad \text{and} \quad 
         \Hom_{\DD ( \tilde{A}^\bullet )} (  \tilde{A}^\bullet e_{m+j} , \Phi ( \sS_i ) [p  ] ) \cong 
        \begin{cases}
\mathbb k, &\text{ $j= i$ and $p=0$}\\
0 , &\text{ else}
\end{cases}
 \end{equation}
 where the idempotents $e_{m+1} , \ldots , e_{2m}$ are the idempotents of $\tilde{A}^\bullet$ correspnding to $\Phi ( \sF_1 ) [d-2] , \ldots , \Phi ( \sF_m ) [d-2]$.
 Using \eqref{eq:simple-Phi}, we show that $\Phi ( \sS_i ) \cong \bH^0 \Phi ( \sS_i )$ in $\DD_{\mathrm{fg}}( \tilde{A}^\bullet )$ holds, and that  $\bar{e} \bH^0 \Phi ( \sS_i )  = 0$ is satisfied.
 Indeed, 
 since objects in $\DD ( \tilde{A}^\bullet  )$ of the form $ \tilde{A}^\bullet e'$ are $\KK$-projective, for any idempotent $e'$ (meaning the complex $\Hom^\bullet_{\tilde{A}^\bullet}(\tilde{A}^\bullet e' , N^\bullet ) \cong e' N^\bullet $ is acyclic, for all acyclic $N^\bullet$), we can replace $\Hom_{\DD ( \tilde{A}^\bullet )}$ by $\Hom_{\bK (\tilde{A}^\bullet )}$ in \eqref{eq:simple-Phi}, where $\bK (\tilde{A}^\bullet )$ denotes the homotopy category of $\tilde{A}^\bullet$.
 Furthermore, $\Hom_{\bK (\tilde{A}^\bullet )}$ is, by definition, just $\bH^0 \Hom_{\tilde{A}^\bullet}$, and so \eqref{eq:simple-Phi} can be rewritten as
 \begin{equation}\label{eq:simple-cohom}
     \bH^\bullet ( e \Phi (\sS_i )  ) = 0 \quad \text{and} \quad 
         \bH^{p } (  e_{m+j} \Phi ( \sS_i )  ) \cong 
        \begin{cases}
\mathbb k, &\text{ $j= i$ and $p=0$}\\
0 , &\text{ else}
\end{cases}
 \end{equation}
 and thus  $\Phi( \sS_i ) \cong \bH^0 ( \Phi \sS_i )$ in $\DD_{\mathrm{fg}} ( A^\bullet  )$ holds, and $\bar{e} \bH^0 \Phi ( \sS_i )  = 0$ is satisfied.
 In particular, $\Phi( \sS_i ) $ can be viewed as a $\bH^0 ( \tilde{A}^\bullet )$-module. 
 Moreover, by e.g. \cite[Section 4.4]{koenig-yang}, the equalities in \eqref{eq:simple-Phi} imply further that the $\Phi( \sS_i ) \cong \bH^0 ( \Phi \sS_i )$ are distinct $\bH^0 ( \tilde{A}^\bullet )$-simples.
 \sout{(see also} \cite[Section 4.4]{koenig-yang}).

  It remains to show the equalities in \eqref{eq:simple-a}.
     Since the $\sS_i$ are contained in $\KK$, we obtain from \eqref{eq:orthogonality-K-G-adherent} in Proposition \ref{prop:G} \eqref{lem:G-b} that the vanishing $\Hom^\bullet (\sG , \sS_i ) = 0$ holds, for all $1\leq i \leq m$.
    For the second set of isomorphisms of \eqref{eq:simple-a}, we note that 
    \[
    \Hom^\bullet ( \sE_j [-1] , \sS_i ) \cong \Hom^\bullet (  \sF_{j} [d-2] , \sS_i  ) , 
    \]
    for all $1\leq i, j \leq m$, by the distinguished triangle \eqref{eq:tilting-triangle} and using $\Hom^\bullet (\sG_j , \sS_i ) = 0$ (see above).
    Using this isomorphism, 
    we see that showing the second part of \eqref{eq:simple-a} is equivalent to showing that
    \begin{equation}\label{eq:simple2}
         \Hom ( \sE_{j} , \sS_i [p+1] ) \cong 
        \begin{cases}
\mathbb k, &\text{ $j=i$ and $p=0$}\\
0 , &\text{ else}
\end{cases}
    \end{equation}
    for all $1\leq j , i \leq m$.
    To show this, we first consider the case $i<j$.
    We have the following equality of $\Hom$-spaces:
    \[
    \Hom^\bullet (  \sE_j , \sS_i ) \cong \Hom^\bullet (  \bT_{\sK_{i+1}} \ldots \bT_{\sK_{m}} \sE_j , \sK_{i} [d-2] ) \cong \Hom^\bullet (  \bT \sE_j , \sK_{i} [d-2] ) = 0 .
    \]
    Note that we used the isomorphisms $\bT_{\sK_j} \sE_j \cong \bT_{\sK_k}\bT_{\sK_j} \sE_j \cong \bT_{\sK_k}\bT_{\sK_j} \bT_{\sK_l} \sE_j $ for all $k<j<l$ (see \eqref{eq:j-lower-i} and \eqref{eq:j-bigger-i}) and $  \bT_{\sK} \sE_j \cong \bT\sE_j $ (see \eqref{eq:last-iso}) in the second equality, and Lemma \ref{lem:hom-computation} \eqref{eq:hom-computation-iso1_2} in the third equality.
    On the other hand, in the case $i\geq j$ we have a chain of isomorphisms
    \[
    \Hom^\bullet ( \sE_j , \sS_i [1] ) \cong   \Hom^\bullet (  \bT_{\sK_{i+1}} \ldots \bT_{\sK_{m}} \sE_j , \sK_{i} [d-1] ) \cong \Hom^\bullet (   \sE_j , \sK_{i} [d-1] ) \cong \begin{cases}
\mathbb k[0], &\text{ $j= i$}\\
0 , &\text{ else}
\end{cases}
    \]
    where we used the isomorphism $\bT_{\sK_l} \sE_j \cong \sE_j $ for all $j<l$ in the second equality, and Lemma \ref{lem:hom-computation} \eqref{eq:hom-computation-iso2_2} in the third equality.
    Combining these two cases, we conclude \eqref{eq:simple2} and we finish the proof of the lemma.
\end{proof}

\begin{corollary}\label{cor:veriderloc-negative-dg}
Let $X$ be projective Gorenstein variety over $\mathbb k$ with a unique  singular point $p\in X$. Assume that a crepant resolution $\pi\colon Y \to X$ exists. 
Let $\bT \colon \Db( Y ) \to \Db ( Y )$ be an autoequivalence which is invariant under $\pi_\ast$.
Assume there is a geometric exceptional sequence $\E = ( \sE_1, \ldots, \sE_{m} )$ of type $(m,k)$, such that $k \leq d-1$ and $\sE$ and $\bT$ are adherent.
Set $\tilde{\AA}=\langle \E, \bT ( \E ) \rangle$.
Then there is an equivalence
\[
\tilde{\AA} / \ker\pi_\ast \cong \DD_{\mathrm{fg}} ( A^\bullet ) ,
\]
and a semiorthogonal decomposition
\begin{equation}\label{eq:sod-quot1-b}
    \Db (Y) /\ker \pi_\ast = \langle \DD_{\mathrm{fg}} ( A^\bullet ) ,   {^{\perp}\tilde{\AA}} \rangle  ,
\end{equation}
where $A^\bullet$ is a $\dg$ $\mathbb k$-algebra with cohomologies
\begin{align}
  \bH^i(A^\bullet) \cong \begin{cases}
      \End(\sE)  \quad \qquad \qquad \qquad \text{ if } \, \, i=0 \\
      \Hom ( \sE , \bS^{-1}\sE [k-1] ) \quad \text{ if } \, \, i=k+1-d \\
      0  \,\,\,\, \quad \qquad \qquad \qquad \qquad \text{ else.}
      \end{cases}  
\end{align}
Moreover, if there is an equivalence $\Db (Y) /\ker \pi_\ast \cong \Db(X)$ induced by the functor $\pi_\ast \colon \Db(Y) \to \Db(X)$, then the decomposition
\begin{equation}\label{eq:sod-quot2-b}
    \Db (X) = \langle \DD_{\mathrm{fg}} ( A^\bullet ) ,  \pi_\ast {^{\perp}\tilde{\AA}} \rangle 
\end{equation}
is admissible and $ {^{\perp}\tilde{\AA}} \cong \pi_\ast {^{\perp}\tilde{\AA}}$
is contained in $ \Dperf(X)$.
\end{corollary}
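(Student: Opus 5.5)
The proof will combine Corollary \ref{cor:tilting}, Lemma \ref{L:SimplesGenKernel}, Proposition \ref{prop:dg-verdier} and Corollary \ref{cor:sod-quot}. Adherence implies pre-adherence. Geometricity of type $(m,k)$ with $k\le d-1$ gives the following: since $\bS^{-1}_\E\sE_i$ is an iterated right mutation shifted by $[1-k]$, $\Hom(\sE_i,\bS^{-1}_\E\sE_i[d])$ is concentrated in degree $d$ only if $k=d+1$. So $\Hom(\sE_i,\bS^{-1}_\E\sE_i[d])=0$, the hypothesis of Proposition \ref{prop:G} holds, and $\sG$ and $\sF$ are defined. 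By Corollary \ref{cor:tilting} (using $k\neq d$), $\sT=\sG\oplus\sF[d-2]$ gives $\Phi\colon\tilde{\AA}\cong\DD_{\mathrm{fg}}(\tilde A^\bullet)$. Here $\tilde A^\bullet$ has cohomology in degrees $0$ and $k+1-d\le 0$, and finite total cohomology because $\Db(Y)$ is $\Hom$-finite. By the truncation remark \eqref{eq:dg-truncation}, I will replace $\tilde A^\bullet$ by a non-positive model. I will also arrange the idempotent $e$ corresponding to $\sG$ to lie in degree $0$; for instance, a minimal model on the semisimple base will do.

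Next, Lemma \ref{L:SimplesGenKernel} identifies $\Phi(\ker\pi_*)$ with $\thick(\bH^0(\tilde A^\bullet)/\bar e\mbox{-}\mod)$. By Proposition \ref{prop:dg-verdier}, this is exactly $\ker\bP_e$. Hence
\[
\tilde{\AA}/\ker\pi_*\cong \DD_{\mathrm{fg}}(\tilde A^\bullet)/\ker\bP_e\cong\DD_{\mathrm{fg}}(e\tilde A^\bullet e).
\]
Set $A^\bullet:=e\tilde A^\bullet e$. Its cohomology is $\Hom^\bullet(\sG,\sG)$, because $e\tilde A^\bullet e\simeq \mathrm{REnd}(\Phi\sG)$. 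Proposition \ref{prop:G}\eqref{lem:G-c} gives the long exact sequence \eqref{eq:les-G}. Its first term $\Hom(\sE,\bS^{-1}_\E\sE[p+d-2])$ is nonzero only for $p+d-2=k-1$, i.e. $p=k+1-d$, and the third term only for $p=0$. When $k+1-d<0$ the sequence splits into the stated isomorphisms. When $k=d-1$ both terms sit in degree $0$ and fit into the short exact sequence of Proposition \ref{prop:algebra-extensions}. In that case the "cohomology" formula must be read as the identification of $\bH^0$ up to extension, or the two-term statement taken as a graded description. I will note this carefully, since it is the only subtle point in the cohomology computation. I would also check that $\Hom(\sE,\bS^{-1}\sE[k-1])$ is the correct normalization, using $\bS^{-1}_\E\sE_j[k-1]=\bR^{m-1}\sE_j$, which lives in degree $0$ relative to $\sE$.

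Finally, Corollary \ref{cor:sod-quot} applies because pre-adherence holds. It gives $\Db(Y)/\ker\pi_*=\langle\tilde\AA/\ker\pi_*,{}^\perp\tilde\AA\rangle$. Substituting the equivalence gives \eqref{eq:sod-quot1-b}. Under the extra hypothesis that $\pi_*$ induces $\Db(Y)/\ker\pi_*\cong\Db(X)$, the same corollary gives admissibility and ${}^\perp\tilde\AA\cong\pi_*{}^\perp\tilde\AA\subset\Dperf(X)$, which is \eqref{eq:sod-quot2-b}.

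I expect the main obstacle to be the technical passage to a non-positive dg model. The idempotent $e$ must be a genuine degree-$0$ idempotent, and $\Phi$ must transport the identification of Lemma \ref{L:SimplesGenKernel} compatibly after this replacement. I would handle it by choosing the dg enhancement of $\tilde\AA$ and taking $\tilde A^\bullet=\mathrm{REnd}(\sT)$ realized on the h-projective resolution of $\sG\oplus\sF[d-2]$. The projections onto the two summands then give strict degree-$0$ idempotents, and these survive good truncation.
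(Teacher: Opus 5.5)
Your proposal is correct and follows the paper's proof essentially step for step. You build $\Phi$ from Corollary \ref{cor:tilting}, identify $\Phi(\ker\pi_\ast)=\ker\bP_e$ via Lemma \ref{L:SimplesGenKernel} and Proposition \ref{prop:dg-verdier}, read off the cohomology of $e\tilde A^\bullet e$ from Proposition \ref{prop:G}\eqref{lem:G-c}, and get the decompositions from Corollary \ref{cor:sod-quotient}. The extra points you handle are left implicit in the paper and are correct: the vanishing $\Hom(\sE_i,\bS^{-1}_\E\sE_i[d])=0$ needed for Proposition \ref{prop:G}, a model in which $e$ is a closed degree-$0$ idempotent, and reading the cohomology formula for $k=d-1$ as the extension of Proposition \ref{prop:algebra-extensions}.
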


\begin{proof}
By Corollary \ref{cor:tilting}, there is a an equivalence
\[
\Phi \colon \tilde{\AA} \xrightarrow{\cong} \DD_{\mathrm{fg}} ( \tilde{A}^\bullet )
\]
given by the object $\sF[d-2] \oplus \sG $ in $\tilde{\AA}$ as defined in Proposition \ref{prop:G}.
Let $e$  be the idempotent of $\tilde{A}^\bullet$ corresponding to $\sG$ via $\Phi$.
By \eqref{eq:dg-truncation}, we can assume that $\tilde{A}^\bullet$ is non-positive. 

Set $A^\bullet = e\tilde{A}^\bullet e $.
By Corollary \ref{cor:tilting}, the cohomology of $A^\bullet $  in degrees $0$ and $k+1-d$ is isomorphic to $\Hom^\bullet ( \sG , \sG )$ (as $e$ is the idempotent of $\tilde{A}^\bullet $ corresponding to $\sG$). 
By Proposition \ref{prop:G} \ref{lem:G-c}, the cohomology of $A^\bullet$ in degrees $0$ and $k+1-d$ is then isomorphic to $\End ( \sE )$ and $\Hom ( \sE , \bS^{-1}\sE [k-1])$, respectively and vanishes everywhere else.
Moreover, there is an exact functor 
\[
\bP_e(-) = \Hom_{\tilde{A}^\bullet}^\bullet (\tilde{A}^\bullet e , -) \colon \DD_{\mathrm{fg}}(\tilde{A}^\bullet)\to \DD_{\mathrm{fg}}( A^\bullet ).
\]
By Proposition \ref{prop:dg-verdier} \eqref{eq:equality-subcat} and Lemma 
\ref{L:SimplesGenKernel} we have  the equality of subcategories
\[
\ker \bP_e = \Phi ( \ker \pi_\ast )
\]
in $\DD_{\mathrm{fg}} ( \tilde{A}^\bullet )$.
Thus, by Proposition \ref{prop:dg-verdier} \eqref{eq:dg-verdier-localization} there is an equivalence 
\[
\tilde{\AA} / \ker \pi_\ast \xrightarrow[\cong]{\Phi } \DD_{\mathrm{fg}} ( \tilde{A}^\bullet ) / \ker \bP_e  \cong \DD_{\mathrm{fg}}( A^\bullet  ).
\]
The equivalences \eqref{eq:sod-quot1-b} and \eqref{eq:sod-quot2-b} follow from Corollary \ref{cor:sod-quotient}, see \eqref{eq:sod-quot1} and \eqref{eq:sod-quot2}, respectively.
\end{proof}

\begin{example}\label{ex:nodal-ks}
Assume that the base field $\mathbb{k}$ is algebraically closed of characteristic not equal to $2$.
Then Corollary \ref{cor:veriderloc-negative-dg} can be applied to $1$-nodal projective threefolds $X$ admitting a small (hence crepant) resolution $\pi \colon Y \to X$. We now explain how this recovers the $1$-nodal threefold case of Kuznetsov-Shinder \cite[Theorem 6.17 (i)]{ks}.

 The subcategory $\ker\pi_\ast \subseteq \Db(Y)$ is generated by one $3$-spherical object. Let $\bT$ be the corresponding spherical twist.
Assume there exists an exceptional object $\sE$ in $\Db(Y)$, such that $\sE$ and $\bT$ are (pre-)adherent (note that our notions of pre-adherence and adherence coincide in this example, as $\ker\pi_\ast$ has a single generator. Moreover, this coincides with the notion of adherence in \cite[Definition 3.9]{ks}, see Remark \ref{rmk:adherence-old-def}).
Note that a single exceptional object forms a geometric exceptional sequence of type $(1,1)$.
Then using Corollary \ref{cor:veriderloc-negative-dg}, together with \cite[Corollary 5.6 and Lemma 5.7]{ks}, we obtain  an admissible semiorthogonal decomposition 
\[
\Db(X) \cong \langle \DD_{\mathrm{fg}} ( A^\bullet ) , {{^\perp}\tilde{\AA}} \rangle ,
\]
with ${{^\perp}\tilde{\AA}} \subset \Dperf ( X )$ and $A^\bullet$ is a $\dg$ $\mathbb k$-algebra with cohomologies 
\begin{align}
    \bH^i(A^\bullet)=\begin{cases}
        \mathbb{k} \quad \text{ if } i=0 \text{ or } i=k+1-d=-1  \\
        0  \quad \text{ else}.
    \end{cases}
\end{align}
This implies that $A^\bullet$ is quasi-isomorphic to the graded algebra $\mathbb{k}[z] / (z^2)$ with $\deg(z) =-1$ (viewed as a dg algebra with trivial differential), cf. \cite[Theorem 2.1.]{KellerYangZhou}.
\end{example}

\section{Explicit descriptions of the algebras }\label{sec:algebra-description}

In this section, we assume that the base field $\mathbb{k}$ is algebraically closed.
The algebras $A=\End(\sG)$ in Proposition \ref{prop:algebra-extensions} are closely related to so-called rolled-up helix algebras, which in our situation are also known as higher preprojective algebras, cf. Remark \ref{rem:rolled-up}. Indeed, in certain cases (including some projective cones $X$ -- in particular, weighted projective spaces of the form $X=\P(1^d, d)$), $A$ is a natural quotient of a rolled-up helix algebra. In general, $A$ is a deformation of such a quotient algebra. 

\subsection{Preparation: rolled-up helix algebras and higher preprojective algebras}

Our description of the algebras $A$, builds on known descriptions of higher preprojective algebras due to Keller \cite{KellerCY} and Hanihara \cite{Hanihara2}, which we recall in this subsection.

\begin{setup}\label{Set:fintype}
    Let $\DD$ be a $\mathbb k$-linear algebraic triangulated category of finite type, i.e. $ \sum_{s \in \Z} \dim_{\mathbb k} \Hom_\DD (A , B [s]) < \infty$ for all objects $A, B$ in $\DD$.
\end{setup}

Bounded derived categories of smooth projective varieties over a field $\mathbb k$ and of finite dimensional $\mathbb k$-algebras of finite global dimension satisfy the conditions in Setup \ref{Set:fintype}.

\begin{lemma}[Bondal--Kapranov {\cite[Corollary to Theorem 2.10 \& Corollary 3.5]{bondal-kapranov}}]
    Let $\DD$ be a category as in Setup \ref{Set:fintype}. Assume that $\DD$ admits a full exceptional sequence. Then $\DD$ has a Serre functor $\bS_\DD$.
\end{lemma}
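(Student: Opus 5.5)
The plan is to use Bondal's theory of enhancements and representability for categories of finite type. Fix a full exceptional sequence $(\sE_1,\ldots,\sE_n)$ in $\DD$ and set $\sE=\bigoplus_i \sE_i$. Because $\DD$ is algebraic, I will take a dg enhancement and consider the dg endomorphism algebra $B^\bullet=\operatorname{REnd}(\sE)$. It has finite-dimensional total cohomology by the finite type assumption. Since the sequence is full, $\sE$ classically generates $\DD$, so Keller's theorem (\cite[8.7]{keller}) gives an equivalence $\DD\cong\Dperf(B^\bullet)$.

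Next I will show that $\Dperf(B^\bullet)=\DD_{\mathrm{fg}}(B^\bullet)$. Exceptionality makes $B^\bullet$ directed: $\bH^\bullet(e_iB^\bullet e_j)=0$ for $i>j$ and $e_iB^\bullet e_i\simeq\mathbb k$. I will induct along the semiorthogonal decomposition $\langle \sE_1,\ldots,\sE_n\rangle$. Each piece $\langle\sE_i\rangle\cong\Db(\mathbb k)$ has Serre functor the identity, and Serre functors glue along semiorthogonal decompositions of categories of finite type. This is Bondal--Kapranov's criterion: a category of finite type with a full exceptional sequence is \emph{saturated}, meaning every cohomological functor of finite type is representable.

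From saturation I get the Serre functor directly. For $B\in\DD$, the functor $\Hom_\DD(B,-)^\ast$ is contravariant cohomological and of finite type, because $\sum_s\dim\Hom(B,A[s])<\infty$. It is therefore representable, by an object I call $\bS_\DD(B)$, and Yoneda makes $\bS_\DD$ a functor. To see that it is an equivalence, I apply the same argument to $\Hom_\DD(-,B)^\ast$ to build a quasi-inverse $\bS_\DD^{-1}$, and then check via Yoneda that the two compositions are the identity.

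The key step, and the main obstacle, is representability. I would prove it by induction on $n$ using the decomposition $\DD=\langle\DD',\sE_n\rangle$. For a finite-type functor $H$, I first represent its restriction to $\langle\sE_n\rangle\cong\Db(\mathbb k)$ by the object $H(\sE_n)^\ast\otimes\sE_n$, which is finite because $H$ is of finite type. I then represent the restriction to $\DD'$ by the induction hypothesis, and glue the two pieces along the gluing triangle, using that admissible subcategories have adjoints. This is exactly Bondal--Kapranov's argument, and here I would simply cite \cite{bondal-kapranov}.
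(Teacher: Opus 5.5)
Your proposal takes the same route as the paper. The paper gives no argument beyond citing Bondal--Kapranov (Corollary to Theorem 2.10 and Corollary 3.5), which is exactly your chain: a full exceptional sequence in a category of finite type makes it saturated, and saturation gives a Serre functor by representing $\Hom(B,-)^\ast$ and $\Hom(-,B)^\ast$; like the paper, you also defer the key representability step to Bondal--Kapranov.

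The dg enhancement, the Keller equivalence and the announced equality $\Dperf(B^\bullet)=\DD_{\mathrm{fg}}(B^\bullet)$ are never proved or used, so you can drop them.
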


\begin{definition}\label{def:geom-type}
     Let $\DD$ be a triangulated category as in Setup \ref{Set:fintype} with a full exceptional sequence $\E:=(\sE_1, \ldots, \sE_n)$. Let $k \in \Z$. The \emph{helix of type $(n,k)$  generated by $\sE$} is an infinite sequence $\mathbb{H}=(\sE_i)_{i \in \Z}$ extending $\sE$ by setting 
    \begin{align}
        \sE_{i-n}:=\bS_\DD(\sE_{i})[1-k] \qquad \quad \text{ for all } \ i \in \Z.
    \end{align}
    A helix $\H$ is called \emph{geometric} if $\Hom^l(\sE_i, \sE_j)=0$ for all $l \neq 0$ and all $i<j$. {In this case, we also call the exceptional sequence $\E$ \emph{geometric (of type $(n,k)$)}.} 
\end{definition}

We are particularly interested in geometric helices of type $(n, k)$ that are generated by special exceptional sequences that can be partitioned into $k$ ``blocks'' of pairwise orthogonal objects:  

\begin{definition} \label{D:d-block exceptional sequence}
    Let $k \in \Z_{>0}$ and let $\TT$ be a triangulated category. For each $1 \leq i \leq k$ let $\E_i=(\sE_{i1}, \ldots, \sE_{il_i})$ be a sequence of pairwise orthogonal exceptional objects, i.e. 
    \begin{align}
        \Hom_\TT(\sE_{ij}, \sE_{im}[s])=0 \text{ for all }  s \in \Z \text{ and all } j\neq m.
    \end{align}
 If $\E:=(\E_1, \ldots, \E_k)$ forms an exceptional sequence, it is called \emph{$k$-block exceptional sequence}. 
  \end{definition}

\begin{example}\label{ex:p1xp1}
    Let $X=\P^1 \times \P^1$. Then $\left(\OO, \OO(0, 1), \OO(1, 0), \OO(1, 1)\right)$ is a full strong $3$-block exceptional sequence. Indeed, $\E_1=\OO$, $\E_2=(\OO(0, 1), \OO(1, 0))$, $\E_3=\OO(1, 1)$. The $(4, 3)$-helix $\H$ generated by it is
    \begin{align}
        \ldots \OO(-2, -2), \OO(-2, -1), \OO(-1, -2), \OO(-1, -1), \OO, \OO(0, 1), \OO(1, 0), \OO(1, 1), \OO(2, 2), \ldots 
    \end{align}
    since $\bS_X=- \otimes \omega_X [\dim X]=- \otimes \mathcal{O}(-2,-2)[2]$ is the Serre functor. 
\end{example}

\begin{lemma}\label{L:dblockgldim}
Let $\DD$ be a triangulated category as in Setup \ref{Set:fintype}.
    Let $k \in \Z_{>0}$ and let $\E:=(\sE_1, \ldots, \sE_m)$ be a strong $k$-block exceptional sequence with endomorphism algebra $E=\End(\oplus_{i=1}^m \sE_i)$. Then $\gldim E \leq k-1$.
\end{lemma}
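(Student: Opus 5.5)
We prove $\gldim E \leq k-1$ by ordering the simple $E$-modules according to the blocks and showing that their projective dimensions grow by at most one from block to block.

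Write $P_i := \Hom(\oplus_l \sE_l, \sE_i)$ for the indecomposable projective $E$-module attached to $\sE_i$, and $S_i$ for its simple top. Because the sequence is strong and exceptional, $E$ is a directed (triangular) algebra. Concretely, $\Hom(\sE_i,\sE_j)=0$ for $i>j$ and $\End(\sE_i)=\mathbb{k}$, and this makes $E$ basic with a complete set of primitive idempotents indexed by the $\sE_i$.

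The block condition adds one more piece of structure. If $\sE_i,\sE_j$ lie in the same block $\E_a$ with $i\neq j$, then $\Hom(\sE_i,\sE_j)=0$ in both directions. So the partial order ``$\Hom(\sE_i,\sE_j)\neq 0$ for $i\neq j$'' only ever relates objects in strictly different blocks, and it always goes from a lower block index to a higher one (or the reverse, depending on the module convention).

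First I fix the module convention, say left modules, where $\Hom(P_i,P_j)\cong \Hom(\sE_j,\sE_i)$ up to the usual op. Then I observe that the radical of $P_i$ has a filtration whose subquotients are simples $S_j$ with $\sE_j$ in a block strictly on one side of the block of $\sE_i$. This holds because every nonzero non-invertible map involves two distinct indices, and such indices lie in distinct blocks, ordered by exceptionality.

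The induction then runs as follows. Call the block index $a(i)$ when $\sE_i\in\E_{a(i)}$. At the extremal block, for instance $a=1$ if the radical composition factors lie in lower blocks, we have $\mathrm{rad}\,P_i=0$, so $S_i=P_i$ is projective. Now suppose every simple from blocks of index $<a$ has projective dimension at most $a-2$. The exact sequence $0\to \mathrm{rad}\,P_i\to P_i\to S_i\to 0$ shows that $\mathrm{rad}\,P_i$ is filtered by such simples, so its projective dimension is at most $a-2$. It follows that $\projdim S_i\leq a-1$. Hence all simples have projective dimension at most $k-1$. Since $E$ is finite dimensional, $\gldim E$ is the maximum of the projective dimensions of the simples, giving $\gldim E\leq k-1$.

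The main obstacle is purely bookkeeping. I have to get the direction of the order right under the chosen convention for the endomorphism algebra and for left versus right modules, and I have to check that composition factors of $\mathrm{rad}\,P_i$ really avoid the block of $\sE_i$ itself. The latter uses $\End(\sE_i)=\mathbb{k}$ together with the pairwise orthogonality inside a block. I will state it via $e_j\,\mathrm{rad}(P_i)\subseteq e_j E e_i$, which vanishes for $j=i$ because the radical part of $e_iEe_i=\mathbb{k}$ is zero, and vanishes for $j\neq i$ in the same block by orthogonality.
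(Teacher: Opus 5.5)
Your argument is correct and is essentially the paper's. The paper notes that the block condition forces every path in the quiver of $E$ to have length at most $k-1$, since nonzero maps only go from a block to a strictly later one, and then cites the standard fact that this bounds $\gldim E$ by $k-1$ \cite[Corollary 2.17]{HuberKalck}; your induction on the block index via $0\to \mathrm{rad}\,P_i\to P_i\to S_i\to 0$ is exactly the proof of that fact in this layered setting, so you have simply made the argument self-contained.
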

\begin{proof}
    The ``$k$-block condition'' implies that the quiver of $A$ has paths of length at most $k-1$. It is well-known that this implies $\gldim A \leq k-1$, cf. e.g. \cite[Corollary 2.17]{HuberKalck}.
\end{proof}

For geometric helices of type $(n, n)$ (i.e. those generated by $n$-block sequences, where each block consists of a single object) the following result is due to Bondal \& Polishchuk, \cite{bondal-polishchuk}. 

\begin{proposition}\label{P:dBlockGeometricRed}
        Let $l<k \in \Z_{>0}$ and let $\E:=(\E_1, \ldots, \E_k)$ be a  $k$-block exceptional sequence in a triangulated category $\DD$ as in Setup \ref{Set:fintype}. Let $1\leq i_1 < \cdots < i_l \leq k$.
        
        If the helix of type $(n, k)$ generated by $\E$ is geometric, then  the helix of type $(s, l)$ generated by the $l$-block exceptional sequence $(\E_{i_1}, \ldots, \E_{i_l})$ is also geometric, where $s$ is the number of pairwise non-isomorphic objects in the sequence $(\E_{i_1}, \ldots, \E_{i_l})$.
\end{proposition}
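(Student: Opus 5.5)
The plan is to reduce to removing a single block and then argue by induction on $k-l$. The helix of type $(s,l)$ generated by $\E':=(\E_{i_1},\ldots,\E_{i_l})$ is formed inside $\DD':=\thick(\E')$. This category again satisfies Setup \ref{Set:fintype}, and $\E'$ is a full exceptional sequence of it, so by Bondal--Kapranov it has a Serre functor $\bS_{\DD'}$. By induction it suffices to treat $l=k-1$, where we omit one block $\E_r$. The hypothesis is the geometricity of a type $(k-1)$ helix in $\DD'$, and the conclusion is the same statement for a smaller subcategory, so the induction closes formally.

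\textbf{Step 1 (mutation description of the helix).} Following Bondal--Polishchuk, I would first rewrite the helix in terms of block mutations. In a block exceptional sequence the objects inside a block are mutually orthogonal, so a whole block can be mutated at once. For a full block sequence $(\E_1,\ldots,\E_k)$, the standard identity is
\[
\bS^{-1}_\DD(\E_1)[k-1]\cong \bR_{\E_k}\cdots\bR_{\E_2}(\E_1),
\]
where $\bR$ denotes block right mutation. This shows that the next block $\E_{k+1}$ of the helix is obtained by mutating $\E_1$ to the far right through the other $k-1$ blocks. Dually, $\E_{0}$ is obtained by left mutating $\E_k$ through the remaining blocks. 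With this description, geometricity of the helix of type $(n,k)$ is equivalent to the following: every window of $k$ consecutive blocks $(\E_{a+1},\ldots,\E_{a+k})$ is a strong exceptional sequence, and each single block mutation along the way is \emph{pure}. Pure means that the defining triangle reduces to a short exact sequence in degree $0$, that is, the evaluation map is surjective (or the coevaluation map is injective) at the level of $\Hom^0$.

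\textbf{Step 2 (the $\DD'$-helix inside the $\DD$-helix).} Inside $\DD'$, the same identity gives
\[
\E'_{l+1}=\bS_{\DD'}^{-1}(\E_{i_1})[l-1]\cong \bR_{\E_{i_l}}\cdots\bR_{\E_{i_2}}(\E_{i_1}),
\]
and all of these mutations can be computed in $\DD$. I would compare this object with the $\DD$-helix. Using the braid relations for mutations, I would commute $\E_r$ to the right, which yields
\[
\bR_{\E_r}(\E'_{l+1})\cong \E_{k+1}.
\]
Equivalently, $\E'_{l+1}$ is the left projection of $\E_{k+1}$ away from $\E_r$, which up to shift is $\bL_{\E_r}$ applied within the window $\E_2,\ldots,\E_{k+1}$. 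Iterating this, every block of the $\DD'$-helix is either a block of the $\DD$-helix or one block mutation of such a block through a single block of the $\DD$-helix that lies inside a window of $k$ consecutive blocks.

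\textbf{Step 3 (purity passes to the subsequence).} The main obstacle is to show that these new objects still have all their $\Hom$-spaces to and from the other $\DD'$-helix objects concentrated in degree $0$. My first attempt would use the mutation triangle
\[
\Hom^\bullet(\E_r,F)\otimes\E_r\to F\to \bL\text{-type object},
\]
and apply $\Hom^\bullet(\E_a,-)$ with $\E_a$ a block of the window. Geometricity of the $\DD$-helix places every term of the resulting long exact sequence in degree $0$. The only remaining issue is degree $-1$, which could arise as a kernel; to exclude it, I would show that the relevant composition map
\[
\Hom^0(\E_a,\E_r)\otimes\Hom^0(\E_r,F)\to\Hom^0(\E_a,F)
\]
is surjective or injective as required. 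For this I would use purity of the original mutation in Step 1 together with the Koszul-type exactness that Bondal--Polishchuk derive from geometricity, namely that the rolled-up helix algebra is Koszul. If this direct argument fails, my fallback is their criterion that a helix is geometric iff the windows are strong and the helix algebra is Koszul, together with the fact that Koszulity is preserved when passing to the idempotent subalgebra at the blocks $\E_{i_j}$.
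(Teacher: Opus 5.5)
Your reduction to deleting one block at a time is fine (the paper also iterates). Adapting Bondal--Polishchuk to block mutations is also one of the two routes the paper mentions. As written, however, the argument does not go through.

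\textbf{Step~2 is false beyond the first period.} A minor point first: the identity in Step~2 needs the deleted block to have been moved to the end. It reads $\E_{k+1}\cong\bR_{\widetilde{\E}_r}(\E'_{l+1})$ up to shift, with $\widetilde{\E}_r=\bR_{\E_k}\cdots\bR_{\E_{r+1}}(\E_r)$, not with $\E_r$ itself. The serious problem is the claim that, iterating, every block of the $\DD'$-helix is a block of the $\DD$-helix or one mutation of such a block. Relabel so that the deleted block is the last one, $\E_k$. Then $\DD'=\E_k^{\perp}$ and $\bS_{\DD'}^{-1}\cong\mathsf{L}_{\E_k}\circ\bS_\DD^{-1}$ on $\DD'$. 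Hence, up to shifts,
\[
\bS_{\DD'}^{-p}\cong \mathsf{L}_{\E_k}\circ\mathsf{L}_{\bS_\DD^{-1}\E_k}\circ\cdots\circ\mathsf{L}_{\bS_\DD^{-(p-1)}\E_k}\circ\bS_\DD^{-p}\quad\text{on }\DD',
\]
so the $p$-th period of the new helix needs $p$ successive mutations of the old one, not one. Concretely, take $(\OO,\OO(1),\OO(2))$ on $\P^2$ and delete $\OO(1)$. Then $\DD'\cong\Db(\mathbb{k}K_6)$, with $K_6$ the $6$-Kronecker quiver, and the $\DD'$-helix has ranks (in absolute value) $1,1,5,29,169,\ldots$. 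By contrast, a single mutation of one object through another object in a common window of the helix generated by $(\OO,\OO(1),\OO(2))$ or by $(\OO,\OO(2),T_{\P^2}(1))$ has rank at most $29$. Geometricity constrains $\Hom^\bullet(\E'_a,\E'_b)$ for all $a<b$. Equivalently, by Lemma \ref{L:GeomHelicesandhigherreprinfinite}, it requires $\bH^i(\nu^{-p}_{l-1,E'}(E'))=0$ for all $p\geq 0$, where $E'=\End(\bigoplus_j\E_{i_j})\op$. Purity checks of single mutations near the $\DD$-helix cannot reach these conditions.

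\textbf{Step~3 is not carried out, and the fallback does not apply.} Step~3 is where the actual content lies, but it is only a statement of intent. The needed surjectivity or injectivity of the composition maps is not proved, and neither is the equivalence asserted in Step~1. The fallback fails because the helix algebra of the subsequence is built from the $\DD'$-helix, i.e.\ from $\bS_{\DD'}$. It is not an idempotent subalgebra of the helix algebra of $\E$. In the example, $\dim\Hom(\OO(a),\OO(b))$ grows quadratically in $b-a$, whereas the $\Hom$-spaces between objects of the $\DD'$-helix grow exponentially. Note also that your sketch never shows where the hypothesis that \emph{whole} blocks are deleted is used. The remark after the proposition shows this hypothesis is essential.

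\textbf{What the paper does instead.} The paper supplies the missing global input in three steps.
\begin{enumerate}
\item Move the blocks to be deleted to the end by block mutations. This leaves the other blocks unchanged and, by Bridgeland--Stern's Theorem 5.3, preserves geometricity.
\item Rewrite geometricity, via Lemma \ref{L:GeomHelicesandhigherreprinfinite}, as the endomorphism algebra being $(k-1)$-representation-infinite.
\item Apply Hanihara's Theorem 2.3: truncating an $n$-representation-infinite algebra at a suitable source/sink idempotent gives an $(n-1)$-representation-infinite algebra. The block structure is what makes its hypotheses hold; cf.\ Lemma \ref{L:dblockgldim} and the footnote in the proof of Lemma \ref{L:productsofgeom}.
\end{enumerate}
This controls all powers $\nu^{-p}$ at once, which is exactly what your local approach lacks.
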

\begin{proof}
One can either adapt the proof of Bondal \& Polishchuk \cite[Prop. 2.6]{bondal-polishchuk} by replacing mutations by ``block mutations'', cf. e.g. \cite{BridgelandStern}. Or first  mutate the blocks that should be removed to the end of the sequence. This yields another geometric exceptional sequence by    \cite[Theorem 5.3.]{BridgelandStern}. Then Lemma \ref{L:GeomHelicesandhigherreprinfinite} shows that \cite[Theorem 2.3]{Hanihara2} can be applied iteratively to remove these blocks.
\end{proof}

\begin{remark}
    The statement in Proposition \ref{P:dBlockGeometricRed} can fail if only part of a block is removed. Indeed, let

    \begin{equation*}\begin{tikzpicture}[description/.style={fill=white,inner sep=2pt}]

    \matrix (n) [matrix of math nodes, row sep=2em,
                 column sep=2em, text height=1.5ex, text depth=0.25ex,
                 inner sep=2pt, nodes={inner xsep=0.3333em, inner
ysep=0.3333em}] at (0, 0)
    {   1  && 2 & \cdots & n \\
    &&& n+1 \\};

    \draw[->] ($(n-1-1.east) + (-1.7mm,-2mm)$) to  ($(n-2-4.west) + (2.3mm,2mm)$);

    \draw[->] ($(n-1-3.east) + (-1.7mm,-2mm)$) to  ($(n-2-4.north) + (0mm,0mm)$);

\draw[->] ($(n-1-5.west) + (+1.7mm,-2mm)$) to ($(n-2-4.east) + (-1mm,3mm)$);

\node at (-4, -0.2) {$Q:=S_{n}=$}; 

\end{tikzpicture}\end{equation*}

    The derived category $\DD:=\Db(\mathbb{k}Q\mbox{-}\mod)$ has a full strong $2$-block\footnote{One block consists of $n$ objects and the other block of $1$ object.} exceptional sequence consisting of indecomposable projective $\mathbb{k}Q$-modules. 
    By Lemma \ref{L:GeomHelicesandhigherreprinfinite}, this sequence is geometric if and only if $n\geq 4$ if and only if $S_{n}$ is a non-Dynkin quiver.
    In particular, in this case, the geometricity of the helix is not preserved if and only if we remove $n> t \geq n-3$ objects from the larger block.
\end{remark} 

The following notion was introduced in \cite{HIO}, cf. \cite[Definition 1.14]{HIMO} for the version below.

\begin{definition}\label{D:higherreprinfinite}
    Let $E$ be a finite dimensional $\mathbb k$-algebra with $\gldim E < \infty$. Let  $\bS_E$ be the Serre functor of $\Db(E)$. Let $n \in \Z_{\geq 0}$ and $\nu_{n}:=\nu_{n,E}:=\bS_E[-n]$.
    Then $E$ is  \emph{$n$-representation-infinite} if 
    \begin{align} \label{E:higher-repr-inf}
        \bH^i(\nu_{n,E}^{-j}(E))=0  \text{  for all } i \neq 0 \text{  and all } j\geq 0. 
    \end{align}
\end{definition}

\begin{remark} \label{R:GldimHigherHered}
    It is known that $\gldim E = n$ if $E$ is $n$-representation-infinite, cf. e.g. \cite[Lem. 1.4]{Tomonaga2510} combined with \cite[Definition 2.7]{HIO}. 
\end{remark}

The next result is well-known and, for example, implicit in works of Bondal--Polishchuk \cite{bondal-polishchuk} and Bridgeland--Stern \cite{BridgelandStern}.

\begin{lemma}\label{L:GeomHelicesandhigherreprinfinite}
    Let $\E:=(\sE_1, \ldots, \sE_{{m}})$ be a full strong exceptional sequence in a triangulated category $\DD$ as in Setup \ref{Set:fintype}.
    The following conditions are equivalent
    \begin{itemize}
    \item[(a)] The helix of type $({m}, k)$ generated by $\E$ is \emph{geometric}.
    \item[(b)] The finite dimensional $\mathbb k$-algebra $E:=\End_\DD(\oplus_{i=1}^{{m}} \sE_i)\op$ is $(k-1)$-representation-infinite.
    \end{itemize}
\end{lemma}
\begin{proof}
Since $\E$ is an exceptional sequence, the quiver of $E$ has no oriented cycles, which shows that $\gldim E<\infty$ holds.
By assumption, $T:=\oplus_{i=1}^m \sE_i$ is a tilting object in $\DD$. By work of Keller, this yields a triangle equivalence $\DD \cong \Db(E)$ sending $T$ to $E$. Using this and the fact that triangle equivalences commute with Serre functors, we see that
\begin{align*}
 \Hom_\DD^i(T, (\bS^{-1}_\DD[k-1])^j(T))=0 \ \text{  for all } i \neq 0 \text{  and all } j\geq 0  &\Longleftrightarrow \\
\Hom_E^i(E, \nu_{(k-1),E}^{-j}(E))=0 \ \text{  for all } i \neq 0 \text{  and all } j\geq 0 
&\Longleftrightarrow \eqref{E:higher-repr-inf}
\end{align*}
This shows that (a) implies condition \eqref{E:higher-repr-inf} for $E$ and thus (a) implies (b).   
For the converse, we have to show $\Hom(\sE_i, \sE_j [l])=0$ for all $l \neq 0$ and all $i<j$. This follows since we can replace $T$ by $(\bS_\DD[1-k])^s(T)$ for any $s\in \Z$ (by definition, this generates the same helix as $T$ (up to reindexing)). In this way, we can assume without loss of generality that $T$ contains $\sE_i$ as a direct summand. The claim follows from the discussion above since (by definition of a helix) $\sE_j$ is then a direct summand of $(\bS^{-1}_\DD[k-1])^j(T)$ for some $j \geq 0$.
\end{proof}

Our main source of geometric exceptional sequences are certain $k$-block exceptional sequences via the following known results.

\begin{proposition}\label{prop:BHI}
    Let $Z$
    be a smooth projective variety of dimension $d$. Let $\E:=(\sE_1, \ldots, \sE_m)$ be a full strong exceptional sequence of \emph{sheaves} in $\Db(Z)$ with endomorphism algebra $E=\End(\oplus_{i=1}^m \sE_i)\op$.    
    Then the following conditions are equivalent:
    \begin{enumerate}[label=(\alph*)]
        \item $\gldim E\leq d.$
        \item The helix of type $({m}, d+1)$ generated by $\E$ is \emph{geometric}.
    \end{enumerate}
    In particular, any full strong $(d+1)$-block exceptional sequence of \emph{sheaves} in $\Db(Z)$ generates a geometric helix of type $({m}, d+1)$.
\end{proposition}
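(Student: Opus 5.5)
Via Lemma \ref{L:GeomHelicesandhigherreprinfinite}, condition (b) is equivalent to $E$ being $d$-representation-infinite. So the task is to show that $\gldim E\le d$ forces
\[
\bH^i(\nu_{d,E}^{-j}(E))=0 \quad \text{for all } i\neq 0,\ j\geq 0.
\]
The converse (b)$\Rightarrow$(a) is immediate from Remark \ref{R:GldimHigherHered}, which gives $\gldim E=d$ for a $d$-representation-infinite algebra. The ``in particular'' then follows from Lemma \ref{L:dblockgldim}: a strong $(d+1)$-block sequence has $\gldim E\le d$.

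For (a)$\Rightarrow$(b), I transport everything to $Z$ through the tilting equivalence $\Db(Z)\cong\Db(E)$ sending $T=\bigoplus\sE_i$ to $E$. Under it, $\nu_{d,E}^{-1}=\bS_E^{-1}[d]$ corresponds to $\bS_Z^{-1}[d]=-\otimes\omega_Z^{-1}$. The condition therefore becomes
\[
\Hom^i(T,T\otimes\omega_Z^{-j})=0 \quad \text{for } i\neq 0,\ j\ge 0,
\]
that is, $\Ext^i(\sE_a,\sE_b\otimes\omega_Z^{-j})=0$ for $i\ne0$. For $j=0$ this is strongness, so only $j\ge1$ remains.

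The key observation is that $\sE_a,\sE_b$ are sheaves, so $\Ext^i(\sE_a,\sF)=0$ for $i<0$ whenever $\sF$ is a sheaf. This is exactly why the sheaf hypothesis is needed, and it handles all negative degrees. For positive degrees I use Serre duality on $Z$:
\[
\Ext^i(\sE_a,\sE_b\otimes\omega_Z^{-j}) \cong \Ext^{d-i}(\sE_b\otimes\omega_Z^{-(j-1)},\sE_a)^\ast
= \Hom(T\otimes\omega_Z^{-(j-1)},T[d-i])^\ast .
\]
Translating back to $E$-modules, this is $\Hom_E(\nu_d^{-(j-1)}E,E[d-i])$. For $i>d$ it vanishes because the degree $d-i$ is negative and both objects are sheaves. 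For $0<i\le d$ I argue by induction on $j$, assuming $M:=\nu_d^{-(j-1)}E$ is already a module. Then
\[
\nu_d^{-1}M=\mathrm{RHom}_E(DE,M)[d]
\]
has cohomology $\Ext^{d+l}_E(DE,M)$ in degree $l$. Since $\gldim E\le d$, these vanish for $l>0$, and the negative degrees vanish by the sheaf argument above. This completes the induction.

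A cleaner way to package the same argument is to observe that $\bS_E^{-1}[d]$ applied to a module is concentrated in degrees $\le 0$ because $\gldim E\le d$. On the $Z$ side, $T\otimes\omega_Z^{-j}$ is a sheaf, so $\Hom(T,T\otimes\omega_Z^{-j})$ is concentrated in degrees $\ge0$. Combining these two bounds pins the cohomology of $\nu_d^{-j}E$ in degree $0$.

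I expect the main obstacle to be getting both inequality directions right, and in particular making sure the identification of $\nu_d^{-1}$ with twisting by $\omega_Z^{-1}$ respects the direction of the equivalence, given the ${}^{op}$ convention in the definition of $E$.
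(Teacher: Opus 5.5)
Your proof is correct, but for the substantive implication it takes a different route from the paper. The paper gives no direct argument. It quotes \cite[Lemma 1.4 and Proposition 1.5]{Tomonaga2510} (attributed to Buchweitz--Hille--Iyama) and only translates condition (3) there using $(\bS_Z[1-(d+1)])^{-1}=-\otimes\omega_Z^{-1}$.

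You instead prove (a)$\Rightarrow$(b) from scratch, by trapping the cohomology of $\nu_{d,E}^{-j}(E)$ between two bounds:
\begin{itemize}
\item $\gldim E\le d$ forces $\mathrm{RHom}_E(DE,M)[d]$ into degrees $\le 0$ whenever $M$ is a module;
\item $\bH^l(\nu_{d,E}^{-j}E)\cong\Hom_Z(T,T\otimes\omega_Z^{-j}[l])$ vanishes for $l<0$, because both arguments are sheaves.
\end{itemize}
Induction on $j$ then finishes the argument. This is more elementary than the citation, and it makes visible that the sheaf hypothesis is used only for the lower bound. You could even skip Lemma \ref{L:GeomHelicesandhigherreprinfinite} here. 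The helix of type $(m,d+1)$ satisfies $\sE_{i-m}=\sE_i\otimes\omega_Z$, so geometricity is literally the vanishing of $\Ext^l(\sE_a,\sE_b\otimes\omega_Z^{-j})$ for $l\neq 0$ and $j\ge 0$, which is what you prove. For (b)$\Rightarrow$(a) and for the final assertion you use Remark \ref{R:GldimHigherHered} and Lemma \ref{L:dblockgldim}, i.e.\ essentially the same external input as the paper.

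You should delete the Serre duality paragraph, because it is both wrong and unused. Serre duality gives
\[
\Ext^i(\sE_a,\sE_b\otimes\omega_Z^{-j})\cong\Ext^{d-i}(\sE_b\otimes\omega_Z^{-(j+1)},\sE_a)^\ast,
\]
with exponent $-(j+1)$ rather than $-(j-1)$. So it passes from step $j$ to step $j+1$ and cannot support an induction on $j$. The case $i>d$ is already covered by the global dimension bound.

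Your worry about the ${}^{\mathrm{op}}$ convention is also unnecessary. You only use that the tilting equivalence sends $T$ to the regular left module $E$ and commutes with Serre functors.
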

\begin{proof}
  The first part follows from \cite[Lemma 1.4 and Proposition 1.5]{Tomonaga2510}, which is attributed to Buchweitz, Hille \& Iyama. Indeed, Proposition 1.5 (3) in loc. cit. translates into condition (b), since $(\mathsf{S}_Z[1-(d+1)])^{-1}=-\otimes_Z\omega_Z^{-1}[d-d]=-\otimes_Z\omega_Z^{-1}$, cf. also the proof of Lemma \ref{L:GeomHelicesandhigherreprinfinite} (b) $\Rightarrow$ (a). 
  
  The second part follows from the implication (a) $\Rightarrow$ (b) together with Lemma \ref{L:dblockgldim}.
\end{proof}

\begin{example}
    The helix $\H$ of type $(4,3)$ in Example \ref{ex:p1xp1} is generated by a full strong  $3$-block exceptional collection of sheaves on the surface $\P^1 \times \P^1$. Hence, it is geometric, by Proposition \ref{prop:BHI}.

    By Proposition \ref{P:dBlockGeometricRed}, the $2$-block exceptional sequences $\E:=(\mathbb{E}_1, \E_2), \E':=(\mathbb{E}_2, \E_3)$ and $\E'':=(\mathbb{E}_1, \E_3)$ are also geometric. This can also be seen directly via tilting theory and Lemma \ref{L:GeomHelicesandhigherreprinfinite} -- the corresponding algebras are of the form $\mathbb{k}Q$ for a non-Dynkin quiver $Q$, hence $1$-representation infinite. 
\end{example}

\begin{remark}
    Not all geometric helices of type $(m, d+1)$ come from $(d+1)$-block exceptional sequences. 
    For example, the blow-up $Z$ of $\P^2$ in a point admits a full geometric sequence of type $(4, 3)$ consisting of line bundles, cf. Example \ref{ex:del-pezzo}. This sequence is not a $3$-block exceptional sequence. In fact, there does not exist a full $3$-block exceptional sequence of sheaves on $Z$ according to \cite[Remark in Section 3.5]{karpov-nogin}.

    Here is a ``representation theoretic'' argument to show that there does not exist a full strong $3$-block exceptional collection $\E$ on $Z$ such that one block consists of a single line bundle $\sL$\footnote{Such full string block exceptional sequences exist in most of our examples, which simplifies our arguments in these cases.}. Indeed, after tensoring $\E$ with $\sL^{-1}$ we may assume that $\sL=\mathcal{O}_Z$. It follows from \cite[Proposition 6.1]{King} (cf. also Example \ref{ex:del-pezzo}), that 
    \begin{align}
        \mathcal{O}_Z^\perp \cong \Db(\mathbb{k}Q_Z)
    \end{align}
    where $Q_Z$ is the following quiver
    \begin{equation*}
\small
\begin{tikzpicture}[description/.style={fill=white,inner sep=2pt}]
 \matrix (n) [matrix of math nodes, row sep=2.5em,
                 column sep=2em, text height=1.5ex, text depth=1ex,
                 inner sep=2pt, nodes={inner xsep=0.3333em, inner
ysep=0.3333em}] at (-6, 0)
    {   (-1,-1)  && (0,-1) \\ 
     && (-1,0) \\
          };
    \draw[<-] ($(n-1-1.east) + (0,1mm)$)  to  node[fill=white, yshift=0.7mm, scale=0.8] [midway]{$x_{0}$}($(n-1-3.west) + (0mm,1mm)$) ;
    \draw[<-] ($(n-1-1.east) + (0,-1mm)$) to node[fill=white, yshift=-0.7mm, scale=0.8] [midway]{$x_{1}$} ($(n-1-3.west) + (0mm,-1mm)$);

 \draw[<-] ($(n-1-3.south) + (-1mm,0mm)$)  to  node[fill=white, xshift=0mm,  yshift=-0.5mm, scale=0.8] [midway]{$y$}($(n-2-3.north) + (-1mm,0mm)$);

  \draw[<-] ($(n-1-1.south) + (2.5mm,0mm)$)  to  node[fill=white, yshift=0mm, scale=0.8] [midway]{$z$}($(n-2-3.north) + (-2mm,0mm)$);

\end{tikzpicture}\end{equation*}
This implies that $\Db(\mathbb{k}Q_Z)$ does not admit a full strong $2$-block exceptional sequence $\F$. 

Indeed, $\End_Z(\F)\cong \mathbb{k}Q$, where $Q$ has paths of lengths at most $1$.  
 Since, by tilting, $\mathbb{k}Q_Z$ and $\mathbb{k}Q$ are derived equivalent, \cite[Corollary in Chapter I.5.7]{HappelBook} implies that $Q_Z$ and $Q$ are related by a sequence of reflections (in sinks or sources). This yields a contradiction, since $Q_Z$  and all its reflections have a path of length $2$.   
\end{remark}

\begin{lemma} \label{L:gldimdroptensor}
    Let $A=\mathbb{k}Q/I$ and $A'=\mathbb{k}Q'/I'$ be finite dimensional $\mathbb{k}$-algebras such that $Q$ and $Q'$ 
    both have  source vertices $s \in Q_0$ and $s' \in Q'_0$, with corresponding idempotents $e_s \in A$ and $e_{s'} \in A'$.
    
    If $\gldim A/e_s \leq \gldim A-1$ and $\gldim A'/e_{s'} \leq \gldim A'-1$, then  $\gldim (A \otimes_\mathbb{k} A'/e_s \otimes e_{s'}) \leq \gldim A \otimes_\mathbb{k} A'-1$. 
\end{lemma}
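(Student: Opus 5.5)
Throughout, $\bar A := A/Ae_sA$ and similarly for $A'$ and $A\otimes_\mathbb{k} A'$. The plan is to reduce everything to the formula $\gldim(A\otimes_\mathbb{k} A') = \gldim A + \gldim A'$. Over an algebraically closed (or perfect) field, with $A,A'$ finite dimensional of finite global dimension, this holds because simple modules of the tensor product are $S\otimes S'$ and $\Ext$ is multiplicative. I would first identify the quotient. The idempotent $e_s\otimes e_{s'}$ is primitive in $A\otimes A'$, corresponding to the vertex $(s,s')$ of $Q\times Q'$. Since $s$ and $s'$ are sources, $(s,s')$ is a source of the product quiver, and $(A\otimes A')/(e_s\otimes e_{s'})$ is the algebra obtained by deleting this vertex.

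Next I would use the source property to compute global dimension vertex by vertex. For a finite-dimensional algebra given by a quiver with relations, $\gldim = \max_v \projdim S_v$. Set $B=A\otimes A'$ and $\bar B = B/(e_s\otimes e_{s'})$. Deleting a source leaves minimal projective resolutions of the other simples unchanged in shape, except that $P_{(s,s')}$ summands get removed. Instead of this, I would use the standard fact that for a source $s$ (so $e_sAe_s=\mathbb{k}$ and $e_sA(1-e_s)=0$, i.e.\ $P_s$ is simple projective, depending on conventions), $A$ is a one-point (co)extension of $\bar A$ by the module $M=\operatorname{rad}P_s$ or its dual. Then $\gldim A = \max(\gldim \bar A, \projdim_{\bar A} M +1)$, or more precisely the projective dimension of the simple at $s$ in the opposite convention. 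The hypothesis $\gldim\bar A\le \gldim A-1$ then forces the simple $S_s$ to be the unique simple with maximal projective (or injective) dimension $n=\gldim A$, while all simples $S_v$ with $v\neq s$ have $\projdim_A S_v\le n-1$. I would check this using that the restriction $\bar A\mbox{-}\mod\to A\mbox{-}\mod$ preserves projective resolutions of modules supported away from $s$, when $s$ is a sink for the relevant action.

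Then, for simples $S_v\otimes S'_{v'}$ of $B$ with $(v,v')\ne(s,s')$, we get
\[\projdim_B (S_v\otimes S'_{v'}) = \projdim S_v+\projdim S'_{v'} \le n+n'-1,\]
since at least one summand drops by one. The tensor-product quiver also has $(s,s')$ as a source, so the same one-point extension argument shows that the resolutions of these simples over $\bar B$ have the same length. This gives $\gldim\bar B\le n+n'-1 = \gldim B - 1$.

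The main obstacle is justifying the claim that projective dimensions of simples at vertices other than the source are the same over the quotient and over the full algebra, with the orientation conventions correct (left modules versus which of sink/source gives the simple projective). I would handle it by working with injective dimensions if necessary: for a source in left-module conventions, $A e_s$ may be the simple one, and $\Ext_A(S_v,-)=\Ext_{\bar A}(S_v,-)$ on $\bar A$-modules. I would also need the mixed case $\projdim_B$ of $S_v\otimes S'_{v'}$ computed over $B$ versus $\bar B$, which follows from the Künneth-type tensor product of minimal resolutions together with the one-point extension comparison.
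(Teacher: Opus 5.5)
Your argument is correct. It rests on the same two facts as the paper's proof, but combines them in a different order:
\begin{enumerate}
\item[(i)] if $e$ is an idempotent with $eB(1-e)=0$, then projective $B/BeB$-modules are projective over $B$, so minimal resolutions over the quotient stay minimal over $B$ and projective dimensions agree;
\item[(ii)] $\gldim(A\otimes_\mathbb{k}A')=\gldim A+\gldim A'$.
\end{enumerate}

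The paper never reformulates the hypothesis on $A$. Given a simple $\bar B$-module $S$, it notes that $S$ is killed by $e_s\otimes 1$ or by $1\otimes e_{s'}$, say the former. It then concludes $\projdim_{\bar B}S=\projdim_B S=\projdim_{A/e_s\otimes A'}S\le \gldim(A/e_s)+\gldim A'\le \gldim B-1$. This needs (i) for the idempotent $e_s\otimes 1$. That idempotent is not a single source, but it still satisfies $(e_s\otimes 1)B(1-e_s\otimes 1)=e_sA(1-e_s)\otimes A'=0$. The paper uses (ii) only at the level of global dimensions.

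You instead apply (i) to $A\to\bar A$ first. This turns the hypothesis into the statement that $\projdim_A S_v\le \gldim A-1$ for all $v\neq s$, i.e.\ $S_s$ is the only simple of maximal projective dimension. You then use the vertexwise form $\projdim_B(S_v\otimes S'_{v'})=\projdim_A S_v+\projdim_{A'}S'_{v'}$ of (ii). So your route applies (i) only to genuine source idempotents and gives a more transparent reading of the hypothesis. The paper's route avoids the simple-by-simple K\"unneth computation.

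On the orientation issue you flag: in the paper's conventions a source means $e_sA(1-e_s)=0$. Then $A/Ae_sA\cong A(1-e_s)$ is a projective left module and $S_s$ is simple injective. This is exactly the case where your claim $\projdim_A S_v=\projdim_{\bar A}S_v$ for $v\neq s$ holds.

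In the other orientation that claim is false. For the quiver $s\to v$ with the path convention in which $P_s$ is simple projective, one has $\projdim_A S_v=1$ but $\projdim_{\bar A}S_v=0$. However, the hypotheses and the conclusion are invariant under passing to opposite algebras, so your fallback to injective dimensions is legitimate.

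You should discard your passing remark that deleting the source only removes $P_{(s,s')}$-summands from resolutions. In the right orientation no such summands occur at all, and in the wrong one the lengths can change.
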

\begin{proof}
Let $S$ be a simple $(A \otimes_\mathbb{k} A'/e_s \otimes e_{s'})$-module. We can view it as an $A \otimes_\mathbb{k} A'$- module via $\pi\colon A \otimes_\mathbb{k} A' \to A \otimes_\mathbb{k} A'/e_s \otimes e_{s'}$, which is again simple and not supported at $e_s \otimes e_{s'}$. Every such simple $A \otimes_\mathbb{k} A'$-module arises from a simple $A/e_s \otimes_\mathbb{k} A'$ or a simple $A \otimes_\mathbb{k} A'/e_{s'}$-module via pullback along the ring epimorphisms $p\colon A \otimes_\mathbb{k} A' \to A/e_s \otimes_\mathbb{k} A'$ and $p'\colon A \otimes_\mathbb{k} A' \to A \otimes_\mathbb{k} A'/e_{s'}$. 

Since $s$ and $s'$ are source vertices, pulling back projective modules via $\pi$, $p$ and $p'$ yields projective $A \otimes_\mathbb{k} A'$- modules. Using this, minimal projective resolutions over the quotient algebra pull back to minimal projective resolutions over $A \otimes_\mathbb{k} A'$.  In particular, if $S$ is not supported on $e_s \otimes 1$ 
\begin{align}
    \projdim_{A \otimes_\mathbb{k} A'/e_s \otimes e_{s'}} S = \projdim_{A \otimes_\mathbb{k} A'} S = \\
    \projdim_{A/e_s \otimes_\mathbb{k} A'} S \leq \gldim A/e_s \otimes_\mathbb{k} A' \\= \gldim A/e_s + \gldim A' \leq \gldim A -1 + \gldim A' = \gldim A \otimes_\mathbb{k} A' -1
\end{align}
and analogously if $S$ is not supported on $1 \otimes e_{s'}$. This proves the claim.
\end{proof}

\begin{lemma} \label{L:productsofgeom}
    Let $Z_1, Z_2$ be very strong Fano
    varieties. Then their product $Z=Z_1 \times Z_2$ is a very strong Fano
    variety.

    Let $\E_i$ be the geometric exceptional sequence on $Z_i$, in the notation of Definition \ref{def:verystrong-Fano}. 
    If $\E_1$ consists of vector bundles and $\E_2$ consists of sheaves, then there is a geometric exceptional sequence $\E$ on $Z$ as in Definition \ref{def:verystrong-Fano} that consists of sheaves. If both $\E_1$ and $\E_2$ consist of vector bundles, then $\E$ consists of vector bundles.
\end{lemma}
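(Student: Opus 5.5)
We verify the two conditions of Definition \ref{def:verystrong-Fano} for $Z=Z_1\times Z_2$, of dimension $d-1=(d_1-1)+(d_2-1)$, and then build the required geometric sequence as an exterior product.

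\emph{Step 1: the anticanonical embedding of $Z$.} We have $\omega_Z^{-1}\cong \omega_{Z_1}^{-1}\boxtimes\omega_{Z_2}^{-1}$. This bundle is very ample, and its embedding is the Segre composite of the two anticanonical embeddings. For projective normality we use the K\"unneth formula
\[
\bH^0(Z,\omega_Z^{-n})\cong \bH^0(Z_1,\omega_{Z_1}^{-n})\otimes \bH^0(Z_2,\omega_{Z_2}^{-n}).
\]
Projective normality of $Z_1$ and $Z_2$ means that each factor is a quotient of $\mathrm{Sym}^n$ of its degree-one part. The section ring of the Segre embedding is the Segre product of the two section rings, so surjectivity of multiplication passes to the product. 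Normality is automatic, since $Z$ is smooth.

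\emph{Step 2: the geometric sequence.} Write $\E_i=(\L_i,\OO_{Z_i})$, geometric of type $(m_i+1,d_i)$. By Lemma \ref{L:GeomHelicesandhigherreprinfinite}, $E_i=\End(\E_i)\op$ is $(d_i-1)$-representation-infinite. Let $\E=\E_1\boxtimes\E_2$ be the exterior product sequence, ordered lexicographically; by K\"unneth it is a full strong exceptional sequence with endomorphism algebra $E_1\otimes E_2$. The Serre functor satisfies $\bS_Z[1-d]=\bS_{Z_1}[1-d_1]\boxtimes\bS_{Z_2}[1-d_2]$, because $d-1=(d_1-1)+(d_2-1)$. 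Hence
\[
\nu_{d-1,E_1\otimes E_2}^{-j}=\nu_{d_1-1}^{-j}\otimes\nu_{d_2-1}^{-j},
\]
and K\"unneth shows that $E_1\otimes E_2$ is $(d-1)$-representation-infinite. By Lemma \ref{L:GeomHelicesandhigherreprinfinite} again, $\E$ is geometric of type $((m_1+1)(m_2+1),d)$.

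We reorder $\E$ so that $\OO_Z=\OO_{Z_1}\boxtimes\OO_{Z_2}$ comes last. Since each $\OO_{Z_i}$ is last in $\E_i$, $\OO_Z$ is the last object in lexicographic order. The remaining objects $\L$ form a full exceptional sequence of $\OO_Z^\perp$.

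\emph{Step 3: $\L$ is geometric of type $(m,d-1)$ — the main obstacle.} Removing a single object does not preserve geometricity in general (see the remark after Proposition \ref{P:dBlockGeometricRed}). I would instead argue algebraically. The algebra $\End(\L)\op$ is $(E_1\otimes E_2)/(e_s\otimes e_{s'})$, where $s$ and $s'$ are the vertices of $\OO_{Z_1}$ and $\OO_{Z_2}$; after passing to opposite algebras these are sources in the quivers. The hypothesis that $\L_i$ is geometric of type $(m_i,d_i-1)$ gives, via Remark \ref{R:GldimHigherHered},
\[
\gldim E_i/e_s=d_i-2=\gldim E_i-1.
\]
Lemma \ref{L:gldimdroptensor} then yields
\[
\gldim(E_1\otimes E_2)/(e_s\otimes e_{s'})\le d-2.
\]
What remains is to upgrade this bound to $(d-2)$-representation-infiniteness. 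Here I would use Hanihara's criterion \cite[Theorem 2.3]{Hanihara2}, as in the proof of Proposition \ref{P:dBlockGeometricRed}: the full sequence $\E$ is geometric of type $(\cdot,d)$, the removed object is the last one, and the quotient has the appropriate global dimension. If that criterion does not apply directly, the fallback is to compute the helix $\Hom$'s of $\L$ directly via K\"unneth from those of $\L_i$ and $\E_i$.

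\emph{Step 4: sheaves and vector bundles.} The exterior product $\sF_1\boxtimes\sF_2$ of a vector bundle and a sheaf is a sheaf, since $p_1^*\sF_1$ is locally free and so tensoring with it is exact. The exterior product of two vector bundles is a vector bundle. This gives the final assertions of the lemma.
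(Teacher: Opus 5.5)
Your proposal is correct and follows essentially the same route as the paper. The steps are the Segre/K\"unneth argument for projective normality, then the exterior product sequence, whose endomorphism algebra $E_1\otimes E_2$ is $(d-1)$-representation-infinite, and finally Lemma \ref{L:gldimdroptensor} together with \cite[Theorem 2.3]{Hanihara2} at the source idempotent $e_s\otimes e_{s'}$, which shows that $\L$ is geometric of type $(m,d-1)$. For the representation-infinite step you argue directly via K\"unneth and $\omega_Z=\omega_{Z_1}\boxtimes\omega_{Z_2}$, whereas the paper cites \cite[Theorem 2.10]{HIO}.

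The paper applies Hanihara's theorem exactly as you propose, noting in a footnote that its injective-dimension hypothesis follows from the global dimension bound at a source. So your ``fallback'' is not needed, and it would not be straightforward anyway, since $\L$ is not itself an exterior product.
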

\begin{proof} 
Remark \ref{rmk:projnormal-product} shows that $Z$ satisfies Definition \ref{def:verystrong-Fano} \eqref{def:verystrong-Fano1}. 

Let $d_i-1=\dim Z_i$.
Let $\E_i=(\sE_{i1}, \ldots, \sE_{i(m_i+1)}=\OO_{Z_i})$ be the geometric exceptional sequences on $Z_i$ of type $(m_i+1, d_i)$ as in  Definition \ref{def:verystrong-Fano}.

For $\sH_{ij} \in \Db(Z_i)$, we have (cf. e.g. \cite[Proof of Lemma 5.2]{Uehara}),
\begin{align} \label{eq:HomBox}
  \Hom_Z(\sH_{11} \boxtimes \sH_{21}, \sH_{12} \boxtimes \sH_{22}[s]) =\bigoplus_{k+l=s} \Hom_{Z_1}( \sH_{11}, \sH_{12} [k]) \otimes_{\mathbb{k}}  \Hom_{Z_2}( \sH_{21}, \sH_{22} [l]) 
\end{align}
This implies that the exterior products $\sE_{1i}\boxtimes\sE_{2j}$ form again a full strong exceptional sequence $\E=\E_1 \boxtimes \E_2$.

By \eqref{eq:HomBox}, there is an algebra isomorphism $\End_Z(\E_1 \boxtimes\E_2) \cong \End_{Z_1}(\E_1) \otimes_{\mathbb{k}} \End_{Z_2}(\E_2)$. Since the $\E_i$ are geometric, the algebras $E_i=\End_{Z_i}(\E_i)\op$ are $(d_i-1)$-representation-infinite by Lemma \ref{L:GeomHelicesandhigherreprinfinite}. Hence, $\End_Z(\E_1 \boxtimes\E_2)\op \cong E_1 \otimes_{\mathbb{k}} E_2$ is $(d_1+d_2 -2)$-representation-infinite by \cite[Theorem 2.10]{HIO}. By Lemma \ref{L:GeomHelicesandhigherreprinfinite}, $\E$ is a geometric exceptional sequence of type $(m+1, d=\dim Z +1)=((m_1+1)(m_2+1), (d_1-1)+(d_2-1)+1)$ with last term $\mathcal{O}_Z=\mathcal{O}_{Z_1} \boxtimes \mathcal{O}_{Z_2}$. 

In order to show that $Z$ is strongly Fano, it thus remains to show that the restriction $\L$ of $\E$ to $\OO_Z^\perp$ is geometric of type $(m, d-1)$. Let $\L_i$ be the restrictions of $\E_i$ to 
$\OO_{Z_i}^\perp$, which are geometric of type $(m_i, d_i-1)$ since $Z_i$ are strongly Fano by assumption.

The term $\mathcal{O}_Z$ of $\E$ corresponds to an idempotent $e:=e_{s_1} \otimes e_{s_2}$ in  
$E_1 \otimes_{\mathbb{k}}E_2=:E $ where (since we take opposite algebras) the $s_i$ are sources in the quivers of $E_i$. Restricting endomorphisms of $\E_i$ to the subsequence $\L_i$ yields an isomorphism $E_i/e_{s_i} \cong \End(\L_i)\op$. Combining this with Lemma \ref {L:GeomHelicesandhigherreprinfinite} our geometricity assumptions on $\E_i$ and $\L_i$ and Remark \ref{R:GldimHigherHered}, shows that $\gldim E_i/e_{s_i} = \gldim E_i-1$.

Together with Lemma \ref{L:gldimdroptensor} this shows that $\gldim E/e \leq (d_1 -1) + (d_2 -1) -1 =d-2$. By \cite[Theorem 2.3]{Hanihara2}, $E/e$ is $(d -2)$-representation infinite\footnote{Here, we use that for algebras $A=\mathbb{k} Q/I$ without oriented cycles in their quivers $Q$ (e.g. endomorphism algebras of exceptional sequences) \cite[Assumption 2.2(2)]{Hanihara2}
\begin{align}
\mathrm{inj.dim}_A A/e\leq d-1
\end{align}
follows from 
\begin{align}
\gldim A/e\leq d-1
\end{align}
where $e$ is an idempotent corresponding to a source of the quiver $Q$.
}
and thus $\End(\L)\op \cong E/e$ is also $(d-2)$-representation-infinite.  Lemma \ref{L:GeomHelicesandhigherreprinfinite} shows that $\L$ is a geometric exceptional sequence of type $(m, d-1)$.

The last statement, follows since $\sH \boxtimes \sH'$ is a sheaf if $\sH$ is a vector bundle and $\sH'$ is a sheaf. And it is a vector bundle if both $\sH$ and $\sH'$ are vector bundles.
\end{proof}

We will use the following result in Theorem \ref{thm:projcone-split}.

\begin{corollary}\label{cor:geometric-subcollection}
    Let $Z$ be a smooth projective variety of dimension $d$. Let $\E:=(\sE_1, \ldots, \sE_m)$ be a full geometric exceptional sequence of \emph{sheaves} of type $(m,d)$ in $\mathcal{O}_Z^\perp$.
    If $\E':=(\sE_1, \ldots, \sE_m, \mathcal{O}_Z)$ is strong, then $\E'$ is geometric of type $(m+1,d+1)$. 
\end{corollary}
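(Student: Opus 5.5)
The plan is to reduce the statement to the global dimension criterion of Proposition \ref{prop:BHI}. First, $\E'$ is a \emph{full} exceptional sequence in $\Db(Z)$. Indeed, $\OO_Z$ is exceptional, so there is a semiorthogonal decomposition $\Db(Z)=\langle \OO_Z^\perp, \OO_Z\rangle$, and $\E$ is full in $\OO_Z^\perp$. By hypothesis $\E'$ is strong, and it consists of sheaves. Hence Proposition \ref{prop:BHI} (with $\dim Z=d$) reduces the claim to showing
\[
\gldim A \leq d, \qquad A:=\End\Bigl(\textstyle\bigoplus_{i}\sE_i\oplus \OO_Z\Bigr)\op .
\]

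Next I control the quotient by the idempotent of $\OO_Z$. Let $e=e_s$ be the idempotent of $A$ corresponding to $\OO_Z$. Since $\OO_Z$ is the last member of an exceptional sequence, there are no nonzero morphisms from $\OO_Z$ to the $\sE_i$. So in the quiver of the opposite algebra $A$, the vertex $s$ is a source, exactly as in the proof of Lemma \ref{L:productsofgeom}. Restricting endomorphisms gives $A/AeA\cong \End(\oplus_i \sE_i)\op$. This algebra is $(d-1)$-representation-infinite by Lemma \ref{L:GeomHelicesandhigherreprinfinite}, because $\E$ is full and geometric of type $(m,d)$ in $\OO_Z^\perp$ (and $\OO_Z^\perp$ satisfies Setup \ref{Set:fintype}). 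By Remark \ref{R:GldimHigherHered}, therefore, $\gldim A/AeA=d-1$.

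Then I bound projective dimensions of simple $A$-modules by the argument used in Lemma \ref{L:gldimdroptensor}. Because $s$ is a source, the indecomposable projective $A/AeA$-modules pull back to indecomposable projective $A$-modules: the projective $P_i$ with $i\neq s$ involves no path through $s$. Consequently minimal projective resolutions over $A/AeA$ remain projective resolutions over $A$. So every $A$-module annihilated by $e$, in particular every simple $S_i$ with $i\neq s$ and every module supported away from $s$, has projective dimension at most $d-1$ over $A$. For the remaining simple $S_s$, consider the exact sequence $0\to \operatorname{rad}P_s\to P_s\to S_s\to 0$. Since $s$ is a source, $e\operatorname{rad}P_s=0$, so $\operatorname{rad}P_s$ is an $A/AeA$-module and has projective dimension at most $d-1$. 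Hence $\projdim S_s\leq d$, and so $\gldim A\leq d$. Proposition \ref{prop:BHI} (a)$\Rightarrow$(b) then gives that $\E'$ generates a geometric helix of type $(m+1,d+1)$.

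The main obstacle is the bookkeeping of left/right and op conventions. I must ensure that, for the algebra appearing in Proposition \ref{prop:BHI}, the vertex of $\OO_Z$ is a source rather than a sink, so that the projectives of the quotient, and not its injectives, pull back. If it turns out to be a sink in the relevant convention, I will run the dual argument with injective dimensions and the socle sequence $0\to S_s\to I_s\to I_s/S_s\to 0$. Finite global dimension is automatic from the absence of oriented cycles, and the dual argument gives the same bound $d$.
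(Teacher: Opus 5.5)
Your proposal is correct and is essentially the paper's proof. The paper likewise obtains $\gldim \End(\oplus_i\sE_i)\op\leq d-1$ from Lemma \ref{L:GeomHelicesandhigherreprinfinite} and Remark \ref{R:GldimHigherHered}, bounds $\projdim_{E'}S\leq 1+\projdim_E \operatorname{rad}P_{E'}(S)\leq d$ for every simple $E'$-module $S$, and concludes with Proposition \ref{prop:BHI}. The convention you worried about works out in the projective direction: since $\Hom(\OO_Z,\sE_i)=0$, one has $eA(1-e)=0$ for left modules over the opposite algebra, so projective $A/AeA$-modules are projective over $A$ and $e\operatorname{rad}P_s=0$, exactly as your argument requires.
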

\begin{proof}
    Let $E=\End_Z(\oplus_{i=1}^m \sE_i)\op$ and $E'=\End_Z((\oplus_{i=1}^m \sE_i) \oplus \mathcal{O}_Z)\op$. By Remark \ref{R:GldimHigherHered} combined with Lemma \ref{L:GeomHelicesandhigherreprinfinite} and our assumptions on $\E$, we have $\gldim E \leq d-1$.  This shows \begin{align}
      \projdim_{E'}\, S \leq 1 + \projdim_E\, \mathrm{rad}(P_{E'}(S)) \leq 1+(d-1)=d  
    \end{align}
    where $S$ is a simple $E'$-module with projective cover $P_{E'}(S)$. This shows $\gldim E' \leq d$ which, in combination with Proposition \ref{prop:BHI}, completes the proof.
\end{proof}

\begin{definition}\label{D:HigherpreprojAlg} 
    Let $E$ be an $n$-representation infinite algebra. The $(n+1)$-\emph{preprojective algebra} $\Pi_{n+1}(E)$ of $E$ is defined as the tensor algebra of the $E$-bimodule $\Hom_E(E, \nu_n^{-1}(E)) $ over $E$:
    \begin{align}
      \Pi_{n+1}(E):=T_E \Hom_E(E, \nu_n^{-1}(E))   
    \end{align}
\end{definition}

\begin{remark}\label{R:careful}
This definition of the $(n+1)$-preprojective algebra coincides with \cite[Definition 2.11]{IyamaOppermann}. Indeed,
 there are isomorphisms of $E$-$E$-bimodules 
 \begin{align}
  \Ext^n_E(DE, E) = \Hom_{\Db(E)}(\bS(E), E[n]) \cong  \Hom_E(E,\nu_n^{-1}E).
  \end{align}
  The last isomorphism is given by the inverse Serre functor $\bS^{-1}$ and recalling that $\nu_n^{-1}:=\bS^{-1}[n]$ and
  $\Hom_E(E,\nu_n^{-1}E) \cong \Hom_{\Db(E)}(E,\nu_n^{-1}E)$, since $\nu_n^{-1}E$ is a module, since $E$ is $n$-representation infinite.
\end{remark}

\begin{remark}\label{rem:rolled-up}
    Assume $E$ is the opposite algebra of the endomorphism algebra of a full geometric  exceptional sequence $\mathbb{E}$ of type $(n, d)$ in a triangulated category $\cd$ as in Setup \ref{Set:fintype}. Then $E$ is $(d-1)$-representation-infinite by Lemma \ref{L:GeomHelicesandhigherreprinfinite} and its $d$-preprojective algebra $\Pi_d(E)$ is known to be isomorphic to the rolled-up helix algebra of the helix $\H$ generated by $\E$ as defined in Bridgeland--Stern \cite[Section 3.2.]{BridgelandStern}. 
    
    Indeed, firstly, there is an isomorphism of algebras
    $\Pi_d(E) \cong  \bigoplus_{i\geq 0} \Hom_E(E, \nu_{d-1}^{-i}(E)) $, cf. e.g. \cite[Lemma 2.13]{IyamaOppermann}.
    Note that the algebra on the right hand side is the endomorphism algebra of $E$ in the orbit category of $\Db(E)$ with respect the autoequivalence $\nu_{d-1}^{-1}$. In particular, if $f \in \Hom_E(E, \nu_{d-1}^{-j} E)$ and $g \in \Hom_E(E, \nu_{d-1}^{-i} E)$, then $fg:= \nu_{d-1}^{-i}(f)g \in \Hom_E(E, \nu_{d-1}^{-i-j} E)$. By tilting there is an algebra isomorphism $\bigoplus_{i\geq 0} \Hom_E(E, \nu_{d-1}^{-i}(E)) \cong \bigoplus_{i\geq 0} \Hom_\cd(\E, \nu_{d-1}^{-i}(\E))$.

    Next, consider the \emph{helix algebra} as studied by Bondal \& Polishchuk \cite{bondal-polishchuk}
    \begin{align}
       A(\H) = \bigoplus_{k \geq 0} \prod_{j-i=k} \Hom_\cd(\sE_i, \sE_j) 
    \end{align}
     of the helix $\H = (\sE_i)_{i \in \Z}$ of type $(n, d)$ generated by $\E=(\sE_1, \ldots, \sE_n)$. Applying powers of the Serre functor $\bS_\cd$ induces a $\Z$-action on $A(\H)$. The \emph{rolled-up helix algebra} $B(\H)$  of $\H$ is the subalgebra of elements in $A(\H)$ that are invariant under this action. One can check that
     \begin{align}
         \Hom_\cd(\sE_i, \nu_{d-1}^{-l}(\sE_j))  &\to \prod_{b-a=ln+j-i} \Hom_\cd(\sE_a, \sE_b) \\
         f &\mapsto (\bS_{\cd}^m(f))_{m \in \Z}
          \end{align}
     induces an algebra isomorphism
     \begin{align}
     \bigoplus_{l\geq 0} \Hom_\cd(\E, \nu_{d-1}^{-k}(\E))  \to B(\H):= \left(\bigoplus_{k \geq 0} \prod_{j-i=k} \Hom_\cd(\sE_i, \sE_j)\right)^\Z.  
     \end{align}
     This finishes the argument showing that the $(d-1)$-preprojective algebra $\Pi_{d-1}(E)$ is isomorphic to the rolled up helix algebra $B(\H)$.
\end{remark}

\subsection{The split case}\label{subsec:split-case}
The results in this section apply to $A=\End(\sG)$ provided 
 the assumptions of Proposition \ref{prop:algebra-extensions} hold and if, in addition, $\varphi$ in \eqref{eq:ses-G} is a split $\mathbb k$-algebra epimorphism (this is for example satisfied for projective cones over certain Fano varieties, cf. Theorem \ref{thm:projcone-split}). 

\medskip

Let $\varphi\colon A \twoheadrightarrow E$ be a $\mathbb k$-algebra epimorphism with kernel $I:=\ker \varphi$. If $I^2=0$, there is a natural $E$-bimodule structure on the two-sided ideal $I \subset A$. Indeed, for $\epsilon \in E$ and $x \in I$, set $\epsilon x:=\alpha x$, for some $\alpha\in \varphi^{-1}(\epsilon)$. One can check that this does not depend on the choice of $\alpha$, using the assumption $I^2=0$. The right action of $E$ on $I$ is defined analogously. Moreover, these actions define an $E$-bimodule structure, since $I$ is a two-sided ideal in $A$ and hence an $A$-bimodule.  

Assume that $\varphi$ splits, i.e. there is a $\mathbb k$-algebra homomorphism $\sigma\colon E \to A$, such that $\sigma\varphi=\id_E$. In particular, we have an isomorphism of vector spaces 
 $ E \oplus I \xrightarrow{\psi:=(\sigma,  \ \mathrm{incl.})}A$. We can use this isomorphism to define a $\mathbb k$-algebra structure on $E \oplus I$ such that $\psi$ becomes an $\mathbb k$-algebra homomorphism. Indeed, using that $\sigma$ is a $\mathbb k$-algebra homomorphism and that $I^2=0$, one can check that this $\mathbb k$-algebra multiplication on $ E \oplus I$ is given by
\begin{equation} \label{eq:split multiplication}
    (\epsilon, x)(\epsilon', x')=(\epsilon \epsilon', x \epsilon' + \epsilon x')
\end{equation}
where we have the multiplication in $E$ in the first component and the $E$-bimodule multiplication on $I$ in the second component.

Let $M$ be any $E$-bimodule. Then there is a $\mathbb k$-algebra structure on $E \oplus M$ by defining a multiplication analogous to \eqref{eq:split multiplication}.
If $\beta\colon M \to I$ is an isomorphism of $E$-bimodules, then there is an induced isomorphism of $\mathbb k$-algebras $E \oplus M \xrightarrow{(\id_E, \beta)} E \oplus I$. 

Let $T_E(M):=E \oplus M \oplus (M \otimes_E M) \oplus \ldots $ be the tensor algebra. Note that $M^{\geq 2}:=(M \otimes_E M) \oplus (M \otimes_E M \otimes_E M) \oplus \ldots$ defines a two-sided ideal in $T_E(M)$ (which is generated by $M \otimes_E M$).
By construction, there is a  $\mathbb k$-algebra isomorphism 
\begin{align}
    T_E(M)/M^{\geq 2} \to E \oplus M
\end{align}

Summing up we have explained the following result.

\begin{lemma}\label{L:Quotient-Tensor-Alg}
    Let $A \xrightarrow{\varphi} E$ be an epimorphism of $\mathbb k$-algebras, such that $(\ker \varphi)^2=0$. In particular, $I:=\ker \varphi$ has a natural $E$-bimodule structure.
    Let $M$ be an $E$-bimodule fitting into a short exact sequence
    \begin{align}
    0 \to M \xrightarrow{i} A \xrightarrow{\varphi} E \to 0,
\end{align}

such that $i\colon M \to \im \ i = \ker \varphi =I$ is an $E$-bimodule isomorphism. If the $\mathbb k$-algebra homomorphism $\varphi$ splits, then there is an isomorphism of $\mathbb k$-algebras
\begin{align}
    A \xleftarrow{\cong} T_E(M)/M^{\geq 2}
\end{align}
\end{lemma}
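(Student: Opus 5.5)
The lemma has essentially been proved in the discussion preceding it, so the proof will chain together the constructions already made. The plan is to build the isomorphism in three stages: identify $A$ with a split square-zero extension $E\oplus I$, transport this to $E\oplus M$ via $i$, and recognise $E\oplus M$ as the quotient $T_E(M)/M^{\geq 2}$.

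First, I fix a splitting $\sigma\colon E\to A$ with $\varphi\sigma=\id_E$. I then consider the $\mathbb k$-linear map $\psi=(\sigma,\mathrm{incl.})\colon E\oplus I\to A$. It is bijective because the sequence $0\to I\to A\to E\to 0$ is exact and $\sigma$ is a section. Transporting the product along $\psi$, I verify formula \eqref{eq:split multiplication}. To do this I expand
\[
(\sigma(\epsilon)+x)(\sigma(\epsilon')+x')=\sigma(\epsilon\epsilon')+x\sigma(\epsilon')+\sigma(\epsilon)x'+xx'.
\]
Here $xx'=0$ since $I^2=0$. The middle terms are exactly $x\epsilon'+\epsilon x'$ for the induced $E$-bimodule structure on $I$, because $\sigma(\epsilon)\in\varphi^{-1}(\epsilon)$. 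This gives an algebra isomorphism from $E\oplus I$, with the trivial-extension product, onto $A$.

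Second, since $i\colon M\to I$ is an isomorphism of $E$-bimodules, the map $(\id_E,i)\colon E\oplus M\to E\oplus I$ preserves the product \eqref{eq:split multiplication}. This is a direct check using bimodule linearity of $i$.

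Third, I show $T_E(M)/M^{\geq 2}\cong E\oplus M$. The subspace $M^{\geq 2}$ is a two-sided ideal, because multiplication in $T_E(M)$ is additive in tensor degree and $E$ sits in degree $0$. The quotient is therefore spanned by degrees $0$ and $1$. Products of two degree-one elements land in $M^{\otimes 2}\subset M^{\geq 2}$, so they vanish in the quotient. Products involving degree $0$ are given by the bimodule actions, which is precisely \eqref{eq:split multiplication}. Composing the three isomorphisms gives $T_E(M)/M^{\geq 2}\xrightarrow{\cong}A$.

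No step is a real obstacle. The only point needing care is the first: the bimodule action on $I$ must be well defined, which relies on $I^2=0$ exactly as explained before the lemma. Also, multiplicativity of $\psi$ genuinely needs $\sigma$ to be an algebra map, not merely a linear section.
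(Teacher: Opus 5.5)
Your proposal is correct and takes the same route as the paper, which proves the lemma in the discussion immediately preceding it. That discussion has the same three steps: transport the product along $\psi=(\sigma,\mathrm{incl.})$ to get the trivial-extension multiplication on $E\oplus I$, pass to $E\oplus M$ via the bimodule isomorphism $i$, and identify $E\oplus M$ with $T_E(M)/M^{\geq 2}$. Your explicit checks — bijectivity of $\psi$, and the product expansion using $I^2=0$ and multiplicativity of $\sigma$ — supply exactly the verifications the paper leaves to the reader.
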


In combination with Proposition \ref{prop:algebra-extensions}, Lemma \ref{L:GeomHelicesandhigherreprinfinite} and Definition \ref{D:HigherpreprojAlg}, this implies the following result.

\begin{corollary}\label{C:QuotientPreproj}
    Assume that the conditions in Proposition \ref{prop:algebra-extensions} hold and let $E=\End(\sE)$. Assume additionally, that the $\mathbb k$-algebra homomorphism $\varphi$ in \eqref{eq:ses-G} splits.
    Then  
    \begin{align}\label{E:TensorModSquare}
    A \xleftarrow{\cong} \Pi_{d-1}(E)/M^{\geq 2},
\end{align}
is an isomorphism of $\mathbb k$-algebras,
where $\Pi_{d-1}(E)=T_E(M)$ is the $(d-1)$-preprojective algebra of $E$, i.e. the tensor algebra of the $E$-bimodule $M=\Hom(E, \nu_{d-2}^{-1}(E))$.
\end{corollary}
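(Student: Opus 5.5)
The plan is to feed Proposition \ref{prop:algebra-extensions} into Lemma \ref{L:Quotient-Tensor-Alg}. The only point that needs care is matching the $E$-bimodule appearing there with the one used to define $\Pi_{d-1}(E)$.

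\emph{Step 1: the square-zero extension.} Take $A=\End(\sG)$. By Proposition \ref{prop:algebra-extensions}, the map $\varphi\colon A\to E$ in \eqref{eq:ses-G} is a surjective algebra homomorphism. Its kernel $I$ satisfies $I^2=0$. By part \eqref{prop:algebra-extensions-2}, the inclusion $\Hom(\sE,\bS^{-1}_\E\sE[d-2])\hookrightarrow A$, given by $\alpha\cdot\bT(-)\cdot\eta\cdot\beta$, is an isomorphism of $E$-bimodules onto $I$. Here $I$ carries the natural bimodule structure induced by $\varphi$, exactly as in the discussion before Lemma \ref{L:Quotient-Tensor-Alg}. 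Part \eqref{prop:algebra-extensions-2} also gives an isomorphism of this bimodule with $\Hom(E,\bS_E^{-1}(E)[d-2])$. Since $\nu_{d-2}^{-1}=\bS_E^{-1}[d-2]$, this is $M=\Hom(E,\nu_{d-2}^{-1}(E))$. So the sequence $0\to M\to A\to E\to 0$ satisfies the hypotheses of Lemma \ref{L:Quotient-Tensor-Alg}, with $i\colon M\to I$ an isomorphism of bimodules.

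\emph{Step 2: identifying $T_E(M)$ with $\Pi_{d-1}(E)$.} The exceptional sequence $\E$ is geometric of type $(m,d-1)$, so Lemma \ref{L:GeomHelicesandhigherreprinfinite} shows that $E$ (or rather $E\op$, depending on conventions) is $(d-2)$-representation-infinite. Definition \ref{D:HigherpreprojAlg} then defines $\Pi_{d-1}(E)=T_E\Hom_E(E,\nu_{d-2}^{-1}(E))=T_E(M)$. One point needs checking: $\Hom_E(E,\nu^{-1}_{d-2}E)$ in the module category should agree with the derived Hom used in Proposition \ref{prop:algebra-extensions}. This holds because $\nu_{d-2}^{-1}E$ is a module, by representation-infiniteness, as in Remark \ref{R:careful}.

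\emph{Step 3: conclusion.} Since $\varphi$ splits by assumption, Lemma \ref{L:Quotient-Tensor-Alg} gives an algebra isomorphism $T_E(M)/M^{\geq 2}\xrightarrow{\cong}A$. This is \eqref{E:TensorModSquare}.

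The main obstacle is the bookkeeping of conventions. One must make sure that the left and right $E$-actions on $I$ coming from $\varphi$ agree with those on $\Hom(E,\bS_E^{-1}E[d-2])$, where the left action is by precomposition and the right action is via $\bS_E^{-1}(-)[d-2]$. These are the actions established in the proof of Proposition \ref{prop:algebra-extensions}, transported through the tilting equivalence using Lemma \ref{lem:equiv-bimod}. One must also keep track of the opposite-algebra convention in Lemma \ref{L:GeomHelicesandhigherreprinfinite} so that the bimodule used in the tensor algebra is the right one. I would handle this by writing the bimodule structures explicitly on both sides and comparing them.
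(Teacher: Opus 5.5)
Your proposal is correct and follows the paper's proof essentially step for step. Like the paper, you apply Lemma~\ref{L:Quotient-Tensor-Alg} to $\varphi$ and identify $\ker\varphi$ with $M$ as $E$-bimodules via Proposition~\ref{prop:algebra-extensions}\eqref{prop:algebra-extensions-2}, then use Lemma~\ref{L:GeomHelicesandhigherreprinfinite} to see that $E$ is $(d-2)$-representation-infinite, so that $T_E(M)=\Pi_{d-1}(E)$. Your extra checks, namely the module-versus-derived $\Hom$ point (Remark~\ref{R:careful}) and the opposite-algebra conventions, are bookkeeping that the paper leaves implicit.
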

\begin{proof}
We apply Lemma \ref{L:Quotient-Tensor-Alg} to the homomorphism $\varphi$ in \eqref{eq:ses-G}. 
 Let $M=\Hom(E, \nu_{d-2}^{-1}(E))$. In  Proposition \ref{prop:algebra-extensions} (2) we show that 
$M \cong \ker \varphi$ as $E$-bimodules. By the assumptions on $E$ and Lemma \ref{L:GeomHelicesandhigherreprinfinite}, $E$ is $(d-2)$-representation infinite and thus $T_E(M)$ is the $(d-1)$-preprojective algebra $\Pi_{d-1}(E)$ of $E$ (cf. Definition \ref{D:HigherpreprojAlg}). 
\end{proof}

In many cases, Corollary \ref{C:QuotientPreproj} reduces the description of $A=\End(\sG)$ to the description of the preprojective algebras $\Pi_{d-1}(E)$.

\subsubsection{Description of some $n$-preprojective algebras}

We explain how to describe $n$-preprojective algebras for $n\leq 2$ (using known constructions in representation theory) and for all truncated Beilinson algebras based on work of Hanihara \cite{Hanihara2} (there is an alternative approach in the latter case in the setting of the appendix).

We start with the semi-simple case, i.e. $n=0$.

\begin{lemma}\label{L:preprojn0}
    Let $E=\mathbb{k}^t=\mathbb{k} \times \cdots \times \mathbb{k}$. Then $E$ is $0$-representation infinite and there is an isomorphism of $\mathbb k$-algebras $\Pi_1(E) \cong \mathbb{k}[x]^t = \mathbb{k}[x] \times \cdots \times \mathbb{k}[x]$
    
\end{lemma}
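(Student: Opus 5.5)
The plan is to verify the definition directly and then compute the tensor algebra. For $n=0$, the Nakayama-type functor is $\nu_{0,E}=\bS_E$. Since $E=\mathbb{k}^t$ is semisimple, $\Db(E)$ is equivalent to $t$ copies of $\Db(\mathbb{k})$. On each copy the Serre functor is the identity, because $\Hom(V,W)^\ast\cong\Hom(W,V)$ for finite dimensional vector spaces. Hence $\bS_E\cong\id$, and more precisely $\bS_E(-)\cong DE\otimes_E^{\mathbf L}(-)$ with $DE\cong E$ as $E$-bimodules, since $E$ is commutative and symmetric. Consequently $\nu_{0}^{-j}(E)\cong E$ is concentrated in cohomological degree $0$ for every $j\ge 0$, so condition \eqref{E:higher-repr-inf} holds. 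This shows that $E$ is $0$-representation-infinite, consistent with $\gldim E=0$ (cf. Remark \ref{R:GldimHigherHered}).

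Next, I will identify the bimodule $M=\Hom_E(E,\nu_0^{-1}(E))$. By the above, $M\cong\Hom_E(E,E)\cong E$ as an $E$-bimodule. The one point requiring care is that the bimodule structure is the diagonal one, with no twist by an automorphism of $E$. I expect this to be the only subtle step. I will check it either via Remark \ref{R:careful}, which gives $M\cong\Ext^0_E(DE,E)=\Hom_E(DE,E)$ with $DE\cong E$ as bimodules via the trace form $\sum e_i\mapsto$ componentwise, or by writing $E=\prod \mathbb{k}e_i$ with primitive idempotents $e_i$. In the latter form, $e_iMe_j\cong \Hom(e_jE,\, e_iE)$ vanishes for $i\ne j$ and is $\mathbb{k}$ for $i=j$, so every bimodule with these multiplicities is $E$ itself.

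Finally, compute the tensor algebra. Since $M\cong E$, we get $M^{\otimes_E r}\cong E$ for all $r$, so
\[
\Pi_1(E)=T_E(E)\cong\bigoplus_{r\ge0}E\,x^r\cong E[x]\cong \mathbb{k}[x]^t ,
\]
where $x$ denotes the generator $1\in M$. The element $x$ is central because the left and right actions on $M$ agree. Equivalently, one can decompose along the idempotents: $T_E(M)\cong\prod_i T_{\mathbb{k}}(\mathbb{k}e_i)=\prod_i\mathbb{k}[x_i]$. This gives the claimed isomorphism $\Pi_1(E)\cong\mathbb{k}[x]\times\cdots\times\mathbb{k}[x]$.
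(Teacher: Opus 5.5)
Your proposal is correct and takes the same approach as the paper. The Serre functor of $\Db(\mathbb{k}^t)$ is the identity, so $\nu_{0,E}=\id$; this gives both the $0$-representation-infinite condition and $\Pi_1(E)=T_E(E)\cong E[x]\cong\mathbb{k}[x]^t$. Your extra check, using the idempotent decomposition, that $\Hom_E(E,\nu_0^{-1}E)$ is $E$ with the untwisted diagonal bimodule structure is left implicit in the paper and is handled correctly.
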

\begin{proof}
    The Serre functor $\mathsf{S}_E$ is the identity functor and hence so is $\nu_{0,E}=\mathsf{S}_E[-0]$. This implies that $E$ is $0$-representation infinite and also that $\Pi_1(E) \cong \mathbb{k}[x]^t$.
\end{proof}

The following relates to categorical absorption of nodal surface singularities as discussed in \cite{kks, ks}.

\begin{corollary}\label{cor:nodal-2d}
    In the setting and notations of Corollary \ref{C:QuotientPreproj}, assume that $d=2$, then there is an isomorphism of $\mathbb{k}$-algebras $A \cong \mathbb{k}[x]/(x^2)$.
\end{corollary}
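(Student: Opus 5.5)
The plan is to apply Corollary \ref{C:QuotientPreproj} with $d=2$ and identify everything explicitly. In this setting $\E$ is a geometric exceptional sequence of type $(m,d-1)=(m,1)$. By Lemma \ref{L:GeomHelicesandhigherreprinfinite}, the algebra $E=\End(\sE)$ is then $0$-representation-infinite, so by Remark \ref{R:GldimHigherHered} it has global dimension $0$, i.e.\ it is semisimple. The main content is to pin down $m$, since we need $m=1$, so that $E\cong\mathbb{k}$.

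To get $m=1$, I would use geometricity of type $(m,1)$: the helix is $\sE_{i-m}=\bS(\sE_i)$. The condition $\Hom^l(\sE_i,\sE_j)=0$ for $l\neq 0$ and $i<j$ forces, together with exceptionality, $\Hom^\bullet(\sE_i,\sE_j)$ to sit in degree $0$. Since $E$ is semisimple and basic with $\mathbb{k}$ algebraically closed, we have $E\cong\mathbb{k}^m$. In particular $\Hom^\bullet(\sE_i,\sE_j)=0$ for $i\neq j$ within $\E$, so the $\sE_i$ are pairwise orthogonal.

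If the proof really needs $m=1$, I would argue via Definition \ref{def:strong-adherence}. The kernel $\ker\pi_\ast$ is generated by the $\sK_i$. In the surface situation (cone over a conic, i.e.\ $Z=\P^1$), $\OO_Z^\perp\subset\Db(\P^1)$ is generated by a single exceptional object, which gives $m=1$. Alternatively, I would avoid fixing $m$ and run the computation below blockwise.

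Now compute the bimodule $M=\Hom_E(E,\nu_0^{-1}(E))$. Since $\bS_E=\id$ for $E=\mathbb{k}^t$ (Lemma \ref{L:preprojn0}), we get $M\cong E$ as a bimodule. Hence $\Pi_1(E)\cong\mathbb{k}[x]^t$, with $M$ corresponding to $x\,\mathbb{k}[x]^t$ in degree one, and $M^{\geq 2}=(x^2)$. Corollary \ref{C:QuotientPreproj} then gives $A\cong(\mathbb{k}[x]/(x^2))^t$, and with $t=m=1$ this is $\mathbb{k}[x]/(x^2)$.

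The expected obstacle is the identification $t=1$, which the corollary's hypotheses do not visibly force. I would first try to extract it from adherence: the number of simples of $A$ equals the number of generators $\sK_i$ of $\ker\pi_\ast$. For an isolated rational double point that is a cone over a conic, i.e.\ the $A_1$ node, the kernel is generated by a single spherical object $\OO_D(-1)$, and this forces $m=1$.
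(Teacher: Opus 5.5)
Your first half is correct and is exactly the paper's opening step. Type $(m,1)$ makes $E$ $0$-representation-infinite, hence $E\cong\mathbb{k}^m$, and Corollary \ref{C:QuotientPreproj} together with Lemma \ref{L:preprojn0} gives $A\cong(\mathbb{k}[x]/(x^2))^t$ with $t=m$.

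The gap is the step $t=1$. You flag it yourself, but you only settle it under an extra hypothesis. The setting of Corollary \ref{C:QuotientPreproj} is that of Proposition \ref{prop:algebra-extensions}. It consists of a projective surface with a unique isolated Gorenstein singularity admitting a crepant resolution, an autoequivalence $\bT$, and an adherent geometric collection of type $(m,1)$. Nothing there says that the singularity is the cone over a conic, or that $\iota^!$ identifies $\thick(\E)$ with $\OO_Z^\perp$. A priori the singular point could be any rational double point, and the conclusion $A\cong\mathbb{k}[x]/(x^2)$ is precisely what rules out the others. So invoking ``$\OO_{\P^1}^\perp$ is generated by one exceptional object'' or ``$\ker\pi_\ast=\thick(\OO_D(-1))$'' assumes what has to be shown.

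Even for the $A_1$ node, generation of $\ker\pi_\ast$ by a single object does not by itself bound the number of $\sK_i$. You would also need that the $\sK_i$ are mutually orthogonal, which does follow from $E\cong\mathbb{k}^m$ and Lemma \ref{lem:hom-computation}, and that $\OO_D(-1)$ is indecomposable. Your fallback of running the computation blockwise only yields $(\mathbb{k}[x]/(x^2))^t$, which is not the statement.

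The missing input is a use of the uniqueness of the singular point. The paper gets it globally:
\begin{itemize}
\item In dimension $2$ the fibres of $\pi$ have dimension at most one, so $\pi_\ast$ is a Verdier localization.
\item Corollary \ref{cor:veriderloc-negative-dg} then gives an admissible decomposition $\Db(X)=\langle\Db(A),\PP\rangle$ with $\PP\subset\Dperf(X)$.
\item By Orlov, $\Dsg(X)\cong\Dsg(A)\cong\Dsg(\mathbb{k}[x]/(x^2))^{t}$, so $X$ has $t$ nodal points, and hence $t=1$.
\end{itemize}

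If you prefer to stay inside $\Db(Y)$, you can instead exploit the orthogonality of the $\sK_i$. Adherence then splits $\ker\pi_\ast$ as the orthogonal product of the categories $\thick(\sK_i)$. By the standard fact, $\ker\pi_\ast$ is generated by the sheaves $\OO_C(-1)$ of the exceptional curves over the single singular point. These are indecomposable, so each lies in one factor. Adjacent ones have nonzero $\Ext^1$, and the exceptional configuration is connected, so they all lie in the same factor. This forces $m=1$.
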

\begin{proof}
   Combining Corollary \ref{C:QuotientPreproj} with Lemma \ref{L:preprojn0} shows $A \cong (\mathbb{k}[x]/(x^2))^t$. In the geometric setting of Proposition \ref{prop:algebra-extensions}, we have $t=1$. Indeed, by Corollary \ref{cor:veriderloc-negative-dg} (which is applicable since $\pi_*\colon \Db(Y) \to \Db(X)$ is a Verdier localization by e.g. \cite[Lemma 2.32]{pavic-shinder} since we are in dimension $2$ so the fibers of $\pi$ are at most $1$-dimensional), there is an admissible semiorthogonal decomposition
   \begin{align}
       \Db(X)=\langle \Db(A), \PP \rangle.
   \end{align}
   By work of Orlov, there is an equivalence of singularity categories $\Dsg(X)\cong \Dsg(A)$. This implies that $X$ has $t$ nodal singularities. So $t=1$ since we assume in Proposition \ref{prop:algebra-extensions} that $X$ has a unique singular point.
\end{proof}

The next case is $n=1$, which geometrically corresponds to singular $3$-folds $X$ and is well-studied in representation-theory.

\begin{theorem}\label{T:Preprojn1}
 Let $\mathbb k$ be a field and let $Q$ be a finite connected quiver that has no oriented cycles and has at least one arrow\footnote{Otherwise, we are in the situation of Lemma \ref{L:preprojn0}}. 

Then the path algebra $\mathbb{k}Q$ of $Q$ is $n$-representation infinite, if and only if $n=1$ and $Q$ is not a Dynkin quiver.

Moreover, in this case, the $2$-preprojective algebra\footnote{Also, known as the preprojective algebra of $Q$} $\Pi_2(Q):=\Pi_2(\mathbb{k}Q)$ is isomorphic to $\mathbb{k}\overline{Q}/(\rho)$. Here, the quiver $\overline{Q}$ has the same vertices as $Q$ and for every arrow $\alpha\colon s(\alpha) \to t(\alpha)$ in $Q$
there is an additional arrow $\alpha^*\colon t(\alpha) \to s(\alpha)$ in the opposite direction. Finally, 
\begin{align}
    \rho=\sum_{\textrm{arrows } \alpha \text{ in } Q} \alpha \alpha^* - \alpha^* \alpha
\end{align}
  
\end{theorem}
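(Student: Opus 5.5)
The theorem has two parts. The first characterises when $\mathbb{k}Q$ is $n$-representation infinite; the second identifies $\Pi_2(\mathbb{k}Q)$ with $\mathbb{k}\overline{Q}/(\rho)$. Throughout, let $Q$ be a finite connected quiver without oriented cycles and with at least one arrow.

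\emph{First part.} Since $Q$ is acyclic with at least one arrow, $E=\mathbb{k}Q$ is hereditary of global dimension exactly $1$. By Remark \ref{R:GldimHigherHered}, an $n$-representation infinite algebra has $\gldim E=n$, so only $n=1$ can occur. For $n=1$ we have $\nu_1^{-1}=\bS_E^{-1}[1]$. For a hereditary algebra this is the derived inverse Auslander--Reiten translation. Concretely, $\bH^0(\nu_1^{-j}E)=\tau^{-j}E$, and the higher cohomology $\bH^{i}$ with $i\neq 0$ vanishes precisely when no indecomposable summand of $\tau^{-(j-1)}E$ is injective. This holds because the derived category of a hereditary algebra has every object isomorphic to the sum of its shifted cohomology. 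Indeed, $\tau^{-1}$ of a non-injective indecomposable is again a module, whereas $\bS^{-1}[1]$ of an indecomposable injective $I_x$ is $P_x[1]$.

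So condition \eqref{E:higher-repr-inf} is equivalent to: no indecomposable projective is preinjective, i.e.\ $\tau^{-j}P_x$ is never injective. By Gabriel's theorem and the standard structure of the preprojective component (as in Auslander--Reiten--Smal\o{}), this happens exactly when $Q$ is not Dynkin. If $Q$ is Dynkin, every indecomposable is both preprojective and preinjective, so some $\tau^{-j}P_x$ is injective and the condition fails. If $Q$ is non-Dynkin, the preprojective component is infinite and contains no injectives.

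\emph{Second part.} By Remark \ref{R:careful}, $\Pi_2(E)=T_E\Ext^1_E(DE,E)$, which is the definition of Iyama--Oppermann. For path algebras, it is classical (Baer--Geigle--Lenzing, Crawley-Boevey, Ringel's ``the preprojective algebra of a quiver'') that
\begin{align*}
T_E\Ext^1_E(DE,E)\cong \mathbb{k}\overline{Q}/(\rho).
\end{align*}
The proof of this isomorphism has two steps.

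\begin{enumerate}
\item \emph{Identify the bimodule.} Use the standard projective bimodule resolution
\begin{align*}
0\to \bigoplus_{\alpha} Ee_{t(\alpha)}\otimes e_{s(\alpha)}E\to \bigoplus_x Ee_x\otimes e_xE\to E\to 0.
\end{align*}
This computes $\Ext^1_E(DE,E)$ as the $E$-bimodule generated by symbols $\alpha^*$, one for each arrow, subject to the single relation $\sum[\alpha,\alpha^*]$ at each vertex.
\item \emph{Read off the tensor algebra.} It follows that $T_E(\Ext^1_E(DE,E))$ is generated over $E$ by the arrows $\alpha^*$, with the ideal generated by $\rho$. Equivalently, this is Keller's description of the $2$-Calabi--Yau completion of $Q$ as the path algebra of the double quiver modulo the derivative of the zero potential, with $H^0$ giving $\mathbb{k}\overline{Q}/(\rho)$.
\end{enumerate}

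\emph{Main obstacle.} The only delicate point is verifying that $\bH^0$ of the Calabi--Yau completion agrees with the tensor algebra defined above. This step requires $E$ to be $1$-representation infinite, which is exactly what the first part provides. I would cite the literature for it rather than reprove it.
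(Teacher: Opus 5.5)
Your proposal is correct and matches the paper, whose proof simply cites \cite[Example 3.11 (a)]{IyamaICM} for the first part and \cite[Theorem A]{RingelPreprojective} for the second. Your Auslander--Reiten argument (no indecomposable projective is preinjective exactly when $Q$ is non-Dynkin) and your bimodule-resolution computation are the standard content behind those citations.

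One correction to your closing remark. $1$-representation-infiniteness enters only through Remark \ref{R:careful}, to identify $\Pi_2(E)$ with $T_E\Ext^1_E(DE,E)$. Your steps (1)--(2) hold for every acyclic $Q$, and so does the fact that $\bH^0$ of Keller's $2$-Calabi--Yau completion is $\mathbb{k}\overline{Q}/(\rho)$. There, $\rho$ is the differential of the degree $-1$ loops, not a derivative of a potential.
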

\begin{proof}
The first part is classical, cf. e.g. \cite[Example 3.11 (a)]{IyamaICM}.

The second part can, for example, be found in \cite[Theorem A]{RingelPreprojective}.
\end{proof}

\begin{example}\label{E:prprojhered}
Let $Q$ be the quiver given by the black arrows. It is not of Dynkin type. Then $\overline{Q}$ is obtained from $Q$ by adding the red arrows:
    \begin{equation*}
\small
\begin{tikzpicture}[description/.style={fill=white,inner sep=2pt}]
 \matrix (n) [matrix of math nodes, row sep=2em,
                 column sep=2em, text height=1.5ex, text depth=1ex,
                 inner sep=2pt, nodes={inner xsep=0.3333em, inner
ysep=0.3333em}] at (-6, 0)
    {   (0,-1)  && (-1,-1) && (-1,0) \\
          };
    \draw[->] ($(n-1-1.east) + (0,1mm)$)  to  node[fill=white, yshift=0.7mm, scale=1] [midway]{$x_{0}$}($(n-1-3.west) + (0mm,1mm)$) ;
    \draw[->] ($(n-1-1.east) + (0,-1mm)$) to node[fill=white, yshift=-0.7mm, scale=1] [midway]{$x_{1}$} ($(n-1-3.west) + (0mm,-1mm)$);
 \draw[<-] ($(n-1-3.east) + (0,1mm)$)  to  node[fill=white, yshift=0.7mm, scale=1] [midway]{$y_{0}$}($(n-1-5.west) + (0mm,1mm)$) ;
    \draw[<-] ($(n-1-3.east) + (0,-1mm)$) to node[fill=white, yshift=-0.7mm, scale=1] [midway]{$y_{1}$} ($(n-1-5.west) + (0mm,-1mm)$);

    \draw[<-, red] ($(n-1-5.south west) + (5mm,-0mm)$) .. controls +(-3.7mm,-4.5mm)
and +(+3.7mm,-4.5mm) .. node[fill=white, scale=1, yshift=0.5mm] [midway]{$y_1^*$} ($(n-1-3.south east) + (-3mm,0mm)$);

\draw[<-, red] ($(n-1-5.south west) + (6.5mm,-0mm)$) .. controls +(-3.7mm,-9.5mm)
and +(+3.7mm,-9.5mm) .. node[fill=white, scale=1, yshift=-0.5mm] [midway]{$y_0^*$} ($(n-1-3.south east) + (-4.5mm,0mm)$);

   \draw[->, red] ($(n-1-3.south west) + (5mm,-0mm)$) .. controls +(-3.7mm,-4.5mm)
and +(+3.7mm,-4.5mm) .. node[fill=white, scale=1, yshift=0.5mm] [midway]{$x_1^*$} ($(n-1-1.south east) + (-3mm,0mm)$);

\draw[->, red] ($(n-1-3.south west) + (6.5mm,-0mm)$) .. controls +(-3.7mm,-9.5mm)
and +(+3.7mm,-9.5mm) .. node[fill=white, scale=1, yshift=-0.5mm] [midway]{$x_0^*$} ($(n-1-1.south east) + (-4.5mm,0mm)$);
\end{tikzpicture}
\end{equation*}
By multiplying with the idempotents at the different vertices of $\overline{Q}$, the ideal $(\rho)$ in Theorem \ref{T:Preprojn1} can be rewritten as 
\begin{align}
    (\rho)=(x_0^*x_0 +x_1^*x_1, x_0x_0^* + x_1x_1^* + y_0y_0^* + y_1y_1^*  , y_0^*y_0 +y_1^*y_1) 
\end{align}
\end{example}

\begin{corollary}\label{C:QuotientPreproj2}
    In the setting and notations of Corollary \ref{C:QuotientPreproj} and Theorem \ref{T:Preprojn1}, assume that $d=3$, then there is a quiver $Q$ without oriented cycles and such that all of its connected components are not of Dynkin type and there is an an isomorphism of $\mathbb{k}$-algebras \[A \cong \mathbb{k}\overline{Q}/((\rho) + R^2)\]
    where $R$ is the two-sided ideal generated by all arrows $\alpha^* \in \overline{Q}$.
\end{corollary}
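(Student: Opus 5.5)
Combine Corollary \ref{C:QuotientPreproj} with Theorem \ref{T:Preprojn1}. When $d=3$, Corollary \ref{C:QuotientPreproj} gives $A \cong \Pi_2(E)/M^{\geq 2}$ with $M=\Hom(E,\nu_1^{-1}(E))$. Here $E$ is the endomorphism algebra of a geometric exceptional sequence of type $(m,2)$. By Lemma \ref{L:GeomHelicesandhigherreprinfinite}, $E$ (or its opposite, which is harmless since opposites of path algebras are path algebras of opposite quivers) is $1$-representation-infinite. Remark \ref{R:GldimHigherHered} then gives $\gldim E = 1$. So $E$ is hereditary, and since $\mathbb{k}$ is algebraically closed and $E$ is basic (the summands $\sE_i$ are exceptional, hence pairwise non-isomorphic with $\End(\sE_i)=\mathbb{k}$), we get $E \cong \mathbb{k}Q$ for a finite quiver $Q$. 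The quiver $Q$ has no oriented cycles because $\E$ is exceptional.

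Next apply Theorem \ref{T:Preprojn1} to each connected component of $Q$. The $1$-representation-infinite property passes to direct factors, since $\nu_1$ decomposes blockwise. A component with an arrow is therefore non-Dynkin. A component without arrows would be a copy of $\mathbb{k}$, which is $0$- but not $1$-representation-infinite: there $\nu_1^{-1}(\mathbb{k})=\mathbb{k}[-1]$ has cohomology in nonzero degree. So every component has an arrow and is non-Dynkin. Theorem \ref{T:Preprojn1} then gives $\Pi_2(\mathbb{k}Q)\cong \mathbb{k}\overline{Q}/(\rho)$, componentwise and hence globally.

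The remaining step, and the main point to check, is that the ideal $M^{\geq 2}$ corresponds to $(\rho)+R^2$ under this isomorphism. The plan is to use the grading of $\mathbb{k}\overline{Q}/(\rho)$ in which arrows of $Q$ have degree $0$ and starred arrows have degree $1$. The relation $\rho$ is homogeneous of degree $1$, so this grading descends to the quotient. The standard proof of Ringel's description (\cite{RingelPreprojective}) identifies the degree-$0$ part with $\mathbb{k}Q=E$ and the degree-$1$ part with $M \cong \Ext^1_E(DE,E)$, generated as a bimodule by the starred arrows. Under this identification the tensor grading of $T_E(M)$ matches the starred-arrow grading. Then $M^{\geq 2}$ is the part of degree $\geq 2$, which is exactly the image of the ideal spanned by paths containing at least two starred arrows, i.e. the image of $R^2$. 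Hence
\[
A \cong \Pi_2(E)/M^{\geq 2} \cong \mathbb{k}\overline{Q}/\bigl((\rho)+R^2\bigr).
\]
The potential subtlety is that the isomorphism in Theorem \ref{T:Preprojn1} must respect the tensor grading. If the cited reference does not state this, I would verify directly that the degree-one generators $\alpha^*$ map into $M$. This can be done via Remark \ref{R:careful}, identifying $M$ with $\Ext^1_E(DE,E)$, whose bimodule generators correspond to the arrows of $Q$ reversed.
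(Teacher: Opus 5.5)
Your proposal is correct and is essentially the argument the paper intends. The paper gives no separate proof and treats the corollary as the direct combination of Corollary \ref{C:QuotientPreproj} with Theorem \ref{T:Preprojn1}; your grading argument identifying $M^{\geq 2}$ with the image of $R^2$ spells out the step the paper leaves implicit. One cosmetic slip: $\nu_1^{-1}(\mathbb{k})=\bS^{-1}(\mathbb{k})[1]=\mathbb{k}[1]$, not $\mathbb{k}[-1]$, but it still has cohomology in a nonzero degree, so your conclusion stands.
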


\begin{example}
    Applying Corollary \ref{C:QuotientPreproj2} in Example \ref{E:prprojhered} shows that $A$ has the relations defined by $\rho$ and additionally any path containing at least two red arrows is zero in $A$. 
\end{example}

We continue with $n=2$, which geometrically corresponds to singular fourfolds $X$. 

\begin{theorem}\label{thm:3preproj}
    Let $E\cong \mathbb{k}Q/I$ be a finite dimensional $\mathbb{k}$-algebra that is $2$-representation infinite. 
    
    For each pair of vertices $i, j$ in $Q$, let $r_{1}(ij), \ldots, r_{m_{ij}}(ij)$ be a $k$-basis of $D\Ext^2_E(S_i, S_j)$, where $S_i$ denotes the simple $E$-module at vertex $i$. Let $\widetilde{Q}$ be the quiver obtained from $Q$ by adding arrows $\rho_r\colon j \to i$ for each pair of vertices $i, j$ and all elements $r \in \{r_l(ij)\mid 1 \leq l \leq m_{ij}\}=:\mathcal{R}(ij).$ Set $\mathcal{R}:= \bigcup_{i, j} \mathcal{R}(ij)$ 

    Then the $3$-preprojective algebra $\Pi_3(E)$ is isomorphic to the Jacobian-algebra \begin{align}
        J(\widetilde{Q},W) :=\mathbb{k}\widetilde{Q}/(\partial_a W \mid a \text{ arrow in } \widetilde{Q}),
            \end{align}
   where $W$ is the potential given by
   \begin{align}
       W=\sum_{r \in \mathcal{R}} r\rho_r
   \end{align}
    
\end{theorem}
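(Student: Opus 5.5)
The plan is to deduce the statement from Keller's description of Calabi--Yau completions \cite{KellerCY} by passing through the derived $3$-Calabi--Yau completion $\mathbf{\Pi}_3(E)$. This is a dg algebra whose degree-zero cohomology and whose concentration in degree $0$ we control by the representation-infinite hypothesis.

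First, since $E$ is $2$-representation infinite, Remark \ref{R:GldimHigherHered} gives $\gldim E = 2$. Then \cite[Theorem 6.10]{KellerCY} applies: for $E = \mathbb{k}Q/I$ of global dimension at most $2$, with $I$ generated by a minimal set of relations, the derived $3$-CY completion $\mathbf{\Pi}_3(E)$ is quasi-isomorphic to the Ginzburg dg algebra $\Gamma(\widetilde Q, W)$. The graded quiver of $\Gamma$ consists of $Q$ in degree $0$, the arrows $\rho_r$ in degree $-1$ and loops $t_i$ in degree $-2$, with $W = \sum_r r\rho_r$. Here one must identify a minimal set of relations from $i$ to $j$ with a basis of $D\Ext^2_E(S_i,S_j)$. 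I would do this by the standard minimal projective resolution argument: $\Ext^2_E(S_i,S_j)$ is dual to the space of minimal relations between the corresponding vertices, and the direction of $\rho_r\colon j\to i$ is the reverse of the relation. I will check this orientation carefully against the convention in the statement, since it depends on left versus right modules.

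Second, by the construction of $\Gamma(\widetilde Q,W)$, its degree-zero cohomology is the Jacobian algebra. Indeed, $d(\rho_r)=r=\partial_{\rho_r}W$ kills the original relations, and $d$ of the degree $-1$ arrows dual to arrows $a$ of $Q$ gives $\partial_a W$. So $\bH^0(\Gamma) = J(\widetilde Q,W)$. On the other hand, \cite[Theorem 4.8]{KellerCY} (see also \cite[Proposition 3.3]{HIO}) gives $\bH^0(\mathbf{\Pi}_3(E)) \cong T_E\,\Ext^2_E(DE,E)$. By Remark \ref{R:careful}, this is $T_E\Hom_E(E,\nu_2^{-1}E) = \Pi_3(E)$ from Definition \ref{D:HigherpreprojAlg}. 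Combining the two identifications proves the statement, because $\bH^0$ is invariant under quasi-isomorphism.

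I do not even need that $\mathbf{\Pi}_3(E)$ has cohomology concentrated in degree $0$, although this also holds here because $E$ is $2$-representation infinite. The main obstacle is the first step: verifying that the quiver with relations of $E$ matches the hypotheses of Keller's theorem, namely that the relations are minimal. We also need the compatibility of the bimodule $\Ext^2_E(DE,E)$ used by Keller with $\Hom_E(E,\nu_2^{-1}E)$, including left/right and opposite-algebra conventions. For the latter I would rely on Remark \ref{R:careful}. For the former I would choose $I$ admissible and generated by a basis of $\bigoplus_{i,j} e_j(I/(IJ+JI))e_i$, where $J$ is the arrow ideal, which is dual to $\Ext^2_E(S_i,S_j)$.
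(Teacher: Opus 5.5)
Your proposal is essentially the paper's argument (Keller's Theorem 6.10 gives $\mathbf{\Pi}_3(E)\simeq\Gamma_3(\widetilde Q,W)$, then $\bH^0(\Gamma_3(\widetilde Q,W))=J(\widetilde Q,W)$ by definition, then Remark \ref{R:careful}). The only variation is in identifying $\bH^0(\mathbf{\Pi}_3(E))$ with $\Pi_3(E)$: the paper quotes that $\mathbf{\Pi}_3(E)$ is concentrated in degree $0$ \cite{HIO}, whereas you use Keller's direct computation $\bH^0(\mathbf{\Pi}_3(E))\cong T_E\Ext^2_E(DE,E)$ for $\gldim E\le 2$, which is equally valid.

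One slip: in $\Gamma_3(\widetilde Q,W)$ the arrows $\rho_r$ lie in degree $0$, since they are arrows of $\widetilde Q$ and $W$ must be homogeneous of degree $0$. The degree $-1$ generators are the reversed arrows $a^*$ and $\rho_r^*$, with $d(a^*)=\partial_aW$ and $d(\rho_r^*)=\partial_{\rho_r}W=r$. You have conflated $\rho_r$ with $\rho_r^*$, though this does not affect $\bH^0(\Gamma_3(\widetilde Q,W))=J(\widetilde Q,W)$.
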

\begin{proof}
    By \cite[Section 6.9 \& Theorem 6.10]{KellerCY} applied to $c=0$ and $W'=0$, the derived $3$-prepreprojective algebra ${\bf\Pi}_3(E)$ of $E$ is quasi-isomorphic to the Ginzburg dg-algebra $\Gamma_3(\widetilde{Q}, W)$ of $(\widetilde{Q}, W)$. In particular, the zeroth cohomology of both algebras are isomorphic.
    By definition of $\Gamma_3(\widetilde{Q}, W)$, there is an isomorphism $H^0(\Gamma_3(\widetilde{Q}, W)) \cong J(\widetilde{Q}, W)$. Since $E$ is $2$-representation infinite, by definition, ${\bf\Pi}_3(E)$ is concentrated in degree $0$ and coincides with 
    the $3$-prepreprojective algebra ${\Pi}_3(E)$ of $E$, cf. e.g. \cite[Part (ii) of proof of Theorem 4.36]{HIO} together with Remark \ref{R:careful}. In particular,
     $H^0({\bf\Pi}_3(E)) \cong {\bf\Pi}_3(E) \cong \Pi_3(E)$. This completes the proof.
\end{proof}

\begin{corollary}\label{C:QuotientPreproj3}
    In the setting and notations of Corollary \ref{C:QuotientPreproj} and Theorem \ref{thm:3preproj}, assume that $d=4$, then there is a quiver $Q$ and a potential $W$ for $\mathbb{k}\widetilde{Q}$ such that there is an an isomorphism of $\mathbb{k}$-algebras \[A \cong \mathbb{k}\widetilde{Q}/((\partial_a W \mid a \text{ arrow in } \widetilde{Q}) + R^2)\]
    where $R$ is the two-sided ideal generated by all arrows $\rho_r \in \widetilde{Q}$ for $r \in \mathcal{R}$.
\end{corollary}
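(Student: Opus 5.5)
The proof should be a direct combination of Corollary \ref{C:QuotientPreproj} with Theorem \ref{thm:3preproj}, in the same way Corollary \ref{C:QuotientPreproj2} follows from Theorem \ref{T:Preprojn1}. For $d=4$, Corollary \ref{C:QuotientPreproj} gives an algebra isomorphism $A \cong \Pi_3(E)/M^{\geq 2}$, where $M=\Hom(E,\nu_2^{-1}(E))$. Here $E$ is $2$-representation infinite by Lemma \ref{L:GeomHelicesandhigherreprinfinite}, since $\E$ is geometric of type $(m,3)$. Since $E$ is the endomorphism algebra of an exceptional sequence, it is a basic algebra over the algebraically closed field $\mathbb{k}$ whose quiver has no oriented cycles. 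Hence we may write $E \cong \mathbb{k}Q/I$ for an admissible ideal $I$, and Theorem \ref{thm:3preproj} gives $\Pi_3(E)\cong J(\widetilde{Q},W)$ with $W=\sum_{r\in\mathcal{R}} r\rho_r$. It then remains to identify the ideal $M^{\geq 2}$ with the image of $R^2$ under this isomorphism.

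The substantive step is checking that the tensor grading on $T_E(M)$ matches the grading on $J(\widetilde{Q},W)$ in which arrows of $Q$ have degree $0$ and the new arrows $\rho_r$ have degree $1$. First, $W$ is homogeneous of degree $1$. Therefore every cyclic derivative $\partial_a W$ is homogeneous: for $a\in Q_1$ it has degree $1$, and for $a=\rho_r$ it has degree $0$, namely $\partial_{\rho_r}W = r$, which recovers the relations $I$ of $E$. So $J(\widetilde{Q},W)$ is graded, and its degree-$0$ part is $\mathbb{k}Q/I=E$.

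Next I would check that the isomorphism ${\bf\Pi}_3(E)\simeq \Gamma_3(\widetilde{Q},W)$ from \cite[Theorem 6.10]{KellerCY} respects this grading. In Keller's construction, $\Pi_3(E)=T_E(\Ext^2_E(DE,E))$ is graded by tensor degree. The generators $\rho_r$ are dual to the minimal relations of $E$, that is, they form a basis of $\Ext^2_E(DE,E)=M$ over $E\otimes E^{op}$ modulo the radical. I would verify that Keller's comparison sends the tensor-degree-$1$ component $M$ to the degree-$1$ part of $J(\widetilde{Q},W)$. This is the main obstacle: Theorem \ref{thm:3preproj} as stated only asserts an ungraded isomorphism, so I would return to the explicit quasi-isomorphism in \cite[Section 6.9]{KellerCY}. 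There the extra grading (weight $1$ on the $\rho_r$) is built in, because the Ginzburg algebra is the derived tensor algebra over the cofibrant resolution of $E$ with the inverse dualizing bimodule.

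Once gradedness is established, the degree-$\geq 2$ part of $J(\widetilde{Q},W)$ is spanned by paths containing at least two arrows $\rho_r$, which is exactly the image of $R^2$. It also corresponds to $M^{\geq 2}=\bigoplus_{n\geq 2} M^{\otimes_E n}$. Taking quotients then gives
\[A\cong \mathbb{k}\widetilde{Q}/\bigl((\partial_a W\mid a\in \widetilde{Q}_1)+R^2\bigr),\]
as claimed.
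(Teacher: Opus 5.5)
Your proposal is correct and follows the intended argument: the paper states this corollary without proof, as the direct combination of Corollary~\ref{C:QuotientPreproj} with Theorem~\ref{thm:3preproj}, under which $M^{\geq 2}$ is identified with the image of $R^2$. The step you single out, namely that Keller's quasi-isomorphism respects the tensor grading (with weight $1$ on the $\rho_r$), is exactly what justifies that identification, and the paper leaves it implicit.
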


\begin{remark}\label{rmk:product-of-proj}
By work of Thibault \cite{Thibault} and Iyama \& Grant \cite{IG}, explicit descriptions of $(g+1)$-preprojective algebras $\Pi_{g+1}(E)$ in terms of quivers with potentials can also be obtained more generally for Koszul algebras $E=\End(\sE)$ of global dimension $g>2$. Which by Corollary \ref{C:QuotientPreproj} yields explicit descriptions of the split square-zero extensions $A$. 

This applies for example to anticanonical cones over products $Z=\P^{n_1} \times \cdots \times \P^{n_t}$ (we will see an explicit description for $Z=\P^{d-1}$ obtained using different methods below) and more generally to products $Z=Z_1 \times \cdots \times Z_n$ of Fanos $Z$ satisfying the assumptions of Theorem \ref{thm:projcone-split} and such that all $E_i=\End(\E_i)$ are Koszul -- indeed, tensor products of Koszul algebras are Koszul by \cite[Ex. 1.6]{MartinezVillaIntrotoKoszul} and truncations of Koszul algebras in sources of the quiver are again Koszul.

Note however, that not all our examples are Koszul, e.g. for $Z=\P^1 \times \Bl_p(\P^2)$, there are algebras $\End(\sE)$ with relations of length $3$ (cf. \cite{King}), so they cannot be Koszul.     \end{remark}

\begin{example}
Let $E=\mathbb{k}Q/I$ be given by the black quiver below. Then the $3$-preprojective algebra $\Pi_3(E)$ of $E$ is given by the following quiver $\widetilde{Q}$ consisting of red and black arrows
\begin{equation*}
\begin{tikzpicture}[description/.style={fill=white,inner sep=2pt}]
 \matrix (n) [matrix of math nodes, row sep=4em,
                 column sep=2em, text height=1.5ex, text depth=0.25ex,
                 inner sep=2pt, nodes={inner xsep=0.3333em, inner
ysep=0.3333em}] at (-5, 0)
    { 
    0 &&& -1 &&& -2 \\ 
      };

    \draw[->] ($(n-1-4.east) + (0,2mm)$)  to  node[scale=0.6, fill=white, yshift=1.7mm] [midway]{$x_{21}$}($(n-1-7.west) + (0mm,2mm)$) ;
    \draw[white] ($(n-1-4.east) + (0,0mm)$)  to  node[fill=white, scale=0.6, yshift=0.0mm] [midway]{$\bf\textcolor{black}{\vdots}$}($(n-1-7.west) + (0mm,0mm)$) ;
    \draw[->] ($(n-1-4.east) + (0,-2mm)$) to node[fill=white, scale=0.6, yshift=-1.7mm] [midway]{$x_{24}$} ($(n-1-7.west) + (0mm,-2mm)$);

     \draw[->] ($(n-1-1.east) + (0,2mm)$)  to  node[scale=0.6, fill=white, yshift=1.7mm] [midway]{$x_{11}$}($(n-1-4.west) + (0mm,2mm)$) ;
      \draw[white] ($(n-1-1.east) + (0,0mm)$)  to  node[fill=white, scale=0.6, yshift=0.0mm] [midway]{$\bf\textcolor{black}{\vdots}$}($(n-1-4.west) + (0mm,0mm)$) ;
    \draw[->] ($(n-1-1.east) + (0,-2mm)$) to node[fill=white, scale=0.6, yshift=-1.7mm] [midway]{$x_{14}$} ($(n-1-4.west) + (0mm,-2mm)$);

      \draw[white] ($(n-1-7.north west) + (5mm,0mm)$) .. controls +(-3.7mm,11mm)
and +(+3.7mm,11mm) .. node[fill=white, scale=0.6] [midway]{$\bf\textcolor{red}{\vdots}$}($(n-1-1.north east) + (-3mm,0mm)$);  

   \draw[->, red] ($(n-1-7.north west) + (3.8mm,-0mm)$) .. controls +(-3.7mm,16mm)
and +(+3.7mm,16mm) .. node[fill=white, scale=0.6] [midway]{$\rho_{r_{12}}$} ($(n-1-1.north east) + (-2.8mm,0mm)$);

      \draw[->, red] ($(n-1-7.north west) + (3mm,-0mm)$) .. controls +(-3.7mm,6mm)
and +(+3.7mm,6mm) .. node[fill=white, scale=0.6] [midway]{$\rho_{r_{34}}$}($(n-1-1.north east) + (-1mm,0mm)$);

\end{tikzpicture}\end{equation*}
where for $1\leq k<l\leq 4$, we have $r_{kl}:=r_{kl}(0 -2):=x_{2k}x_{1l}-x_{2l}x_{1k}$.

Hence, the potential is given by 
\begin{align}
W= \sum_{1\leq k<l\leq 4} r_{kl}\rho_{r_{kl}} 
= \sum_{1\leq k<l\leq 4}(x_{2k}x_{1l}-x_{2l}x_{1k}) \rho_{r_{kl}}
\end{align}
Taking cyclic derivatives of $W$ yields the following relations 
\begin{align}
    \partial_{\rho_{r_{kl}}}W &=r_{kl}=x_{2k}x_{1l}-x_{2l}x_{1k} \label{eq:R1}\\
    \partial_{x_{1j}}W&=\partial_{x_{1j}}\left(\sum_{1\leq k<j}
    (x_{2k}x_{1j}-x_{2j}x_{1k}) \rho_{r_{kj}}\right) + \partial_{x_{1j}}\left(\sum_{ j<l \leq 4}
    (x_{2j}x_{1l}-x_{2l}x_{1j}) \rho_{r_{jl}}\right) \\
    &=\sum_{1\leq k<j}
    x_{2k} \rho_{r_{kj}} +
    \sum_{ j<l \leq 4}
    -x_{2l} \rho_{r_{jl}} \label{eq:R2}\\
    \partial_{x_{2j}}W&=\sum_{1\leq k<j}
    -x_{1k} \rho_{r_{kj}} +
    \sum_{ j<l \leq 4}
    x_{1l} \rho_{r_{jl}} \label{eq:R3}.
\end{align}
By Theorem \ref{thm:3preproj}, the $3$-preprojective algebra $\Pi_3(E)$ is the quotient of $\mathbb{k}\widetilde{Q}$ by the ideal generated by all the relations\footnote{One can check that they generate the same ideal as the relations exhibited in Proposition \ref{P:Hanihara}.} \eqref{eq:R1}, \eqref{eq:R2}, \eqref{eq:R3}. By Corollary \ref{C:QuotientPreproj3}, the algebra $A$ is obtained from $\Pi_3(E)$ by imposing the additional relations that any paths containing any two of the $\rho$-arrows is zero.

\end{example}

We finish with an infinite family of $d$-representation infinite algebras for all $d\geq 1$. The following result is the special case $d=n$ of Hanihara's \cite[Theorem 4.12]{Hanihara2}.

\begin{proposition} \label{P:Hanihara}
Let $E=\End(\sE)$ be the truncated Beilinson algebra of $\P^{d-1}$. Then the $(d-1)$-preprojective algebra $\Pi_{d-1}(E)$ of $E$ is given by the following quiver $Q_d$ with relations
\begin{equation*}\begin{tikzpicture}[description/.style={fill=white,inner sep=2pt}]

    \matrix (n) [matrix of math nodes, row sep=2em,
                 column sep=2em, text height=1.5ex, text depth=0.25ex,
                 inner sep=2pt, nodes={inner xsep=0.3333em, inner
ysep=0.3333em}] at (-5, 0)
    {   1  && 2 && \cdots && d-1 \\
          };

    \draw[->] ($(n-1-1.east) + (0,2mm)$)  to node[yshift=-1.2mm, scale=0.8] [midway]{$\vdots$} node[yshift=1.7mm, scale=0.8] [midway]{$x_{11}$}($(n-1-3.west) + (0mm,2mm)$) ;
    \draw[->] ($(n-1-1.east) + (0,-2mm)$) to node[yshift=-1.7mm, scale=0.8] [midway]{$x_{1d}$} ($(n-1-3.west) + (0mm,-2mm)$);

    \draw[->] ($(n-1-3.east) + (0,2mm)$)  to node[yshift=-1.2mm, scale=0.8] [midway]{$\vdots$} node[yshift=1.7mm, scale=0.8] [midway]{$x_{21}$}($(n-1-5.west) + (0mm,2mm)$) ;
    \draw[->] ($(n-1-3.east) + (0,-2mm)$) to node[yshift=-1.7mm, scale=0.8] [midway]{$x_{2d}$} ($(n-1-5.west) + (0mm,-2mm)$);
    
       \draw[->] ($(n-1-5.east) + (0,2mm)$)  to node[yshift=-1.2mm, scale=0.8] [midway]{$\vdots$} node[yshift=1.7mm, scale=0.8] [midway]{$x_{(d-2)1}$}($(n-1-7.west) + (0mm,2mm)$) ;
    \draw[->] ($(n-1-5.east) + (0,-2mm)$) to node[yshift=-1.7mm, scale=0.8] [midway]{$x_{(d-2)d}$} ($(n-1-7.west) + (0mm,-2mm)$);

     \draw[->] ($(n-1-7.south west) + (5mm,-0mm)$) .. controls +(-3.7mm,-16mm)
and +(+3.7mm,-16mm) .. node[fill=white, scale=0.8] [midway]{$z_{(d-1)d}$} ($(n-1-1.south east) + (-3mm,0mm)$);

      \draw[->] ($(n-1-7.south west) + (3mm,-0mm)$) .. controls +(-3.7mm,-6mm)
and +(+3.7mm,-6mm) .. node[yshift=-3mm] [midway]{$\vdots$} node[yshift=-3.5mm, xshift=7mm, scale=0.7] [midway]{$z_{ij, \ i<j}$} node[fill=white, scale=0.8] [midway]{$z_{12}$}($(n-1-1.south east) + (-1mm,0mm)$);

\node[scale=0.8] at (2.27, 3mm){$x_{(i+1)k}x_{ij} - x_{(i+1)j}x_{ik}=0 $};

\node[scale=0.8] at (2.2, -2mm){$x_{1j}z_{kl} + x_{1l}z_{jk} - x_{1k}z_{jl}=0 $};

\node[scale=0.8] at (1.5, -7mm){$z_{kl} x_{(d-2)j} + z_{jk}x_{(d-2)l} - z_{jl} x_{(d-2)k}=0$};

\node[scale=0.8] at (1.28, -1.8){for all $1\leq j < k<l \leq d$ and  $1\leq i \leq d-3$ };

\end{tikzpicture}\end{equation*}
    
\end{proposition}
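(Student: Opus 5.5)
The plan is to present $\Pi_{d-1}(E)$ explicitly as the $(d-1)$-preprojective algebra of the truncated Beilinson algebra. Here $E$ is the opposite endomorphism algebra of the sequence $(\OO,\OO(1),\ldots,\OO(d-2))$ on $\P^{d-1}$, i.e. the restriction to $\OO_{\P^{d-1}}^\perp$-type truncation of the Beilinson sequence. By Lemma \ref{L:GeomHelicesandhigherreprinfinite} and Proposition \ref{P:dBlockGeometricRed}, $E$ is $(d-2)$-representation infinite: the full Beilinson sequence is a geometric $d$-block sequence, and dropping one block keeps it geometric of type $(d-1,d-1)$. So $\Pi_{d-1}(E)$ is defined, and by Remark \ref{rem:rolled-up} it is the rolled-up helix algebra of the helix generated by the truncated sequence. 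The statement is literally Hanihara's \cite[Theorem 4.12]{Hanihara2} with $n=d$. The proof therefore consists of matching conventions, but I would also sketch a direct verification.

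The first step is to compute the degree-$0$ part $E$. Its quiver has vertices $1,\ldots,d-1$ and $d$ arrows $x_{i1},\ldots,x_{id}$ from $i$ to $i+1$, corresponding to the coordinates in $\Hom(\OO(i-1),\OO(i))=\bH^0(\OO(1))$. The relations are the commutativity relations $x_{(i+1)k}x_{ij}=x_{(i+1)j}x_{ik}$, which come from commutativity of $\mathrm{Sym}$.

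The second step is to compute the degree-one bimodule $M=\Hom(E,\nu_{d-2}^{-1}E)$ via the helix. In the helix of type $(d-1,d-1)$, $\sE_{i+(d-1)}=\bS^{-1}(\sE_i)[d-2]$ lies in $\OO^\perp$ after mutation. Equivalently, using the tensor-algebra presentation through Keller's description, $M$ is dual to $\Ext^{d-2}_E(DE,E)$ (Remark \ref{R:careful}). The generators of $M$ as a bimodule live in $\Ext^{d-2}$ between the simples at the extreme vertices. Concretely, $D\Ext^{d-2}_E(S_1,S_{d-1})$ is spanned by the Koszul-type relations of length $d-2$, indexed by pairs $j<k$ (the $\Lambda^2$ of a $d$-dimensional space, i.e. after dualizing $\Lambda^{d-2}$). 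This gives the new arrows $z_{jk}\colon d-1\to 1$.

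The third step is to find the relations involving $z$. These are the cyclic derivatives of the Ginzburg-type potential, as in Theorem \ref{thm:3preproj} and, for general $d$, in the Koszul description of \cite{Thibault,IG} (Remark \ref{rmk:product-of-proj}). Differentiating the superpotential $\sum \pm x\cdots x\, z$ with respect to the first arrow $x_{1j}$ gives the Koszul (exterior-algebra) relations $x_{1j}z_{kl}+x_{1l}z_{jk}-x_{1k}z_{jl}$. Differentiating with respect to the last arrow $x_{(d-2)j}$ gives the symmetric ones. The derivatives with respect to the middle arrows $x_{ij}$ reduce to consequences of the commutativity relations.

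The main obstacle is proving that these relations generate all relations, i.e. that there are no further ones. I would check this by a Hilbert series comparison. The dimension of the degree-$l$ part $\Hom(\E,\nu_{d-2}^{-l}\E)$ is computable from cohomology of $\OO(a)$ on $\P^{d-1}$ via the helix. This should be matched against a count of paths modulo the listed relations, using a Gröbner/PBW argument or Koszulity of $E$ (so that $\Pi_{d-1}(E)$ is Koszul with quadratic dual computable). In the final write-up I would simply cite \cite[Theorem 4.12]{Hanihara2}.
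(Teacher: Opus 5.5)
Your actual argument is exactly the paper's: you check via Lemma~\ref{L:GeomHelicesandhigherreprinfinite} and Proposition~\ref{P:dBlockGeometricRed} that $E$ is $(d-2)$-representation infinite, so $\Pi_{d-1}(E)$ is defined, and then invoke \cite[Theorem 4.12]{Hanihara2} with $n=d$; the paper gives no independent proof. If you keep the optional direct sketch, correct its third step: for $d\geq 5$ the relations of $\Pi_{d-1}(E)$ are $(d-3)$-fold derivatives (not first-order cyclic derivatives) of a superpotential of length $d-1$ (cf.\ \cite{IG, Thibault}), and the relations $x_{1j}z_{kl}+x_{1l}z_{jk}-x_{1k}z_{jl}$ arise by differentiating away the arrows $x_{2\bullet},\ldots,x_{(d-2)\bullet}$, not $x_{1j}$ (already for $d=4$, $\partial_{x_{1j}}W$ gives the $z\,x_{(d-2)\bullet}$-type relations).
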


We independently obtained the following consequence using direct computations on weighted projective stacks $\mathcal{P}(1^d, d)$. Here we avoid these computations and deduce the result from Hanihara's.  

\begin{corollary} \label{Cor:description_R_d}
We use the notation of Proposition \ref{P:Hanihara}.

Then $A$ is given as a quotient of $\Pi_{d-1}E$ by the two-sided ideal $(\mathbb{k}Q_d \{ z_{ab} \mid 1 \leq a<b \leq d \} \mathbb{k}Q_d)^2$. 

In particular, it has an explicit description as a quiver $Q_d$ with relations as in Proposition \ref{P:Hanihara} with additional relations $(\mathbb{k}Q_d \{ z_{ab} \mid 1 \leq a<b \leq d \} \mathbb{k}Q_d)^2=0$.    
\end{corollary}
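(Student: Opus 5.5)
The plan is to combine Corollary \ref{C:QuotientPreproj} with Proposition \ref{P:Hanihara}. By Corollary \ref{C:QuotientPreproj} (we are implicitly in the split setting, as in the surrounding subsection), $A \cong \Pi_{d-1}(E)/M^{\geq 2}$. Here $\Pi_{d-1}(E) = T_E(M)$ with $M = \Hom(E, \nu_{d-2}^{-1}(E))$, and $M^{\geq 2}$ is the two-sided ideal generated by $M\otimes_E M$. So it suffices to identify, inside the quiver presentation of Proposition \ref{P:Hanihara}, the tensor grading of $T_E(M)$. The first step is to show that the degree-one part $M$ corresponds to the span of paths containing exactly one arrow $z_{ab}$, and that $E$ corresponds to paths with no $z$-arrow.

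For this I would use the natural $\Z_{\geq 0}$-grading on $\mathbb{k}Q_d$ in which the $x_{ij}$ have degree $0$ and the $z_{ab}$ have degree $1$. All relations in Proposition \ref{P:Hanihara} are homogeneous: the commutativity relations $x_{(i+1)k}x_{ij}-x_{(i+1)j}x_{ik}$ have degree $0$, and the two families involving $z$ have degree $1$. Hence $\Pi_{d-1}(E)$ inherits a grading. Its degree-$0$ part is $\mathbb{k}Q_d^{x}$ modulo the degree-$0$ relations, which is the truncated Beilinson algebra $E$.

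I then need the degree-$1$ part to be the bimodule $M$ and the whole grading to be the tensor grading. This is the step I expect to be the main obstacle, since it depends on how Hanihara's presentation is constructed. I would argue as follows. In Hanihara's construction (cf. Remark \ref{rem:rolled-up}), $\Pi_{d-1}(E)$ is realized as $\bigoplus_{i\geq 0}\Hom(E,\nu_{d-2}^{-i}E)$. The new arrows $z_{ab}\colon d-1 \to 1$ form a basis of the generators of $M = \Hom(E,\nu_{d-2}^{-1}E)$ as an $E$-bimodule, namely the top-degree part $\Hom(\sE_{d-1}, \sE_{1}\otimes\omega^{-1})$ modulo paths. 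The key fact is that $T_E(M)$ is generated over $E$ by $M$, which is in turn generated as a bimodule by these arrows. Consequently the $i$-th summand $\Hom(E,\nu^{-i}E) = M^{\otimes_E i}$ is spanned by paths with exactly $i$ arrows $z$. The grading by number of $z$-arrows therefore coincides with the tensor grading.

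With this identification, $M^{\geq 2}$ is spanned by paths with at least two $z$-arrows. This is exactly the two-sided ideal $(\mathbb{k}Q_d\{z_{ab}\}\mathbb{k}Q_d)^2$ in $\Pi_{d-1}(E)$. The quotient description follows, and the presentation by quiver with relations is obtained by adding these as extra relations.
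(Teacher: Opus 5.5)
Your proposal is correct and follows the paper's argument: the paper's proof just combines Corollary \ref{C:QuotientPreproj} with Proposition \ref{P:Hanihara}, and your grading bookkeeping makes explicit why $M^{\geq 2}$ becomes the ideal spanned by paths with at least two $z$-arrows. Like the paper, this rests on Hanihara's presentation being compatible with the tensor grading, i.e.\ the $x$-arrows lie in $E$ and the $z_{ab}$ lie in $M$. One small slip in a parenthetical: the bimodule generators of $M$ are $\Hom(\sE_{d-1},\nu_{d-2}^{-1}\sE_1)$ with $\nu_{d-2}^{-1}$ taken in $\OO^\perp$, where $\nu_{d-2}^{-1}\sE_1$ is a left mutation through $\OO$ (for $\OO(-d+1),\ldots,\OO(-1)$ it is $\Omega(1)$, giving $\dim H^0(\Omega(2))=\binom{d}{2}$ arrows); it is not $\Hom(\sE_{d-1},\sE_1\otimes\omega^{-1})$, but nothing in your argument depends on this.
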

\begin{proof}
This follows from Proposition \ref{P:Hanihara} together with Corollary \ref{C:QuotientPreproj}.
\end{proof}

\subsection{The non-split case}\label{subs:non-split}

The following is the main result of this subsection. Let $X_1$ and $X_2$ be projective varieties as in Proposition \ref{prop:algebra-extensions}, such that $\widehat{\mathcal{O}}_{X_1,p} \cong \widehat{\mathcal{O}}_{X_2,p}$ is a cone singularity (over a strong Fano $Z$).
Then there is a categorical absorption $\Db(A_1 )$ of $p\in X_1$ that is a deformation of a categorical absorption $\Db(A_2)$ of $p \in X_2$. 
More precisely, there is a flat family $\AA$ over $\mathbb{A}^1$, with fibers $A_1$ and $A_2$ over $0 \in \mathbb{A}^1$ and $1 \in \mathbb{A}^1$, respectively.

\begin{theorem}\label{T:Deformations} In the setup of Proposition \ref{prop:algebra-extensions}, we fix the isomorphism class of the endomorphism algebra $E=\End(\sE)$.
    Let $A$ and $A'$ be finite dimensional algebras that appear as middle terms in the short exact sequence \eqref{eq:ses-G}. Then there exists a $\mathbb{k}[t]$-algebra $\mathcal{A}$ that satisfies:
    \begin{enumerate}[label=(\alph*)]
        \item $\mathcal{A}$ is a free (and hence flat) $\mathbb{k}[t]$-module.
        \item $\mathcal{A}/(t) \cong A$
        \item $\mathcal{A}/(t-1) \cong A'$
    \end{enumerate}
    In other words, $A$ and $A'$ are fibers in a flat family of algebras $\mathcal{A}$. In particular, any such algebra $A$ is a deformation of the split algebra.

    Moreover, if $\gldim E \leq 1$, then $A$ and $A'$ are isomorphic to the split extension. 
\end{theorem}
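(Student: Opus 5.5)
The plan is to use Hochschild's classification of square-zero extensions. By Proposition \ref{prop:algebra-extensions}, both $A$ and $A'$ are square-zero extensions
\[
0 \to M \to A \to E \to 0
\]
of $E$ by the same $E$-bimodule $M \cong \Hom(E, \bS_E^{-1}(E)[d-2])$. The bimodule structure on $M$ is determined intrinsically by $E$, via Lemma \ref{lem:equiv-bimod} and the tilting equivalence.

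The key input is Hochschild's theorem. Since $\mathbb{k}$ is a field, every such extension splits $\mathbb{k}$-linearly. Fixing a $\mathbb{k}$-linear section $s\colon E \to A$, the multiplication on $A \cong E \oplus M$ becomes
\[
(\epsilon,x)(\epsilon',x') = (\epsilon\epsilon',\ \epsilon x' + x\epsilon' + c(\epsilon,\epsilon'))
\]
for a Hochschild $2$-cocycle $c \in Z^2(E,M)$. The isomorphism classes of extensions inducing the given bimodule structure are in bijection with $\mathrm{HH}^2(E,M)$, and the split extension corresponds to $c=0$. Let $c$ and $c'$ be the cocycles attached to $A$ and $A'$.

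To build the family, define on the free $\mathbb{k}[t]$-module $\mathcal{A} := (E \oplus M)\otimes_{\mathbb{k}} \mathbb{k}[t]$ the $\mathbb{k}[t]$-bilinear multiplication using the cocycle
\[
c_t := (1-t)\,c + t\,c'.
\]
Since the cocycle condition is linear in $c$, $c_t$ is a cocycle over $\mathbb{k}[t]$. Associativity then follows from the same computation as in the constant case; the unit is $(1,0)$ after normalizing the cocycles, which we may assume. This gives (a). Specializing at $t=0$ and $t=1$ recovers the multiplications given by $c$ and $c'$, giving (b) and (c).

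The "in particular" clause follows by taking $A'$ to be the split extension $T_E(M)/M^{\geq 2}$, i.e. $c'=0$. This requires only that the trivial extension is a legitimate choice in the construction above. It need not be geometrically realized, since the construction is purely algebraic given $E$ and $M$.

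For the final claim, I would show $\mathrm{HH}^2(E,M)=\Ext^2_{E^e}(E,M)=0$ when $\gldim E\le 1$. For a finite-dimensional algebra $E$ over an algebraically closed (hence perfect) field, the projective dimension of $E$ as an $E^e$-module equals $\gldim E$. So $\Ext^2_{E^e}(E,-)=0$, every cocycle is a coboundary, and the extension splits as an algebra extension. Then Lemma \ref{L:Quotient-Tensor-Alg} identifies $A$ with the split algebra.

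The main obstacle is the bookkeeping in the first step: making sure that the $E$-bimodule structure on $\ker\varphi$ really is the same for $A$ and $A'$, so that both extensions live in the same $\mathrm{HH}^2(E,M)$. This is exactly what Proposition \ref{prop:algebra-extensions}(2) provides. One also has to fix identifications $\ker\varphi\cong M$ compatibly, which is harmless since any bimodule automorphism of $M$ merely rescales the cocycle class. The identity $\mathrm{pd}_{E^e}E=\gldim E$ for perfect fields is standard, and I would cite it.
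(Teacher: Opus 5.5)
Your proposal is correct and follows the paper's argument essentially verbatim; the paper deduces the theorem from Proposition \ref{P:DeformGeneral}, using normalized Hochschild cocycles for $A$ and $A'$ as extensions of $E$ by the same bimodule $M$, the $\mathbb{k}[t]$-bilinear interpolation $(1-t)c+tc'$ on the free module $(E\oplus M)\otimes_{\mathbb{k}}\mathbb{k}[t]$, specialization at $t=0,1$, and $\mathrm{HH}^2(E,M)=\Ext^2_{E^{e}}(E,M)=0$ coming from $\projdim_{E^{e}}E\le 1$ when $\gldim E\le 1$. Your remark that bimodule automorphisms of $M$ only rescale the cocycle class is not needed, since the interpolation works for any choice of identifications $\ker\varphi\cong M$.
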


It is well-known that this follows from work of Hochschild \cite{Hochschild} on square-zero extensions. Indeed, there is the following result that is much more general than Theorem \ref{T:Deformations}.

\begin{proposition}\label{P:DeformGeneral}
    Let $E$ be a finite dimensional $\mathbb{k}$-algebra and let $M$ be an $E$-bimodule.
    For $j=1, 2$, consider short exact sequences
    \begin{align}\label{eq:squarezeroextension}
        0 \to M \xrightarrow{i_j} A_j \xrightarrow{\varphi_j} E \to 0
    \end{align}
    such that $\varphi_j$ is an algebra homomorphism and the two-sided ideal $I_j:=i_j(M)$ satisfies $I_j^2=0$. Then there exists a $\mathbb{k}[t]$-algebra $\mathcal{A}$ that satisfies:
    \begin{enumerate}[label=(\alph*)]
        \item $\mathcal{A}$ is a free (and hence flat) $\mathbb{k}[t]$-module of finite rank.
        \item $\mathcal{A}/(t) \cong A_1$
        \item $\mathcal{A}/(t-1) \cong A_2$
    \end{enumerate} 
    Moreover, if $\gldim E \leq 1$, then the $A_j$ are isomorphic to the split extension $E \oplus M$.
\end{proposition}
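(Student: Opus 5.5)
Our plan is to use Hochschild's classification of square-zero extensions by $\mathrm{HH}^2(E,M)$ and then linearly interpolate between cocycles.

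\emph{Step 1: choose a cocycle for each extension.} Since $\varphi_j$ is surjective, we choose a $\mathbb{k}$-linear section $s_j\colon E\to A_j$ with $\varphi_j s_j=\id_E$. Using $s_j$ and $i_j$ we identify $A_j$ with $E\oplus M$ as vector spaces. The multiplication then becomes
\[
(\epsilon,x)(\epsilon',x')=(\epsilon\epsilon',\ \epsilon x'+x\epsilon'+f_j(\epsilon,\epsilon')),
\]
where $f_j(\epsilon,\epsilon'):=i_j^{-1}\big(s_j(\epsilon)s_j(\epsilon')-s_j(\epsilon\epsilon')\big)$. Here we use $I_j^2=0$ and the induced $E$-bimodule structure on $I_j$. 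We must also check that this bimodule structure agrees with the given one on $M$; this is part of the hypothesis (implicitly, $i_j$ is a bimodule map). Associativity of $A_j$ is then equivalent to $f_j$ being a Hochschild $2$-cocycle:
\[
\epsilon f(\epsilon',\epsilon'')-f(\epsilon\epsilon',\epsilon'')+f(\epsilon,\epsilon'\epsilon'')-f(\epsilon,\epsilon')\epsilon''=0 .
\]
We may also normalise $s_j(1)=1$, which gives $f_j(1,-)=f_j(-,1)=0$. Conversely, every normalised cocycle $f$ defines a unital associative algebra $E\oplus_f M$ of this form.

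\emph{Step 2: build the family.} We put $\mathcal{A}:=(E\oplus M)\otimes_\mathbb{k}\mathbb{k}[t]$, with $\mathbb{k}[t]$-bilinear multiplication given by the formula above with the cocycle
\[
f_t:=(1-t)f_1+t f_2 .
\]
The cocycle condition is linear in $f$, so $f_t$ is a normalised cocycle over $\mathbb{k}[t]$. Hence $\mathcal{A}$ is an associative unital $\mathbb{k}[t]$-algebra. It is free of rank $\dim E+\dim M$, which is finite because $E$ is finite dimensional and $M$ is finite dimensional (as in our application, or we restrict to this case). Specialising $t=0$ recovers $E\oplus_{f_1}M\cong A_1$, and specialising $t=1$ recovers $E\oplus_{f_2}M\cong A_2$. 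Taking $A_2$ to be the split extension, which corresponds to $f=0$, gives the claim that every such algebra is a deformation of the split one.

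\emph{Step 3: the case $\gldim E\le 1$.} Split extensions correspond to cocycles that are coboundaries. Indeed, if $f=\delta g$ for a linear map $g\colon E\to M$, then $\epsilon\mapsto(\epsilon,-g(\epsilon))$ is an algebra section, and conversely. So the extension class lies in $\mathrm{HH}^2(E,M)\cong\Ext^2_{E\otimes E^{op}}(E,M)$. When $\gldim E\le1$ and $\mathbb{k}$ is perfect (in the paper $\mathbb{k}$ is algebraically closed), we have $\projdim_{E^e}E=\gldim E\le 1$. We would justify this by the standard identity $\projdim_{E^e}E=\gldim E$ for finite dimensional algebras with $E/\mathrm{rad}\,E$ separable. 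Therefore $\mathrm{HH}^2(E,M)=0$, every $A_j$ is split, and by Lemma \ref{L:Quotient-Tensor-Alg} each $A_j\cong E\oplus M$.

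\emph{Main obstacle.} We expect the only delicate point to be this separability step, namely that the bimodule projective dimension equals the global dimension. For endomorphism algebras of exceptional sequences one can instead argue directly: $E=\mathbb{k}Q/I$ with $\gldim\le1$ forces $I=0$, and for a path algebra $\mathbb{k}Q$ there is the explicit standard two-term bimodule resolution
\[
0\to\bigoplus_{\alpha}Ee_{t(\alpha)}\otimes e_{s(\alpha)}E\to\bigoplus_{v}Ee_v\otimes e_vE\to E\to 0 .
\]
This gives $\mathrm{HH}^2(E,M)=0$ directly. The rest consists of routine verifications.
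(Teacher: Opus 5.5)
Your proposal is correct and follows essentially the paper's own proof: normalised Hochschild cocycles obtained from unital linear sections, the interpolated cocycle $(1-t)C_1+tC_2$ on $(E\oplus M)\otimes_{\mathbb{k}}\mathbb{k}[t]$ specialised at $t=0,1$, and $\mathrm{HH}^2(E,M)=0$ from $\projdim_{E^e}E\le 1$ (for this last bound the paper simply cites Butler--King). You are also right on two points the paper leaves implicit: the ``finite rank'' claim needs $\dim_{\mathbb{k}} M<\infty$, and the bimodule bound needs $E/\mathrm{rad}\,E$ separable, which holds here because $\mathbb{k}$ is assumed algebraically closed in that section.
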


For the convenience of the reader, we explain this result in several steps.

\begin{proposition}[{\cite[Propositions 6.2 \& 6.3]{Hochschild}}]\label{P:Hochschildcocylce}
    Let $A:=A_j$ and $I:=I_j$ be as in \eqref{eq:squarezeroextension}. In particular, $A \cong E \oplus I$ as vector spaces. Then the multiplication \eqref{eq:split multiplication} on $A$ is modified as follows
\begin{equation} \label{eq:non split multiplication}
    (\epsilon, \sigma)(\epsilon', \sigma')=(\epsilon \epsilon', \sigma \epsilon' + \epsilon \sigma' + C(\epsilon \otimes \epsilon'))
\end{equation}
    where $C \in \Hom_\mathbb{k}(E \otimes_\mathbb{k} E, I)$ is a $\mathbb k$-linear map,  
satisfying the cocycle condition
\begin{align}\label{eq:cocycle}
xC(y \otimes z)-C(xy \otimes z)+C(x \otimes yz) -C(x \otimes y)z=0
\end{align}
for all $x, y, z \in E$. Conversely, every bilinear form $B$ satisfying \eqref{eq:cocycle} gives rise to an associative algebra structure on $A$ by replacing $C$ with $B$ in \eqref{eq:non split multiplication}.

Moreover, two cocycles $C, C'$ define isomorphic algebra structures on $A$ if and only if they define the same class in Hochschild cohomology $\mathrm{HH}^2(E, I)$.

Finally, $A$ is isomorphic to the split extension \eqref{eq:split multiplication} if and only if the class of $C$ vanishes in $\mathrm{HH}^2(E, I)$.
\end{proposition}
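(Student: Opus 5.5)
Since $\varphi$ is surjective onto $E$ with square-zero kernel $I$, I first fix a $\mathbb k$-linear section $s\colon E\to A$ with $\varphi s=\id_E$. Such a section exists because $\mathbb k$ is a field. This gives a vector space identification $A\cong E\oplus I$, $(\epsilon,\sigma)\mapsto s(\epsilon)+\sigma$. Next I define
\[
C(\epsilon\otimes\epsilon'):=s(\epsilon)s(\epsilon')-s(\epsilon\epsilon').
\]
Applying $\varphi$ shows this lies in $\ker\varphi=I$, so $C\in\Hom_{\mathbb k}(E\otimes_{\mathbb k}E,I)$. I then expand the product $(s(\epsilon)+\sigma)(s(\epsilon')+\sigma')$, using $\sigma\sigma'=0$ (because $I^2=0$) and the fact that the $E$-bimodule structure on $I$ is given by $\epsilon\sigma'=s(\epsilon)\sigma'$, exactly as explained before Lemma \ref{L:Quotient-Tensor-Alg}. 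This yields the multiplication formula \eqref{eq:non split multiplication}.

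For the cocycle identity \eqref{eq:cocycle}, I write out associativity $(s(x)s(y))s(z)=s(x)(s(y)s(z))$, substituting $s(x)s(y)=s(xy)+C(x\otimes y)$ on both sides. Comparing the $I$-components gives
\[
C(x\otimes y)z+C(xy\otimes z)=xC(y\otimes z)+C(x\otimes yz),
\]
which is \eqref{eq:cocycle}. For the converse, I start with any $B$ satisfying \eqref{eq:cocycle} and define the multiplication by \eqref{eq:non split multiplication} with $B$ in place of $C$. The same computation read backwards shows this product is associative. For a unit I use $(1,-B(1\otimes1))$; alternatively, I first normalise $B$ by a coboundary so that $B(1\otimes-)=B(-\otimes1)=0$.

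For the isomorphism criterion, I change the section to $s'=s+f$ with $f\in\Hom_{\mathbb k}(E,I)$. Expanding $s'(x)s'(y)-s'(xy)$ gives
\[
C'=C+(xf(y)-f(xy)+f(x)y)=C+\delta f,
\]
so the new cocycle differs by a Hochschild coboundary. Conversely, suppose two structures are isomorphic via an isomorphism compatible with $\varphi$ and with the identity on $I$. Such an isomorphism has the form $(\epsilon,\sigma)\mapsto(\epsilon,\sigma+f(\epsilon))$, and the cocycles then differ by $\delta f$. Thus isomorphism classes of extensions correspond to classes in $\mathrm{HH}^2(E,I)$, computed by the standard bar complex.

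The last claim follows from the case $C=\delta f$. Then $s-f$ is multiplicative, so it is an algebra splitting, and $A$ is the split extension \eqref{eq:split multiplication}. Conversely, an algebra splitting is a section with $C=0$.

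The main obstacle is interpreting "isomorphic algebra structures" correctly. Hochschild's statement classifies extensions up to equivalence, meaning isomorphisms fixing $E$ and $I$. Abstract algebra isomorphisms may twist by automorphisms of $E$ and of the bimodule $I$, so I would state and prove the result for equivalences of extensions, as in Hochschild's Propositions 6.2 and 6.3. Only the final "split iff $[C]=0$" statement is needed downstream, and that statement is unaffected by this subtlety.
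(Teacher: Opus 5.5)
Your proposal is correct and follows the standard Hochschild argument the paper relies on: the remark after the proposition builds $C$ as $\psi(x)\psi(y)-\psi(xy)$ from a $\mathbb{k}$-linear section $\psi$ with $\psi(1)=1$. Your caveat is also justified, since for abstract algebra isomorphisms the ``only if'' part fails ($C$ and $\lambda C$, with $[C]\neq 0$ and $\lambda\in\mathbb{k}^\times\setminus\{1\}$, give isomorphic algebras via $(\epsilon,\sigma)\mapsto(\epsilon,\lambda\sigma)$); so the statement must be read for equivalences of extensions, and for the last claim what you actually prove is ``$\varphi$ has an algebra section iff $[C]=0$'', which contains the only direction used later ($[C]=0\Rightarrow A\cong E\oplus I$, in Proposition \ref{P:DeformGeneral} and Theorem \ref{T:Deformations}).
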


\begin{remark}
    For later use, we note that a cocycle $C$ as in \eqref{eq:non split multiplication} is constructed as follows. Let $\psi \colon E \to A$ be a $\mathbb{k}$-linear map (not necessarily a ring homomorphism) that splits $\varphi$, then $C(x \otimes y):=\psi(x)\psi(y)-\psi(xy)$. In particular, we can choose $\psi$ such that $\psi(1)=1$ and thus can assume $C(1 \otimes 1)=0$.
\end{remark}

\begin{proof}[Proof of Proposition \ref{P:DeformGeneral}]
    Let $S:=E \oplus M$ be the split extension (so the multiplication is as in \eqref{eq:split multiplication}). Set $E[t]:=E \otimes_\mathbb{k} \mathbb{k}[t]$, $M[t]:=M \otimes_\mathbb{k} \mathbb{k}[t]$ and $S[t]:=E[t] \oplus M[t] \cong S \otimes_\mathbb{k} \mathbb{k}[t]$. Then $M[t]$ is an $E[t]$-bimodule and $S[t]$ is a split extension.

    We apply Proposition \ref{P:Hochschildcocylce} to $A_j$ to obtain cocycles $C_j \in \Hom_\mathbb{k}(E \otimes_\mathbb{k} E, I_j) \cong \Hom_\mathbb{k}(E \otimes_\mathbb{k} E, M)$ such that the multiplication on $A_j$ is given by \eqref{eq:non split multiplication} with $C$ replaced by $C_j$ and such that $C_j(1 \otimes 1)=0$. We can extend $C_j$ to a cocycle $C_j[t] \in \Hom_\mathbb{k}(E[t] \otimes_\mathbb{k} E[t], M[t])$ by setting $C_j[t]((e \otimes p) \otimes (e' \otimes p'))=C_j(e \otimes e') \otimes pp'$. One can check that the linear combination
\begin{align}
    L:=(1 \otimes t) C_2[t] + (1 \otimes (1-t)) C_1[t]
\end{align}
    is again a cocycle. Hence, by Proposition \ref{P:Hochschildcocylce}, we get a new algebra structure $\mathcal{A}$ on $S[t]$: 
    \begin{equation} 
    (\epsilon \otimes p, \sigma \otimes p)(\epsilon'\otimes p', \sigma'\otimes p')=(\epsilon \epsilon' \otimes pp', \sigma \epsilon' \otimes pp' + \epsilon \sigma' \otimes pp' + L((\epsilon \otimes p) \otimes (\epsilon' \otimes p')))
\end{equation}
  Since $C_j(1 \otimes 1)=0$, there is an ring monomorphism 
  \begin{align}\label{eq:kt-alg}
      \mathbb{k}[t] \to \mathcal{A} \\
      p \mapsto (1 \otimes p, 0)
  \end{align}
  with image in the center of $\mathcal{A}$. Hence, $\mathcal{A}$ is a $\mathbb{k}[t]$-algebra. 
  
  By definition there is a short exact sequence
   \begin{align}
   0 \to M[t] \xrightarrow{i \otimes \id} \mathcal{A} \xrightarrow{\pi \otimes \id} E[t] \to 0
    \end{align}
where $i\colon M \to S$ is the canonical inclusion and $\pi\colon S \to E$ is the canonical projection. One can check that $i \otimes \id$ and  $\pi \otimes \id$ are $\mathbb{k}[t]$-module homomorphisms, where we view $\mathcal{A}$ as a $\mathbb{k}[t]$ module via \eqref{eq:kt-alg}. Since $E[t]=E \otimes_\mathbb{k} \mathbb{k}[t]$ is a free $\mathbb{k}[t]$-module, this sequence splits and $\mathcal{A} \cong E[t] \oplus M[t]$ is a free $\mathbb{k}[t]$-module of finite rank, showing (a).

Using the definition of $L$, one can check that 
\begin{align*}
    &\mathcal{A}/((1 \otimes t, 0)) \xrightarrow{\cong} A_1 \\
     &\overline{(e \otimes p, m \otimes p)} \mapsto  (e p(0), m p(0)) \\ \\
     &\mathcal{A}/((1 \otimes (1-t),0)) \xrightarrow{\cong} A_2 \\
     &\overline{(e \otimes p, m \otimes p)} \mapsto  (e p(1), m p(1))  
\end{align*}
define algebra isomorphisms, showing (b) and (c), where we identify $t$ and $(1 \otimes t, 0)$ using the embedding $\mathbb{k}[t] \to \mathcal{A}$. 

The last part, follows by combining Proposition \ref{P:Hochschildcocylce} with 
\begin{align}
 \mathrm{HH}^2(E, M)=\Ext^2_{E^{\mathop{ev}}}(E, M)=0,   
\end{align}
which follows from $\projdim_{E^{ev}}(E)\leq 1$. The latter holds since $\gldim E \leq 1$, cf. e.g. \cite[Equation (1.3.)]{ButlerKing}.
\end{proof}

\begin{remark}
    There is only a contribution of the cocycle $C$ in \eqref{eq:non split multiplication} if $\epsilon \neq 0 \neq \epsilon'$. In particular, in the setting of Proposition \ref{prop:algebra-extensions} we can use the description of $A_X:=\End_X(\sG)$ in the split case (see subsection \ref{subsec:split-case} in particular Corollary \ref{C:QuotientPreproj}) to describe (most of) $A_{X'}:=\End_{X'}(\sG)$ in a non-split case. Indeed, the quivers of $A_X$ and $A_{X'}$ are the same  and all relations $\sum_i \lambda_i p_i=0$ where all paths $p_i \in I$ are the same in $A_X$ and $A_{X'}$. For relations $\sum_j \mu_j q_j =0$ in $A_{X'}$ where at least one of the $q_j$ is not in $I$, we know that modulo $I$ this corresponds to a relation in $E=\End(\sE)$.
\end{remark}

We illustrate this remark in the following special case: 

\begin{prop}\label{prop:quotsing-nonsplit}
Let $X$ be a $d$-dimensional projective variety with a single singularity $p \in X$. Assume $p$ is complete locally isomorphic to the singularity of the  cone over the Veronese embedding of $\P^{d-1}$ of degree $d$.
Suppose that the blow-up $\pi \colon \tilde{X} \to X$ of $X$ at $p$ satisfies  Assumption \ref{A:KeyAssumptions}\footnote{The geometric exceptional sequence is the truncated Beilinson exceptional collection $\OO (-d+1) , \ldots , \OO (-1)$ in $\Db( \P^{d-1}) $, which is geometric of type $(d-1, d-1)$.}. The algebra $R_X=\End(\sG)$ is given by the following quiver $Q_d$ with relations:
\begin{equation*}\begin{tikzpicture}[description/.style={fill=white,inner sep=2pt}]

    \matrix (n) [matrix of math nodes, row sep=2em,
                 column sep=2em, text height=1.5ex, text depth=0.25ex,
                 inner sep=2pt, nodes={inner xsep=0.3333em, inner
ysep=0.3333em}, ampersand replacement=\&] at (-5, 0)
    {   1  \&\& 2 \&\& \cdots \&\& d-1 \\
          };

    \draw[->] ($(n-1-1.east) + (0,2mm)$)  to node[yshift=-1.2mm, scale=0.8] [midway]{$\vdots$} node[yshift=1.7mm, scale=0.8] [midway]{$x_{11}$}($(n-1-3.west) + (0mm,2mm)$) ;
    \draw[->] ($(n-1-1.east) + (0,-2mm)$) to node[yshift=-1.7mm, scale=0.8] [midway]{$x_{1d}$} ($(n-1-3.west) + (0mm,-2mm)$);

    \draw[->] ($(n-1-3.east) + (0,2mm)$)  to node[yshift=-1.2mm, scale=0.8] [midway]{$\vdots$} node[yshift=1.7mm, scale=0.8] [midway]{$x_{21}$}($(n-1-5.west) + (0mm,2mm)$) ;
    \draw[->] ($(n-1-3.east) + (0,-2mm)$) to node[yshift=-1.7mm, scale=0.8] [midway]{$x_{2d}$} ($(n-1-5.west) + (0mm,-2mm)$);
    
       \draw[->] ($(n-1-5.east) + (0,2mm)$)  to node[yshift=-1.2mm, scale=0.8] [midway]{$\vdots$} node[yshift=1.7mm, scale=0.8] [midway]{$x_{(d-2)1}$}($(n-1-7.west) + (0mm,2mm)$) ;
    \draw[->] ($(n-1-5.east) + (0,-2mm)$) to node[yshift=-1.7mm, scale=0.8] [midway]{$x_{(d-2)d}$} ($(n-1-7.west) + (0mm,-2mm)$);

     \draw[->] ($(n-1-7.south west) + (5mm,-0mm)$) .. controls +(-3.7mm,-16mm)
and +(+3.7mm,-16mm) .. node[fill=white, scale=0.8] [midway]{$z_{(d-1)d}$} ($(n-1-1.south east) + (-3mm,0mm)$);

      \draw[->] ($(n-1-7.south west) + (3mm,-0mm)$) .. controls +(-3.7mm,-6mm)
and +(+3.7mm,-6mm) .. node[yshift=-3mm] [midway]{$\vdots$} node[yshift=-3.5mm, xshift=7mm, scale=0.7] [midway]{$z_{ij, \ i<j}$} node[fill=white, scale=0.8] [midway]{$z_{12}$}($(n-1-1.south east) + (-1mm,0mm)$);

\node[scale=0.8] at (3.8, -4mm){
\begin{minipage}{100pt}
    {\begin{align*}
    x_{(i+1)k}x_{ij} + C_X(x_{(i+1)k} \otimes x_{ij}) - x_{(i+1)j}x_{ik} - C_X(x_{(i+1)j} \otimes x_{ik})&=0 \\
    x_{1j}z_{kl} + x_{1l}z_{jk} - x_{1k}z_{jl}&=0 \\
    z_{kl} x_{(d-2)j} + z_{jk}x_{(d-2)l} - z_{jl} x_{(d-2)k}&=0 \\
    \text{for all } 1\leq j < k<l \leq d \text{ and } 1\leq i \leq d&-3 \\ \\
    \text{and additionally }(\mathbb{k}Q_d \{ z_{ab} \mid 1 \leq a<b \leq d \} \mathbb{k}Q_d)^2&=0
\end{align*}}
\end{minipage}
}; 
\end{tikzpicture}\end{equation*}
where  $C_X \in \Hom_{\mathbb k}(\End(\sE) \otimes_{\mathbb k} \End(\sE), I_{\sF})$ is a $\mathbb k$-linear map that satisfies the cocycle condition \eqref{eq:cocycle} and $\End(\sE)$ is the truncated Beilinson algebra (the latter only depends on the complete local information of our singularity $p$ and not on the global geometry of $X$).
\end{prop}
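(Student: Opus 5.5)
The plan is to realise $R_X=\End(\sG)$ as a possibly non-split square-zero extension of the truncated Beilinson algebra, and then to transport the presentation of the split case (Corollary \ref{Cor:description_R_d}) along the Hochschild description of such extensions.

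\emph{Step 1 (square-zero extension).} Under Assumption \ref{A:KeyAssumptions}, with $\E=(\OO(-d+1),\ldots,\OO(-1))$ of type $(d-1,d-1)$, the hypotheses of Proposition \ref{prop:algebra-extensions} hold. The adherence needed there is supplied by Theorem \ref{thm:strong-adherence-equiv}, which we take as given. Proposition \ref{prop:algebra-extensions} then yields a short exact sequence
\[
0\to I\to R_X\xrightarrow{\varphi} E\to 0
\]
with $I^2=0$. Here $E=\End(\sE)$ is the truncated Beilinson algebra, and $I\cong \Hom(E,\bS_E^{-1}E[d-2])=M$ as $E$-bimodules. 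Note that both $E$ and the bimodule $M$ depend only on $\E$, hence only on the complete local singularity and not on the global geometry of $X$.

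\emph{Step 2 (Hochschild cocycle).} Choose a $\mathbb{k}$-linear section $\psi$ of $\varphi$ with $\psi(1)=1$. Proposition \ref{P:Hochschildcocylce} identifies $R_X$ with $E\oplus I$ equipped with the multiplication \eqref{eq:non split multiplication}, where $C_X(x\otimes y)=\psi(x)\psi(y)-\psi(xy)$ is a cocycle. Next, choose $\psi$ adapted to the quiver: send the vertex idempotents and the arrows $x_{ij}$ of $E$ to lifts in $R_X$. Lifting a complete set of orthogonal idempotents along a nilpotent ideal is standard, so $\psi$ can be taken to preserve the idempotents.

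\emph{Step 3 (quiver and relations).} The quiver of $R_X$ has the same vertices as $E$. Its arrows are the lifts $\psi(x_{ij})$ together with arrows generating $I$ as a bimodule, and these generators are the same as in the split algebra $A_0$, namely the $z_{ab}$ of Proposition \ref{P:Hanihara}. I would then sort the relations of the split presentation of Corollary \ref{Cor:description_R_d} by how many factors lie in $I$:
\begin{itemize}
\item Relations involving a $z$ (one or two factors in $I$) only use the $E$-bimodule structure on $I$ and the identity $I^2=0$. The bimodule structure is the same in $R_X$ as in $A_0$, by Step 1 and the remark after Proposition \ref{P:DeformGeneral}. These relations are therefore unchanged, and they include $(z)^2=0$.
\item For the quadratic commutativity relations in the $x$'s, the product $\psi(x_{(i+1)k})\psi(x_{ij})$ equals $\psi(x_{(i+1)k}x_{ij})+C_X(x_{(i+1)k}\otimes x_{ij})$. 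Using the relation $x_{(i+1)k}x_{ij}=x_{(i+1)j}x_{ik}$ in $E$ gives exactly the displayed deformed relation.
\end{itemize}

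\emph{Step 4 (completeness).} Finally I would verify that these relations are all the relations, by a dimension count. Both $R_X$ and the algebra presented by the displayed relations surject onto $E$, and in each case the kernel is spanned by the images of the paths through exactly one $z$, modulo the unchanged relations. That span equals $M$ in both cases. Hence the natural surjection from the presented algebra onto $R_X$ is an isomorphism.

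I expect Step 4 to be the main obstacle. The difficulty is arguing that the presented algebra is no larger than $\dim E+\dim M$: longer products of $x$-arrows pick up further $C_X$-corrections, and these have to be shown to lie in $I$ and to be consistent. I would try first to reduce this to the cocycle condition \eqref{eq:cocycle} together with the fact that $E$ is Koszul with quadratic relations. Koszulity should make it enough to control degree-two products, and the cocycle condition guarantees associativity of the deformed products.
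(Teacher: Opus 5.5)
Your proposal is correct and follows the paper's route. The paper's proof takes the split presentation (Corollary \ref{eq:cone-absorption} with Corollary \ref{Cor:description_R_d}) and deforms the multiplication via Hochschild's cocycle description in Proposition \ref{P:Hochschildcocylce}, which is exactly your Steps 1--3. Your Step 4 is also easier than you fear, and Koszulity is not needed: since $(z)^2=0$ and the correction terms lie in $(z)$, the presented algebra is $E$ plus an $E$-bimodule quotient of $M$, so extra consistency relations can only make it smaller. Its dimension is therefore at most $\dim E+\dim M=\dim R_X$, and the surjection onto $R_X$ forces equality.
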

\begin{proof}
    Corollary \ref{eq:cone-absorption} and Example \ref{ex:projspace-quadric}(a) describe $R_X$ in the split case. Using this, the general (non-split) case follows from Proposition \ref{P:Hochschildcocylce}. 
\end{proof}

\begin{remark}
    Recall that if the characteristic of $\mathbb{k}$ is coprime to $d$, then the singualrity  $p \in X$ of the cone over the Veronese embedding of $\P^{d-1}$ of degree $d$ can be identified with the quotient singularity of type $\frac{1}{d} (1^d)$.
\end{remark}

\section{The derived category of cone singularities over Fano varieties} \label{sec:cone-Fano}

One of the main results of this section (Corollary \ref{eq:cone-absorption}) is the construction of a Kawamata type semiorthogonal decomposition  
\[
\Db(X) \cong \langle \Db(A) , \CC \rangle ,
\]
where $X$ is a $d$-dimensional projective variety with  a unique  singular point $p \in X$ which is an $\mathrm{ACS}$-singularity (see Definition \ref{def:ACS}), and such 
that Assumption \ref{A:KeyAssumptions} holds, where $ \Bl_p (X)$ plays the role of $Y$ in \ref{A:KeyAssumptions}.
Moreover, the finite dimensional $\mathbb{k}$-algebra $A$ fits into a short exact sequence
\begin{equation}\label{eq:sequence-summary}
           0\to \Hom^\bullet ( \sE , \bS^{-1} \sE [d-2] ) \to A \xrightarrow{q} \End (  \sE ) \to 0 ,
\end{equation}
where $q$ is an algebra homomorphism and $\bS_\sE^{-1}(\sE)[d-2]$ is an $\End(\sE)$-bimodule.
Recall that $\sE$ is a geometric exceptional sequence of type $(m, d-1)$, see Assumption \ref{A:KeyAssumptions}.
In section \ref{sec:algebra-description},
we explain how to describe the algebra $A$ in terms of quivers with relations, assuming that we know such a description for $\End(\sE)$ and if, additionally, one of the following holds: 
\begin{enumerate}
    \item  $\dim(X) \leq 3$, i.e. $\gldim \End(\sE) \leq 1$ (see Corollary \ref{cor:nodal-2d} and Corollary \ref{C:QuotientPreproj2}).
    \item The sequence \eqref{eq:sequence-summary} splits (for example if $X$ is a certain projective cone, see Theorem \ref{thm:projcone-split}) and one of the following holds:
    \begin{enumerate}
        \item $\dim X\leq 4$ , i.e. $\gldim \End(\sE) \leq 2$ (see Corollary \ref{C:QuotientPreproj3}).
    \item $\End(\sE)$ is obtained by truncating the Beilinson algebra by removing the vertex corresponding to $\OO$ (Corollary \ref{Cor:description_R_d}).
    \end{enumerate}
\end{enumerate}
See also Remark \ref{rmk:product-of-proj} for additional cases.

\subsection{Cone singularities and localization}\label{subsec:cone-and-localization}
Throughout this section,  $\mathbb k$ is an algebraically closed field of characteristic $0$.
Recall that a variety $X $ together with a closed embedding $X\subset \P^r$ is called \emph{projectively normal}, if the corresponding homogeneous coordinate ring of $X \subset \P^r$ is  is an integrally closed domain.

Let $Z$ is a smooth $(d-1)$-dimensional Fano variety and let $\sL$ be a very ample line bundle on $Z$, such that the corresponding embedding $Z\subset \P^r$ is projectively normal.
In this section, we assume that $X$ is a $d$-dimensional projective variety such that $p \in X$ is the singularity of the cone over $Z$ with respect to  $Z\subset \P^r$.
Moreover, we assume that $\omega_Z \cong \sL^{-m}$, holds, for some $m$.
Let $\pi : \tilde{X} \to X$ be the blow-up of $X$ at $p$.
This blow-up $\pi\colon \tilde{X} \to X$ is resolves the singularity $p\in X$.
The exceptional divisor $\iota \colon E \hookrightarrow \tilde{X}$ is isomorphic to $Z$ and we have an isomorphism of line bundles $\iota^\ast \OO_{\tilde{X}} (E) \cong \sL^{-1}$.
Note that such $p\in X$ are rational singularities.
We have the following result.

\begin{theorem}[Efimov]\label{thm:bo-localization-and-kernel}
    Let $X$ and  $p \in X$ be as above, and assume that $X$ is smooth away from $p \in X$. 
    Then the pushforward functor $\pi_\ast : \Db (\tilde{X}) \to \Db(X)$ is a Verdier localization.
    That is, there is an induced equivalence 
    \[
    \pi_\ast \colon \Db ( \tilde{X}) / \ker ( \pi_\ast ) \xrightarrow{\cong} \Db(X) .
    \]
    Moreover, the kernel category $\ker ( \pi_\ast ) \subset \Db(\tilde{X}) $ is generated by $\iota_\ast (\OO_E^{\perp})$. 
\end{theorem}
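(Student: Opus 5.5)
The proof splits into two parts: the Verdier localization claim, and the description of $\ker\pi_\ast$ as generated by $\iota_\ast(\OO_E^\perp)$.

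\textbf{Localization.} The natural route is through rationality of the singularity. Since $p\in X$ is a rational singularity, we have $\pi_\ast\OO_{\tilde X}\cong \OO_X$. The projection formula then gives $\pi_\ast\pi^\ast\cong \id$ on $\Dperf(X)$, so $\pi^\ast\colon \Dperf(X)\to \Db(\tilde X)$ is fully faithful.

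To pass from perfect complexes to all of $\Db(X)$, I would invoke Efimov's general criterion: for a proper morphism $\pi$ with $R\pi_\ast\OO_{\tilde X}=\OO_X$ from a smooth (or merely quasi-projective) variety, the functor $\pi_\ast\colon \Db(\tilde X)\to \Db(X)$ is a Verdier localization. This is Efimov's theorem on homotopy finiteness and categorical resolutions. Concretely, it suffices to show two things:
\begin{itemize}
\item[(i)] $\pi_\ast$ is essentially surjective up to direct summands;
\item[(ii)] the induced functor $\Db(\tilde X)/\ker\pi_\ast\to \Db(X)$ is fully faithful.
\end{itemize}
For (ii) I would use the ind-completion. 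Here $\pi_\ast\colon D_{qc}(\tilde X)\to D_{qc}(X)$ has a fully faithful left adjoint $\pi^\ast$ (by $\pi_\ast\OO=\OO$), so it is a Bousfield localization with kernel $\ker\pi_\ast^{qc}$. Compact objects are then checked via Neeman--Thomason. The delicate point is that $\Db$ is not the compact part, and the boundedness-in-cohomology comparison is where Efimov's argument is needed. I would simply cite it.

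\textbf{Kernel.} First, $\ker\pi_\ast$ consists of objects supported on $E$. This follows by flat base change to $X\setminus\{p\}$, as in the proof of Lemma \ref{lem:dCY-kernel}. Next, every object supported on $E$ lies in the thick closure of $\iota_\ast\Db(E)$, by filtering by powers of the ideal of $E$.

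With this in hand, I would compute $\pi_\ast\iota_\ast \cong p_\ast(-)\otimes k(p)$, where $p\colon E\to \mathrm{pt}$. Hence $\iota_\ast(\OO_E^\perp)$, consisting of objects with $R\Gamma=0$, lies in $\ker\pi_\ast$.

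For the converse I would use the semiorthogonal decomposition $\Db(E)=\langle \OO_E^\perp,\OO_E\rangle$ together with the filtration of an arbitrary kernel object. The idea is to show that the $\OO_E$-components cancel, using $\iota^\ast\OO(E)\cong\sL^{-1}$ and $\omega_Z\cong\sL^{-m}$ to compare $\iota_\ast\OO_E(-kE)$ with objects of the form $\iota_\ast(\OO_E^\perp)$ plus pullbacks.

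The key cleaner argument runs as follows. The thick subcategory $\KK=\thick\iota_\ast(\OO_E^\perp)$ is admissible in $D_E(\tilde X)$, and its right orthogonal inside $\ker\pi_\ast$ vanishes. For $\sF$ in $\ker\pi_\ast$ with $\Hom(\iota_\ast\OO_E^\perp,\sF)=0$, adjunction gives $\iota^!\sF\in\langle\OO_E\rangle$. Iterating restriction along thickenings $nE$ then shows $\sF\cong \pi^\ast(\text{something supported at }p)$ modulo $\KK$. Applying $\pi_\ast$ and $\pi_\ast\pi^\ast=\id$ forces that something to be $0$.

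\textbf{Main obstacle.} I expect this last generation step to be the hardest, since it requires controlling infinitely many thickenings. I would try first to reduce it to the completion $\widehat X$, via the equivalence $D_E(\tilde X)\cong D_E(\widehat{\tilde X})$ from Lemma \ref{lem:dCY-kernel}'s proof. There $\tilde X$ is the total space of $\sL^{-1}$ over $Z$, and the kernel can be computed explicitly via the $\mathbb{G}_m$-grading.
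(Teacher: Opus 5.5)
There is a genuine gap, and it sits exactly where the paper's proof does its only real work.

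\textbf{The localization step.} You invoke an ``Efimov criterion'' whose only hypothesis is $R\pi_\ast\OO_{\tilde X}\cong\OO_X$, and you leave both (i) and (ii) to that citation. This is not the result that is available. With rationality alone, $\pi_\ast$ being a Verdier localization on $\Db$ is known only in special situations, such as fibres of dimension $\le 1$ (\cite[Lemma 2.32]{pavic-shinder}); the paper invokes that lemma in dimension $2$ for precisely this reason. The paper also remarks that it does not know whether $\ker\pi_\ast=\iota_\ast(\OO_E^\perp)$ holds beyond the cone case, even for isolated Gorenstein singularities with a crepant resolution.

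The result the paper actually applies is \cite[Corollary 5.6]{ks}. It requires $p$ to be \emph{acyclic projectively normal}: for all $n\geq 0$, the map $\mm_p^n/\mm_p^{n+1}\to\bH^0(E,\OO_E(-nE))$ is an isomorphism and $\bH^i(E,\OO_E(-nE))=0$ for $i>0$. In effect this says $R\pi_\ast\OO_{\tilde X}(-nE)\cong\mm_p^n$ for every $n\geq 0$, not only for $n=0$. Verifying this is the whole proof. Since $\OO_E(-nE)\cong\sL^n$, the first condition is projective normality of $Z\subset\P^r$ (\cite[Chapter II, Exercise 5.14]{hartshorne}). The second is Kodaira vanishing, because $\sL^n\cong\omega_Z\otimes\sL^{n+m}$ with $\sL^{n+m}$ ample for $n\geq 0$. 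Your proposal checks neither condition.

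\textbf{The kernel step.} Two parts of your argument are correct: the inclusion $\iota_\ast(\OO_E^\perp)\subseteq\ker\pi_\ast$, and the reduction to objects supported on $E$. The reverse inclusion, however, is only gestured at. ``Iterating restriction along thickenings $nE$'' is exactly the step that needs $\bH^{>0}(Z,\sL^n)=0$ and $\bH^0(Z,\sL^n)\cong\mm_p^n/\mm_p^{n+1}$ for all $n$. You do not identify these inputs; you only mention $\iota^\ast\OO(E)\cong\sL^{-1}$ and $\omega_Z\cong\sL^{-m}$.

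The sketch also contains two slips:
\begin{itemize}
\item From $\Hom^\bullet(\iota_\ast\OO_E^\perp,\sF)=0$, adjunction and Serre duality on $E$ give $\iota^!\sF\in(\OO_E^\perp)^\perp=\langle\omega_E\rangle$, not $\langle\OO_E\rangle$.
\item You assert that $\thick\,\iota_\ast(\OO_E^\perp)$ is admissible in $\Db_E(\tilde X)$ without argument. This is not automatic, since $\iota_\ast$ is not fully faithful.
\end{itemize}

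Your proposed graded computation on the total space of $\sL^{-1}$ over $Z$ could plausibly be turned into a proof. Its inputs, however, would again be exactly the two cohomological conditions above. Once those are established, both the localization statement and the kernel statement follow at once from \cite[Corollary 5.6]{ks}, which is how the paper argues.
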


\begin{proof}
    Recall from \cite[Definition 5.3]{ks} that a normal isolated singularity $p \in U$ is called \emph{acyclic projectively normal}, if
    \begin{enumerate}
        \item\label{eq:apn-1} the canonical morphism $\mm^n_{p} / \mm^{n+1}_{p} \to \bH^0 ( E , \OO_E (-nE) )$ is an isomorphism, and if
        \item\label{eq:apn-2}  the vanishing $\bH^p ( E , \OO_E (-n E ) ) = 0$ holds for all $i > 0$,
    \end{enumerate}
    for all $n \geq 0$. 
    Here, $\mm_{p} \subset \OO_U$ denotes the maximal ideal corresponding to $p \in U$, and $E $ is the exceptional divisor of the blow-up $\Bl_p (U)$. 
    We note that the singularity $p\in X$ as in the statement of the theorem
    is a acyclic projectively normal isolated singularity. 
    Indeed, $p \in X$ being a acyclic projective variety is a complete local question. Hence we can assume that $p \in X$ is the affine cone over the Fano $Z$ via the embedding $Z \subset \P^r$, and $\tilde{X}$ is the blow-up at the vertex.
    Since the corresponding embedding $Z \subset \P^r$ is projectively normal, together with \cite[Chapter II, Exercise 5.14]{hartshorne} we have that condition \eqref{eq:apn-1} is satisfied.
    For condition \eqref{eq:apn-2}, we note that there is an isomorphism
    \[
    \bH^p ( E , \OO_E ( -n E) ) \cong \bH^p ( Z , \sL^n )  = 0, 
    \]
    for all $n \geq 0$ and $p > 0$, where we used Kodaira vanishing in the second equality (and that $\omega_Z \cong \sL^{-m}$).
    It follows then by \cite[Corollary 5.6]{ks} that $\pi_\ast : \Db(\tilde{X} ) \to \Db(X)$ is a Verdier localization and that $\ker  \pi_\ast $ is generated by 
       $\iota_\ast ( \OO_E^{\perp} ) $.
\end{proof}

We say that a smooth Fano variety $Z$ has \emph{projectively normal anticanonical embedding}, if
 the anticanonical bundle  $\omega_Z^{-1}$ is very ample, and such that the corresponding  embedding $Z \subset \P^r$ given by $\omega_Z^{-1}$ is projectively normal.

\begin{corollary}\label{cor:bo-localization-anticanonical}
    Suppose in addition to the assumptions of Theorem \ref{thm:bo-localization-and-kernel} that $p \in X$ is the singularity of the cone over a smooth Fano variety $Z$ with projectively normal anticanonical embedding.
    Assume further that $\Db(Z)$ admits a full exceptional collection 
    \[
    \sL_1 , \ldots , \sL_m , \OO_Z .
    \]
    Then the blow up $\pi \colon \tilde{X} \to X$ at $p \in X$ is a crepant resolution and $\pi_\ast \colon \Db( \tilde{X} ) \to \Db(X)$ is a Verdier localization.
    Furthermore, the kernel category $\ker \pi_\ast$ is generated by $d$-spherical objects
    \[
    \iota_\ast \sL_1 , \ldots , \iota_\ast \sL_m .
    \]
\end{corollary}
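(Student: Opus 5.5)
We split the statement into three claims: crepancy of the blow-up, the Verdier localization property, and the description of $\ker \pi_\ast$ by $d$-spherical generators. Each is a short deduction from Theorem \ref{thm:bo-localization-and-kernel}, the adjunction formula, and the general spherical-object computations of Section \ref{sec:adherence}.

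\emph{Crepancy.} In the anticanonical situation we take $\sL = \omega_Z^{-1}$, so $m=1$. Then $E \cong Z$ and $\iota^\ast \OO_{\tilde X}(E) \cong \omega_Z$. By adjunction, $\omega_E \cong (\omega_{\tilde X} \otimes \OO_{\tilde X}(E))|_E$. Writing $\omega_{\tilde X} \cong \pi^\ast\omega_X \otimes \OO_{\tilde X}(aE)$, we get $\omega_Z \cong \omega_Z^{\otimes(a+1)}$, using that $\pi^\ast\omega_X$ restricts trivially to $E$. Here $\omega_X$ is a line bundle near $p$, since the cone over a projectively normal anticanonically embedded Fano is Gorenstein; this is best checked complete-locally on the affine cone. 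Because $\omega_Z$ is non-torsion in $\Pic Z$ (it is anti-ample), we conclude $a=0$, so $\pi$ is crepant.

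\emph{Localization and kernel.} Theorem \ref{thm:bo-localization-and-kernel} applies directly and shows two things: $\pi_\ast$ is a Verdier localization, and $\ker\pi_\ast$ is generated by $\iota_\ast(\OO_E^\perp)$. Since $(\sL_1,\dots,\sL_m,\OO_Z)$ is full, the sequence $\sL_1,\dots,\sL_m$ generates $\OO_Z^\perp$. Indeed, $\langle \sL_1,\dots,\sL_m\rangle$ is left admissible with right orthogonal $\langle\OO_Z\rangle$, so we get $\OO_Z^\perp = \langle \sL_1,\dots,\sL_m\rangle$. Applying $\iota_\ast$, which is exact, it follows that $\iota_\ast\sL_1,\dots,\iota_\ast\sL_m$ classically generate $\ker\pi_\ast$.

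\emph{Sphericity.} This is the step needing real computation, though it is standard. Lemma \ref{lem:dCY-kernel} gives $\bS_{\tilde X}(\iota_\ast\sL_i) \cong \iota_\ast\sL_i[d]$, so it suffices to show $\Hom^\bullet(\iota_\ast\sL_i,\iota_\ast\sL_i) \cong \mathbb{k}\oplus\mathbb{k}[-d]$. We use the triangle $\sL_i \to \iota^!\iota_\ast\sL_i \to \sL_i\otimes\OO_E(E)[-1]$, noting that $\OO_E(E)\cong\omega_E$. Then
\[
\Hom^\bullet(\iota_\ast\sL_i,\iota_\ast\sL_i)\cong \Hom^\bullet_E(\sL_i,\iota^!\iota_\ast\sL_i),
\]
which sits in a triangle with $\Hom^\bullet_E(\sL_i,\sL_i)=\mathbb{k}$ and $\Hom^\bullet_E(\sL_i,\sL_i\otimes\omega_E)[-1]$. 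By Serre duality on $E$, which has dimension $d-1$, the latter equals $\Hom^\bullet_E(\sL_i,\sL_i)^\ast[-(d-1)-1]=\mathbb{k}[-d]$.

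The main obstacle is that this triangle must split, since otherwise cancellation could kill the two pieces. We will argue by degrees: for $d\ge 2$ the two pieces sit in degrees $0$ and $d$, so the connecting map goes from degree $d-1$ to degree $d-1$. Both pieces vanish in degree $d-1$ unless $d=1$, and that case is excluded since $\dim Z\ge 1$ (alternatively, $d=1$ would give a smooth point). Hence $\iota_\ast\sL_i$ is $d$-spherical.
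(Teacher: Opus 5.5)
Your proof is correct and follows the paper's. Theorem~\ref{thm:bo-localization-and-kernel} gives the localization and the generation, and Lemma~\ref{lem:dCY-kernel}, the triangle $\sL_i\to\iota^!\iota_\ast\sL_i\to\sL_i\otimes\omega_E[-1]$ and Serre duality on $E$ give sphericity. The only variation is in the crepancy step: the paper's footnote obtains the Gorenstein property and $\omega_{\tilde X}\cong\pi^\ast\omega_X$ together from the local model (the total space of $\omega_Z$, which has trivial canonical bundle), rationality and Koll\'ar--Mori, Theorem 5.10, whereas you take Gorensteinness of the cone as known and then get $a=0$ by adjunction.

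One small fix in the last step: the triangle splits simply because $\Hom_{\Db(\mathbb{k})}(\mathbb{k}[-d],\mathbb{k}[1])=0$ for every $d\geq 0$. Your ``degree $d-1$'' bookkeeping is therefore off, and $d=1$ is not a dangerous case; cancellation could only occur if $d=-1$.
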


\begin{proof}
        The fact that $\pi\colon \tilde{X} \to X$ is crepant is well-known.
        \footnote{Locally the blow up of $p\in X$ is isomorphic to the total space of $\omega_Z$, since the anticanonical embedding is projectively normal.
        By the formula for the canonical bundle of the total space it follows that the canonical bundle of the local blow-up is trivial.
        As the singularity is rational, this together with \cite[Theorem 5.10]{kollar-mori} implies that $p\in X$ is Gorenstein and we have that $\omega_{\tilde{X}}\cong \pi^\ast \omega_X $.}\label{footnote5}
        Moreover, by Theorem \ref{thm:bo-localization-and-kernel} $\pi_\ast \colon \Db ( \tilde{X} ) \to \Db (X)$ is a Verdier localization and the kernel $\ker \pi_\ast$ is generated by the objects
        \[
        \iota_\ast\sL_1 , \ldots , \iota_\ast\sL_m .
        \]
        It remains to show that the $\iota_\ast \sL_j$ are $d$-spherical objects in $\Db(\tilde{X} )$.
    Since the blow-up $\pi : \tilde{X} \to X$ is a crepant resolution and $\iota_\ast\sL_j$ is in $\ker \pi_\ast$,  Lemma \ref{lem:dCY-kernel} shows $\bS_Y ( \iota_\ast \sL_j ) \cong \iota_\ast \sL_j [d]$ in $\Db ( \tilde{X} )$.
Moreover, 
applying $\Hom^\bullet (  \sL_j , - )$ onto the distinguished triangle 
\[
 \sL_j \to \iota^! \iota_\ast\sL_j \to \sL_j \otimes \OO_E(E)[-1]
\]
in $\Db( \tilde{X})$, we obtain a long exact sequence of 
$\Hom$-spaces
\[
\ldots \Hom ( \sL_j , \sL_j [s] ) \to \Hom ( \sL_j , \iota^! \iota_\ast \sL_j [s] ) \cong \Hom ( \iota_\ast\sL_j , \iota_\ast \sL_j [s] ) \to \Hom ( \sL_j , \sL_j \otimes \omega_Z [s-1] ) \to \ldots ,
\]
where we used adjunction in the second component and the isomorphism of line bundles $\OO_E (E) \cong \omega_Z$ in the third component.
Using Serre duality on $\Db(Z)$ in the third component  
and that $\sL_j$ is an exceptional object in $\Db(Z)$ in both the first and third component, we have conclude isomorphisms
\begin{align*}
\Hom (\iota_\ast\sL_j , \iota_\ast\sL_j [s] ) \cong  \begin{cases} \mathbb{k} , \quad & \text{if } s =   0 , d ,  \\ 
0 \quad & \text{else. }\end{cases}
\end{align*}
This finishes the proof of the corollary.
\end{proof}

\begin{example}\label{ex:delpezzo-projnormal}
    Here is a list of some Fano varieties with projectively normal anticanonical embedding:
    \begin{enumerate}
        \item The Veronese embedding $\P^{d-1} \subset \P^N$ of any degree is projectively normal (cf. \cite[Chapter II, Exercise 5.14]{hartshorne}). 
        In particular, the anticanonical embedding $\P^{d-1} \subset \P^r$ is projectively normal.
        \item Smooth $(d-1)$-dimensional quadrics $Q \subset \P^{d}$ are projectively normal.
        In particular, the anticanonical embedding $Q\subset \P^r$ is projectively normal (cf. \cite[Chapter II, Exercise 5.14]{hartshorne}).
        \item Smooth del Pezzo surfaces of degree at least $3$ have very ample anticanonical bundle.
        The corresponding anticanonical embedding is projectively normal, by \cite[Theorem 8.3.4]{dolgachev}.
        \item Smooth $d-1 \geq 3$ dimensional del Pezzo varieties varieties of degree $5$ have projectively normal anticanonical embedding (see e.g. \cite[Corollary 2.8 and Definition 0.1]{Gallego-Purnaprajna}).
    \end{enumerate}
    In particular, Corollary \ref{cor:bo-localization-anticanonical} can be applied to projective varieties $X$, such that $p\in X$ is the cone singularity over these Fano varieties (with respect to  the anticanonical embedding).
\end{example}

\begin{remark}\label{rmk:projnormal-product}
    If $Z_1 $ and $Z_2$ are Fano varieties with projective normal anticanonical embedding $Z_1 \subset \P^{r_1}$ and $Z_2 \subset \P^{r_2}$, then their product $Z = Z_1  \times Z_2$ is a Fano with projectively normal anticanonical embedding.
    Indeed, the anticanonical bundle of $Z$ is very ample, since the  anticanonical bundles of $Z_1$ and $Z_2$ are very ample, and the anticanonical embedding of $Z$ is given by the Segre embedding.
    Moreover, the Segre embedding (of $\P^{r_1} \times \P^{r_2}$) is projectively normal, and thus the anticanonical embedding of $Z$ is projectively normal (using e.g. \cite[Chapter II, Exercise 5.14 (d)]{hartshorne}).
\end{remark}

\subsection{Cone singularities and categorical absorptions}
The following theorem discusses an equivalent criterion to strong adherence in the case  singularities of cones over Fano varieties with projectively normal anticanonical embedding:

\begin{theorem}\label{thm:strong-adherence-equiv}
    Let $X$ be a $d$-dimensional projective variety with a single isolated singularity at $p$, and let $p\in X$ be the singularity of the cone over a smooth Fano variety 
    with projectively normal anticanonical embedding. 
     Let
    $\pi \colon \tilde{X} \to X$ be the blow-up of $X$ at $p$ and $\iota \colon E \subset \tilde{X}$ its exceptional locus.
    Assume
    there is an exceptional collection $\E = ( \sE_1 , \ldots , \sE_m ) $ in $\Db ( \tilde{X} )$, such that the image of $\E$ via the right adjoint $\iota^! \colon \Db ( \tilde{X} ) \to \Db(E)$ to the pushforward functor $\iota_\ast$ is contained in $\langle \OO_E \rangle^{\perp}$.
    
    Then there is an autoequivalence $\bT$ on $\Db ( \tilde{X} )$ such that $\pi_\ast$ is invariant under $\bT$ and such that $\E$ and $\bT$ are strongly adherent, if and only if 
      the functor $\iota^! \colon \Db ( \tilde{X} ) \to \Db(E)$ restricts to an equivalence
    \[
    \iota^! \colon \thick ( \E ) \xrightarrow{\cong} \langle \OO_E \rangle^\perp .
    \]

\end{theorem}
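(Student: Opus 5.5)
The direction ``$\Rightarrow$'' is immediate from Proposition \ref{prop:defs-agree}. Its hypotheses are all available: $X$ has a unique isolated Gorenstein singularity, the blow-up $\pi$ is crepant with exceptional divisor $E\cong Z$ of codimension $1$ (Corollary \ref{cor:bo-localization-anticanonical}), and $\iota^!$ sends $\thick(\E)$ into $\langle \OO_E\rangle^\perp$ by assumption. So strong adherence of $\E$ and $\bT$ gives that $\iota^!\colon \thick(\E)\to\langle\OO_E\rangle^\perp$ is an equivalence.

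For ``$\Leftarrow$'' we construct $\bT$ explicitly. Set $\sL_i:=\iota^!\sE_i[1]$. Since $\iota^!$ is an equivalence onto $\langle \OO_E\rangle^\perp$ and the $\sE_i$ form an exceptional collection, $(\sL_1,\ldots,\sL_m)$ is a full exceptional collection of $\langle\OO_E\rangle^\perp$. Hence $(\sL_1,\ldots,\sL_m,\OO_E)$ is a full exceptional collection of $\Db(Z)$. By Corollary \ref{cor:bo-localization-anticanonical}, the objects $\sA_i:=\iota_\ast\sL_i$ are $d$-spherical and generate $\ker\pi_\ast$. Let $\bT:=\bT_{\sA_1}\circ\cdots\circ\bT_{\sA_m}$. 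By Proposition \ref{prop:spherical}\ref{prop:spherical-a}, $\pi_\ast$ is invariant under $\bT$.

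Next we verify the hypotheses of Proposition \ref{prop:spherical}\ref{prop:spherical-b} with $\sM_i=\sE_i$ and $\epsilon_i\colon\sA_i\to\sE_i[1]$ the adjunction counit (up to the shift). Adjunction gives $\Hom^\bullet(\sA_i,\sE_j)\cong\Hom^\bullet(\sL_i,\iota^!\sE_j)=\Hom^\bullet(\sL_i,\sL_j[-1])\cong\Hom^\bullet(\sE_i,\sE_j[-1])$, using full faithfulness of $\iota^!$ on $\thick(\E)$. This yields \eqref{eq:assump1}, with the compatibility with composition by $\epsilon_i$ coming from naturality of adjunction. For \eqref{eq:assump2} with $j<i$, we compute $\Hom^\bullet(\sA_j,\sA_i)$ via the triangle $\sL_i\to\iota^!\iota_\ast\sL_i\to\sL_i\otimes\omega_E[-1]$. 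Serre duality on $E$ together with exceptionality kills the third term, so $\Hom^\bullet(\sA_j,\sA_i)\cong\Hom^\bullet(\sL_j,\sL_i)$, exactly as in \eqref{eq:K-L-iso}, and composing with $\epsilon_i$ matches this with $\Hom^\bullet(\sA_j,\sE_i[1])$. Proposition \ref{prop:spherical}\ref{prop:spherical-b} then gives $\bT\sE_i\cong\bT_{\sA_i}\sE_i$, which sits in the triangle $\sE_i\to\bT\sE_i\to\sA_i\xrightarrow{\epsilon_i}\sE_i[1]$. Therefore $\sK_i\cong\sA_i=\iota_\ast\sL_i$.

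It remains to show pre-adherence, namely that $\E,\bT(\E)$ is exceptional and that $\ker\pi_\ast\subset\langle\E,\bT\E\rangle$. The inclusion is clear, since $\sK_i$ is the cone of $\sE_i\to\bT\sE_i$ and the $\sK_i$ generate $\ker\pi_\ast$. Given this, adherence and strong adherence follow from $\sK_i\cong\iota_\ast\sL_i$. The main obstacle is the semiorthogonality $\Hom^\bullet(\bT\sE_j,\sE_i)=0$ for all $i,j$. I would apply $\Hom^\bullet(-,\sE_i)$ to the triangle $\sE_j\to\bT\sE_j\to\sK_j$. The resulting map $\Hom^\bullet(\sE_j,\sE_i)\to\Hom^\bullet(\sK_j,\sE_i[1])$ is composition with $\epsilon_j$. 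Via adjunction it becomes $\iota^!(-)$ composed with the identification $\sL_j=\iota^!\sE_j[1]$, so it is an isomorphism by full faithfulness of $\iota^!$. Hence $\Hom^\bullet(\bT\sE_j,\sE_i)=0$. Exceptionality of $\bT\E$ holds because $\bT$ is an autoequivalence.
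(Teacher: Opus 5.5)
Your proposal is correct and follows the paper's proof essentially step for step: the ``only if'' direction via Proposition~\ref{prop:defs-agree}; for ``if'', the choice $\sL_i=\iota^!\sE_i[1]$ and $\bT=\bT_{\iota_\ast\sL_1}\circ\cdots\circ\bT_{\iota_\ast\sL_m}$, the verification of \eqref{eq:assump1} and \eqref{eq:assump2} by adjunction and the computation \eqref{eq:K-L-iso}, and finally $\Hom^\bullet(\bT\sE_j,\sE_i)=0$ because composition with $\epsilon_j$ is an isomorphism. Like the paper, you tacitly identify the cone of the specific natural map $\eta_{\sE_i}\colon\sE_i\to\bT\sE_i$ with $\iota_\ast\sL_i$; this is harmless, since $\Hom(\sE_i,\bT\sE_i)\cong\mathbb{k}$ and $\eta_{\sE_i}\neq 0$.
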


\begin{proof}
    The ``only if'' direction follows from Proposition \ref{prop:defs-agree}.
    For the ``if'' direction,
    set $\L = ( \sL_1 , \ldots , \sL_{m} )$ with $\sL_j : = \iota^!  \sE_j  [1]$. 
    This collection $\L$ forms a full exceptional collection of $\langle \OO_E \rangle^\perp$. 
    By Corollary \ref{cor:bo-localization-anticanonical}, the $\iota_\ast\sL_j$ are spherical, hence
    by Proposition \ref{prop:spherical} \ref{prop:spherical-a}, the functor $\pi_\ast $ is invariant under the autoequivalence
    \[\bT_\L : = \bT_{\iota_\ast\sL_1} \circ \ldots \circ \bT_{\iota_\ast\sL_m}\] 
    on $\Db( \tilde{X}).$
    We want to show that $\E$ and $\bT_{\L}$
    are strongly adherent.
    For this we start by computing the following graded  $\Hom$-spaces:
    \begin{equation}
        \Hom^\bullet ( \iota_\ast \sL_j , \sE_i ) \xrightarrow[\cong]{\ad} \Hom^\bullet ( \sL_j , \iota^! \sE_i ) = \Hom^\bullet ( \iota^! \sE_j , \iota^! \sE_i ) [-1]
    \end{equation}
    In particular, if $i=j$ there is a morphism $\epsilon_j \colon \iota_\ast\sL_j \to \sE_j [1] $ corresponding to the identity morphism $\id \colon \iota^!\sE_j \to \iota^! \sE_j$ via adjunction.
    Moreover, by standard properties of the adjunction, we have that the commutativity of morphisms
    $\iota^! (-)  = \ad (-)\cdot \epsilon_j$ between graded vector spaces in $\Db(\mathbb{k})$ holds.
    Thus since $\ad (-)$ is an isomorphism, as well as $\iota^! (-)$ on the morphism of $\E$ by assumption, we obtain that
    \begin{equation}\label{eq:some-hom-prop}
        (-)\cdot \epsilon_j \colon \Hom^\bullet (\sE_j , \sE_i ) \xrightarrow[\cong]{(-)\cdot\epsilon_j} \Hom^\bullet (   \iota_\ast \sL_j, \sE_i )[1]
    \end{equation}
    is an isomorphism in $\Db(\mathbb{k})$.
    In particular, we have shown that \eqref{eq:assump1}  of Proposition \ref{prop:spherical} \ref{prop:spherical-b} is satisfied.
    Moreover, there is a distinguished triangle
    \begin{equation}\label{eq:spherical-iso-2}
          \iota_\ast \sL_i [-1] \xrightarrow{\epsilon_i} \sE_i \to \bT_{\sL_i } \sE_i \to \iota_\ast \sL_i    .
    \end{equation}
    Applying $\Hom^\bullet ( \iota_\ast \sL_n , - )$ with $n < i$ to this triangle, we get a triangle 
    \begin{equation}\label{eq:another-dist-triangle}
     \Hom^\bullet ( \iota_\ast \sL_n , \bT_{\iota_\ast \sL_i}\sE_i ) \to \Hom^\bullet ( \iota_\ast \sL_n , \iota_\ast \sL_i ) \xrightarrow{\epsilon_i \cdot (-)}\Hom^\bullet ( \iota_\ast \sL_n , \sE_i  )[1]
    \end{equation}
    in $\Db(\mathbb{k})$.
    The same computation as in  \eqref{eq:K-L-iso} in the proof of Proposition \ref{prop:defs-agree} shows that there are isomorphisms
    \[
    \Hom^\bullet ( \iota_\ast \sL_n , \iota_\ast \sL_i  ) \xrightarrow[\cong]{\ad(-)} \Hom^\bullet ( \sL_n , \iota^! \iota_\ast \sL_i ) \xleftarrow[\cong]{  \ad_{\sL_i} \cdot (-) }  \Hom^\bullet ( \sL_n ,  \sL_i ) 
     \]
     for $n<i$.
     Here the morphism $\ad_{\sL_i} \colon \sL_i \to \iota^! \iota_\ast \sL_i$ is the morphism in $\Db(E)$ corresponding to the identity $\id \colon \sL_i \to \sL_i$ via adjunction.
     These isomorphisms extend to the following commutative diagram
     \begin{equation}
\xymatrix{
   \Hom^\bullet ( \sL_n , \iota^! \iota_\ast \sL_i )  \ar@{<-}[rr]^-{\ad_{\sL_i} \cdot (-)}_\cong  \ar@{<-}[rrd]_{\ad(-)}^\cong & &   \Hom^\bullet (   \sL_n ,  \sL_i ) \ar@{->}[d]^{\iota_\ast (-)} \ar@{->}[rr]^-{ \ad(\epsilon_i )  \cdot (-) = \id  }_\cong & & \Hom^\bullet ( \sL_n , \iota^!\sE_i [1] ) \ar@{<-}[d]^{\ad(-)}_{\cong} \\
  & & \Hom^\bullet ( \iota_\ast \sL_n , \iota_\ast \sL_i ) \ar@{->}[rr]^{\epsilon_i \cdot (-)} & & \Hom^\bullet (  \iota_\ast \sL_n , \sE_i [1] )},
\end{equation}
where the commutativity of both the triangle and the square follows from the definition of adjunction.
In particular, we read off from this diagram that 
the bottom morphism $\epsilon_i \cdot (-)$ is an isomorphism.
Thus, we see that Assumption \eqref{eq:assump2} of Proposition \ref{prop:spherical} \ref{prop:spherical-b} is satisified.
We can thus apply Proposition \ref{prop:spherical} \ref{prop:spherical-b} and we get
ismorphisms
$\bT_{\iota_\ast \sL_i} \sE_i \cong \bT_\L \sE_i$ and that $\bT_\L \sE_i$ fits into a distinguished triangle
   \begin{equation}\label{eq:main-triangle}
            \iota_\ast \sL_i[-1] \xrightarrow{\epsilon_i} \sE_i \to \bT_{\L} \sE_i \to \iota_\ast \sL_i 
    \end{equation}
    in $\Db( \tilde{X} )$, for all $i$.

    Next we show that $\E$ and $\bT_\L$ are adherent.
   For this, we need to show that the collection $\mathbb{E} ,  \bT_{{\L}}   \E $ is exceptional, and that $\tilde{ \AA } = \langle \E , \bT_{\L}  \E  \rangle $ contains $\ker\pi_\ast$.
    Applying $\Hom^\bullet ( - , \sE_j )$ onto the triangle \eqref{eq:main-triangle}, we obtain
    a distinguished triangle 
    \[
    \Hom^\bullet ( \iota_\ast \sL_i , \sE_j ) \to \Hom^\bullet ( \bT_\L \sE_i , \sE_j ) \to \Hom^\bullet ( \sE_i , \sE_j  ) \xrightarrow[\cong]{(-)\cdot \epsilon_i} \Hom^\bullet ( \iota_\ast \sL_i , \sE_j )[1]
    \]
    in $\Db(\mathbb{k})$, where by \eqref{eq:some-hom-prop} the morphism $(-)\cdot \epsilon_i$ is an isomorphism in $\Db(\mathbb{k})$.
    It follows that $\Hom^\bullet  ( \bT_{\L} \sE_i , \sE_j  ) $ vanishes for all $i$ and $j$ and thus the collection $\E , \bT_{\L}  \E $ is exceptional.
    Moreover, from the distinguished triangle \eqref{eq:main-triangle} we read of that $\iota_\ast \sL_i$ is contained in $\tilde{\AA} $, for all $i$.
    Since $\L$ is a full exceptional collection of $\langle \OO_E\rangle^\perp$ (see above), we see by Theorem \ref{thm:bo-localization-and-kernel} that $\iota_\ast \sL_1 , \ldots , \iota_\ast \sL_{m} $ generates $\ker \pi_\ast$.
    This means that $\ker\pi_\ast$ is contained in $\tilde{\AA}$ and we have shown that $\E$ and $\bT_\L$ are adherent.
    Finally, $\E$ and $\bT_\L$ are also strongly adherent, as the cone of the morphism $\sE_i \to \bT\sE_i $ is of the form $\iota_\ast \sL_i$ in $\Db ( \tilde{X} )$ (see \eqref{eq:main-triangle}) and since $\L$ forms an exceptional collection of $\langle \OO_E \rangle^\perp$.
    This finishes the proof of the Theorem.
\end{proof}

\begin{remark}
     To prove Theorem \ref{thm:strong-adherence-equiv}, we only use the assumption that $p \in X$ is the singularity of the cone over a smooth Fano variety with projectively normal anticanonical embedding to deduce
    \begin{align}\label{eq:Efimov}
     \ker\pi_\ast = \iota_\ast ( \OO_E^\perp ),   
    \end{align}
  via Theorem \ref{thm:bo-localization-and-kernel}.
    We do not know if \eqref{eq:Efimov} holds more generally for projective isolated Gorenstein singularities $p\in X$ with a crepant resolution.
\end{remark}

\begin{corollary}\label{eq:cone-absorption}
     Let $X$ be a $d$-dimensional projective variety over $\mathbb k$ with a single   singularity at $p$, and assume that $p\in X$ is the singularity of the cone over a Fano variety 
    with projectively normal anticanonical embedding.
    Let
    $\pi \colon \tilde{X} \to X$ be the blow-up of $X$ at $p$.
    Denote by $\iota \colon E \subset \tilde{X}$ its exceptional divisor.
    If there is a geometric exceptional collection $\E$ of type $(m,k)$ with $k \leq d-1$ and such that there is an equivalence 
     \[
    \iota^! \colon \thick ( \E ) \xrightarrow{\cong} \langle \OO_E \rangle^\perp .
    \]
    Then there is an admissible semiorthogonal decomposition 
    \begin{equation}\label{eq:cone-absorp}
        \Db(X) \cong \langle \DD_{\mathrm{fg}} (A^\bullet ) , \BB \rangle 
    \end{equation}
    where $\BB \cong \pi_\ast {^{\perp}\tilde{\AA}} \subset \Dperf (X)$ with $\tilde{\AA} = \langle \E , \bT_{\L } \E \rangle$ and $A^\bullet $ is a finite-dimensional $\dg$-algebra with cohomology concentrated in degrees $0$ and $k+1-d$ and isomorphic (as $\Hom$-spaces) to $\End ( \sE )$ and $\Hom ( \sE , \bS^{-1} \sE [k-1] )$, respectively.
    In particular, if $\sE$ is of type $(m, d-1 )$, then $A^\bullet$ is quasi-isomorphic to a finite dimensional $\mathbb{k}$-algebra $A$ fitting into a short exact sequence
    \begin{equation}\label{eq:ses-algebra-A}
        0\to \Hom^\bullet ( \sE , \bS^{-1} \sE [d-2] ) \to A \xrightarrow{q} \End (  \sE ) \to 0 ,
    \end{equation}
    where $\sE = \bigoplus_i \sE_i$, and $q$ is a morphism of $\mathbb{k}$-algebras.
\end{corollary}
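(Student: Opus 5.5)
The plan is to assemble the corollary from the general machinery of Sections 2 and 3, with Section 4 supplying the geometric input.

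First, we check the hypotheses of Corollary \ref{cor:veriderloc-negative-dg}. By Corollary \ref{cor:bo-localization-anticanonical}, the blow-up $\pi\colon \tilde X \to X$ is a crepant resolution, so $X$ is Gorenstein at $p$. By Theorem \ref{thm:bo-localization-and-kernel}, $\pi_\ast$ is a Verdier localization, so $\Db(\tilde X)/\ker\pi_\ast \cong \Db(X)$ via $\pi_\ast$.

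Next, we use the assumed equivalence $\iota^!\colon \thick(\E)\xrightarrow{\cong}\langle\OO_E\rangle^\perp$. In particular $\iota^!(\E)\subset\langle \OO_E\rangle^\perp$, so the ``if'' direction of Theorem \ref{thm:strong-adherence-equiv} applies. It gives the autoequivalence $\bT_\L=\bT_{\iota_\ast\sL_1}\circ\cdots\circ\bT_{\iota_\ast\sL_m}$ with $\sL_j=\iota^!\sE_j[1]$, such that $\pi_\ast$ is invariant under $\bT_\L$ and $\E$, $\bT_\L$ are strongly adherent, hence adherent. Since $\E$ is geometric of type $(m,k)$ with $k\le d-1$, Corollary \ref{cor:veriderloc-negative-dg} now applies directly. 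It yields the admissible decomposition \eqref{eq:cone-absorp} with $\BB=\pi_\ast{}^\perp\tilde\AA\subset\Dperf(X)$ and $\tilde\AA=\langle\E,\bT_\L\E\rangle$. It also gives the dg algebra $A^\bullet=e\tilde A^\bullet e$, whose cohomology is $\End(\sE)$ in degree $0$ and $\Hom(\sE,\bS^{-1}\sE[k-1])$ in degree $k+1-d$. Finite-dimensionality of $A^\bullet$ (up to quasi-isomorphism) follows because $\thick(\E)$ is of finite type, so the cohomology is finite dimensional.

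For the case $k=d-1$, both cohomology degrees are $0$, so $A^\bullet$ is formal with cohomology $A:=\End(\sG)$. Concretely, by Corollary \ref{cor:tilting}, $\sT$ is tilting and $A^\bullet\simeq\End(\sG)$. The short exact sequence \eqref{eq:ses-algebra-A}, with $q=\varphi$ an algebra map, is then exactly \eqref{eq:ses-G} of Proposition \ref{prop:algebra-extensions}, whose hypotheses (geometric of type $(m,d-1)$, adherence) we have already verified.

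The main obstacle is bookkeeping rather than mathematics. One must check that Corollary \ref{cor:veriderloc-negative-dg} requires only a unique singular point with a crepant resolution, which Corollary \ref{cor:bo-localization-anticanonical} supplies, and that the degree shift $[k-1]$ with $k=d-1$ matches the bimodule $\bS^{-1}\sE[d-2]$ in \eqref{eq:ses-G}. If the non-positivity and truncation hypothesis of Lemma \ref{L:SimplesGenKernel} is stated as ``$k-1\le d$'', we note that it is implied by $k\le d-1$.
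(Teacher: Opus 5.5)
Your proposal is correct and follows the paper's route. The paper proves this in one line by combining Theorem \ref{thm:strong-adherence-equiv} (which gives strong adherence of $\E$ and $\bT_\L$) with Corollary \ref{cor:veriderloc-negative-dg}; you spell out the inputs that line uses implicitly, namely crepancy and the Verdier localization from Corollary \ref{cor:bo-localization-anticanonical} and Theorem \ref{thm:bo-localization-and-kernel}, and, for $k=d-1$, Corollary \ref{cor:tilting} and Proposition \ref{prop:algebra-extensions}. The one further hidden hypothesis is the vanishing $\Hom(\sE_i,\bS^{-1}_{\E}\sE_i[d])=0$ required by Proposition \ref{prop:G}, and it holds automatically by geometricity, since this space equals $\Hom(\sE_i,\sE_{i+m}[d-k+1])$ with $d-k+1\geq 2$.
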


\begin{proof}
    The corollary follows immediately from Theorem \ref{thm:strong-adherence-equiv} and Corollary \ref{cor:veriderloc-negative-dg}.
\end{proof}

\begin{remark}\label{rmk:k-geq-d-1}
   Suppose, in addition to the assumptions in Corollary \ref{eq:cone-absorption} (with $k \leq d-1$), that the full exceptional collection $\iota^! \E , \OO_E $ of $\Db(E)$ is strong.
   Then $\Db(E) $ has a tilting object $\sT$ and we set $A = \End ( \sT )$. 
   Moreover, we note by Corollary \ref{cor:geometric-subcollection} that the full exceptional collection $\iota^! \E ,  \OO_E$ is geometric of type $( m+1 , k+1 )$.
   We have the following chain of inequalities 
   \[
    \gldim A \geq \rqdim \, \Db(A) = \rqdim \, \Db(E) \geq \dim E = d-1 ,
   \]
   where $\rqdim \, \DD$ denotes the Rouquier dimension of a triangulated category $\DD$, where we used \cite{krause-kussin} in the first inequality and \cite[Proposition 7.17]{rouquier} in the third inequality, and where the second equality comes from the tilting equivalence.
   Now, since $\sT$  arises as the direct sum of a
geometric exceptional collection of type $(m+1, k+1)$,
then $k \geq  \gldim A$, see Remark \ref{R:GldimHigherHered}.
We thus have an equality $k = d-1$ in this case.
In particular, in all our examples in subsection \ref{subsec:ProjCones} the corresponding exceptional collection $\E$ will be geometric of type $(m , d-1)$ \footnote{In particular, the blow-up $\pi \colon \tilde{X} \to X$ of $X$ at $p$ satisfies Assumption \ref{A:KeyAssumptions}) 
and the singularity $p \in X$ is an $\mathrm{ACS}$-singularity. }

We are not aware of any $E$ admitting  (only non-strong) exceptional collection $\L , \OO_E$, such that $\L$ is geometric of type $(m,k)$, with $k < d-1$.
\end{remark}

\subsection{Projective cones}\label{subsec:ProjCones}
In this last subchapter, we show that Corollary \ref{eq:cone-absorption} applies to the corresponding projective cones and we examine the semiorthogonal decomposition \eqref{eq:cone-absorp} in this case in more detail.
More concretely, we show that $\BB$ in \eqref{eq:cone-absorp} is generated by two exceptional line bundles, and that the finite-dimensional $\mathbb{k}$-algebra $A$ in the tilting case is isomorphic to the split algebra corresponding to the short exact sequence \eqref{eq:ses-algebra-A}, as discussed in Subsection \ref{subsec:split-case}.

Let $X$ be the projective cone over a smooth Fano variety $Z$ with projectively normal anticanonical embedding. 
That is $\omega_Z^{-1}$ is assumed to be a very ample line bundle, and we consider the projective cone of the embedding given by $\omega_Z^{-1}$.
Denote by $\pi \colon \tilde{X} \to X$ the blow-up at the singularity and let $\iota\colon E\hookrightarrow \tilde{X}$ be the exceptional divisor.
Recall that projection away from the vertex of $X$ is resolved by the blow-up $\tilde{X}$ of the vertex, and there is a $\P^1$-bundle structure $q \colon \tilde{X} \to Z$ 
given by the rank $2$ vector bundle $\omega_Z^{-1} \oplus \OO_Z$,
and, in particular, there is a section of $q$ inducing an isomorphism $Z \cong E$.
We have the following.

\begin{proposition}\label{prop:full-exc-ProjCones}
    Let $X $ be the projective cone over a smooth Fano variety $Z$ with projectively normal anticanonical embedding $Z\subset \P^r$ and suppose there is a full exceptional collection 
    \[
    \L = (\sL_1 , \ldots , \sL_m )
    \]
    of $ \OO_Z^\perp \subset\Db(Z)$.
        Moreover, let $q \colon \tilde{X} \to  Z$ be the $\P^1$-bundle structure of the blow-up $\pi\colon \tilde{X} \to X$ of $X $ at the vertex as above. Set
    \[
    \E := q^\ast \L ( - E ) = q^\ast \L \otimes \OO_{\tilde{X}} (-E) .
    \]
Then $\E $ and $\bT:=\bT_{\L}$ are strongly adherent to each other and there is are full exceptional collections 
\begin{equation}\label{eq:full-exc-coll-E-TE}
    \sE_1 , \ldots , \sE_m , \bT \sE_1 , \ldots , \bT \sE_m , \pi^\ast\OO_{X} , \pi^\ast \OO_{X} (1) 
\end{equation}
and 
\begin{equation}\label{eq:full-exc-coll-E-F}
    \sE_1 , \ldots , \sE_m , \sF_1 , \ldots , \sF_m , \pi^\ast\OO_{X} , \pi^\ast \OO_{X} (1) 
\end{equation}
of $\Db( \tilde{X} )$.
Here, $\OO_X(1)$ is the restriction of $\OO_{\P^{r+1}}(1)$ via $X\subset \P^{r+1}$ and $\F  : = \bS^{-1}_{\bT q^\ast \L (-E)} \bT \E$ (cf. Lemma \ref{lem:hom-K-F}).
\end{proposition}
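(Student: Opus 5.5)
The plan is to reduce strong adherence to the criterion of Theorem \ref{thm:strong-adherence-equiv}, and then build the full exceptional collections from Orlov's projective bundle formula together with mutations.

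\textbf{Step 1: the restriction criterion.} We verify that $\iota^!$ restricts to an equivalence $\thick(\E)\to \langle \OO_E\rangle^\perp$. Since $E$ is a section of $q$, we have $q\circ\iota = \id_Z$. Moreover $\iota^!(-)=\iota^\ast(-)\otimes \OO_E(E)[-1]$. Together these give
\[
\iota^!\sE_j=\iota^\ast q^\ast\sL_j\otimes \iota^\ast\OO(-E)\otimes\iota^\ast\OO(E)[-1]\cong \sL_j[-1],
\]
which lies in $\OO_Z^\perp$. For morphisms we use
\[
\Hom^\bullet_{\tilde X}(q^\ast\sL_i(-E),q^\ast\sL_j(-E))=\Hom^\bullet_{\tilde X}(q^\ast\sL_i,q^\ast\sL_j)\cong\Hom^\bullet_Z(\sL_i,\sL_j),
\]
since $q^\ast$ is fully faithful. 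This composite agrees with $\iota^!$ up to shift because $\iota^\ast q^\ast\cong\id$. Hence $\E$ is exceptional and $\iota^!$ is fully faithful on $\thick(\E)$ with essential image $\thick(\L)=\OO_Z^\perp$. Theorem \ref{thm:strong-adherence-equiv} then gives strong adherence with $\bT=\bT_\L$, which is the autoequivalence constructed in its proof. That proof also supplies the triangles \eqref{eq:main-triangle}, which identify the cones as $\iota_\ast\sL_i$.

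\textbf{Step 2: exceptionality of \eqref{eq:full-exc-coll-E-TE}.} The block $\E,\bT\E$ is exceptional by adherence. It remains to check the vanishing $\Hom^\bullet(\pi^\ast\OO_X(a),\sE_i)=0=\Hom^\bullet(\pi^\ast\OO_X(a),\bT\sE_i)$ for $a=0,1$, and that $\pi^\ast\OO_X,\pi^\ast\OO_X(1)$ is exceptional. The latter follows from $\pi_\ast\OO_{\tilde X}=\OO_X$ (rational singularity) and the projection formula, which reduce it to the cohomology of $\OO_X(-1)$ on the projective cone; this vanishes. For the former, use the triangle $\sE_i\to\bT\sE_i\to\iota_\ast\sL_i$: it suffices to kill $\Hom^\bullet(\pi^\ast\OO_X(a),\sE_i)$ and $\Hom^\bullet(\pi^\ast\OO_X(a),\iota_\ast\sL_i)$. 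The second equals $\Hom^\bullet_Z(\OO_Z,\sL_i)=0$, since $\iota^\ast\pi^\ast\OO_X(a)\cong\OO_E$ and $\sL_i\in\OO_Z^\perp$. For the first, write $\pi^\ast\OO_X(1)=\OO_{\tilde X}(H)$ with $H\sim E+q^\ast\omega_Z^{-1}$ (the class of the section at infinity). Then compute $\bH^\bullet(\tilde X, q^\ast\sL_i(-E-aH))$ via $q_\ast$, using $q_\ast\OO(-E)=0$ and $q_\ast\OO(-E-H)$, which is a shift of a line bundle $\omega_Z^{\pm}$ on $Z$. This yields terms $\Hom^\bullet_Z(\OO_Z,\sL_i\otimes\omega_Z^{\pm1})$, which vanish by Serre duality and $\sL_i\in\OO_Z^\perp$; the sign is to be fixed in the computation.

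\textbf{Step 3: fullness.} By Orlov's formula,
\[
\Db(\tilde X)=\langle q^\ast\Db(Z)(-E),\ q^\ast\Db(Z)\rangle=\langle q^\ast\L(-E),\OO(-E),q^\ast\L,\OO\rangle .
\]
The goal is to show that the subcategory generated by \eqref{eq:full-exc-coll-E-TE} contains every one of these pieces. It contains $q^\ast\L(-E)=\E$. Since $\iota_\ast\sL_i\in\tilde\AA$ and $\iota_\ast\OO_E$ is the cone of $\OO(-E)\to\OO$, the plan is to generate $\OO,\OO(H)$ and then obtain $q^\ast\L$ from $\E$ and the $\iota_\ast\sL_i$ via $0\to q^\ast\sL_i(-E)\to q^\ast\sL_i\to \iota_\ast\sL_i\to0$. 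After that, $\OO(-E)$ is recovered using $\OO(H)$ and the resolution of the infinity section, or alternatively from Serre functor arguments. I expect this bookkeeping, especially recovering $\OO(-E)$, to be the main obstacle. A cleaner route would be to check that the semiorthogonal complement of \eqref{eq:full-exc-coll-E-TE} vanishes, using $\pi_\ast$ and Corollary \ref{cor:sod-quotient}.

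\textbf{Step 4: the second collection.} The collection \eqref{eq:full-exc-coll-E-F} follows from \eqref{eq:full-exc-coll-E-TE}. By definition $\F$ is obtained from $\bT\E$ by the inverse Serre functor of $\thick\bT\E$, which is an iterated mutation within that block. This mutation preserves both exceptionality and the generated subcategory, so the orthogonality with the other blocks is unchanged.
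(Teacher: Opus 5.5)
Your Steps 1 and 4 agree with the paper. Steps 2 and 3 have problems.

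\textbf{Step 2: the pushforward is misidentified.} Since $E\cap H=\emptyset$, the sequence $0\to\OO(-E-H)\to\OO(-H)\to\OO_E\to 0$ together with $Rq_*\OO(-H)=0$ gives $Rq_*\OO(-E-H)\cong\OO_Z[-1]$. Hence $\Hom^\bullet(\pi^*\OO_X(1),\sE_i)\cong\Hom^{\bullet-1}_Z(\OO_Z,\sL_i)=0$ follows directly from $\sL_i\in\OO_Z^\perp$. Your version with $\omega_Z^{\pm1}$ in place of $\OO_Z$ would make the claimed vanishing false in general. For $Z=\P^{d-1}$ and $\sL_i=\OO(-i)$, both $\Hom^\bullet_Z(\OO_Z,\sL_i\otimes\omega_Z)$ and $\bH^\bullet(Z,\sL_i\otimes\omega_Z^{-1})$ are nonzero. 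Serre duality only turns the first into $\Hom^\bullet(\sL_i,\OO_Z[d-1])^*$, and $\sL_i\in\OO_Z^\perp$ says nothing about that. So the conclusion of Step 2 is right, but the argument as written is not.

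\textbf{Step 3: fullness is not proved.} This is the more serious gap. You reduce fullness to showing $\OO(-E)\in\mathcal S:=\thick(\E,\bT\E,\OO,\OO(H))$, and then leave it open. The fallback via Corollary \ref{cor:sod-quotient} does not close it. It only says that $\pi_*$ of an object in the complement is perfect and orthogonal to $\pi_*$ of the collection. That forces nothing to vanish unless you already know a generation statement, which is what is being proved.

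The missing ingredient is the relative Euler sequence of $\tilde X=\P_Z(\omega_Z^{-1}\oplus\OO_Z)$. Since $\OO(H)\cong\OO(E)\otimes q^*\omega_Z^{-1}$ and $E\cap H=\emptyset$, the sections cutting out $E$ and $H$ give a surjection $q^*\omega_Z^{-1}\oplus\OO\to\OO(H)$. Its kernel is the line bundle $\det(q^*\omega_Z^{-1}\oplus\OO)\otimes\OO(-H)\cong\OO(-E)$.

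Your own argument with $0\to\sE_i\to q^*\sL_i\to\iota_*\sL_i\to0$ gives $q^*\Db(Z)\subseteq\mathcal S$. Combined with $\OO(H)\in\mathcal S$, the Euler sequence yields $\OO(-E)\in\mathcal S$, and therefore $\mathcal S=\Db(\tilde X)$.

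\textbf{How the paper handles this.} The paper avoids both Step 2 and this bookkeeping by first showing $\bT\sE_i\cong\sE_i(E)=q^*\sL_i$. The sequence above is nonsplit, and $\Hom(\iota_*\sL_i,\sE_i[1])\cong\mathbb k$ by Lemma \ref{lem:hom-computation}, so it coincides with \eqref{eq:main-triangle}. Thus \eqref{eq:full-exc-coll-E-TE} is literally $q^*\L(-E),q^*\L,\OO,\OO(H)$. This is obtained from Orlov's collection $q^*\L,\OO,\OO(H),q^*\L(-K_Z)\otimes\OO(H)$ by moving the last block to the front with $\bS_{\tilde X}=-\otimes\OO(-2H)[d]$, using $q^*\OO_Z(-K_Z)\otimes\OO(-H)\cong\OO(-E)$. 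Exceptionality and fullness then come out together.
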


\begin{proof}
The fact that $\E$ and $\bT$ are strongly adherent follows readily by applying $\iota^! (-) \cong \iota^\ast(-)(E)[-1]$ onto $\E$ and using Theorem \ref{thm:strong-adherence-equiv}, and using that $Z \cong E \xhookrightarrow{\iota} \tilde{X}$ is a section of $q \colon \tilde{X} \to Z$.

    It remains to show that the right orthogonal of $\tilde{\AA} = \langle \E , \bT ( \E ) \rangle $ is generated by $\OO_{\tilde{X}} , \pi^\ast\OO_{X} (1)$.
    First, by projective bundle formula applied to $q\colon \tilde{X} \to X$ there is a semiorthogonal decomposition
    \begin{align}\label{eq:decomp-antican}
        \Db ( \tilde{X} ) & \cong \langle q^\ast \Db(Z) , q^\ast \Db(Z) \otimes \OO(1) \rangle \notag \\
        & \cong \langle  q^\ast \sL_1 , \ldots , q^\ast \sL_m , \OO_{\tilde{X}} , \OO (1) ,  q^\ast \sL_1 ( -K_Z )\otimes \OO(1) , \ldots , q^\ast \sL_m ( -K_Z )\otimes \OO(1) \rangle \notag \\
        & \cong \langle  q^\ast \sL_1 ( -K_Z ) \otimes \OO(-1) , \ldots , q^\ast \sL_m ( -K_Z )\otimes \OO(-1) , q^\ast \sL_1 , \ldots , q^\ast \sL_m , \OO_{\tilde{X}} , \OO_{\tilde{X}} (1) \rangle ,
    \end{align}
where $K_Z$ denotes the canonical divisor of $Z$ and $\OO(1)$ is the tautological line bundle of $q \colon \tilde{X} \to Z$.
Note that we use in the second equivalence that $\Db(Z)$ has a full exceptional collection of the form 
\[
 \OO_Z ,   \sL_1 ( -K_Z ) , \ldots , \sL_m ( -K_Z ) ,
\]
and we use Serre duality in the third equivalence, where we note that the canonical bundle of $\tilde{X}$ is of the form $\omega_{\tilde{X}} \cong \OO(-2)$ by the canonical bundle formula for projective bundles.
Note further that $E$ is the section of $\tilde{X} \to Z $ corresponding to $\OO_Z$, that is $E = \P ( \OO_Z ) \subset \tilde{X}$.
We define the section $H: = \P ( \omega_Z^{-1} ) \subset \tilde{X}$, and we note that 
there are  isomorphisms of line bundles
\begin{equation}\label{eq:lbundle-eqn}
    \OO_{\tilde{X}} (-E ) \cong \OO(-1) \otimes q^\ast \OO_Z ( -K_Z )  \in \Pic ( \tilde{X} )   \quad \text{and} \quad \OO_{\tilde{X}} (H) \cong \OO_{\tilde{X}} (1)\in \Pic ( \tilde{X} ) .
\end{equation}
Since $E = \P( \OO_Z )$ and $H = \P ( \omega_Z^{-1} )$ are disjoint to each other, the image of $H $ in $X$ via $\pi $ (which we again denote by $H$, by abuse of notation) is Cartier.
One can check that there is an isomorphism of line bundles $\OO_X (H) \cong \OO_X (1)$, and thus $\OO_{\tilde{X}} (H) \cong \pi^\ast \OO_X(1)$.
Now, using this isomorphism of line bundles and \eqref{eq:lbundle-eqn}  
in the semiorthogonal decomposition \eqref{eq:decomp-antican}, we obtain the semiorthogonal decomposition
\begin{equation}\label{eq:decomp-antican2}
    \Db ( \tilde{X} ) \cong \langle  q^\ast \sL_1 ( -E ) , \ldots , q^\ast \sL_m ( -E ) , q^\ast \sL_1 , \ldots , q^\ast \sL_m , \pi^\ast\OO_{X} , \pi^\ast\OO_{X} (1) \rangle  .
\end{equation}

Next, we show that there is an isomorphism of objects $\bT_\L  \sE_i  \cong \sE_i (E)$ for all $i = 1 , \ldots , m$.
Indeed, we tensor the short exact sequence of sheaves
\[
0 \to \OO_{\tilde{X}} \to \OO_{\tilde{X}} (E) \to \OO_E (E) \to 0 
\]
by $\sE_i$ in $\Db ( \tilde{X} )$ and we obtain a distinguished triangle
\[
\sE_i \to \sE_i (E) \to \iota_\ast \sL_i \xrightarrow{\alpha_i} \sE_i [1] 
\]
in $\Db ( \tilde{X} )$, where we used that $\sE_i $ is defined as $q^\ast \sL_i(-E)$.
It is clear that $\alpha_i $ is a nontrivial morphism in $\Db ( \tilde{X} )$, as $\sE_i (E)$ is an exceptional (and thus indecomposable) object in $\Db( \tilde{X} )$.
Furthermore, we have a distinguished triangle 
\[
\sE_i \to \bT_{\L} \sE_i  \to \iota_\ast \sL_i \xrightarrow{\epsilon_i } \sE_i [1]
\]
see \eqref{eq:main-triangle}, and there is only one morphism $\epsilon_i $ from $\iota_\ast \sL_i \to \sE_i [1]$ (up to scalar) by Lemma \ref{lem:hom-computation} \eqref{eq:hom-computation-iso1}.
It follows thus that $\epsilon_i $ and $\alpha_i $ agree (up to scalar) and so comparing these two distinguished triangles implies that there is an isomorphism $\bT_\L  \sE_i  \cong \sE_i (E)$, for all $i$.
We can yet again rewrite the decomposition \eqref{eq:decomp-antican2}
as
\begin{equation}\label{eq:exc-coll-E-TE}
    \Db( \tilde{X} ) \cong \langle \E , \bT_{\L} \E , \pi^\ast \OO_{X} , \pi^\ast\OO_{X} (1) \rangle ,
\end{equation}
which proves \eqref{eq:full-exc-coll-E-TE}. Finally,
Since $\F$ is a full exceptional collection of $\thick\bT_{\L} ( \E )$, the full exceptional collection \eqref{eq:full-exc-coll-E-F} follows from \eqref{eq:full-exc-coll-E-TE}. 
This concludes the proof of the proposition.
\end{proof}

\begin{corollary}\label{eq:projcone-absorption}
    Let $X$ be the projective cone over a smooth Fano variety $Z$ with projectively normal anticanonical embedding and suppose there is a full geometric exceptional collection $\L$ of type $(m,k)$
    of $ \OO_Z^\perp \subset\Db(Z)$ with $k \leq d-1$.
    Then there exists an admissible semiorthogonal decomposition 
    \[
    \Db (X) \cong \langle \DD_{\mathrm{fg}} ( A^\bullet  ) , \OO_X , \OO_X (1) \rangle , 
    \]
    where 
    $A^\bullet $ is a finite-dimensional $\dg$ $\mathbb{k}$-algebra with cohomology concentrated in degrees $0$ and $k+1-d$ and isomorphic (as $\Hom$-spaces) to $\End ( \sL )$ and $\Hom ( \sL , \bS^{-1} \sL [k-1] )$, respectively, where $\sL = \bigoplus_i \sL_i$.
\end{corollary}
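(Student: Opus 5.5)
The plan is to derive the statement directly from Corollary~\ref{cor:veriderloc-negative-dg} (or equivalently Corollary~\ref{eq:cone-absorption}), using Proposition~\ref{prop:full-exc-ProjCones} to supply the adherence data and to identify the complement. Let $\pi\colon \tilde X\to X$ be the blow-up of the vertex, $q\colon\tilde X\to Z$ the $\P^1$-bundle and $\iota\colon E\cong Z\hookrightarrow\tilde X$ the exceptional divisor.

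\textbf{Hypotheses of Corollary~\ref{cor:veriderloc-negative-dg}.} First, $X$ has a unique singular point: away from the vertex the projective cone is a $\P^1$-bundle over $Z$ minus a section, hence smooth. Second, by Corollary~\ref{cor:bo-localization-anticanonical} (applied with $\sL=\omega_Z^{-1}$, which is allowed by Example~\ref{ex:delpezzo-projnormal}-type projective normality), $\pi$ is a crepant resolution and $X$ is Gorenstein. Third, Theorem~\ref{thm:bo-localization-and-kernel} gives that $\pi_\ast$ induces $\Db(\tilde X)/\ker\pi_\ast\cong\Db(X)$. Now set $\E:=q^\ast\L(-E)$ and $\bT:=\bT_\L$. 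Proposition~\ref{prop:full-exc-ProjCones} says $\E$ and $\bT$ are strongly adherent, hence adherent.

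\textbf{Geometricity of $\E$.} It remains to check that $\E$ is geometric of type $(m,k)$ in $\Db(\tilde X)$. Since $\iota^!\sE_i\cong\sL_i[-1]$ and $\iota^!$ restricts to an equivalence $\thick(\E)\cong\OO_E^\perp$ (Proposition~\ref{prop:defs-agree}), the Hom-spaces and Serre functors of the two subcategories match. The Serre functor on an admissible subcategory with a full exceptional sequence is intrinsic, so the helix generated by $\E$ corresponds to that of $\L$, which is geometric by assumption. Alternatively, one can note $\Hom_{\tilde X}(q^\ast\sL_i,q^\ast\sL_j[l])=\Hom_Z(\sL_i,\sL_j[l])$ because $q_\ast\OO=\OO$.

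\textbf{Conclusion.} Applying Corollary~\ref{cor:veriderloc-negative-dg} yields an admissible decomposition $\Db(X)=\langle\DD_{\mathrm{fg}}(A^\bullet),\pi_\ast{}^\perp\tilde\AA\rangle$. The cohomology of $A^\bullet$ is $\End(\sE)$ in degree $0$ and $\Hom(\sE,\bS^{-1}\sE[k-1])$ in degree $k+1-d$. Via the equivalence $\iota^![1]\colon\thick(\E)\cong\OO_E^\perp=\thick(\L)$, which commutes with Serre functors, these become $\End(\sL)$ and $\Hom(\sL,\bS^{-1}\sL[k-1])$. Finally, the full exceptional collection \eqref{eq:full-exc-coll-E-TE} shows ${}^\perp\tilde\AA=\langle\pi^\ast\OO_X,\pi^\ast\OO_X(1)\rangle$. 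Pushing forward with the projection formula ($\pi_\ast\OO_{\tilde X}=\OO_X$ by rationality) gives $\pi_\ast{}^\perp\tilde\AA=\langle\OO_X,\OO_X(1)\rangle$.

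The main point to be careful about is the transport of geometricity and of the Serre functor from $\OO_E^\perp$ to $\thick(\E)$, i.e.\ checking that $\iota^![1]$ intertwines the internal Serre functors $\bS_\E$ and $\bS_\L$. I would handle this by uniqueness of Serre functors under a fully faithful equivalence of categories.
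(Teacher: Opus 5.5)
Your proposal is correct and follows the paper's route. The paper's proof simply combines Proposition~\ref{prop:full-exc-ProjCones} with Corollary~\ref{eq:cone-absorption}: the proposition gives strong adherence of $\E=q^\ast\L(-E)$ and $\bT_\L$, and the full collection $\E,\bT\E,\pi^\ast\OO_X,\pi^\ast\OO_X(1)$ identifies ${}^\perp\tilde{\AA}$. Your extra checks are exactly the steps the paper leaves implicit, namely transporting geometricity and the cohomology of $A^\bullet$ from $\E$ to $\L$ via $\iota^![1]\colon\thick(\E)\cong\OO_E^\perp$ (or via the fully faithful functor $q^\ast(-)\otimes\OO_{\tilde X}(-E)$), and computing $\pi_\ast\pi^\ast\OO_X(i)\cong\OO_X(i)$.
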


\begin{proof}
This follows from Proposition \ref{prop:full-exc-ProjCones} and Corollary \ref{eq:cone-absorption}.
 \end{proof}

Let us now discuss the tilting case (i.e. the case $k=d-1$ in Corollary \ref{eq:projcone-absorption}) in more detail.
We need the following lemma.

\begin{lemma}\label{lem:help}
We use the same notation as in Proposition \ref{prop:full-exc-ProjCones}.
For all $1\leq i \leq m$, the exceptional object $\sF_{i} [d-2]$ fits into the distinguished triangle 
\begin{equation}\label{eq:left-mutation}
    \sF_{i}[d-2] \to \OO_{\tilde{X}} \otimes \Hom^\bullet ( \OO_{\tilde{X}} , q^\ast \sL_i (-K_Z ) ) \xrightarrow{\mu_i} q^\ast \sL_i (-K_Z ) 
\end{equation}
    in $\Db( \tilde{X} )$,
    where $\mu_i$ is the evaluation map.
    In other words, $\sF_i [d-2]$ is isomorphic to the left mutation of $q^\ast \sL_i (-K_Z )$ through $\OO_{\tilde{X}}$.
\end{lemma}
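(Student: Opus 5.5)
The plan is to identify $\thick \bT(\E)$ with $q^\ast(\OO_Z^\perp)$ and then compute the inverse Serre functor there using the standard mutation formula. From the proof of Proposition~\ref{prop:full-exc-ProjCones} we know that
\[
\bT\sE_i = \bT_\L \sE_i \cong \sE_i(E) = q^\ast\sL_i .
\]
Hence $\thick \bT(\E) = \thick(q^\ast\L) = q^\ast(\OO_Z^\perp)$, and
\[
\sF_i = \bS^{-1}_{\bT\E}(q^\ast \sL_i).
\]
Since $q\colon \tilde X\to Z$ is a $\P^1$-bundle, $q^\ast\colon \Db(Z)\to\Db(\tilde X)$ is fully faithful. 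So $q^\ast$ restricts to an equivalence $\OO_Z^\perp \xrightarrow{\cong} \thick(q^\ast\L)$. Serre functors commute with equivalences (as in Lemma~\ref{lem:equiv-bimod} and the proof of Lemma~\ref{L:GeomHelicesandhigherreprinfinite}), which gives
\[
\bS^{-1}_{\bT\E}\circ q^\ast \cong q^\ast \circ \bS^{-1}_{\OO_Z^\perp}.
\]
This reduces the lemma to a computation on $Z$.

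Next I would use the semiorthogonal decomposition $\Db(Z)=\langle \OO_Z^\perp, \OO_Z\rangle$ and the standard formula for the Serre functor of a component: for $\DD=\langle \AA,\BB\rangle$ one has $\bS_\AA^{-1}\cong \bL_\BB\circ \bS_\DD^{-1}$ on $\AA$. This follows because $\bL_\BB$ is left adjoint to the inclusion $\AA\hookrightarrow\DD$: for $a,a'\in\AA$,
\[
\Hom(a',\bL_\BB \bS^{-1}_\DD a)\cong\Hom(\bS_\DD^{-1}a,a')^{\ast\,\vee}
\]
by Serre duality. Here $\bS_Z^{-1}=(-)\otimes\omega_Z^{-1}[1-d]$ since $\dim Z=d-1$, so
\[
\bS^{-1}_{\OO_Z^\perp}(\sL_i)\cong \bL_{\OO_Z}\bigl(\sL_i(-K_Z)\bigr)[1-d].
\]
With the convention that $\bL_{\OO_Z}(M)$ is the cone of the evaluation map $\OO_Z\otimes\Hom^\bullet(\OO_Z,M)\to M$, we get
\[
\sF_i[d-2]\cong q^\ast\bL_{\OO_Z}\bigl(\sL_i(-K_Z)\bigr)[-1],
\]
which is the cocone of $q^\ast$ of the evaluation map.

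Finally I would transport the triangle along $q^\ast$. Since $q^\ast\OO_Z=\OO_{\tilde X}$ and $q^\ast$ is fully faithful,
\[
\Hom^\bullet(\OO_{\tilde X},q^\ast\sL_i(-K_Z))\cong\Hom^\bullet(\OO_Z,\sL_i(-K_Z)),
\]
and $q^\ast$ of the evaluation map is the evaluation map $\mu_i$. This yields the triangle \eqref{eq:left-mutation}, so $\sF_i[d-2]$ is the left mutation of $q^\ast\sL_i(-K_Z)$ through $\OO_{\tilde X}$.

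The main obstacle is bookkeeping rather than ideas. First, I need the precise identification $\bT\E=q^\ast\L$ with $\sE_i=q^\ast\sL_i(-E)$ and $\OO(E)|$ compatible with \eqref{eq:lbundle-eqn}. Second, I need to get the shift and direction conventions right in the formula $\bS_\AA^{-1}=\bL_\BB\bS_\DD^{-1}$. I would verify the shift at the end by checking that $\Hom^\bullet(\sF_i[d-2],\sG_j)$ lands in degree $0$, as required by Corollary~\ref{cor:tilting} with $k=d-1$.
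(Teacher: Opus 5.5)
Your argument is correct, and it follows a genuinely different route from the paper.

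The paper works entirely on $\tilde X$. Inside the full exceptional collection \eqref{eq:full-exc-coll-E-F} it defines $\sB_j$ as the right mutation of $\sF_j[d-2]$ through $\OO_{\tilde X}$. It then proves $\sB_j\cong q^\ast\sL_j(-K_Z)$ by descending induction on $j$, checking the orthogonality conditions \eqref{eq:projbundleprop1} and \eqref{eq:projbundleprop2}. These checks use the projective bundle formula and the computation \eqref{eq:hom-vanishing}, which relies on the spherical objects $\iota_\ast\sL_j$ and Lemma \ref{lem:hom-K-F}. Finally, the shift is pinned down by a separate $\Hom$ computation.

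You instead transport the problem to $Z$. Using $\bT\sE_i\cong q^\ast\sL_i$ (which the paper's proof also invokes) and full faithfulness of $q^\ast$, you identify $\bS^{-1}_{\bT\E}$ with $q^\ast\circ\bS^{-1}_{\OO_Z^\perp}\circ(q^\ast)^{-1}$. The standard formula $\bS^{-1}_{\AA}\cong\mathsf{L}_{\BB}\circ\bS^{-1}_{\DD}$ for $\DD=\langle\AA,\BB\rangle$ then gives the triangle, with the correct shift, in one step.

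What each route buys: yours is shorter and more conceptual, and beyond $\bT\sE_i\cong q^\ast\sL_i$ it needs neither Lemma \ref{lem:hom-K-F} nor the collection \eqref{eq:full-exc-coll-E-F}. The paper's route, in exchange, exhibits the mutation explicitly inside the collection on $\tilde X$, using only tools already set up there.

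Two small remarks:
\begin{itemize}
\item Your displayed justification of the Serre functor formula has the first $\Hom$ in the wrong direction and a spurious double dual. It should read $\Hom(\mathsf{L}_{\BB}\bS^{-1}_{\DD}a,a')\cong\Hom(\bS^{-1}_{\DD}a,a')\cong\Hom(a',a)^\ast$ for $a,a'\in\AA$, which is exactly the defining property of $\bS^{-1}_{\AA}$.
\item The final consistency check via $\sG_j$ is unnecessary, since your computation already determines the shift.
\end{itemize}
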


\begin{proof}
We consider the full exceptional collection 
\[
   q^\ast \sL_1 ( -E )  , \ldots , q^\ast \sL_m ( -E ) ,  \sF_1 , \ldots , \sF_m , \OO_{\tilde{X}} ,  \OO_{\tilde X} (1) .
\]
of $\Db ( \tilde{X} )$, see \eqref{eq:full-exc-coll-E-F}.
We prove the lemma inductively.
More concretely, let us denote by $\sB_j$ the right mutation of $\sF_{j} [d-2]$ through $\OO_{\tilde{X}}$ in $\Db( \tilde{X})$, for all $j$.
We show by induction that $\sB_j $ is isomorphic to $\sL_j (-K_Z)$ in $\Db( \tilde{X} )$, for all $j$.
    Assume thus by induction hypothesis that there are isomorphism $ \sB_m \cong q^\ast \sL_m (-K_Z) , \ldots , \sB_{j+1} \cong q^\ast \sL_{j+1} (-K_Z)$.
    We obtain thus a full exceptional collection
    \[
     q^\ast \sL_1 ( -E )  , \ldots , q^\ast \sL_m ( -E ) ,  \sF_1 , \ldots , \sF_{j-1} , \OO_{\tilde{X}} , \sB_j , q^\ast \sL_{j+1} (-K_Z)  , \ldots  q^\ast \sL_m (-K_Z)  , \OO_{\tilde X} (1) 
    \]
     by mutating the exceptional collection above.
     In the following, we show that $\sB_j \cong q^\ast \sL_j (-K_Z)$.

    Let us first show that $q^\ast \sL_j (-K_Z)$ is contained in $\langle \sB_j\rangle $, for all $j$.
    To see this, it is enough to show that the following graded $\Hom$-spaces in $\Db(\mathbb{k})$ vanish
    \begin{equation}\label{eq:projbundleprop1}
        \Hom^\bullet ( q^\ast \sL_j (-K_Z) , q^\ast \sL ( -E ) ) = 0 \quad \text{and} \quad \Hom^\bullet ( q^\ast \sL_j (-K_Z) , \sF_i ) = 0
    \end{equation}    
    for all $1\leq i \leq j -1$,
    as well as that the vanishing of graded $\Hom $-spaces 
    \begin{equation}\label{eq:projbundleprop2}
        \Hom^\bullet ( q^\ast \sL_{l} (-K_Z) , q^\ast \sL_j (-K_Z) ) = 0 \quad \text{and} \quad \Hom^\bullet ( \OO_{\tilde X}(1) , q^\ast \sL_j (-K_Z) ) = 0
    \end{equation}
    holds,
    for all $j+1 \leq l\leq m$.
    For the vanishing of  
    the first term of \eqref{eq:projbundleprop1},
    we note that $q^\ast \L ( -E ) $ is contained in $q^\ast \Db(Z) \otimes \OO(-1)$ (by the isomorphism of line bundles $\OO_{\tilde{X}}(-E) \cong \OO(-1) \otimes q^\ast \OO_Z (-K_Z )$, see \eqref{eq:lbundle-eqn}) and $ q^\ast \sL_j (-K_Z)$ is contained in $q^\ast \Db(Z)$.
    Thus by semiorthogonality between $q^\ast \Db(Z) \otimes \OO(-1)$ and $q^\ast \Db(Z) $ (projective bundle formula), we get the desired vanishing of graded $\Hom$-spaces.
    Similarly, 
    the the vanishing of 
    the second term of \eqref{eq:projbundleprop2}
    follows from the isomorphism of line bundles $\OO_{\tilde{X}}(H) \cong \OO(1)$ and the semiorthogonality between $q^\ast \Db(Z)$ and $q^\ast \Db(Z) \otimes \OO(1)$.
    The vanishing of 
    the first term of \eqref{eq:projbundleprop2}
    follows easily from the fact that $\L$ is an exceptional collection.
    
    It remains to show the vanishing of the second term of \eqref{eq:projbundleprop1}.
        Let us show more generally that for all $1\leq i \leq j \leq m$ the following isomorphism 
    \begin{equation}\label{eq:hom-vanishing}
      \Hom^\bullet ( q^\ast \sL_j (-K_Z) , \sF_{i}[d-2] ) \cong \begin{cases}
        \mathbb{k}[-1],& \text{if $j=i$} \\
        0.& \text{if $j>i$}
    \end{cases}  
    \end{equation}
    in $\Db(\mathbb{k})$ holds.
    To show this, we note that tensoring the short exact sequence 
    \[
    0\to \OO_{\tilde{X}} \to \OO_{\tilde{X}} (E) \to \iota_\ast \OO_Z ( K_Z ) \to 0
    \]
    by $p^\ast \sL_j ( -K_Z )$ in $\Db( \tilde{X} )$ 
    yields a distinguished triangle 
    \[
    p^\ast \sL_j ( -K_Z ) \to p^\ast \sL_j ( 1 ) \to \iota_\ast \sL_j 
    \]
    in $\Db( \tilde{X} )$,
    where we used the isomorphism of line bundles $\OO_{\tilde{X}}(E) \cong \OO_{\tilde{X}}( 1 ) \otimes p^\ast \OO_Z(K_Z)$ in the second term.
    Applying $\Hom^\bullet ( - , \sF_i [d-2])$, we obtain a distinguished triangle 
    \begin{equation}\label{eq:Lj-Fi}
        \Hom^\bullet ( \iota_\ast\sL_j , \sF_i [d-2] ) \to \Hom^\bullet ( p^\ast \sL_j ( 1 ) , \sF_i [d-2] ) \to \Hom^\bullet ( p^\ast\sL_i (-K_Z ) , \sF_i [d-2] )
    \end{equation}
    in $\Db(\mathbb{k})$.
    The first term vanishes for $j>i$ and is isomorphic to $\mathbb{k}[-2]$ by Lemma \ref{lem:hom-K-F} \eqref{lem:hom-K-F-a}.
    For the second term, we note that $\sF_i $ is contained in $\thick q^\ast \L  \subset q^\ast \Db(Z) $ (recall that there is an identification $\bT  (\E ) \cong q^\ast \L$).
    On the other hand, $p^\ast \sL_j ( 1 )$ is contained in $q^\ast \Db(Z) \otimes \OO(1)$, and hence by semiorthogonality between $q^\ast \Db(Z)$ and $q^\ast \Db(Z) \otimes \OO(1)$, we have that the second term of \eqref{eq:Lj-Fi} vanishes.
    We conclude \eqref{eq:hom-vanishing}.

We have thus shown \eqref{eq:projbundleprop1} and \eqref{eq:projbundleprop2} and thus that $q^\ast\sL_j (-K_Z) $ is contained in $\langle \sB_j\rangle $.
Since $q^\ast\sL_j (-K_Z) $ is exceptional, we have that $\sB_{j} \cong q^\ast\sL_j (-K_Z) [N]$ in $\Db(\tilde{X})$, for some $N$ in $\Z$.
    Since $\sB_{j}$ is by definition the right mutation of $\sF_{j}[d-2]$ through $\OO_{\tilde{X}}$, there is a defining distinguished triangle of $\sB_{j}$
    \[
    \sF_{j}[d-2] \to \OO_{\tilde{X}} \otimes \Hom^\bullet ( \sF_{j}[d-2] , \OO_{\tilde{X}} )^\ast \to \sB_{j} 
    \]
    given by the co-evaluation morphism.
    Applying $\Hom^\bullet ( q^\ast\sL_j (-K_Z) , - )$ onto this distinguished triangle, we obtain
    \begin{align*}
        \Hom^\bullet ( q^\ast\sL_j (-K_Z) ,\sF_{j}[d-2] ) & 
        \to \Hom^\bullet ( q^\ast\sL_j (-K_Z) , \OO_{\tilde{X}} ) \otimes \Hom^\bullet ( \sF_{j}[d-2] , \OO_{\tilde{X}} )^\ast \\
        &\to \Hom^\bullet ( q^\ast\sL_j (-K_Z) , \sB_{j} )
    \end{align*}
    in $\Db(\mathbb{k})$.
    Using \eqref{eq:hom-vanishing} for the case $j=i$ in the first term, together with the fact that the second term $\Hom^\bullet ( q^\ast\sL_j (-K_Z) , \OO_{\tilde{X}} )$ vanishes (as $\sL_j$ is contained in $\langle\OO_Z\rangle^\perp$), we get that $\Hom^\bullet ( q^\ast\sL_j (-K_Z) , \sB_{j}  ) \cong \mathbb{k}[0]$.
    In particular, this implies that $N=0$.

This completes the inductive proof
        that the right mutation of $\sF_{j}$ through $\OO_{\tilde{X}}$ is $q^\ast\sL_j (-K_Z)$, for all $j$.
    But this also means that the left mutation of $q^\ast\sL_j (-K_Z)$ through $\OO_{\tilde{X}}$ is just $\sF_{j}$, for all $j$.
    The proof of the lemma is thus complete.
\end{proof}

\begin{theorem}\label{thm:projcone-split}
    Let $X$ be the projective cone over a smooth $(d-1)$-dimensional very strong Fano variety $Z$ (cf. Definition \ref{def:verystrong-Fano}) with respect to the anticanonical embedding and suppose the corresponding (full geometric) exceptional collection $\L  = ( \sL_1 , \ldots , \sL_m ) $ of $\OO_Z^\perp $ 
    is given by coherent sheaves.
    Then there exists an admissible semiorthogonal decomposition 
    \begin{equation}\label{eq:porjconesod}
        \Db (X) \cong \langle \Db ( A  ) , \OO_X , \OO_X (1) \rangle , 
    \end{equation}
    where 
    $A $ is the finite-dimensional split $\mathbb{k}$-algebra 
    \[
A \cong  \End ( \sL  ) \oplus  \Hom^\bullet ( \sL , \bS^{-1} \sL [d-2] ) 
    \]
    as described in \eqref{E:TensorModSquare}.
\end{theorem}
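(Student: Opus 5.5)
The plan has two parts. First, obtain the decomposition \eqref{eq:porjconesod} with a tilting-type algebra $A$ sitting in the short exact sequence \eqref{eq:ses-algebra-A}. Second, show that the algebra epimorphism $q=\varphi\colon A\to\End(\sE)$ of Proposition \ref{prop:algebra-extensions} splits, so that Corollary \ref{C:QuotientPreproj} (via Lemma \ref{L:Quotient-Tensor-Alg}) gives the split description \eqref{E:TensorModSquare}.

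For the first part, $Z$ is very strong, so $\L$ is geometric of type $(m,d-1)$. Corollary \ref{eq:projcone-absorption} with $k=d-1$ then gives $\Db(X)\cong\langle \Db(A),\OO_X,\OO_X(1)\rangle$, where $A=\End(\sG)$ and $\sG$ is built from $\E=q^\ast\L(-E)$ as in Proposition \ref{prop:G}. By Proposition \ref{prop:full-exc-ProjCones} and Theorem \ref{thm:strong-adherence-equiv}, $\iota^!$ identifies $\thick(\E)$ with $\OO_E^\perp$. Hence $\End(\sE)\cong\End(\sL)$, and $\Hom(\sE,\bS^{-1}_\E\sE[d-2])\cong\Hom(\sL,\bS^{-1}\sL[d-2])$ as bimodules.

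For the splitting, I would construct an algebra section $E\to\End(\sG)$ using the geometry of the $\P^1$-bundle $q\colon\tilde X\to Z$. By Lemma \ref{lem:help}, $\sF_i[d-2]$ is the left mutation of $q^\ast\sL_i(-K_Z)$ through $\OO_{\tilde X}$, which identifies $\sF[d-2]$ concretely. I would then describe $\sG_i$ as an explicit object, expected to be a sheaf, with a natural map $\beta_i\colon\sG_i\to\sE_i$. The best candidate is obtained by pulling back along $q$: all objects in play ($\sE_i=q^\ast\sL_i\otimes\OO(-1)\otimes q^\ast\omega_Z^{-1}$, $\sF_i$, $\OO_{\tilde X}$) are built functorially from the $\sL_i$ by $q^\ast$ and twists. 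Since $q^\ast$ and tensoring by line bundles are functors, every $\epsilon\in\End(\sL)$ induces compatible endomorphisms of $\sE$, of $\sF$, and of the evaluation/mutation triangles. Concretely, I would try to show that the triangle \eqref{eq:tilting-triangle} arises functorially in $\sL_i$ from a functorial triangle of the form $q^\ast(-)\otimes(\text{fixed complex})$, for instance from the Euler-type sequence on the $\P^1$-bundle tensored with $q^\ast\sL_i$. Such a functor $\Psi\colon\thick(\L)\to\Db(\tilde X)$ with $\Psi(\sL_i)=\sG_i$ and $\beta\circ\Psi(\epsilon)=\sE(\epsilon)\circ\beta$ gives $\psi(\epsilon):=\Psi(\epsilon)$, an algebra map with $\varphi\psi=\id$ by the uniqueness characterization of $\varphi$ in the proof of Proposition \ref{prop:algebra-extensions}.

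The hypothesis that the $\sL_i$ are sheaves, and that $(\L,\OO_Z)$ is strong and geometric, enters here. Morphisms between the $\sL_i$ are honest sheaf maps in degree $0$, so no higher $A_\infty$-corrections arise, and the relevant $\Hom$-spaces, such as $\Hom(\OO_Z,\sL_i(-K_Z))=\bH^0(\sL_i\otimes\omega_Z^{-1})$, are concentrated in degree $0$. With this concentration the evaluation map $\mu_i$ is a genuine sheaf map, and its kernel or cokernel is functorial in $\sL_i$. This functoriality is what makes the section multiplicative, not merely linear.

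I expect the main obstacle to be identifying $\sG_i$ functorially, namely showing that the cocone of $\nu_i$ is realized by such a natural construction, so that the required compatibility square commutes on the nose. This must avoid the choice ambiguity in $\nu_i$ noted in the footnote to Lemma \ref{lem:nu-iso}. I would first compute $\sG_i$ explicitly as $q^\ast\sL_i\otimes\mathcal{V}$ for a fixed object $\mathcal{V}$ built from $\OO(-1)$, $\OO$ and the global sections of $\omega_Z^{-1}$, checked via Lemma \ref{lem:help} and Proposition \ref{prop:G}(b). If that fails, the fallback is to show the relevant Hochschild class in $\mathrm{HH}^2(E,I)$ vanishes using the $\mathbb{G}_m$-action on the cone: this grading would put $I$ in a nonzero degree and force the cocycle $C$ of Proposition \ref{P:Hochschildcocylce} to vanish.
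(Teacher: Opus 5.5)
Your first step agrees with the paper: Corollary \ref{eq:projcone-absorption} with $k=d-1$, the identification $\End(\sE)\cong\End(\sL)$, and the observation that $\beta\circ\psi(f)=f\circ\beta$ forces $\varphi\psi=\id$. The splitting, however, is the actual content of the theorem, and it is not established. Worse, the concrete model you propose for it cannot work.

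\textbf{The proposed model for $\sG_i$ fails.} No object of the form $q^\ast\sL_i\otimes\mathcal V$ with $\mathcal V$ fixed can be $\sG_i$, and neither can anything coming from a functorial triangle $q^\ast(-)\otimes(\text{fixed complex})$, such as a relative Euler sequence. By Lemma \ref{lem:help}, $\sF_i[d-2]$ has rank $h^0(\sL_i\otimes\omega_Z^{-1})-\operatorname{rk}\sL_i$. The triangle \eqref{eq:tilting-triangle} then gives $\operatorname{rk}\sG_i=h^0(\sL_i\otimes\omega_Z^{-1})$. For $Z=\P^2$ and $\L=(\OO(-2),\OO(-1))$ these ranks are $3$ and $6$, whereas $q^\ast\sL_i\otimes\mathcal V$ would have the same rank for both line bundles. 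The $i$-dependence lives in the trivial bundle $\OO_{\tilde X}\otimes\bH^0(\sL_i\otimes\omega_Z^{-1})$, which is not a twist of $q^\ast\sL_i$.

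\textbf{The missing idea is a natural realization of the extension class $\nu_i$.} The paper combines two ingredients:
\begin{enumerate}
\item the evaluation triangle $\sF_i[d-2]\to\OO_{\tilde X}\otimes\bH^0(\sL_i(-K_Z))\xrightarrow{\mu_i}q^\ast\sL_i(-K_Z)$ of Lemma \ref{lem:help};
\item the multiplication map $w\colon\sE_i=q^\ast\sL_i(-K_Z)\otimes\OO(-1)\to q^\ast\sL_i(-K_Z)$ by the section $w\in\bH^0(\OO(1))$ cutting out $H=\P(\omega_Z^{-1})$.
\end{enumerate}
The composite of $w$ with the connecting map $q^\ast\sL_i(-K_Z)\to\sF_i[d-1]$ is nonzero. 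Since $\Hom(\sE_i,\sF_i[d-1])\cong\mathbb{k}$, it is therefore a nonzero multiple of $\nu_i$. This one-dimensionality is what removes the choice ambiguity you worry about. The octahedral axiom then yields
\[
\sG_i\cong\bigl\{\OO_{\tilde X}\otimes\bH^0(\sL_i(-K_Z))\oplus q^\ast\sL_i(-E)\xrightarrow{\ \mu_i\oplus w\ }q^\ast\sL_i(-K_Z)\bigr\},
\]
with $\beta_i$ the projection onto $q^\ast\sL_i(-E)$.

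Only with this two-term model does functoriality become visible. For $f\colon\sL_i\to\sL_j$, the pair $\bigl(\bH^0(f(-K_Z))\otimes\id\oplus q^\ast f(-E),\ q^\ast f(-K_Z)\bigr)$ is a chain map, by naturality of evaluation and of multiplication by $w$. It is multiplicative, and it satisfies $\beta_j\psi(f)=q^\ast f(-E)\,\beta_i$. Your remark that $\mu_i$ is natural in $\sL_i$ is the right ingredient. On its own, though, it only gives functoriality of $\sF_i$, not of the extension $\sG_i$; the section $w$ and the octahedron are what is missing.

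\textbf{The $\mathbb{G}_m$ fallback.} This could in principle give a genuinely different proof: a weight grading with $E$ in weight $0$ and $I$ in nonzero weight would split $\varphi$ via $A_0\cong E$. To make it work you would have to lift $\sE_i$, $\sF_i$, $\nu_i$, and hence $\sG_i$, to the equivariant derived category. You would then have to actually compute that $I\cong\Hom(\bT\sE,\sF[d-2])$ sits in nonzero weight, which ultimately comes from the sections cutting out $E$ and $H$ having different weights. As written, this is only an assertion.
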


\begin{proof}
Recall that, by definition of a very strong Fano variety, the exceptional collection $\L$ is geometric of type $(m, d-1)$.
 Let $\pi \colon \tilde{X}\to X$ be the blow-up of $X$ at the singularity and let $q\colon \tilde{X} \to Z$ the corresponding $\P^1$-bundle structure on $\tilde{X}$.
By Corollary \ref{eq:projcone-absorption}, there exists a semiorthogonal decomposition of the form \eqref{eq:porjconesod}, with a finite-dimensional $\mathbb{k}$-algebra $A$ fitting into an exact sequence
\[
0\to \Hom ( \sL , \bS^{-1} \sL [d-2] ) \to A \xrightarrow{q} \End ( \sL  ) \to 0 
    \]
 where $q$ is a morphism of $\mathbb{k}$-algebras.
 Here, $A=\End (\sG )$, where $\sG = \bigoplus_i \sG_i$ in $\Db( \tilde{X})$ is as constructed in Proposition \ref{prop:G} with respect to the geometric exceptional collection $\E = ( q^\ast \sL_1 (-E) , \ldots , q^\ast \sL_m (-E) )$ of type $(m, d-1 )$. 

To prove the theorem, we need to show that the short exact sequence of $\End ( \sL )$ bi-modules (see Proposition \ref{prop:algebra-extensions} \eqref{prop:algebra-extensions-2}) above splits.
We are going to show the splitting by constructing a split morphism of algebras $\psi \colon \End ( \sL ) \to A$ below.
    Let us first recall the construction of the $\sG_i$.
    We consider the full exceptional collection \eqref{eq:full-exc-coll-E-F} of $\Db( \tilde{X} )$ (together with the isomorphism of line bundles $\OO_{\tilde{X}} (1 ) \cong \pi^\ast \OO_X(1)$):
    \[
    q^\ast \sL_1 (-E) , \ldots , q^\ast \sL_m (-E) , \sF_1 , \ldots, \sF_{m} , \OO_{\tilde{X}} , \OO_{\tilde{X}}(1) .
    \]
     Further, 
     there is a distinguished (and unique up to scalar, since $\L$ is geometric and of type $(m, d-1 )$) morphism $\nu_i \colon q^\ast \sL_i (-E) \to \sF_i [d-1]$ for all $i$ and we call $\sG_i$ its cocone (cf. Proposition \ref{prop:G}).

    Let us now give an alternative description of 
    $\sG_i$, exploiting the fact that $X$ is a projective cone (i.e. the description of the $\sG_i$ below might not work for general $X$).
Namely, we have that $\sF_{i}[d-2]$ fits into a distinguished triangle
    \[
    \sF_{i}[d-2] \to \OO_{\tilde{X}} \otimes \mathrm{H}^0 ( \sL_i (-K_Z) ) \xrightarrow{\mu_i} q^\ast \sL_1 (-K_Z) 
    \]
    in $\Db( \tilde{X} )$ by Lemma \ref{lem:help},
    where $\mu_i$ is the evaluation map, for all $1\leq i \leq m$.
    We use here that $\L , \OO_Z$, and thus $\OO_Z , \L(-K_Z)$, is a geometric exceptional collection of type $(m+1,d)$ by Corollary \ref{cor:geometric-subcollection} and, in particular, there is no higher sheaf cohomology of $\sL_i(-K_Z)$.
    Moreover, we consider the following commutative diagram.
    \begin{equation}\label{eq:octa-Gi}
\xymatrix{
    q^\ast \sL_i (-E) \ar@{<--}[d]^{\beta_i}\ar@{->}[r]^{\cong} & q^\ast \sL_i (-E) \ar@{<-}[d]^{0 \oplus \id } &   \\
    C_i  \ar@{<--}[d]^{\alpha_i}\ar@{->}[r] & \OO_{\tilde{X}} \otimes \mathrm{H}^0 ( \sL_i (-K_Z) ) \oplus q^\ast \sL_i (-E) \ar@{<-}[d]^{(\id , 0 )} \ar@{->}[r]^-{\mu_i \oplus w } &  q^\ast \sL_i (-K_Z) \ar@{<-}[d]^{=} \ar@{->}[r] & C_i [1] \ar@{<--}[d] \\
 \sF_{i}[d-2] \ar@{->}[r] &\OO_{\tilde{X}} \otimes \mathrm{H}^0 ( \sL_i (-K_Z) ) \ar@{->}[r]^-{\mu_i} &  q^\ast \sL_i (-K_Z ) \ar@{->}[r] & \sF_{i}[d-1]} 
\end{equation}
 Here $w$ denotes the section in $\mathrm{H}^0 ( \OO ( 1 ) )$ corresponding to the section $H = \P( \omega_Z^{-1} )$ and the morphism $w \colon q^\ast\sL_i (-E) \to q^\ast \sL_i (-K_Z)$ is induced by $w \in \mathrm{H}^0 ( \OO ( 1 ) )$ after identifying $q^\ast\sL_i (-E) \cong q^\ast \sL_i (-K_Z) \otimes \OO_{\tilde{}}(-1)$ (via the isomorphism of line bundles $\OO_{\tilde{X}}(-E) \cong q^\ast \OO_Z (-K_Z) \otimes \OO_{\tilde{X}}(-1)$).  
Moreover, the object $C_i[1]$ in $\Db( \tilde{X} )$ is the cone of the morphism $\mu_i \oplus w$ and the dotted arrow $\alpha_i$ is a morphism induced by the commutative diagram of the horizontal distinguished triangles.
By the octahedral axiom, the cone of $\alpha_i$ is isomorphic to $q^\ast \sL_i (-E)$ in $\Db( \tilde{X} )$.
Since $C_i$ is non-trivial (i.e. not isomorphic to the direct sum of $\OO_{\tilde{X}} \otimes \mathrm{H}^0 ( \sL_i (-K_Z) ) \oplus q^\ast \sL_i (-E)$ and $q^\ast\sL_i(-K_Z)[-1]$) and since $\Hom(\sL_i (-E) , \sF_{i}[d-1])\cong \mathbb{k}$, we get that $C_i\cong \sG_i$ in $\Db( \tilde{X} )$.
Thus $\sG_i$ fits into a distinguished triangle
\begin{equation}\label{eq:two-term}
    \sG_i \to \OO_{\tilde{X}} \otimes \mathrm{H}^0 ( \sL_i (-K_Z) ) \oplus q^\ast\sL_i(-E)  \xrightarrow{\mu_i \oplus w} q^\ast\sL_i (-K_Z) .
\end{equation}
Since the second and the third object in this distinguished triangle are coherent sheaves, as well as the morphism $\mu_i \oplus w$ is a morphism of sheaves, the cone complex of $\mu_i \oplus w$ is (by definition of the cone complex) the following two-term complex 
and thus there is an isomorphism
\begin{equation}\label{eq:two-term-complex}
    \sG_i \cong \{ \OO_{\tilde{X}} \otimes \mathrm{H}^0 ( \sL_i (-K_Z) ) \oplus q^\ast\sL_i(-E)  \xrightarrow{\mu_i \oplus w} q^\ast\sL_i (-K_Z) \}
\end{equation}
in $\Db(\tilde{X})$.
Note that the term $q^\ast\sL_i (-K_Z)$ is concentrated in degree $0$.

Let us now define the algebra morphism $\psi\colon \End(\sL) \to A$ using the description \eqref{eq:two-term-complex} of $\sG_i$.
Namely, for any morphism $f$ in $\Hom ( \sL_i , \sL_j )$ we define the assignment $\psi \colon \Hom ( \sL_i , \sL_j ) \to \Hom ( \sG_i , \sG_j )$ by\footnote{Note that we are abusing notation here. Indeed, $\id \otimes f $ is $\id\otimes f (-K_Z)$, $f$ in the lower right component of the matrix is $q^\ast f (-E)$, and the right-most $f$ is $q^\ast f(-K_Z)$.}:

    \begin{equation*}\label{E:farrows}\begin{tikzpicture}[description/.style={fill=white,inner sep=2pt}, baseline=(current  bounding  box.center)]

    \matrix (n) [matrix of math nodes, row sep=1.5em,
                 column sep=2em, text height=1.5ex, text depth=0.25ex,
                 inner sep=2pt, nodes={inner xsep=0.3333em, inner
ysep=0.3333em}] at (0, 0)
    {   
    && \{ \OO_{\tilde{X}} \otimes \mathrm{H}^0 ( \sL_i (-K_Z) ) \oplus q^\ast\sL_i(-E) && q^\ast\sL_i (-K_Z) \}  \\
    f  \\
    &&\{ \OO_{\tilde{X}} \otimes \mathrm{H}^0 ( \sL_j (-K_Z) ) \oplus q^\ast\sL_j (-E) && q^\ast\sL_j (-K_Z) \}  \\};
 
  \draw[->] ($(n-3-3.east) + (0,0mm)$) to node[midway, yshift=8]{$\mu_j \oplus w$} ($(n-3-5.west) + (0mm,0mm)$);
\draw[->] ($(n-1-3.east) + (0,0mm)$) to node[midway, yshift=8]{$\mu_i \oplus w$} ($(n-1-5.west) + (0mm,0mm)$);

\draw[->] ($(n-1-5.south) + (0,0mm)$) to node[midway, xshift=8]{$f$} ($(n-3-5.north) + (0mm,0mm)$);
\draw[->] ($(n-1-3.south) + (0,0mm)$) to node[midway, xshift=38]{$\begin{pmatrix} \id \otimes f & 0 \\ 0 & f \end{pmatrix} $} ($(n-3-3.north) + (0mm,0mm)$);

\draw[|->] ($(n-2-1.east) + (5mm,0mm)$) to ($(n-2-1.east) + (20mm,0mm)$);

\end{tikzpicture}\end{equation*}
It is clear that this assignment sends $f$ to a morphism of complexes.
Moreover, it is straightforward to check that this is a $\mathbb{k}$-algebra homomorphism (i.e. behaves well with respect to composition of morphisms).
Finally, we check that $\psi(f)$ evaluated at $q= (\ast\cdot \beta_i )^{-1} ( \beta_j \cdot (-) )$ (see Proposition \ref{prop:G} \eqref{lem:G-c}).
By \eqref{eq:octa-Gi}, we see that the 
morphism $\beta_j $ in $\Db( \tilde{X} )$ is induced by the morphism of complexes
\begin{equation}
\xymatrix{
     \OO_{\tilde{X}} \otimes \mathrm{H}^0 ( \sL_j (-K_Z) ) \oplus q^\ast \sL_j (-E) \ar@{->}[d]^{( 0 , \id )} \ar@{->}[r]^-{\mu_j \oplus w } &  q^\ast \sL_j (-K_Z) \ar@{->}[d]   \\
  q^\ast \sL_j (-E)  \ar@{->}[r] &  0 } 
\end{equation}
for all $j$.
From this description of $\beta_j$ it follows that there is a commutative diagram
    \begin{equation*}
\xymatrix{
     \sG_i \ar@{->}[r]^-{\beta_i} \ar@{->}[d]_{\psi(f)} & q^\ast \sL_i (-E) \ar@{->}[d]_{f}\\
 \sG_j \ar@{->}[r]^-{\beta_j} & q^\ast \sL_j (-E)} ,
\end{equation*}
in $\Db( \tilde{X} )$.
Hence, we have the chain of equalities
\[
q( \psi (f) ) = (\ast\cdot \beta_i )^{-1} ( \beta_j \cdot \psi (f) )  = (\ast\cdot \beta_i )^{-1} ( f \cdot \beta_i ) = f \in \End ( \sL ) .
\]
In other words, $q \colon \End (\sG ) \to \End (\sL)$ is a split homomorphism of $\mathbb{k}$-algebras.
This completes the proof of the theorem.
\end{proof}

\begin{remark}
    The category $\OO_Z^\perp$ admits geometric collections of line bundles of type $(d-1, d-1)$ for $Z = \P^{d-1}$ (see \cite[Example 2.9]{bondal-polishchuk} or \cite[Example 3.3 (a)]{BridgelandStern}, and e.g. Proposition \ref{P:dBlockGeometricRed}) and of type (3,2) $Z = \P^1\times \P^1$ (see \cite[Example 3.3 (b)]{BridgelandStern} or Example \ref{ex:p1xp1} and  Proposition \ref{P:dBlockGeometricRed}), respectively.
    Hence we can apply Theorem \ref{thm:projcone-split} using Example \ref{ex:delpezzo-projnormal}.
    Moreover, one can check that the morphism $\mu_i \oplus w$ appearing in the description of the $\sG_i$ as a two-term complex
    \[
    \sG_i \cong \{ \OO_{\tilde{X}} \otimes \mathrm{H}^0 ( \sL_i (-K_Z) ) \oplus q^\ast\sL_i(-E)  \xrightarrow{\mu_i \oplus w} q^\ast\sL_i (-K_Z) \}
    \]
    in the proof of Theorem \ref{thm:projcone-split} (see \ref{eq:two-term-complex}) is surjective, for all $i$.
    This implies for all $i$ that $\sG_i$ is the kernel of $\mu_i \oplus w$ and, in particular, a locally free sheaf.
\end{remark}

The following example is different from all other examples we consider. Namely, these geometric sequences  don't come from $d$-block exceptional sequences!

\begin{example}\label{ex:del-pezzo}
Let us  discuss examples of Theorem \ref{thm:projcone-split} for explicit $Z$.
Let $Z$ be a del Pezzo surface of degree $8$, or $7$.
That is, $Z$ is a  blow-up of $\mathbb{P}^2$ in $1$ and $2$ points, respectively.
We note first that the anticanonical embedding is projectively normal, see Example \ref{ex:delpezzo-projnormal}.
We explain further that $Z$ admits a full geometric exceptional collection of type $(4, 3)$ respectively $(5,3)$ of the form $\L , \OO_Z$, where $\L$ is a geometric exceptional collection of line bundles of type $(3, 2)$, respectively $(4,2)$.
We use the exceptional collections in \cite[Propositions 6.1 \& 6.2]{King}. The descriptions of the endomorphism algebras in loc. cit. implies that $\End_Z(\sL)=\mathbb{k}Q_Z$ for a non-Dynkin quivers $Q_Z$ depicted in black below. Thus $\L$ is geometric by Lemma \ref{L:GeomHelicesandhigherreprinfinite}, since $\mathbb{k}Q_Z$ is $1$-representation infinite by Theorem \ref{T:Preprojn1}. Now $\End_Z(\sL \oplus \mathcal{O}_Z)$ has a quiver without oriented cycles (as endomorphism algebra of an exceptional collection) and differs from the quiver of $\End_Z(\sL)$ by a single (source) vertex. This implies that 
\begin{align}
   \gldim \End_Z(\sL \oplus \mathcal{O}_Z) \leq  \gldim \End_Z(\sL) +1 =2=\dim(Z),
\end{align}
which shows that $\L, \mathcal{O}_Z$ is geometric (Proposition \ref{prop:BHI}).

\begin{equation*}
\small
\begin{tikzpicture}[description/.style={fill=white,inner sep=2pt}]
 \matrix (n) [matrix of math nodes, row sep=2.5em,
                 column sep=2em, text height=1.5ex, text depth=1ex,
                 inner sep=2pt, nodes={inner xsep=0.3333em, inner
ysep=0.3333em}] at (-6, 0)
    {   (-1,-1)  && (0,-1) \\ 
     && (-1,0) \\
          };
    \draw[<-] ($(n-1-1.east) + (0,1mm)$)  to  node[fill=white, yshift=0.7mm, scale=0.8] [midway]{$x_{0}$}($(n-1-3.west) + (0mm,1mm)$) ;
    \draw[<-] ($(n-1-1.east) + (0,-1mm)$) to node[fill=white, yshift=-0.7mm, scale=0.8] [midway]{$x_{1}$} ($(n-1-3.west) + (0mm,-1mm)$);

 \draw[<-] ($(n-1-3.south) + (-1mm,0mm)$)  to  node[fill=white, xshift=-1mm,  yshift=-0.5mm, scale=0.8] [midway]{$y$}($(n-2-3.north) + (-1mm,0mm)$);
  \draw[->, red] ($(n-1-3.south) + (1mm,0mm)$)  to  node[fill=white, xshift=1mm, scale=0.8] [midway]{$y^*$}($(n-2-3.north) + (1mm,0mm)$);

  \draw[<-] ($(n-1-1.south) + (2.5mm,0mm)$)  to  node[fill=white, yshift=0.7mm, scale=0.8] [midway]{$z$}($(n-2-3.north) + (-2mm,0mm)$);
   \draw[->, red] ($(n-1-1.south) + (-2.5mm,0mm)$)  
to node[fill=white, yshift=-0.7mm, scale=0.8] [midway]{$z^*$}($(n-2-3.north) + (-5.3mm,-1mm)$);

   \draw[<-, red] ($(n-1-3.north west) + (5mm,-0mm)$) .. controls +(-3.7mm,+4.5mm)
and +(+3.7mm,+4.5mm) .. node[fill=white, scale=0.8, yshift=-0.5mm] [midway]{$x_0^*$} ($(n-1-1.north east) + (-3mm,0mm)$);

\draw[<-, red] ($(n-1-3.north west) + (6.5mm,-0mm)$) .. controls +(-3.7mm,+9.5mm)
and +(+3.7mm,+9.5mm) .. node[fill=white, scale=0.8] [midway]{$x_1^*$} ($(n-1-1.north east) + (-4.5mm,0mm)$);

 \matrix (m) [matrix of math nodes, row sep=2.5em,
                 column sep=2em, text height=1.5ex, text depth=1ex,
                 inner sep=2pt, nodes={inner xsep=0.3333em, inner
ysep=0.3333em}] at (1, 0)
    {  \mathcal{O}(E_1) && \mathcal{O}(E_1+ E_2) && \mathcal{O}(E_2) \\ 
    && \mathcal{O}(-H+E_1 +E_2)  \\ 
            };
    \draw[<-] ($(m-1-3.west) + (0,1mm)$)  to  node[fill=white, yshift=1mm, scale=0.8] [midway]{$x_{5}$}($(m-1-1.east) + (0mm,1mm)$) ;
        \draw[->, red] ($(m-1-3.west) + (0,-1mm)$)  to  node[fill=white, yshift=-1mm, scale=0.8] [midway]{$x_{5}^*$}($(m-1-1.east) + (0mm,-1mm)$) ;

    \draw[<-] ($(m-1-3.east) + (0,1mm)$)  to  node[fill=white, yshift=1mm, scale=0.8] [midway]{$x_{3}$}($(m-1-5.west) + (0mm,1mm)$) ;
        \draw[->, red] ($(m-1-3.east) + (0,-1mm)$)  to  node[fill=white, yshift=-1mm, scale=0.8] [midway]{$x_{3}^*$}($(m-1-5.west) + (0mm,-1mm)$) ;

        \draw[->] ($(m-1-3.south) + (-1mm,0mm)$)  to  node[fill=white, xshift=-1mm, scale=0.8, yshift=-0.5mm] [midway]{$x_{4}$}($(m-2-3.north) + (-1mm,0mm)$) ;    
        \draw[<-, red] ($(m-1-3.south) + (1mm,0mm)$)  to  node[fill=white, xshift=1mm, scale=0.8] [midway]{$x_{4}^*$}($(m-2-3.north) + (1mm,0mm)$) ;

              \draw[->] ($(m-1-1.south) + (2.5mm,0mm)$)  to  node[fill=white, xshift=1mm, scale=0.8] [midway]{$x_{2}$}($(m-2-3.north west) + (1mm,0mm)$) ; 
              \draw[<-, red] ($(m-1-1.south) + (-2.5mm,0mm)$)  to  node[fill=white, yshift=-1mm, scale=0.8] [midway]{$x_{2}^*$}($(m-2-3.north west) + (-2.3mm,-1mm)$) ;

                    \draw[->] ($(m-1-5.south) + (-2.5mm,0mm)$)  to  node[fill=white, yshift=1mm, scale=0.8] [midway]{$x_{1}$}($(m-2-3.north east) + (-1mm,0mm)$) ; 
\draw[<-, red] ($(m-1-5.south) + (2.5mm,0mm)$)  to  node[fill=white, yshift=-1mm, scale=0.8] [midway]{$x_{1}^*$}($(m-2-3.north east) + (2.3mm,-1mm)$) ;
\end{tikzpicture}\end{equation*}
The notation of the vertices comes from the $\P^1$-bundle structure and from the blow up structure, respectively (recall here that we consider opposite algebras).
Then the $\mathbb{k}$-algebra $A$ appearing in Theorem \ref{thm:projcone-split} is isomorphic to $\mathbb{k} \overline{Q}/I$ by Corollary \ref{C:QuotientPreproj2}.
Here $\overline{Q}$ is the double quiver obtained by adding the red arrows, and the two-sided ideal $I$ is generated by all paths $p$ in $\overline{Q}$ with at least two ${^\ast}$-arrows, together with  $\sum_{a\in Q_1} [ a^\ast , a ]$.
\end{example}

\begin{example}\label{ex:projspace-quadric}
We list examples of very strong Fano varieties of dimension $(d-1)$ -- in particular, Theorem \ref{thm:projcone-split} yields Kawamata type semiorthogonal decompositions over their projective cones (with respect to the anticanonical embedding). In all these cases the anticanonical embedding is projectively normal by Example \ref{ex:delpezzo-projnormal}.

\begin{enumerate}[label=(\alph*)]
    \item projective spaces $\P^{d-1}$ (by \cite{beilinson} combined with Propositions \ref{prop:BHI}  \& \ref{P:dBlockGeometricRed}). The split algebra $A$ from Theorem \ref{thm:projcone-split} is described as an explicit quiver algebra in Corollary \ref{Cor:description_R_d}. 
    \item smooth quadric hypersurfaces (by \cite{Kap}, together with 
  Propositions \ref{prop:BHI}  \& \ref{P:dBlockGeometricRed} -- in even dimensions $(d-1)$ one block of the $d$-block exceptional sequence consists of the two spinor bundles).
  \item del Pezzo surfaces of degree $8$ and $7$ (see Example \ref{ex:del-pezzo})
  \item del Pezzo surfaces of degree $6$ and $5$ (see \cite[Proof of Proposition 4.2, case (3) and (4)]{karpov-nogin} and \cite[Example 8.6 (c)]{BridgelandStern} combined with Proposition \ref{P:dBlockGeometricRed})
\item smooth del Pezzo threefolds of degree  $5$ (by \cite{orlov-V5}, together with
  Propositions \ref{prop:BHI} \&  \ref{P:dBlockGeometricRed})
  \item products $Z=Z_1 \times \cdots \times Z_t$ of very strong Fanos $Z_i$ are again very strong, by Lemma \ref{L:productsofgeom}. This is already interesting for products of projective spaces $\P^{n_1} \times \cdots \times \P^{n_t}$, see Example \ref{E:prprojhered} for a split algebra $A$ for $Z=\P^1 \times \P^1$ and Example \ref{ex:P2xP1-cone} for $Z=\P^2 \times \P^1$.
\end{enumerate} 
\end{example}

\begin{example}\label{ex:P2xP1-cone}
We discuss  a particular case of $Z$ being a product of projective spaces, see Example \ref{ex:projspace-quadric}.
More concretely, let $Z$ in Theorem \ref{thm:projcone-split} be the product space $\P^2\times \P^1$.
In this case,  a $\mathbb{k}$-algebra $A$ appearing in Theorem \ref{thm:projcone-split} can be described by the quiver
    \begin{equation*}\begin{tikzpicture}[description/.style={fill=white,inner sep=2pt}]
 \matrix (n) [matrix of math nodes, row sep=4em,
                 column sep=2em, text height=1.5ex, text depth=0.25ex,
                 inner sep=2pt, nodes={inner xsep=0.3333em, inner
ysep=0.3333em}] at (-5, 0)
    { 
    (-1, 0) &&& (-1,-1) &&& (-1,-2) \\
     &&& (0,-1) &&& (0,-2)  \\ 
      };

    \draw[->] ($(n-1-4.east) + (0,2mm)$)  to  node[scale=0.6, fill=white] [midway]{$x_{10}$}($(n-1-7.west) + (0mm,2mm)$) ;
    \draw[->] ($(n-1-4.east) + (0,-2mm)$) to node[fill=white, scale=0.6] [midway]{$x_{12}$} ($(n-1-7.west) + (0mm,-2mm)$);
    \draw[->] ($(n-1-4.east) + (0,0mm)$)  to  node[fill=white, scale=0.6] [midway]{$x_{11}$}($(n-1-7.west) + (0mm,0mm)$) ;

     \draw[->] ($(n-1-1.east) + (0,2mm)$)  to  node[scale=0.6, fill=white] [midway]{$x_{00}$}($(n-1-4.west) + (0mm,2mm)$) ;
    \draw[->] ($(n-1-1.east) + (0,-2mm)$) to node[fill=white, scale=0.6] [midway]{$x_{02}$} ($(n-1-4.west) + (0mm,-2mm)$);
    \draw[->] ($(n-1-1.east) + (0,0mm)$)  to  node[fill=white, scale=0.6] [midway]{$x_{01}$}($(n-1-4.west) + (0mm,0mm)$) ;
    
    \draw[->] ($(n-2-4.east) + (0,2mm)$) to node[scale=0.6, fill=white, yshift=1.5mm] [midway]{$x'_{10}$}($(n-2-7.west) + (0mm,2mm)$) ;
    \draw[->] ($(n-2-4.east) + (0,-2mm)$) to node[fill=white, scale=0.6, yshift=-1mm] [midway]{$x'_{12}$} ($(n-2-7.west) + (0mm,-2mm)$);
    \draw[->] ($(n-2-4.east) + (0,0mm)$)  to  node[fill=white, scale=0.6] [midway]{$x'_{11}$}($(n-2-7.west) + (0mm,0mm)$) ;
    
     \draw[->] ($(n-2-4.north) + (-1mm,0mm)$)  to  node[scale=0.6, xshift=1.9, fill=white] [midway]{$y_{1}$}($(n-1-4.south) + (-1mm,0mm)$) ;
        \draw[->] ($(n-2-4.north) + (-3mm,0mm)$)  to  node[scale=0.6,xshift=-1.9, fill=white] [midway]{$y_{0}$}($(n-1-4.south) + (-3mm,0mm)$) ;

     \draw[->] ($(n-2-7.north) + (4mm,0mm)$)  to  node[scale=0.6, xshift=1.9, fill=white] [midway]{$y'_{1}$}($(n-1-7.south) + (4mm,0mm)$) ;
        \draw[->] ($(n-2-7.north) + (2mm,0mm)$)  to  node[scale=0.6,xshift=-1.9, fill=white] [midway]{$y'_{0}$}($(n-1-7.south) + (2mm,0mm)$) ;

 \draw[->, red] ($(n-1-7.south west) + (3mm,0mm)$) to  node[scale=0.6, yshift=3.5, xshift=-1mm, fill=white] [midway]{$v_{00}$}($(n-2-4.east) + (-6mm,3mm)$) ;
    \draw[->, red] ($(n-1-7.south west) + (7mm,0mm)$) to node[fill=white, yshift=-3.5, scale=0.6, xshift=1mm] [midway]{$v_{21}$} ($(n-2-4.east) + (-2mm,3mm)$);
    \draw[white] ($(n-1-7.south west) + (5mm,-0mm)$)  to  node[fill=white, scale=0.3 ] [midway]{\bf $\textcolor{red}{\ddots}$}($(n-2-4.east) + (-4mm,3mm)$) ;

   \draw[->, red] ($(n-1-7.north west) + (7mm,-0mm)$) .. controls +(-3.7mm,16mm)
and +(+3.7mm,16mm) .. node[fill=white, scale=0.6] [midway]{$z_{12}$} ($(n-1-1.north east) + (-5mm,0mm)$);
   
      \draw[->, red] ($(n-1-7.north west) + (5mm,-0mm)$) .. controls +(-3.7mm,11mm)
and +(+3.7mm,11mm) .. node[fill=white, scale=0.6] [midway]{$z_{02}$}($(n-1-1.north east) + (-3mm,0mm)$);  
    
      \draw[->, red] ($(n-1-7.north west) + (3mm,-0mm)$) .. controls +(-3.7mm,6mm)
and +(+3.7mm,6mm) .. node[fill=white, scale=0.6] [midway]{$z_{01}$}($(n-1-1.north east) + (-1mm,0mm)$);

\end{tikzpicture}\end{equation*}
where the notation of the vertices is given by the corresponding line bundles on $\P^2 \times \P^1$.
Moreover,
the relations of $A$ are given by all paths containing at least two red arrows together with relations that arise as cyclic derivatives of a potential $W$, see Corollary \ref{C:QuotientPreproj3}.
The potential is given by 
\begin{align*}
W=  \sum_{i=0}^2\sum_{j=0}^1 v_{ij} \rho_{v_{ij}} +   \sum_{0 \leq i <j \leq 2} z_{ij} \rho_{z_{ij}},
\end{align*}
where the $\rho_\bullet$ are the `minimal relations' of $\End(\sL)$, i.e.
\begin{align*}
 \rho_{v_{ij}} =y'_jx'_{1i}- x_{1i}y_j, \quad  \rho_{z_{ij}}=x_{1i}x_{0j}-x_{1j}x_{0i}.
\end{align*}
\end{example}

\appendix
\section{Tilting on weighted projective spaces \texorpdfstring{$\P(1^d, m)$}{P(d, m)\textasciicircum wps}, \texorpdfstring{\\ by Martin Kalck, Yujiro Kawamata and Nebojsa Pavic}{}}\label{sec:appendix}

\subsection{Overview}
\noindent
 Let $m,d > 1$ be integers and let $\mathbb{k}$ be a field\footnote{Not necessarily algebraically closed.} with $\chr( \mathbb{k})$ coprime to $m$. We construct explicit tilting objects $\sT_m^d$ for all weighted projective spaces\footnote{See Subsection \ref{sub:prelim} below for the definition.} of the form $X_m^d=\P(1^d, m)$. 

This generalizes the threefold case $\P(1, 1, 1, 3)$ studied in \cite{kawamata3}. We note that for all weighted projective surfaces $\P(a, b, c)$, semiorthogonal decompositions into derived categories of finite dimensional local algebras have been constructed in \cite{kks}. The tilting result below gives an (a priori) different description for the derived categories surfaces $\P(1, 1, m)$.
Also, for the very special case $\P(1^d, d)$, which is a projective cone over $\P^{d-1}$, we construct similar explicit semiorthogonal decompositions in Subsection \ref{subsec:ProjCones}. The tilting objects  $\sT_m^d$ constructed below can also be decomposed to obtain semiorthogonal decompositions into derived categories of finite dimensional algebras. Actually, using this explicit description of $\sT_d^d$ and noncommutative Gr{\"o}bner bases was our original approach to describe the algebras $R_d$ in Corollary \ref{Cor:description_R_d}. 

The space $X_m^d$ has a unique singular point.  
It is a singularity of the cone over the Veronese embedding of $\P^{d-1}$ of degree $m$.
This singularity is Gorenstein if and only if $m$ divides $d$ and  
the blow-up of the singularity is a crepant resolution
if and only if $m=d$. In particular, the results in this appendix yield first tilting results for derived categories of higher dimensional \emph{non}-Gorenstein varieties.

As a consequence, we obtain derived equivalences with module categories of finite dimensional algebras, which in turn imply triangle equivalences between the corresponding Buchweitz--Orlov singularity categories, which we call \emph{singular equivalences}. This extends the singular equivalences for all cyclic quotient surface singularities obtained in \cite{kalck-karmazyn} to higher dimensional cyclic quotient singularities of the form $\frac{1}{m}(1^d)$. 

Note that, in the Gorenstein case, such singular equivalences have independently been obtained by Hanihara \cite{Hanihara1, Hanihara2} (via graded singularity categories) and the case  $d=m=3$
also follows from \cite{BIRSc} in combination with a Morita-type Theorem for cluster categories \cite{KellerReiten08}.

For $d=3$ and $m=2$, 
Corollary \ref{C:SingEq} generalizes the main result of \cite{KalckNewSingular} to all fields of characteristic different from $2$. 
For all other non-Gorenstein singularities of the form $\frac{1}{m}(1^d)$ in dimension $d\geq 3$  these singular equivalences are new.

\subsection{Preliminaries and definitions}\label{sub:prelim} 
Following \cite{dolgachev} and \cite{kawamata0}, we recall basic notation and properties of weighted projective spaces $\P(1^d, m):=\P (1 , \ldots , 1 , m)$, where the weight $1$ appears $d$ times.

Consider the graded algebra $S= \mathbb{k}[x_0 , \ldots , x_d ]$, where the variables $x_0 , \ldots , x_{d-1} $ have degree one and $x_d$ has degree $m$ and define
\[
 \P (1^d , m ) = \Proj ( S ).
\] 
The weighted projective space $\P ( 1^d , m )$ can also be viewed as a the quotient variety $\P^d / \Z_m $, where the action of $\Z_m$ on $\P^d = \P^d_{[z_0 : \ldots : z_d ]}$ via 
\[
[z_0 : \ldots : z_d ] \mapsto [z_0 : \ldots : z_{d-1} : \zeta z_d ],
\]
with $\zeta $ a fixed primitive $m$-th root of unity.
We denote the $\Z_m$-equivariant quotient morphism by $\pi : \P^d \to \P^d / \Z_m$.
Recall that there is a reflexive sheaf $\OO (i)$ (up to isomorphism) of rank one on $\P (1^d , m)$ for $i \in  \Z$, which corresponds to the $i$-th multiple of the generator of the class group $\Cl ( X_m^d )\cong \Z$.

We recall further that the quotient stack $Y_m^d : =  [ \P^d / \Z_m ]$ (see \cite[Section 5]{kawamata0} for a definition) corresponding to the above action comes with  morphisms $\sigma : [ \P^d / \Z_m ] \to \P^d / \Z_m$ and $\pi' : \P^d \to [ \P^d / \Z_m ] $, which compose to $\pi$.
Moreover, we have the identification
\[
\Db ( [\P^d / \Z_m ] ) \cong \Db_{\Z_m} ( \P^d ) ,
\]
where the right hand side is the derived category of $\Z_m$-equivariant coherent sheaves on $\P^d$. 
Denote by $D_i$, for $0\leq i \leq n$ the divisor on $\P^d$ given by $z_i = 0$ and let $\sD_i$ be the image $\sD_i = \pi'_\ast ( D_i )$ in $Y:=Y_m^d$.
We set $\OO_Y (1) : = \OO_Y ( \sD_0 ) $.

\begin{definition}
Let $X$ be a projective variety (or a quotient stack).

An object $T$ of $\Dperf(X)$ is called a \emph{tilting object}, if it satisfies the following conditions:
\begin{enumerate}
    \item $T$ generates $\DD(\Qcoh(X))$, i.e. for all $A$ in $\DD(\Qcoh(X))$, if $\Hom^\bullet_{X} (T , A) = 0 $ then $A=0$. 
    \item $\Hom (T , T [i]) = 0$, for all $i \neq 0$. 
\end{enumerate}
\end{definition}

It is well-known that Beilinson's Theorem generalizes to weighted projective stacks $[\P^d / \Z_m ]$.

\begin{theorem}[{see \cite[Section 5]{kawamata0}, or \cite[Proposition 2.7 and Theorem 2.12]{auroux-katzarkov-orlov}}]\label{thm:stack-exc-coll}
   The category $\Db(Y_m^d) \cong \Db_{\Z_m} ( \P^d )$ has a full and strong exceptional collection (where $Y:=Y_m^d$)
\[
  \OO_Y (-m+1) , \OO_Y ( -m+2 ) ,\ldots , \OO_Y , \OO_Y (1) , \ldots , \OO_Y (d) .
\]
Moreover, we have the following isomorphisms for all $1\leq i , j \leq m+d$:
\begin{equation}\label{eq:equiv-except-hom}
    \Hom_Y ( \OO_Y (-m + i ) , \OO_Y (-m+j) [p] ) \cong \begin{cases}
        S_{j-i},& \text{if $i\leq j$ and $p=0$} \\
        0,& \text{else}
    \end{cases}
\end{equation}
where $S_l$ denotes the degree $l$ component of the graded polynomial ring $S= \mathbb{k}[x_0, \ldots, x_d]$.
In particular, $\Db (Y_m^d)$ has a tilting object
\begin{equation}\label{eq:tilting-wps}
    \sB_m^d :=\OO_Y (-m+1) \oplus \OO_Y (-m+2) \oplus \ldots \oplus \OO_Y \oplus \OO_Y (1)  \oplus \ldots \oplus \OO_Y (d) .
\end{equation}
\end{theorem}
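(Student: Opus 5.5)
The statement is the weighted-projective-stack version of Beilinson's theorem. My plan is to reduce everything to graded linear algebra over the ring $S$, working in $\Db_{\Z_m}(\P^d)$, or equivalently on the stack $Y=[\P^d/\Z_m]$, which is the weighted projective stack $\mathcal{P}(1^d,m)$.

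First I identify $Y$ with the quotient stack $[(\mathbb{A}^{d+1}\setminus 0)/\mathbb{G}_m]$ with weights $(1^d,m)$. Under this identification $\OO_Y(l)$ corresponds to the graded module $S(l)$, and the stack cohomology is $\bH^p(Y,\OO_Y(l))$, computed via the Čech complex of the covering $\{x_i\neq 0\}$, exactly as for ordinary projective space. The standard computation then gives:
\begin{enumerate}
\item $\bH^0(\OO_Y(l))=S_l$;
\item $\bH^d(\OO_Y(l))$ is dual to $S_{-l-m-d}$, because $\omega_Y=\OO_Y(-m-d)$, the sum of the weights being $d+m$;
\item all other cohomology vanishes.
\end{enumerate}
Since $\Hom_Y(\OO_Y(a),\OO_Y(b)[p])=\bH^p(\OO_Y(b-a))$, the difference $b-a=j-i$ ranges over $[-(m+d-1),m+d-1]$ for line bundles in the collection.

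The second step checks \eqref{eq:equiv-except-hom} from these formulas. For $p=0$ we get $S_{j-i}$, which is zero when $j<i$ and equals $\mathbb{k}$ when $i=j$, so each object is exceptional. For $p=d$ we need $S_{-(j-i)-m-d}$, which vanishes since $-(j-i)-m-d\le -1$. This gives strongness, exceptionality and semiorthogonality.

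The third step, fullness, is the main obstacle. I would take a Koszul resolution: the complex attached to $x_0,\dots,x_d$ on $\mathbb{A}^{d+1}\setminus 0$ is exact and descends to an exact complex on $Y$,
\[
0\to\OO_Y(-m-d)\to\cdots\to\textstyle\bigoplus\OO_Y(-\deg)\to\OO_Y\to 0 .
\]
Its terms are sums of $\OO_Y(-\sum_{i\in I}w_i)$, where $w_i$ are the weights, and their twists are again exact. Using this sequence and its twists, I can express $\OO_Y(l)$ for $l$ outside the window $[-m+1,d]$ through the $m+d$ consecutive bundles in the window, so every $\OO_Y(l)$, $l\in\Z$, lies in the generated subcategory.

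Finally I need that $\{\OO_Y(l)\}_{l\in\Z}$ generate $\DD(\Qcoh Y)$. If $\Hom^\bullet(\OO_Y(l),F)=0$ for all $l$, then the graded module $\bigoplus_l\bH^\bullet(F(l))$ vanishes. Pulling back to the cone, $F$ corresponds to a graded $S$-module with zero sheafification, so $F=0$. This is the usual ampleness-type argument for quotient stacks; alternatively I would cite \cite{kawamata0} or \cite{auroux-katzarkov-orlov} for this step.

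Tilting of $\sB_m^d$ then follows from the vanishing computed in the second step together with generation.
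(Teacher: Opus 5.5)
The paper gives no proof of its own here and only cites Kawamata and Auroux--Katzarkov--Orlov. Your route is the standard argument for weighted projective stacks:
\begin{enumerate}
\item compute $\bH^\bullet(\OO_Y(l))$ by the \v{C}ech complex of the cover $\{x_i\neq 0\}$, whose pieces are cohomologically affine since $\mathbb{G}_m$ is linearly reductive;
\item note that the only possible obstruction to \eqref{eq:equiv-except-hom} is $\bH^d(\OO_Y(j-i))\cong (S_{-(j-i)-m-d})^{\ast}$, which vanishes because $j-i\geq -(m+d-1)$;
\item propagate the window $[-m+1,d]$ to all twists with the twisted Koszul complexes, whose terms are $\OO_Y(l-s)$ with $s\in\{0,\dots,d\}\cup\{m,\dots,m+d\}$;
\item finish with generation by all $\OO_Y(l)$, $l\in\Z$.
\end{enumerate}
All of this is correct for the weighted projective stack.

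The step you should drop is the identification in your first sentence. For the action in the paper, $[\P^d/\Z_m]$ is \emph{not} the weighted projective stack $\mathcal{P}(1^d,m)=[(\mathbb{A}^{d+1}\setminus 0)/\mathbb{G}_m]$. Since $\zeta$ acts only on $z_d$, every point of the hyperplane $\{z_d=0\}$ is fixed. So $[\P^d/\Z_m]$ has stabiliser $\Z_m$ along a whole divisor; it is the $m$-th root stack of $\mathcal{P}(1^d,m)$ along $\{x_d=0\}$. By contrast, $\mathcal{P}(1^d,m)$ has a non-trivial stabiliser only at $[0:\cdots:0:1]$.

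This difference breaks fullness. By equivariant Beilinson, $\Db_{\Z_m}(\P^d)$ has the full exceptional collection $\OO(i)\otimes\chi$, with $0\leq i\leq d$ and $\chi$ running over the $m$ characters of $\Z_m$. Hence its $K_0$ has rank $m(d+1)>m+d$, and no sequence of $m+d$ objects can be full there. With $\OO_Y(1)=\OO_Y(\sD_0)$ you still keep the Hom formula \eqref{eq:equiv-except-hom}, by taking $\Z_m$-invariants of $\bH^\bullet(\P^d,\OO(l))$, so the collection is strong but not full.

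What you have correctly proved is the theorem for $Y=\mathcal{P}(1^d,m)$, which is the only reading under which the statement is true. The conflation with $[\P^d/\Z_m]$ and $\Db_{\Z_m}(\P^d)$ is already present in the statement. You should set up the proof on the weighted projective stack from the start rather than assert that the two stacks coincide.
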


\subsection{Main result}
The following theorem is the main result of this appendix.

\begin{theorem}\label{T:MainGenerald}
Let $m, d >1$ be integers.
Then the $d$-dimensional weighted projective space $X_{m}^d =\P(1^d, m)$ 
has a tilting object $\sT_{m}^d$. In particular, there is a triangle equivalence
\begin{align}\label{eq:tiltingAppendix}
\Db(X_{m}^d ) \cong \Db ( \End_{X_m^d}(\sT_{m}^d )\op). 
\end{align}
The tilting object is given explicitly as 
\begin{align}
\sT_m^d = \bigoplus_{l=1}^d \sG_l \oplus \OO ,
\end{align}
where the $\sG_l:=\sG_{l,d,m}$ are given as the following two-term complexes
\begin{align}
\sG_{l,d,m}:=\begin{cases} &\OO \otimes S_l \oplus \OO (r_l) \xrightarrow{\displaystyle \mu \oplus x_d^{k_l}} \OO(l) \quad \text{if } r_l \neq 0,  \\ &\OO \otimes S_l  \xrightarrow{\displaystyle \qquad \qquad \mu    \qquad} \OO(l) \quad \text{else. }\end{cases}
\end{align}
Here, $\mu$ denotes the natural multiplication map, the first term of the complexes is concentrated in degree $0$. Moreover, for $l \in \Z_{>0}$ the integers $k_l$ and $r_l$ are given by
\begin{align}
l=k_l \cdot m + r_l, \quad \text{where} \quad  k_l \geq 1 \quad \text{and} \quad  -m < r_l \leq 0. 
\end{align}
Moreover, decomposing the tilting object $\sT_m^d$
yields a semiorthogonal decomposition\footnote{
{with $\Db ( \End ( \sG )\op ) \subseteq \Db ( X_m^d )$ admissible.}}
    \begin{align}\label{eq:SODwpsgeneral}
        \Db ( X_m^d ) \cong \langle \Db ( \End ( \sG )\op ) , \OO  \rangle,
    \end{align}
    where $\sG = \bigoplus_{l=1}^d \sG_{l , d , m}$.
     If $m$ divides $d$, or, equivalently, when $X_m^d$ is Gorenstein, then this semiorthogonal decomposition is admissible. If $m=d$, we get an admissible semiorthogonal decomposition 
     \begin{align}\label{eq:SODwps2}
        \Db ( X_m^d ) \cong \left\langle \Db \left( \End\left(  \bigoplus_{l=1}^{d-1} \sG_{l , d , m} \right)\op \right) , \OO, \OO(d)  \right\rangle.
    \end{align}
\end{theorem}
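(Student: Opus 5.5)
The plan is to transport Beilinson's tilting object $\sB_m^d$ on the stack $Y=Y_m^d$ down to $X=X_m^d$ via $\sigma_\ast$, where $\sigma\colon Y\to X$ is the coarse moduli map. Three standard facts about $\sigma$ will be used:
\begin{itemize}
\item $\sigma_\ast$ is exact, since $\chr(\mathbb{k})$ is coprime to $m$ and so $\Z_m$-invariants are exact.
\item $\sigma_\ast\sigma^\ast\cong\id$ on $\Dperf(X)$.
\item $\sigma_\ast\OO_Y(i)\cong\OO(i)$, the reflexive rank one sheaf.
\end{itemize}

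Step 1 is to realise each $\sG_l$ as $\sigma_\ast$ of a complex on $Y$ built from the exceptional collection $\OO_Y(-m+1),\dots,\OO_Y(d)$. On $Y$ let
\[
\widetilde\sG_l=\bigl\{\OO_Y\otimes S_l\oplus\OO_Y(r_l)\xrightarrow{\mu\oplus x_d^{k_l}}\OO_Y(l)\bigr\}.
\]
Here $\OO_Y(r_l)$ lies in the block $\OO_Y(-m+1),\dots,\OO_Y(0)$ because $-m<r_l\le0$. Up to a shift and a mutation, $\widetilde\sG_l$ is the left mutation of $\OO_Y(l)$ through $\langle\OO_Y(r_l),\dots,\OO_Y\rangle$. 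This is modelled on the construction of $\sG_i$ in the proof of Theorem~\ref{thm:projcone-split} and on Lemma~\ref{lem:help}.

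Step 2 is to show that $\sG_l$ and $\OO$ lie in $\Dperf(X)$, and more precisely that $\sigma^\ast\sG_l\cong\widetilde\sG_l$. The target $\OO(l)$ and the summand $\OO(r_l)$ are not locally free at the singular point $[0:\dots:0:1]$. Perfectness is checked locally there, in the chart $x_d\neq0$, which is $\mathbb{A}^d/\Z_m$ with weights $\frac1m(1^d)$. In this chart the complex becomes the map of modules of semi-invariants
\[
R\otimes S_l\oplus M_{r_l}\to M_l ,\qquad R=\mathbb{k}[x_0,\dots,x_{d-1}]^{\Z_m},
\]
where the $M_j$ are the modules of semi-invariants of the corresponding weight. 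Since $x_d$ is a unit, $x_d^{k_l}$ identifies $M_{r_l}\cong M_l$, so the map is surjective. Its kernel is the graph of a map $R\otimes S_l\to M_{r_l}$ and is therefore free, isomorphic to $R\otimes S_l$. Hence $\sG_l$ is a locally free sheaf near the singular point, and it is clearly locally free elsewhere. Comparing the pullback with $\widetilde\sG_l$ then gives $\sigma^\ast\sG_l\cong\widetilde\sG_l$.

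Step 3 is the Ext-vanishing. Using Step 2, adjunction and $\sigma_\ast\OO_Y=\OO_X$, we get
\[
\Hom_X(\sG_i,\sG_j[p])\cong\Hom_Y(\widetilde\sG_i,\sigma_\ast^{\mathrm{st}}\ldots)\cong\Hom_Y(\widetilde\sG_i,\widetilde\sG_j[p]),
\]
with the analogous isomorphisms when $\OO$ replaces one of the $\sG$'s. These are computed from the spectral sequence of the two-term complexes together with \eqref{eq:equiv-except-hom}. All the relevant objects lie in the strong block, so the only possible nonzero groups sit in degrees $-1,0,1$. Degree $-1$ vanishes by injectivity of the induced maps of graded pieces of $S$. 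Degree $1$ vanishes by the surjectivity on global sections shown in Step 2, i.e.\ every section of $S_{j-l}$ lifts through $\mu\oplus x_d^{k}$.

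Step 4 is generation. The objects $\widetilde\sG_1,\dots,\widetilde\sG_d,\OO_Y$ together with $\OO_Y(r)$ for $-m<r<0$ generate $\Db(Y)$, since mutations recover the collection. Applying $\sigma_\ast$, whose image generates $\Db(X)$, reduces generation to showing that the $\OO(r)$ with $-m<r<0$ lie in $\langle\sG_l,\OO\rangle$ on $X$. For this I would use $\sigma_\ast\OO_Y(r)\cong\OO(r)$ together with triangles coming from the surjection in Step 2, which express $\OO(r_l)$ via $\sG_l$, $\OO$ and $\OO(l)$; and $\OO(l)$ is in turn reached via the Koszul complex in $x_0,\dots,x_{d-1}$. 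Unbounded generation follows since the generators are compact.

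The semiorthogonal decomposition is obtained by checking that $\Hom^\bullet(\OO,\sG)=0$ via Step 3 with $i=0$. Admissibility in the Gorenstein case holds because $X$ then has a Serre functor on $\Dperf$ which preserves the relevant pieces. For $m=d$ we have $\sG_{d,d,d}$ with $r_d=0$, and a mutation of it through $\OO$ yields $\OO(d)$.

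The main obstacle is Step 3, in particular the degree-$1$ vanishing of $\Hom(\sG_i,\sG_j[1])$ when $r_i\ne r_j$. This needs an explicit lifting argument in the graded ring $S$, and I would do it by splitting into cases according to $k_i$ versus $k_j$.
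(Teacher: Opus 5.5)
The genuine gap is in Step~4 (generation). The rest is sound: Steps~2 and~3 follow the paper's route, namely perfectness, full faithfulness of $\sigma^*$ on $\Dperf(X)$, and a homotopy-category computation with two-term complexes over the strong collection $\OO_Y(-m+1),\dots,\OO_Y(d)$. Your chart computation of perfectness is a fine substitute for the paper's remark that $\Cone(\sG_l\to\OO\otimes S_l)\cong\OO_{k_lZ}(l)$ is supported away from the singular point.

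\textbf{Step 4 fails.} You reduce generation to showing, via finite triangles, that $\OO(r)\in\langle\sG_l,\OO\rangle$ for $-m<r<0$. This cannot hold. Since $m\nmid r$, the stalk of $\OO(r)$ at $[0:\dots:0:1]$ is a rank one, non-free maximal Cohen--Macaulay module over the invariant ring $\mathbb{k}[x_0,\dots,x_{d-1}]^{\Z_m}$ of the chart $x_d\neq0$. By Auslander--Buchsbaum it has infinite projective dimension, so $\OO(r)\notin\Dperf(X)\supseteq\thick(\sG_1,\dots,\sG_d,\OO)$. Your reduction would even give $\Db(X)=\thick(\sT_m^d)\subseteq\Dperf(X)$, which is absurd for singular $X$.

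The tools you name fail for the same reason:
\begin{itemize}
\item The triangle $\OO(l)[-1]\to\sG_l\to\OO\otimes S_l\oplus\OO(r_l)$ only trades $\OO(r_l)$ for $\OO(l)$, which is again non-perfect when $m\nmid l$.
\item The Koszul complex in $x_0,\dots,x_{d-1}$ is not exact on $X$, because these sections vanish simultaneously at the singular point.
\end{itemize}

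\textbf{The fix.} Tilting requires that $\sT_m^d$ generate $\DD(\Qcoh X)$, i.e.\ $\thick(\sT_m^d)=\Dperf(X)$. The right target is the ample sequence of line bundles $\OO(am)$, $a\in\Z$. The paper reaches it in three steps:
\begin{enumerate}
\item The cone of $\sG_l\to\OO\otimes S_l$ is $\Cone(\OO(r_l)\xrightarrow{x_d^{k_l}}\OO(l))\cong\OO_{k_lZ}(l)$, where $Z=\{x_d=0\}\cong\P^{d-1}$ lies in the smooth locus.
\item The triangles $\OO_Z(l-(k-1)m)\to\OO_{kZ}(l)\to\OO_{(k-1)Z}(l)$ and induction on $k_l$ give $\OO_Z(1),\dots,\OO_Z(d)$ (Lemma~\ref{L:generation}), hence all of $\Db(Z)$ by Beilinson.
\item The triangles $\OO((a-1)m)\xrightarrow{x_d}\OO(am)\to\OO_Z(am)$, starting from $\OO$, produce every $\OO(am)$.
\end{enumerate}

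\textbf{Step 1.} The claim that $\widetilde\sG_l$ is, up to shift, a left mutation of $\OO_Y(l)$ is false when $r_l\neq0$. Mutations of exceptional objects are exceptional, whereas $\End(\sG_{1,3,2})\cong\mathbb{k}[z_1,z_2,z_3]/(z_1,z_2,z_3)^2$ (Corollary~\ref{C:SingEq}). You do not need the claim: the defining triangle already puts $\OO_Y(l)$ in the thick closure on $Y$.

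\textbf{Step 3, degrees $-1$ and $1$.} Degree $-1$ vanishes simply because $\Hom(\OO_Y(i),\OO_Y(s))=0$ for $s\le0<i$. Degree $1$ does not follow from the local surjectivity of Step~2. What is needed, and what the paper feeds into Lemma~\ref{L:Approximation}, is that every morphism $\OO_Y(s)\to\OO_Y(j)$ with $-m<s\le0$ and $1\le j\le d$ factors through $\mu\oplus x_d^{k_j}$. This is a one-line monomial check, with no case split on $k_i$ versus $k_j$. Write a monomial in $S_{j-s}$ as $x_0^{a_0}\cdots x_{d-1}^{a_{d-1}}x_d^b$ and set $|a|=\sum a_i$.
\begin{itemize}
\item If $|a|\ge -s$, it factors through $\mu$.
\item If $|a|<-s$, then $mb>j$, so $b\ge k_j$; this case forces $r_j\neq0$, and the monomial factors through $x_d^{k_j}$.
\end{itemize}
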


\begin{proof}
By \cite[Theorem 7.6]{hille-van-den-bergh}\footnote{In \cite{hille-van-den-bergh} the base field is assumed to be $\C$ but the result is based on \cite{BondalVdB}, which holds over any field.}, the claimed triangle equivalence \eqref{eq:tiltingAppendix} follows once we show that $\sT_m^d$ is tilting.  
We first show that $\sT :=\sT_m^d$ is perfect. Since $\OO(am)$ is a line bundle and hence perfect for all $a \in \Z$, it suffices to show that 
\begin{align}\label{E:Cone}
\Cone(\sG_l \to  \OO\otimes S_l) = \Cone(\OO(r_l) \xrightarrow{x_d^{k_l}} \OO(l) )
\end{align}
for $r_l \neq 0$ is perfect. 
The latter is quasi-isomorphic to $\OO_{k_lZ}(l)$, where $Z \subset X$ is the zero locus of $x_d$. 
Since the unique singular point $[ 0:\ldots:0:1 ]$ does not lie in $Z$, we see that $\OO_{k_lZ}(l)$ is perfect.

We will show in Lemma \ref{L:generation} below, that the smallest thick triangulated subcategory $\mathsf{S}$ of $\Db(X)$ containing $\sT_m^d$
contains $\OO_Z(1), \ldots, \OO_Z(d)$, which generate $\Db(Z)$ by applying Beilinson's work to $Z=\P^{d-1}$. 
By definition, $\mathsf{S}$ also contains $\OO$.
Combining this,  we see that $\mathsf{S}$ contains all divisorial sheaves $\OO(m)$ on $X^d_m$. They generate $\DD(\Qcoh X)$ by \cite[Lemma 36.16.1.] {stacks-project}. Hence, $\sT_m^d$ generates $\DD(\Qcoh X)$ as desired.

It remains to show that $\Hom^\bullet (\sT_m^d , \sT_m^d )$ is concentrated in degree $0$. To show this, we apply the fully faithful functors
\begin{align}
\Perf(X_m^d) \xrightarrow{\pi^*} \Db(Y_m^d ) \xrightarrow{\beta:=\Hom_{Y_m^d}(\sB_m^d , -)}  \Db(\End_{Y_m^d}(\sB_m^d )\op) 
\end{align}
and compute the isomorphic $\Hom^\bullet_{B_m^d}(\beta\pi^*(\sT_m^d), \beta\pi^*(\sT_m^d ))$, where $B_m^d :=\End_{Y_m^d}(\sB_m^d )\op$ and $\sB_m^d$ is the tilting bundle in $\Db (Y_m^d)$ as defined in \eqref{eq:tilting-wps}.
We label the indecomposable projective $B_m^d$-module corresponding to $\OO_Y(i)$ by $P_i$. Then the image of $\sG_l:=\sG_{l,d,m}$ under $\beta\pi^*$ is given by
\begin{align}
\sP_{l,d,m}:=\begin{cases} &P_0 \otimes S_l \oplus P_{r_l} \xrightarrow{\displaystyle \mu \oplus x_d^{k_l}} P_l \quad \text{if } r_l \neq 0,  \\ &P_0 \otimes S_l  \xrightarrow{\displaystyle \qquad \quad \mu    \qquad} P_l \quad \text{else, }\end{cases}
\end{align}
where again $\mu$ is the evaluation map.
Since there are no non-zero morphisms $P_i \to P_j$ for $j<i$, looking at the homotopy category $K^b(\proj B_m^d ) \cong \Db(B_m^d)$, we see that $\Hom^\bullet_{B_m^d}(\sP_{l,d,m}, \sP_{l,d,m})$ is concentrated in positive degrees.
Moreover, by construction, every morphism $P_i \to P_l$, with $i\leq 0$ and $1 \leq l \leq d$ factors over the morphism defining
$\sP_{l,d,m}$. Applying Lemma \ref{L:Approximation} completes the proof. 

This also shows that $\Hom(\OO, \sG)=0$, which implies \eqref{eq:SODwpsgeneral}, by a result of Miyachi, cf. e.g. \cite[Proposition 3.2] {HocheneggerKPloogRepArxiv}.
Indeed, let $\Lambda:=\End(\sG \oplus \OO)\op$ and let $e$ be the idempotent corresponding to $\OO$. Since $\Hom(\OO, \sG)=0$, there are no arrows ending\footnote{We are looking at the opposite algebra here.} in the vertex $e$ in the quiver of $\Lambda$. This implies that $\Lambda e \Lambda= \Lambda e$ and hence $\Lambda/e:=\Lambda/\Lambda e \Lambda \cong \Lambda(1-e)$ is a projective left $\Lambda$-module and since $e\Lambda e \cong \End \OO \cong \mathbb{k}$ has finite global dimension, the idempotent $e$ is ``recollant'' in the sense of \cite[Definition 3.1] {HocheneggerKPloogRepArxiv}. Now \cite[Proposition 3.2] {HocheneggerKPloogRepArxiv} yields a semiorthogonal decomposition
$\Db(\Lambda)=\langle \Db(\Lambda/e), \Db(e \Lambda e)\rangle $, with $\Db(\Lambda/e) \subset \Db(\Lambda)$ admissible. The left hand side identifies with $\Db(X_m^d)$ by tilting and $\Lambda/e \cong (1-e)\Lambda(1-e) \cong \End(\sG)\op$, whereas $\Db(e \Lambda e) \cong \thick(\OO)$. Since $\OO$ is perfect, if $X_m^d$ is Gorenstein, the semiorthogonal decomposition is  admissible by \cite[Lemma 2.15]{KPS}.

Similarly, \eqref{eq:SODwps2} follows by replacing $\sG_d$ by $\OO(d)$. Since $\OO$ is a direct summand of $\sT_m^d$ this does not affect the generation and $\Hom(\OO(d), \bigoplus_{l=1}^{d-1} \sG_{l , d , m} )=0$ follows by degree reasons.
The existence of an admissible semiorthogonal decomposition can now be deduced as in \eqref{eq:SODwpsgeneral}. 
\end{proof}

\begin{lemma}\label{L:generation} 
Let $\mathsf{S}$ be the smallest thick triangulated subcategory of $\Db(X)$ containing $\sT_m^d$. Then $\mathsf{S}$
contains $\OO_Z(1), \ldots, \OO_Z(d)$, where $Z \subset X$ is the zero locus of $x_d$.
\end{lemma}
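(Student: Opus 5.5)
We need to show that $\OO_Z(l)\in\mathsf{S}$ for $1\le l\le d$, where $Z=\{x_d=0\}\cong\P^{d-1}$. The plan is to work with the cones appearing in the proof of Theorem \ref{T:MainGenerald}, together with the Koszul-type relations on $Z$.

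First we record what $\mathsf{S}$ contains directly. Since $\OO$ is a summand of $\sT_m^d$, we have $\OO\otimes S_l\in\mathsf{S}$. From the defining triangle $\sG_l\to \OO\otimes S_l\oplus\OO(r_l)\to\OO(l)$, we get
\[
\Cone\bigl(\sG_l\to\OO\otimes S_l\bigr)\cong \Cone\bigl(\OO(r_l)\xrightarrow{x_d^{k_l}}\OO(l)\bigr)\cong \OO_{k_lZ}(l)\in\mathsf{S}
\]
for $r_l\neq 0$, as in \eqref{E:Cone}. When $r_l=0$, i.e.\ $l=k_lm$, the same triangle gives $\OO(l)\in\mathsf{S}$. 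In that case the short exact sequence $0\to\OO(l-k_lm)\xrightarrow{x_d^{k_l}}\OO(l)\to\OO_{k_lZ}(l)\to0$, whose left term is $\OO(0)=\OO$, again shows $\OO_{k_lZ}(l)\in\mathsf{S}$. So in all cases $\OO_{k_lZ}(l)\in\mathsf{S}$ for $1\le l\le d$.

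Next we reduce from the thickened divisors $k_lZ$ to $Z$ itself by induction on $l$. For $1\le l\le m$ we have $k_l=1$, so $\OO_Z(l)\in\mathsf{S}$ directly. For $l>m$ we filter $\OO_{k_lZ}(l)$ by powers of $x_d$:
\[
0\to \OO_{(k_l-1)Z}(l-m)\xrightarrow{x_d}\OO_{k_lZ}(l)\to\OO_Z(l)\to 0 .
\]
Here $l-m<l$ and $k_{l-m}=k_l-1$. By induction on $l$, using the same kind of filtration for smaller indices, we may assume $\OO_{(k_l-1)Z}(l-m)\in\mathsf{S}$; indeed this object is exactly $\OO_{k_{l-m}Z}(l-m)$, which lies in $\mathsf{S}$ by the first step, provided $l-m\ge1$. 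The sequence then gives $\OO_Z(l)\in\mathsf{S}$.

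To make this rigorous we must check that the displayed sequences are exact on $X$ (which is normal, with $\OO(i)$ reflexive) near $Z$. Since $Z$ avoids the singular point, every $\OO(i)$ is invertible on a neighbourhood of $Z$, so the sequences are exact there. Away from $Z$, all three terms vanish, except that multiplication by $x_d$ is an isomorphism between the reflexive sheaves. The main point to verify is that $x_d^{k}\colon\OO(l-km)\to\OO(l)$ is injective with cokernel supported on $kZ$, which is immediate from the graded description $\OO(i)=\widetilde{S(i)}$. A cleaner fallback is to perform the whole argument after applying $\pi^*$ on the stack $Y_m^d$, where all $\OO_Y(i)$ are line bundles.
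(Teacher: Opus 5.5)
Your proof is correct and has the same two-step structure as the paper's: first obtain $\OO_{k_lZ}(l)\in\mathsf{S}$ from the cones in \eqref{E:Cone}, then remove the thickening using filtrations by powers of $x_d$. The difference lies in which filtration you use.

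The paper uses $\OO_Z(l-(k-1)m)\to\OO_{kZ}(l)\to\OO_{(k-1)Z}(l)$. This keeps the twist $l$ on the quotient, so it needs an induction on $k_l$: to descend from $\OO_{k_lZ}(l)$ to $\OO_Z(l)$ it must already know $\OO_Z(m+r_l),\OO_Z(2m+r_l),\ldots,\OO_Z(l-m)$.

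You instead put $\OO_Z(l)$ in the quotient,
\[
0\to\OO_{(k_l-1)Z}(l-m)\xrightarrow{x_d}\OO_{k_lZ}(l)\to\OO_Z(l)\to0 .
\]
You then observe that $k_{l-m}=k_l-1$ and $r_{l-m}=r_l$. So the left term is exactly the object $\OO_{k_{l-m}Z}(l-m)$ already produced in your first step, and $\OO_Z(l)$ is a single cone of two objects of $\mathsf{S}$. No induction is actually used, so your phrase ``by induction on $l$'' can simply be dropped.

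Your version is also more careful in two places:
\begin{enumerate}
\item You treat $r_l=0$ explicitly. There the defining triangle gives $\OO(l)$ rather than $\OO_{k_lZ}(l)$, and you finish with $\OO\xrightarrow{x_d^{k_l}}\OO(l)$. The paper passes over this case by citing \eqref{E:Cone}, which is only stated for $r_l\neq 0$.
\item You check exactness of the sequences. The sheaves $\OO(i)$ are invertible near $Z$, and $x_d^k$ is an isomorphism away from $Z$. The paper leaves this verification implicit.
\end{enumerate}
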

\begin{proof}
As above let 
\begin{align}
l=k_l \cdot m + r_l, \quad \text{where} \quad  k_l \geq 1 \quad \text{and} \quad  -m < r_l \leq 0. 
\end{align}
We first note that \eqref{E:Cone} shows $\OO_{k_lZ}(l) \in \mathsf{S}$ for all $1 \leq l \leq d$. 
We use induction on $k_l$ to show that $\OO_{Z}(l) \in \mathsf{S}$ for all $1 \leq l \leq d$ as claimed. For $k_l=1$, and thus when $0 \leq l \leq m-1 $, there is nothing to show. Assume that we have shown the statement for all $k_l<n$. We show the statement for $k_l=n$. Set $l_n = n \cdot m + r_l$. There are triangles for all $k$ and $l$:
\begin{align}\label{E:extensions}
\OO_Z(l- (k-1)m) \to \OO_{kZ}(l) \to \OO_{(k-1)Z}(l)
\end{align}
By induction hypothesis $\OO_Z(l_n -(n-1)m)$ is in $\mathsf{S}$ (note that $r<s$ implies $k_r<k_s$). By the discussion above $\OO_{n Z}(l_n ) \in \mathsf{S}$ and thus $\OO_{(n-1)Z}(l_n)$ is in $\mathsf{S}$. Hence using \eqref{E:extensions} 
repeatedly shows the claim.
\end{proof}

\begin{lemma}\label{L:Approximation}
Let $A$ be a ring. Let $\{\sP_i\}_{i=1}^4$ be finitely generated projective $A$-modules (possibly zero). Consider
complexes of length at most $2$: $C_1:=\sP_1 \xrightarrow{a} \sP_2$ and $C_2:=\sP_3 \xrightarrow{b} \sP_4$. Here $\sP_1$ and $\sP_3$  are both concentrated in the same degree $i$.
Moreover, assume the following:
\begin{itemize}
\item[(a)] If $j \neq 2$, then every morphism $\sP_j \to \sP_2$ factors over $a$. 
\item[(b)] If $k \neq 4$, then every morphism $\sP_k \to \sP_4$ factors over $b$. 
\end{itemize}
Let $C=C_1 \oplus C_2$. Then $\Hom_{K^b(\proj-A)}(C, C[i])=0$ for all $i>0$.
\end{lemma}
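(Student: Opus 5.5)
Since $C_1$ and $C_2$ have length at most $2$ and their left-hand terms $\sP_1,\sP_3$ sit in the same degree $i$, every chain map $C\to C[s]$ in $K^b(\proj A)$ with $s\ge 2$ vanishes for degree reasons: the two complexes occupy only the degrees $i$ and $i+1$, so a nonzero component would require a shift by at most $1$. It therefore suffices to treat shift $1$. A chain map $C\to C[1]$ has exactly one possibly nonzero type of component: maps $\phi$ from the degree-$i$ terms of $C$ ($\sP_1\oplus\sP_3$) to the degree-$(i+1)$ terms of $C$ ($\sP_2\oplus\sP_4$). Every such $\phi$ is automatically a chain map, because its compositions with the differentials land in zero terms. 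So the plan is to show that each such $\phi$ is null-homotopic, and to do this one of the four matrix entries of $\phi$ at a time.

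A homotopy $h\colon C\to C[1][-1]=C$ of degree $-1$ has components from the degree-$(i+1)$ terms to the degree-$i$ terms, i.e.\ maps $\sP_2\oplus\sP_4\to\sP_1\oplus\sP_3$. Its boundary is $dh+hd$, whose degree-$i$ component is $h\circ d$ composed appropriately. Given a component $\phi_{jk}\colon\sP_j\to\sP_k$ with $j\in\{1,3\}$ and $k\in\{2,4\}$, it suffices to write it as $a\circ g$ (if $k=2$) or as $b\circ g$ (if $k=4$) with $g\colon\sP_j\to\sP_1$ resp.\ $g\colon\sP_j\to\sP_3$. Hypotheses (a) and (b) give exactly this. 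For $k=2$, any $j\ne 2$ (in particular $j=1,3$) yields a factorization $\phi_{j2}=a g$. For $k=4$, any $j\ne 4$ yields $\phi_{j4}=b g$. The map $g$ then serves as the corresponding component of a homotopy.

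Concretely, I would define the homotopy componentwise from degree $i$ of $C$ to degree $i$ of $C[1]$ (which is degree $i+1$ of $C$, shifted), with sign conventions chosen so that $d_{C[1]}\circ h=\phi$. Summing over the four components gives $\phi=d h\pm h d$ with $hd=0$ here, since $h$ is defined only on degree-$i$ terms and nothing maps into them. The main obstacle is purely bookkeeping: aligning degrees and signs of the shift $[1]$ correctly, and checking that the homotopy's other term contributes nothing. I expect no genuine difficulty there, since the only homotopy pieces are precompositions with the differential $a\oplus b$ of the target.
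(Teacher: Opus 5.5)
Your argument is correct and follows the paper's own proof. The paper just asserts that, by (a) and (b), every chain map $C\to C[s]$ with $s>0$ is null-homotopic; you add the degree count reducing to $s=1$ and the explicit entrywise factorization.

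One bookkeeping fix. A null-homotopy of $\phi\colon C\to C[1]$ has components $C^n\to (C[1])^{n-1}=C^n$. So what you need is an endomorphism $h$ of $\sP_1\oplus\sP_3$ with $\phi=\pm(a\oplus b)\circ h$, taking the degree-$(i+1)$ component of the homotopy to be zero. This is exactly what your maps $g$ assemble to. It is not a map $\sP_2\oplus\sP_4\to\sP_1\oplus\sP_3$, nor a map into degree $i$ of $C[1]$, as your second and fourth paragraphs say.
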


\begin{proof}
This follows from the definition of morphisms in $K^b(\proj-A)$. Indeed, using our assumptions every non-trivial morphism of complexes $C \to C[i]$ is null homotopic.
\end{proof}

\begin{remark}
Note that in the Gorenstein case, i.e. when $m$ divides $d$, the semiorthogonal decompositions of $\Db( \P (1^d , m ) )$ in  \eqref{eq:SODwpsgeneral} \& \eqref{eq:SODwps2} gives a \emph{finite dimensional categorical absorption of singularities} in the sense of \cite{kalck-klapproth-pavic}. If $m=d$, then one can show that the algebra $\End\left(  \bigoplus_{l=1}^{d-1} \sG_{l , d , d} \right)\op$ in \eqref{eq:SODwps2} is given by the quiver with relations in Proposition \ref{P:Hanihara} and Corollary \ref{Cor:description_R_d}.

Note further that the component $\Db (\End (\sG ) )$ in the semiorthogonal decomposition in \eqref{eq:SODwpsgeneral} contains exceptional objects if $m<d$ and $\Db ( \End ( \sG ) )$ will thus not be a minimal absorption. 
\end{remark}

\subsection{Application to singular equivalences}

We start by exhibiting a slightly different tilting object for $X_2^3$, which is better suited for the application below.

\begin{proposition}\label{p:tilt-on-x23}
We use the notation of Theorem \ref{T:MainGenerald} and consider the weighted projective threefold $X_2^3:=\P(1^3, 2)$. 

Then 
$\sT=\OO(-2) \oplus \sG_1 \oplus \sG_2 \oplus \OO$ is a tilting object. 
Moreover, the quiver $Q$ of $\End(\sT)\op$ has the following shape\footnote{In other words, the sequence of subcategories
$\thick(\OO(-2)), \thick(\sG_1), \thick(\sG_2), \thick(\OO)$ 
is semiorthogonal. It is not an exceptional sequence since $\sG_1$ is not exceptional as we will see below.}
\begin{equation} \label{eq:quivershape}
  \begin{tikzpicture}[description/.style={fill=white,inner sep=2pt}]

    \matrix (n) [matrix of math nodes, row sep=2em,
                 column sep=2em, text height=1.5ex, text depth=0.25ex,
                 inner sep=2pt, nodes={inner xsep=0.3333em, inner
ysep=0.3333em}] at (-5, 0)
    {   \OO(-2)  && \sG_1 && \sG_2 && \OO \\
          };

    \draw[<-, dashed] ($(n-1-1.east) + (0,0mm)$)  to  ($(n-1-3.west) + (0mm,0mm)$) ;

    \draw[<-, dashed] ($(n-1-3.east) + (0,0mm)$)  to  ($(n-1-5.west) + (0mm,0mm)$) ;

       \draw[<-, dashed] ($(n-1-5.east) + (0,0mm)$)  to  ($(n-1-7.west) + (0mm,0mm)$) ;

     \draw[->, dashed] ($(n-1-7.south west) + (2mm,-0mm)$) .. controls +(-3.7mm,-16mm)
and +(+3.7mm,-16mm) ..  ($(n-1-1.south east) + (-3mm,0mm)$);

 \draw[<-, dashed] ($(n-1-3.south west) + (2mm,-0mm)$) .. controls +(-3.7mm,-5mm)
and +(+3.7mm,-5mm) ..  ($(n-1-3.south east) + (-3mm,0mm)$);

   \draw[->, dashed] ($(n-1-7.north west) + (2mm,-0mm)$) .. controls +(-3.7mm,8mm)
and +(+3.7mm,8mm) ..  ($(n-1-3.north east) + (-3mm,0mm)$);

      \draw[->, dashed] ($(n-1-5.south west) + (3mm,-0mm)$) .. controls +(-3.7mm,-8mm)
and +(+3.7mm,-8mm) .. ($(n-1-1.south east) + (-1mm,0mm)$);
  
\end{tikzpicture}  
\end{equation}
where each arrow indicates that there can be $\alpha \geq 0$ arrows between these vertices in the quiver $Q$. Whereas no arrow between any two vertices indicates that there can be \emph{no} arrows between these vertices in $Q$.

\end{proposition}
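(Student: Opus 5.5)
The plan mirrors the proof of Theorem \ref{T:MainGenerald}, replacing $\sG_3$ by $\OO(-2)$. For $m=2$, $d=3$ we have $k_1=1,r_1=-1$, so $\sG_1=\{\OO\otimes S_1\oplus\OO(-1)\xrightarrow{\mu\oplus x_3}\OO(1)\}$, and $k_2=1,r_2=0$, so $\sG_2=\{\OO\otimes S_2\xrightarrow{\mu}\OO(2)\}$. Here $S_2$ contains $x_3$, since $\deg x_3=2$. The summands $\OO$ and $\OO(-2)$ are line bundles, and $\sG_1,\sG_2$ are perfect by \eqref{E:Cone}, so $\sT$ is perfect.

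For generation, we argue as follows. The thick closure of $\sT$ contains $\OO,\sG_1,\sG_2$, hence $\OO_Z(1)$ and $\OO_Z(2)$ by Lemma \ref{L:generation}. From the triangle $\OO(-2)\xrightarrow{x_3}\OO\to\OO_Z$ we also get $\OO_Z=\OO_Z(0)$. By Beilinson on $Z=\P^2$, the objects $\OO_Z,\OO_Z(1),\OO_Z(2)$ generate $\Db(Z)$. Using the triangles $\OO(i-2)\xrightarrow{x_3}\OO(i)\to\OO_Z(i)$ in both directions, starting from $\OO$ and $\OO(-2)$, we obtain all divisorial sheaves $\OO(i)$. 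These generate $\DD(\Qcoh X)$ by \cite[Lemma 36.16.1.]{stacks-project}.

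For the vanishing of higher self-extensions, we apply $\beta\pi^\ast$ into $\Db(B_2^3)$. Here the stacky tilting bundle is $\OO_Y(-1)\oplus\OO_Y\oplus\dots\oplus\OO_Y(3)$, which does not contain $\OO_Y(-2)$. So we first replace it by the shifted full strong collection $\OO_Y(-2),\dots,\OO_Y(2)$, which is still full and strong because \eqref{eq:equiv-except-hom} is invariant under twisting by $\OO_Y(-1)$. In this model $\sT$ becomes a sum of complexes of projectives of length at most two, with stalk terms $P_{-2}$ and $P_0$. Every non-stalk term has its first component built from $P_0,P_{-1}$ and its second component $P_1$ or $P_2$. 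We use that $\Hom(P_i,P_j)=0$ for $j<i$. Morphisms into $P_{-2}$ or $P_0$ from a shifted complex land in degree $0$ only. Moreover, every map $P_i\to P_l$ with $i\le 0<l$ factors through the defining differential, as in the theorem. Lemma \ref{L:Approximation} then kills positive degrees, and the ordering $P_{-2}$ first rules out negative degrees.

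The quiver shape \eqref{eq:quivershape} comes from computing which Hom-spaces vanish in the same model. The first vanishing is $\Hom(\OO(-2),\text{anything else})$ in the opposite-algebra direction. The second comes from $\Hom(\OO,\sG_l)=0$, shown in the proof of Theorem \ref{T:MainGenerald}. The third uses degree reasons via $\Hom(P_i,P_j)=0$ for $i>j$. We also show $\Hom(\sG_2,\sG_1)=0$. There are nontrivial loops only at $\sG_1$: the endomorphism $x_3$-twist, coming from $\OO(-1)\to\OO(1)$ via $x_3$ composed through $\OO\otimes S_1$, will be checked to be a nonzero non-identity endomorphism. The main obstacle is this careful bookkeeping of which Homs between the two-term complexes vanish, in particular that no arrow runs from $\sG_2$ to $\sG_1$ and none from $\OO(-2)$ to $\sG_2$ in the wrong direction. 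For this I would write out the chain maps explicitly modulo homotopy, using that the $x_3$-component of $\sG_1$ absorbs maps from $P_{-1}$.
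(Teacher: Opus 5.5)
Your argument follows the paper's proof: perfectness of the summands, generation through $\OO_Z,\OO_Z(1),\OO_Z(2)$ and Beilinson on $Z\cong\P^2$, Hom-vanishing in the homotopy category of projectives over the stacky tilting algebra via Lemma~\ref{L:Approximation}, and an explicit chain-map computation for the quiver shape.

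One step is an improvement on the paper. You replace $\bigoplus_{i=-1}^{3}\OO_Y(i)$ by the twisted full strong collection $\OO_Y(-2),\dots,\OO_Y(2)$. The paper computes in $K^b(\proj B_2^3)$, where $\OO_Y(-2)$ is not one of the $P_i$, so its appeal to ``no maps $P_i\to P_j$ for $i>j$'' does not literally cover the new summand. In your model it does. You should still record the two new factorizations this requires, $S_3=x_0S_2+x_1S_2+x_2S_2$ and $S_4=S_2\cdot S_2$; both hold.

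Three points need fixing.

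First, you do not obtain all divisorial sheaves. For odd $i$, $\OO(i)$ is locally a non-free maximal Cohen--Macaulay module over $\frac12(1,1,1)$, so it has infinite projective dimension. It therefore cannot lie in $\thick(\sT)\subseteq\Perf(X_2^3)$. Your triangles yield exactly the $\OO(2k)$, $k\in\Z$, i.e.\ the powers of the ample line bundle $\OO(2)$. That is all the cited generation lemma needs, so the conclusion stands once the claim is corrected.

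Second, the vanishing $\Hom(\sG_2,\sG_1)=0$ is only announced, and it is the one computation the paper actually writes out. Your hint is also misplaced: the $x_3$-component of $\sG_1$ plays no role here; its job is to absorb $x_3\in\Hom(P_{-1},P_1)$ in $\Hom(\sG_1,\sG_1[1])=0$. The argument runs as follows.
\begin{itemize}
\item A degree-$0$ chain map $\sG_2\to\sG_1$ has zero component $\OO(2)\to\OO(1)$.
\item Its component $\OO\otimes S_2\to\OO\otimes S_1\oplus\OO(-1)$ is $(i,0)^T$ with $i$ a scalar matrix, since $\Hom(\OO,\OO(-1))=0$.
\item The chain-map condition reads $\mu\circ i=0$. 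This forces $i=0$ because $x_0,x_1,x_2$ are linearly independent, and there are no homotopies.
\item The other degrees vanish by tilting.
\end{itemize}
Note also the direction: in the quiver of $\End(\sT)^{\mathrm{op}}$ this excludes arrows $\sG_1\to\sG_2$. Arrows $\sG_2\to\sG_1$ are allowed in the picture, contrary to your phrasing.

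Third, the ``$x_3$-twist'' you propose as a loop at $\sG_1$ does not exist. The map $x_3\colon\OO(-1)\to\OO(1)$ does not factor through $\OO\otimes S_1$, since $x_3\notin S_1S_1$. The radical of $\End(\sG_1)$ comes instead from maps $\OO(-1)\to\OO\otimes S_1$ given by the linear Koszul syzygies of $(x_0,x_1,x_2)$, as in the proof of Corollary~\ref{C:SingEq}. The proposition does not ask you to exhibit any loop at $\sG_1$. It does require that there is no loop at $\sG_2$, which you assert without proof. This is quick: a chain endomorphism of $\sG_2$ is a scalar matrix $M$ on $\OO\otimes S_2$ and a scalar $\lambda$ on $\OO(2)$ with $\mu\circ M=\lambda\mu$. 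Linear independence of the chosen basis of $S_2$ gives $M=\lambda\,\id$, so $\End(\sG_2)=\mathbb{k}$.
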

\begin{proof}
    That the $\sG_i$ are perfect follows from Theorem \ref{T:MainGenerald}. In combination with the fact that  $\OO(-2)$ is a line bundle, this shows that $\sT$ is perfect.

    Moreover, $\thick(\sT)$ contains \begin{align}
        \Cone(\OO(i-2) \to \OO(i)) \cong \OO_Z(i)
    \end{align} 
    for $0 \leq i \leq 2$, which generate $\Db(Z)$ since $Z \cong \P^2$. Since $\OO$ is a summand of $\sT$, proceeding as in the proof of Theorem \ref{T:MainGenerald} shows that $\sT$ generates $\DD(\Qcoh(X_2^3))$.

    Note that the summands $\sG_1, \sG_2$ and $\OO$ of $\sT$ are also summands of the tilting object $\sT_2^3$ from Theorem \ref{T:MainGenerald}. So to show that $\sT$ is tilting it is enough to show vanishing of $\Hom(\OO(-2), \sT[s])$ and $\Hom(\sT, \OO(-2)[s])$ for all $s \neq 0$. As in the proof of Theorem \ref{T:MainGenerald}, the main idea is to do the computations in the homotopy category $K^b(\proj-B_2^3)$ together with Lemma \ref{L:Approximation}, the fact that there are no homomorphisms $P_i \to P_j$ for $i>j$ and the definitions of $\sG_i$ which guarantee that all homomorphisms $P_i \to P_l$ for $i\leq 0$ and factor over the morphisms defining $\sG_i$.

    Finally, the computations showing that the quiver of $\End(\sT )^{\op}$ has the claimed shape are again very similar. 
    We only explain why there are no morphisms  $\sG_2 \to \sG_1$ (and thus no arrows $\sG_1 \to \sG_2$ in the quiver of $\End(\sT)\op$). Since they are part of a tilting object, we already know that such a morphism has to be in degree $0$:
    \begin{align*}
        \xymatrix{
    \OO \otimes S_2 \ar[dd]_{{\begin{pmatrix} i \\ 0 \end{pmatrix}}} \ar[rr] && \OO(2) \ar[dd]^{0} \\ \\
    \OO \otimes S_1 \oplus \OO(-1) \ar[rr]^-{{  \begin{pmatrix} \mu & x_3 \end{pmatrix}}} && \OO(1) }
    \end{align*}
    Since $\OO$ is exceptional the entries of the matrix $i$ are scalars. In particular, if $i \neq 0$ the composition $$  \begin{pmatrix} \mu & x_3 \end{pmatrix} \circ \begin{pmatrix}
        i && 0
    \end{pmatrix}^T$$ 
    is non-zero and thus cannot be part of a morphism of complexes, since $\Hom(\OO(2), \OO(1))=0$. This completes the proof.
\end{proof}

For Noetherian (possibly noncommutative) algebras, the following definition goes back to Buchweitz \cite{Buchweitz}. For quasi-projective varieties the definition is due to Orlov \cite{orlov-sing-1}.

\begin{definition}
    Let $X$ be a quasi-projective variety. The \emph{singularity category} of $X$ is the triangulated quotient category
    \begin{align}
        \Dsg(X):=\frac{\Db(\Coh \, X)}{\Dperf(X)}. 
    \end{align}
    Let $R$ be a left Noetherian ring. The \emph{singularity category} of $R$ is the triangulated quotient category
    \begin{align}
        \Dsg(R):=\frac{\Db(R\mbox{-}\mod )}{\Dperf(R)} 
    \end{align}
\end{definition}

\noindent
We generalize
 the main result of \cite{KalckNewSingular} to all  
 fields $\mathbb{k}$ of characteristic different from two.

\begin{corollary}\label{C:SingEq}
   In the notation of Proposition \ref{p:tilt-on-x23}, there are triangle equivalences
   \begin{align}\label{eq:singequivalences}
       \Dsg\left(\frac{1}{2}(1,1,1) \right) \cong \Dsg(X_2^3) \cong \Dsg(\End(\sT)\op) \cong \Dsg\left(\frac{\mathbb{k}[z_1, z_2, z_3]}{(z_1, z_2, z_3)^2}\right),
   \end{align}
   where $\frac{1}{2}(1,1,1)$ denotes the invariant ring of $\Z/2\Z$ acting on $\mathbb{k}\llbracket u_1, u_2, u_3\rrbracket$ with weights $(1, 1, 1)$.
   
   In particular, the Grothendieck groups of these triangulated categories are isomorphic to $\Z/4\Z$.
\end{corollary}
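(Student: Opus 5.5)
The corollary asserts four equivalences, and I would obtain them as a chain of three links, each justified separately.

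\emph{Middle link.} The equivalence $\Dsg(X_2^3) \cong \Dsg(\End(\sT)\op)$ follows from the tilting equivalence $\Db(X_2^3)\cong \Db(\End(\sT)\op)$ of Proposition \ref{p:tilt-on-x23}. Since $\sT$ is perfect, this equivalence identifies $\Dperf(X_2^3)$ with $\Dperf(\End(\sT)\op)$. The reason is that both subcategories can be characterized intrinsically as the homologically finite objects, by \cite[Prop.~1.11]{orlov-hf} on the geometric side. Passing to Verdier quotients then gives the second equivalence.

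\emph{First link.} The equivalence $\Dsg(X_2^3)\cong \Dsg(\frac12(1,1,1))$ comes from localization. $X_2^3$ has a unique singular point $[0:0:0:1]$, which is the quotient singularity $\frac12(1,1,1)$. By Orlov \cite{OrlovIdemp}, the idempotent completion of $\Dsg(X_2^3)$ is $\Dsg$ of the completed local ring. To drop the idempotent completion, I would show that $\Dsg(X_2^3)$ is already idempotent complete, or equivalently that $\bK_{-1}$ vanishes. This should follow from the middle link. Indeed, $\End(\sT)\op$ is a finite-dimensional algebra, so $\Dsg(\End(\sT)\op)$ is a Verdier quotient of $\Db$ of a finite-dimensional algebra, and such quotients are idempotent complete by \cite[Cor.~4.5]{KPS}.

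\emph{Last link.} This is the main obstacle: identifying $\Dsg(\End(\sT)\op)$ with $\Dsg$ of the local algebra $R=\mathbb{k}[z_1,z_2,z_3]/(z_1,z_2,z_3)^2$. The plan is to use the semiorthogonal shape \eqref{eq:quivershape} and peel off summands. First, $\OO(-2)$ and $\OO$ are exceptional, so the corresponding vertices are directed. Second, $\sG_2$ is exceptional, which I would check by the same homotopy-category computation in $K^b(\proj\text{-}B_2^3)$ as in the proof. Given these, the idempotents for $\OO(-2)$, $\sG_2$ and $\OO$ can be removed by the recollement argument of \cite{HocheneggerKPloogRepArxiv}, since quotienting by a vertex whose local algebra is $\mathbb{k}$ and which is a source or sink in the remaining quiver does not change the singularity category. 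This is the standard singular-equivalence-by-idempotents result, and it applies because the quiver \eqref{eq:quivershape} has no oriented cycles through these vertices.

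The remaining piece is $\Dsg(\End(\sG_1))$. Here I would compute $\End(\sG_1)$ directly in $K^b(\proj\text{-}B_2^3)$ using $\sG_1 = \{\OO\otimes S_1\oplus\OO(-1)\to\OO(1)\}$. The expected answer is that the endomorphisms are $\mathbb{k}$ together with three nilpotent endomorphisms $z_1,z_2,z_3$ induced from $x_0,x_1,x_2$, with all pairwise products vanishing; this gives $\End(\sG_1)\cong R$. Verifying these relations, especially that the products vanish up to homotopy, is the delicate computation.

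\emph{Grothendieck group.} Finally, $K_0(\Dsg(R))$ is computed from $K_0(\Db(R))=\Z$ modulo the class of $[R]$, which is $4[\mathbb{k}]$ since $R$ has length $4$. This gives $\Z/4\Z$.
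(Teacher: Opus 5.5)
Your proposal follows the paper's proof essentially step by step. The middle link comes from tilting. The first link uses Orlov's theorem together with idempotent completeness of the singularity category of a finite-dimensional algebra; the paper cites Chen \cite[Corollary 2.4]{chenradical} for this. The last link is iterated removal of source/sink vertices (the paper uses \cite[Lemma 4.3]{chenradical}), followed by a direct chain-map computation of $\End(\sG_1)$.

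Two small points need tightening:
\begin{enumerate}
\item Orlov's theorem compares idempotent completions on \emph{both} sides. So you also need that $\Dsg$ of the complete local ring $\frac12(1,1,1)$ is idempotent complete, as the paper notes.
\item The three radical endomorphisms of $\sG_1$ are exactly the Koszul syzygies spanning $\ker(S_1\otimes S_1\to S_2)$, of dimension $9-6=3$, placed in the component $\OO(-1)\to\OO\otimes S_1$. Their pairwise products vanish on the nose, not just up to homotopy, because there are no nonzero maps $\OO\to\OO(-1)$. There are also no nonzero homotopies at all.
\end{enumerate}
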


\begin{remark}
    The proof below is very different from \cite{KalckNewSingular}. In particular, this yields an alternative way to compute the Grothendieck group of the singularity category of $X_2^3$ and of the complete local coordinate ring of its singular point. In contrast, in loc. cit. we used knowledge of this Grothendieck group for algebraically closed fields of characteristic zero
\cite{pavic-shinder} to deduce the singular equivalence below.
\end{remark}

\begin{proof}
    By \cite[Theorem 7.6]{hille-van-den-bergh}, the tilting object $\sT$ from Proposition \ref{p:tilt-on-x23} induces a triangle equivalence
    \begin{align}
        \Db(X_2^3) \cong \Db(\End(\sT)\op),
    \end{align}
    which in turn induces a triangle equivalence on the quotient categories
    \begin{align}
        \Dsg(X_2^3) \cong \Dsg(\End(\sT)\op)).
    \end{align} This shows the equivalence in the middle of \eqref{eq:singequivalences}.
    Moreover, the category on the right is idempotent complete since $\End(\sT)\op$ is a finite dimensional $\mathbb{k}$-algebra \cite[Corollary 2.4]{chenradical}. Now, since the completion of the local ring at the singular point of $X_2^3$ is isomorphic to $\frac{1}{2}(1,1,1)$ (note that we use here that the charactersitic of $\mathbb{k}$ is coprime to $m=2$), the left equivalence in \eqref{eq:singequivalences} follows from \cite{OrlovIdemp}, since singularity categories of complete local rings are idempotent complete.

    It remains to show the equivalence on the right of \eqref{eq:singequivalences}. We observe that by iteratively removing sources or sinks (and all adjacent arrows) the quiver \eqref{eq:quivershape} of $\End(\sT)\op$ can be reduced to a quiver with a single vertex at $\sG_1$. Then \cite[Lemma 4.3.]{chenradical} implies
    \begin{align} 
        \Dsg(\End(\sT)\op) \cong \Dsg(\End(\sG_1)\op)
    \end{align}
    We finish the proof by showing 
    \begin{align}
        \End(\sG_1)\op \cong \frac{\mathbb{k}[z_1, z_2, z_3]}{(z_1, z_2, z_3)^2}.
    \end{align}
    A general endomorphism has the following form
    \begin{align} \label{eq:chainm}
        \xymatrix{
    \OO^3 \oplus \OO(-1) \ar[ddd]_{{\begin{pmatrix}
    * & * &  *  & a\cdot l_0\\ 
    * & * &  *  & b\cdot l_1 \\
    * & * &  *  & c\cdot l_2  \\
     0 & 0 & 0 & \mu \cdot \id  
    \end{pmatrix}}} \ar[rrr]^{\quad \begin{pmatrix} x_0 & x_1 & x_2  & x_3 \end{pmatrix}} &&& \OO(1) 
    \ar[ddd]^{\lambda \cdot \id} 
    \\ \\ \\
    \OO^3 \oplus \OO(-1) \ar[rrr]^{\quad \begin{pmatrix} x_0 & x_1 & x_2 & x_3 \end{pmatrix}} &&& \OO(1), 
    }
    \end{align}
    where the $l_i$ are linear forms in $x_0, x_1, x_2$.
    Commutativity of this diagram implies $\lambda=\mu$, that the submatrix of scalars $*$ equals $\lambda \cdot \id$ and that the composition
    \begin{align}\label{eq:compo}
        \begin{pmatrix} x_0 & x_1 & x_2 \end{pmatrix} \circ \begin{pmatrix}
        a\cdot l_0 & b\cdot l_1  & c\cdot l_2
    \end{pmatrix}^T 
        \end{align}
        vanishes. 
        Observe that \begin{align}
           \dim_k \Hom(\OO(-1), \OO^3) = 3 \cdot 3 =9 \\
           \dim_k \Hom(\OO(-1), \OO(1)) = \dim_k S_2=7
        \end{align}
        The compositions in \eqref{eq:compo} span a subspace of $\Hom(\OO(-1), \OO(1))$ of dimension $7-1=6$ (we hit all monomials of degree $2$ except for $x_3$). Summing up the subspace $S \subseteq  \Hom(\OO(-1), \OO^3)$ such that \eqref{eq:compo} vanishes is $9-6=3$ dimensional, this yields a $3$-dimensional subspace $R=<z_1, z_2, z_3> \subseteq \End(\sG_1)$. 
     Moreover, any two elements in $R$ compose to zero (since there are no non-zero maps from $\OO$ to $\OO(-1)$). 
Similarly, there are no non-trivial chain homotopies in \eqref{eq:chainm}. So $\End(\sG_1)=<\id, z_1, z_2, z_3>$ as vector spaces and the discussion above implies that $\End(\sG_1)\cong \frac{\mathbb{k}[z_1, z_2, z_3]}{(z_1, z_2, z_3)^2}$ is commutative. 

Finally, the statement about the Grothendieck groups follows since $\mathbb{k}[z_1, z_2, z_3]/(z_1, z_2, z_3)^2$ is a local algebra of $\mathbb{k}$-dimension $4$, cf. e.g. \cite[Proposition 5.2.]{kalck-karmazyn}\footnote{Note that in this example, we don't need $\mathbb{k}$ to be algebraically closed, since the simple module has $k$-dimension $1$.}.
\end{proof}

\begin{remark} 
    There is another family of singular equivalences between commutative rings \cite{YangPrivate, kalck-karmazyn} that can be obtained in an analogous way:
    \begin{align} \label{eq:SingEqu}
        \Dsg(X_n^2) \cong
        \Dsg\left(\frac{1}{n}(1,1)\right) \cong \Dsg\left(\frac{\mathbb{k}[z_1, \ldots, z_{n-1}]}{(z_1, \ldots, z_{n-1})^2}\right),
    \end{align}
    where $\mathbb{k}$ is a field with characteristic coprime to $n$. 

    Again the tilting object in Theorem \ref{T:MainGenerald} is modified in a way analogous to Proposition \ref{p:tilt-on-x23} 
    \begin{align}
        \sT_n:=\OO(-n) \oplus \sG_{1, 2, n} \oplus \OO.
    \end{align}
    
  Then the quiver of $\End(\sT_n)\op$ has a similar shape as in Proposition \ref{p:tilt-on-x23} and the same reductions as in the proof of Corollary \ref{C:SingEq} yield a singular equivalence with $\End(\sG_{1, 2, n})$. Similarly, to the computations in the proof of Corollary \ref{C:SingEq}, one checks that
  \begin{align}
        \dim_\mathbb{k} \End(\sG_{1, 2, n})=1 + \dim_\mathbb{k} \Hom(\OO(-n+1), \OO^2) - (\dim_\mathbb{k} \Hom(\OO(-n+1), \OO(1))-1)\\ =1 + 2n -(n+2-1) = n
  \end{align}
  and that all morphisms in the radical square to zero. This shows $\End(\sG_{1, 2, n})\cong \frac{\mathbb{k}[z_1, \ldots, z_{n-1}]}{(z_1, \ldots, z_{n-1})^2}$ and completes the sketch of proof of \eqref{eq:SingEqu}.
\end{remark}

\providecommand{\arxiv}[1]{{\tt{arXiv:#1}}}

\end{document}